\documentclass[a4paper]{amsart}
\usepackage[margin=1in]{geometry}
\usepackage{blindtext}
\usepackage{tikz-cd}
\usepackage{amsmath}
\usepackage{mathtools}
\usepackage{amsmath,amssymb,amsthm,mathrsfs}
\usepackage[utf8]{inputenc}
\usepackage[T1]{fontenc}
\usepackage{hyperref}
\usepackage{verbatim}
\usepackage[shortlabels]{enumitem}

\delimitershortfall=1pt
\numberwithin{equation}{section}
\theoremstyle{plain} 
\newtheorem{thm}{Theorem}
\numberwithin{thm}{section}
\newtheorem*{thm*}{Theorem}
\newtheorem{prop}[thm]{Proposition}
\newtheorem{lemma}[thm]{Lemma}
\newtheorem{coro}[thm]{Corollary}

\theoremstyle{definition}
\newtheorem{definition}[thm]{Definition}

\newtheorem{remark}[thm]{Remark}

\newcommand{\A}{\mathbb{A}}

\newcommand{\C}{\mathbb{C}}

\renewcommand{\H}{\mathbb{H}}

\newcommand{\N}{\mathbb{N}}

\newcommand{\Q}{\mathbb{Q}}
\newcommand{\R}{\mathbb{R}}

\newcommand{\Z}{\mathbb{Z}}

\newcommand{\cE}{\mathcal{E}}
\newcommand{\cF}{\mathcal{F}}

\newcommand{\cM}{\mathcal{M}}
\newcommand{\cN}{\mathcal{N}}
\newcommand{\cO}{\mathcal{O}}
\newcommand{\cP}{\mathcal{P}}

\newcommand{\cU}{\mathcal{U}}
\newcommand{\cV}{\mathcal{V}}

\newcommand{\cX}{\mathcal{X}}

\newcommand{\bD}{\mathbf{D}}

\newcommand{\bb}{\mathbf{b}}
\newcommand{\bc}{\mathbf{c}}
\newcommand{\bd}{\mathbf{d}}

\newcommand{\bh}{\mathbf{h}}
\newcommand{\bi}{\mathbf{i}}

\newcommand{\bs}{\mathbf{s}}
\newcommand{\bt}{\mathbf{t}}

\newcommand{\bv}{\mathbf{v}}

\newcommand{\bz}{\mathbf{z}}

\newcommand{\0}{\mathbf{0}}

\newcommand{\ff}{\mathfrak{f}}

\newcommand{\fA}{\mathfrak{A}}
\newcommand{\fB}{\mathfrak{B}}
\newcommand{\fC}{\mathfrak{C}}
\newcommand{\fD}{\mathfrak{D}}
\newcommand{\fE}{\mathfrak{E}}
\newcommand{\fF}{\mathfrak{F}}
\newcommand{\fG}{\mathfrak{G}}
\newcommand{\fH}{\mathfrak{H}}

\newcommand{\fS}{\mathfrak{S}}
\newcommand{\fT}{\mathfrak{T}}

\newcommand{\fX}{\mathfrak{X}}

\newcommand{\sD}{\mathscr{D}}

\newcommand{\sO}{\mathscr{O}}

\newcommand{\sR}{\mathscr{R}}

\newcommand{\DR}{\textup{DR}}

\newcommand{\Span}{\textup{Span}}
\newcommand{\Star}{\textup{Star}}
\newcommand{\Cone}{\textup{Cone}}
\newcommand{\Ext}{\textup{Ext}}

\DeclareMathOperator{\Spec}{Spec}

\DeclareMathOperator{\vv}{\vee\vee}

\title{On the Hodge theory of toroidal embeddings and corresponding vanishings}
\author{Chuanhao Wei}

\begin{document}
\begin{abstract}
    In this paper, we establish Deligne's logarithmic comparison theorem and the $E_1$-degeneration of the corresponding Hodge-de Rham spectral sequence, in the setting of toroidal embeddings. Along the way, we prove Kawamata-Viehweg Vanishing and Bott Vanishing for toroidal varieties and toric varieties respectively.
\end{abstract}
\maketitle

\section{Introduction}
    Given an algebraic variety $X$ over $\C$, with two reduced Weil divisors $B$ and $C$ that share no common irreducible components. Let $D=B+ C$, $U=X\setminus D$, $X_B=X\setminus C$, $X_C=X\setminus B$. Consider the following commutative diagram:
\[    
    \begin{tikzcd}
    U \arrow[r, "i_B"] \arrow[d, "i_C"] & X_B \arrow[d, "j_C"] \\
    X_C \arrow[r, "j_B"] & X.
    \end{tikzcd}
\]   
\begin{thm}[Deligne's Logarithmic Comparison Theorem]
    Assume that $X$ is smooth, with $D=B+C$ being a normal crossing divisor.
    We have the following quasi-isomorphisms in the derived category of complexes of constructable sheaves on $X$:
    $$Rj_{C,*}\circ Ri_{B,!} \C_U[n] \simeq Rj_{B,!} \circ Ri_{C,*} \C_U[n]\simeq \DR^\bullet_X(\log D)(-B),$$
    where 
    $$\DR^\bullet_X(\log D)(-B):=[\cO_X(-B)\to \Omega^1_X(\log D)(-B)\to...\to \Omega^n_X(\log D)(-B)],
    $$
    and $n=\dim X$.
    \end{thm}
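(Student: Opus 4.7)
The plan is to establish the two quasi-isomorphisms separately, building on Deligne's classical logarithmic comparison $\Omega^\bullet_X(\log D)\simeq Rj_*\C_U$ for the total open embedding $j=j_C\circ i_B = j_B \circ i_C:U\hookrightarrow X$.

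\emph{First isomorphism.} Since $D=B+C$ is NCD with $B$ and $C$ sharing no components, in local analytic coordinates $B$ and $C$ involve disjoint sets of coordinate hyperplanes, hence meet transversely. I would deduce $Rj_{C,*}Ri_{B,!}\C_U[n]\simeq Rj_{B,!}Ri_{C,*}\C_U[n]$ by base change around the Cartesian square of the diagram: locally, ``direct image along $C$'' only involves coordinates transverse to $B$, so it commutes with extension by zero along $B$. Equivalently, both sides fit into a Mayer--Vietoris triangle whose remaining vertices are $Rj_{*}\C_U[n]$ and a term supported on $B\cap C$, and one verifies the two triangles coincide by restricting to the open pieces $X_B$ and $X_C$.

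\emph{Second isomorphism.} I would exploit the short exact sequence of complexes of $\cO_X$-modules
\[
0\to\Omega^\bullet_X(\log D)(-B)\to\Omega^\bullet_X(\log D)\to Q^\bullet\to 0,
\]
where $Q^\bullet$ is supported on $B$. On the topological side, the canonical morphism $j_{B,!}\to Rj_{B,*}$ (applied after $Ri_{C,*}$) gives a distinguished triangle $Rj_{B,!}Ri_{C,*}\C_U[n]\to Rj_*\C_U[n]\to T^\bullet\to$ with $T^\bullet$ supported on $B$. Deligne's classical theorem identifies the middle terms; the remaining task is to identify $Q^\bullet\simeq T^\bullet$ via iterated Poincar\'e residue along the components of $B$, and then to conclude by the five lemma. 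I would proceed inductively on the number of components of $B$, reducing each step to the smooth-divisor case, where Poincar\'e residue is classically known to realize the topological boundary map.

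\emph{Main obstacle.} The crux will be matching $Q^\bullet$ with $T^\bullet$ compatibly with the partial-star structure from $Ri_{C,*}$ along the other divisor. By K\"unneth and the NCD local model, this should reduce to the one-variable check on $(\D,\{0\})$: the complex $[x\cO_\D\xrightarrow{d}\cO_\D\, dx]$ has acyclic stalk at the origin, matching the vanishing stalk of $j_{!}\C_{\D^*}$ there. The general statement will then follow by combining this with the classical Deligne computation along $C$ through a K\"unneth decomposition of the mixed functor $Rj_{B,!}Ri_{C,*}$ on the product neighborhood $(\G_m)^a\times(\D^*)^b\times\C^{n-a-b}\hookrightarrow\C^n$.
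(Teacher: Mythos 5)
The paper does not actually prove this theorem: it is quoted verbatim as Deligne's classical result (the paper's own contributions begin with the toroidal generalizations, and in \S 2.4 the smooth case is again taken as known input, repackaged via mixed Hodge modules). So there is no in-paper argument to match your proposal against; what you have written is essentially the standard textbook proof (Deligne; Esnault--Viehweg \S 2), and its skeleton is sound: the local product structure of an NCD reduces everything by K\"unneth to the three one-variable models $[\cO_\D\to\cO_\D\,\frac{dx}{x}]\simeq Rj_*\C_{\D^*}$, $[x\cO_\D\to\cO_\D\,dx]\simeq j_!\C_{\D^*}$, and $[\cO_\D\to\cO_\D\,dx]\simeq\C_\D$, and your verification that $[x\cO_\D\xrightarrow{d}\cO_\D\,dx]$ is acyclic at the origin is the correct key computation. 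The commutation $Rj_{C,*}Ri_{B,!}\simeq Rj_{B,!}Ri_{C,*}$ via open base change on the Cartesian square is likewise standard and correct; the canonical comparison map exists by adjunction and its cone is indeed supported on $B\cap C$, where the local product model kills it.

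The one place where your sketch is softer than it needs to be is the identification $Q^\bullet\simeq T^\bullet$ ``via iterated Poincar\'e residue'': making the residue compatible with the topological boundary maps and with the $Ri_{C,*}$-direction is genuinely fiddly, and a stalkwise cohomology match alone does not produce a morphism in the derived category. You can avoid this entirely. Your K\"unneth computation already shows that the stalks of $\DR^\bullet_X(\log D)(-B)$ at every point of $B$ are acyclic, i.e. $k_B^{-1}\DR^\bullet_X(\log D)(-B)\simeq 0$ for the closed immersion $k_B:B\hookrightarrow X$. Hence
$$\mathrm{Hom}_{D(X)}\bigl(\DR^\bullet_X(\log D)(-B),\,k_{B,*}k_B^{-1}Rj_*\C_U[n]\bigr)=\mathrm{Hom}_{D(B)}\bigl(k_B^{-1}\DR^\bullet_X(\log D)(-B),\,k_B^{-1}Rj_*\C_U[n]\bigr)=0,$$
so the classical comparison map $\DR^\bullet_X(\log D)(-B)\hookrightarrow\Omega^\bullet_X(\log D)[n]\simeq Rj_*\C_U[n]$ automatically factors through the triangle $Rj_{B,!}Ri_{C,*}\C_U[n]\to Rj_*\C_U[n]\to k_{B,*}k_B^{-1}Rj_*\C_U[n]$. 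Any such factorization is then a quasi-isomorphism, because both sides have acyclic stalks on $B$ and restrict over $X_C=X\setminus B$ to the classical comparison for the pair $(X_C,C)$. With this replacement for your ``main obstacle'' step, the proof is complete.
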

    Also due to Deligne, if we further assume that $X$ is proper, we have the $E_1$-degeneration of the Hodge-de Rham spectral sequence of the complex $\DR^\bullet_X(\log D)(-B)$. Combining the previous Logarithmic Comparison Theorem, we have
\begin{thm}[Hodge-de Rham degeneration]\label{T: smooth SS deg}
    $$\H^k(X, Rj_{C,*}\circ Ri_{B,!} \C_U)= \H^k(X, Rj_{B,!} \circ Ri_{C,*} \C_U)=\bigoplus_{p+q=k}H^q(X,\Omega^p_X(\log D)(-B)).$$
\end{thm}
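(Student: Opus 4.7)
The plan is to reduce to the $E_1$-degeneration of the Hodge-to-de Rham spectral sequence
$$E_1^{p,q} = H^q(X, \Omega^p_X(\log D)(-B)) \;\Longrightarrow\; \H^{p+q}(X, \DR^\bullet_X(\log D)(-B))$$
associated to the stupid filtration $F^p = \Omega^{\geq p}_X(\log D)(-B)$. Given this, Theorem 1.1 identifies the two hypercohomology groups on the left-hand side of the assertion with that of $\DR^\bullet_X(\log D)(-B)$ (so the first equality in the statement is immediate), and the direct sum on the right is then read off from the degenerate spectral sequence.

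The approach I would pursue is to exhibit $(\DR^\bullet_X(\log D)(-B), F)$ as the Hodge component of a cohomological mixed Hodge complex on $X$, so that $E_1$-degeneration falls out of Deligne's general principle for mixed Hodge complexes (\emph{Th\'eorie de Hodge III}; see also Peters--Steenbrink, \emph{Mixed Hodge Structures}). The weight filtration $W$ should count the number of logarithmic poles along components of $C$ only: poles along components of $B$ contribute no weight, because the ideal twist $\cO_X(-B)$ kills residues in those directions. Iterated Poincar\'e residues should then identify, for each stratum $D_I = \bigcap_{i \in I} D_i$ obtained by intersecting $|I|=k$ components of $C$,
$$\Gr^W_k \DR^\bullet_X(\log D)(-B) \;\simeq\; \bigoplus_{|I|=k} (a_I)_* \DR^{\bullet-k}_{D_I}\!\bigl(\log B|_{D_I}\bigr)\bigl(-B|_{D_I}\bigr)[-k],$$
where $a_I\colon D_I \hookrightarrow X$. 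Each graded piece is a complex of the same shape on a smooth projective subvariety of strictly smaller dimension, in which only the $B$-part of $D$ survives; pureness of the $E_1$-terms of the graded spectral sequences then follows by induction on $\dim X$, with the base case being the classical Hodge decomposition on $D_I$.

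Pairing this filtered $\C$-complex with the rational component $Rj_{B,!}Ri_{C,*}\Q_U$, endowed with the natural weight filtration coming from the nested stratifications of $D$ by $B$- and $C$-components, would assemble the required mixed Hodge complex, whence the $E_1$-degeneration. The main obstacle I anticipate is the construction and verification of this weight filtration with the claimed graded pieces: the asymmetric presence of the $-B$ twist (with no corresponding $-C$ twist) prevents a direct citation of Deligne's classical computation for $\Omega^\bullet_X(\log D)$ or $\Omega^\bullet_X(\log D)(-D)$, and forces a careful local calculation on an NCD chart, tracking which components contribute residues and which are annihilated by $\cO_X(-B)$. An alternative route that bypasses mixed Hodge theory entirely is the Deligne--Illusie method: spread $(X, B, C)$ out to mixed characteristic, lift to $W_2$, establish a logarithmic Cartier isomorphism for $\Omega^\bullet_X(\log D)(-B)$, and extract a derived splitting $\DR^\bullet_X(\log D)(-B) \simeq \bigoplus_p \Omega^p_X(\log D)(-B)[-p]$ in characteristic $p$, which yields $E_1$-degeneration in characteristic zero by a dimension count.
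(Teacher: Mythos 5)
Your reduction of the statement to the $E_1$-degeneration of the stupid filtration on $\DR^\bullet_X(\log D)(-B)$, with the first equality supplied by Theorem 1.1, is correct. For orientation: the paper gives no independent proof here --- it attributes the degeneration to Deligne, and in \S 2.4 recasts it as the torsion-freeness of $R^i\pi_*$ of the Rees (twistor) de Rham complex, i.e.\ as the strictness of proper direct images in Saito's theory of mixed Hodge modules, plus cohomology and base change. So any complete argument you give is necessarily a different route from the mechanism the paper actually leans on.

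The gap is in your main route. The filtration $W$ counting log poles along components of $C$ only does have the graded pieces you write down --- the residue computation is fine --- but those graded pieces are not pure Hodge complexes: $(a_I)_*\DR^{\bullet-k}_{D_I}(\log B|_{D_I})(-B|_{D_I})[-k]$ computes $H^{\bullet}_c(D_I\setminus B)$, which is genuinely mixed whenever $B|_{D_I}\neq\emptyset$. Hence $(K,W,F)$ is not a cohomological mixed Hodge complex in Deligne's sense, his degeneration criterion does not apply to this $W$, and the claimed ``pureness of the $E_1$-terms by induction on $\dim X$'' cannot hold (note also that $\Gr^W_0=\Omega^\bullet_X(\log B)(-B)$ lives on $X$ itself, so the induction would not even descend in dimension). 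Moreover, knowing $E_1$-degeneration of $F$ on every $\Gr^W_k$ does not formally imply it on $K$: the two dimension-count inequalities point in opposite directions, and what is missing is strictness of the $W$-spectral-sequence differentials with respect to $F$ --- exactly what purity of the graded pieces would have supplied. The known fixes are (i) the finer ``convolution'' weight filtration in which poles along $C$ raise the weight while poles along $B$ lower it (Deligne, Hodge III, \S 8.1; El Zein; Fujino), whose graded pieces are pure, or (ii) the mixed-Hodge-module strictness argument the paper uses. Your char-$p$ alternative is viable in principle, but the logarithmic Cartier isomorphism for $\Omega^\bullet_X(\log D)(-B)$ is not a formal twist of the untwisted one: Frobenius pullback sends $\cO(-B)$ to $\cO(-pB)$, so one must run Deligne--Illusie for the log connection on $\cO(-B)$, whose residues along $B$ equal $1$ and are therefore not nilpotent; this requires the Esnault--Viehweg/Illusie refinement rather than a direct citation.
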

These two results form the foundation of building the graded polarized mixed Hodge structure on $\H^k(X, Rj_{C,*}\circ Ri_{B,!} \C_U)= \H^k(X, Rj_{B,!} \circ Ri_{C,*} \C_U)$. Such theory can be greatly generalized using Saito's theory of mixed Hodge Modules, e.g. \cite{W17a}. 

As the main goal of this paper, we focus on another direction for generalizing such theory. That is, we want to understand that, to what extend does the previous two theorems hold in the setting that $X$ is a toroidal variety over $\C$. To be precise, we make the following
\begin{definition}
Let $X$ be a normal toric variety. Let $B$ and $C$ be two reduced torus-invariant Weil divisors.  We also require $B$ and $C$ share no common irreducible components. We use $(X, !B, *C)$ to denote the data and call it \emph{a toric triple}. We say that it is \emph{plenary} if $B+C$ is the whole torus-invariant divisor on $X$.

We say $(X, !B, *C)$ is a toroidal triple, if for any point $x\in X$, analytically locally, it is isomorphic to a toric triple $(T_x, !B_x, *C_x)$.
\end{definition}

Given a toroidal triple $(X, !B, *C)$, we define the coherent sheaves $\Omega^p_{(X,!B,*C)}$ on $X$ as $(\Omega^p_X(\log (B+C))(-B))^{\vv}$, the reflexive sheaf that corresponds to the counterpart on the smooth locus of $X$, which is also called the sheaf of Danilov's log-differential $p$-forms. For a general toroidal triple $(X, !B, *C)$, they form a complex:
    $$
    \DR^\bullet_{(X, !B, *C)}:=[\cO_X(-B)\to \Omega^1_{(X, !B, *C)}\to...\to \Omega^n_{(X, !B, *C)}][n],
    $$
which we call it the de Rham complex of the toroidal triple $(X, !B, *C)$. We have the Hodge-de Rham spectural sequence degeneration in this setting, which is our first main result of the paper:
\begin{thm}\label{T: degen of SS}
    For any toroidal triple $(X, !B, *C)$, with $X$ being proper, the Hodge-de Rham spectral sequence of $\DR^\bullet_{(X, !B, *C)}$ degenerates at the $E_1$ page.
\end{thm}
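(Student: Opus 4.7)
The plan is to reduce Theorem~\ref{T: degen of SS} to the smooth case (Theorem~\ref{T: smooth SS deg}) via a toroidal log resolution. Since $(X,!B,*C)$ is a toroidal triple, I would first construct a proper birational morphism $\pi\colon\tilde X\to X$ from a smooth toroidal variety $\tilde X$ such that the reduced preimage of $B+C$ together with the exceptional divisor is simple normal crossing; such a $\pi$ is produced by a compatible smooth subdivision of the cones on toroidal charts in the style of Kempf-Knudsen-Mumford-Saint-Donat. After fixing a decomposition $E=E_!+E_*$ of the exceptional divisor and setting $\tilde B:=\pi_*^{-1}B+E_!$ and $\tilde C:=\pi_*^{-1}C+E_*$, the triple $(\tilde X,!\tilde B,*\tilde C)$ is a smooth toroidal triple with $\tilde B+\tilde C$ simple normal crossing, so Theorem~\ref{T: smooth SS deg} gives $E_1$-degeneration upstairs.

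The core step is to establish, for each $p\geq 0$, a quasi-isomorphism
\[
R\pi_*\Omega^p_{(\tilde X,!\tilde B,*\tilde C)} \simeq \Omega^p_{(X,!B,*C)} \qquad \text{(concentrated in degree $0$).}
\]
The degree-$0$ identification is a reflexive-sheaf calculation: both sides are reflexive sheaves on the normal variety $X$, they agree on the open locus where $X$ is smooth and $B+C$ is simple normal crossing (there $\pi$ is an isomorphism), and the complement has codimension at least two. The vanishing $R^q\pi_*=0$ for $q>0$ is the relative Kawamata-Viehweg vanishing for toroidal varieties advertised in the abstract, applied to the twist $\cO_{\tilde X}(-\tilde B)$, which differs from $-\pi^*B$ by an effective $\pi$-exceptional divisor supported on $E_!$.

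Granted this pushforward identification, $R\pi_*$ induces a filtered quasi-isomorphism between $(\DR^\bullet_{(\tilde X,!\tilde B,*\tilde C)}, F^\bullet)$ and $(\DR^\bullet_{(X,!B,*C)}, F^\bullet)$, and hence an isomorphism between the two Hodge-de Rham spectral sequences from the $E_1$ page onward. Since the spectral sequence on $\tilde X$ degenerates at $E_1$, the equality
\[
\sum_{p+q=k}\dim H^q(X,\Omega^p_{(X,!B,*C)}) = \dim\H^k(X,\DR^\bullet_{(X,!B,*C)})
\]
follows, which in turn forces $E_1$-degeneration on $X$.

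The main obstacle will be the higher direct image vanishing in the second step. Although the cohomology of torus-invariant sheaves on affine toric charts decomposes into combinatorial weight pieces indexed by lattice points, tracking the asymmetric twist $(-\tilde B)$ — which involves both the strict transform of $B$ and the ``!'' part $E_!$ of the exceptional divisor — and verifying vanishing uniformly across charts is precisely what requires the full strength of the Kawamata-Viehweg vanishing for toroidal triples developed elsewhere in the paper; the reflexive-sheaf identification in degree $0$, by contrast, is a fairly formal consequence of normality.
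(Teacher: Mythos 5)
Your high-level strategy (transfer degeneration from a resolved model via $R\pi_*\Omega^p_{(\tilde X,!\tilde B,*\tilde C)}\simeq\Omega^p_{(X,!B,*C)}$) is close in spirit to the paper's, but both of your key steps have genuine gaps, and they are exactly the points where the paper has to work hardest. First, the decomposition $E=E_!+E_*$ cannot be ``fixed'' arbitrarily, and the degree-$0$ identification is not a formal consequence of normality. The pushforward of a reflexive sheaf under a proper birational morphism need not be reflexive: already $\pi_*\cO_{\tilde X}(-E)$ for the blow-up of a point is the maximal ideal sheaf, which agrees with $\cO_X$ in codimension one but is not equal to it. Concretely, for $p=0$ your claim reads $\pi_*\cO_{\tilde X}(-\pi_*^{-1}B-E_!)\simeq\cO_X(-B)$, which fails whenever $E_!$ contains an exceptional divisor whose center is not inside $B$. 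The correct statement is Proposition \ref{P: pushforward formula}, which imposes precise constraints ($\tilde B\subset\pi^{-1}(B)$, $\tilde C$ contains all divisorial components of $\overline{\pi^{-1}(C\setminus B)}$, etc.) and whose proof is a nontrivial combinatorial computation, not a reflexivity argument.

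Second, the higher direct image vanishing is not covered by the Kawamata--Viehweg vanishing for toroidal triples: Theorem \ref{T: KVB vanishing of toroidal tri.} only gives vanishing in the range $p+q>\dim X$ in the toroidal case, whereas you need $R^q\pi_*\Omega^p=0$ for \emph{all} $q>0$ and all $p$. The full-range (Bott-type) statement is only available for toric data, and even its relative form (Lemma \ref{L: Vanishing induction step}) requires that $-\bb\tilde B+\bc\tilde C$ be $\pi$-ample for some coefficients in $[0,1)$ --- the ``locally-convex'' condition. Arranging a resolution, together with a sorting of the exceptional divisors into $\tilde B$ versus $\tilde C$, that satisfies this convexity simultaneously with the constraints of Proposition \ref{P: pushforward formula} is precisely the combinatorial content of \S4, and it is not clear it can be done for a smooth model. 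The paper sidesteps this by never introducing new rays: it uses an \emph{efficient} locally-convex subdivision to reach a merely log-simplicial model (Theorem \ref{T: res. of general tor tri.}), reduces to the simplicial case by the short exact sequences of Proposition \ref{P: SES of log-sim. tri. by adding B}, handles the simplicial case by finite covers from smooth toric varieties, and then obtains degeneration from strictness of proper direct images of mixed Hodge modules (torsion-freeness of $R^i\pi_*\widetilde\DR^\bullet$ over $\A^1_z$) rather than by comparing with a smooth model's spectral sequence. If you want to salvage your route, you must (i) specify the decomposition of $E$ and prove the analogue of Proposition \ref{P: pushforward formula} for it, and (ii) prove the relative full-range vanishing for that specific choice, which amounts to re-proving the convexity results of \S4.
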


In the case that $B=\emptyset$, this was a conjecture by V. I. Danilov \cite{Dan78}, and in \cite{GNPP}, they prove the case that $C=B=\emptyset$ using cubic hyper-resolution. In \cite{Dan91}, Danilov himself gave a more elementary proof for such case, which our proof is based on. In \cite{FFR}, they give a proof of this conjecture ($B=\emptyset$), using log-geometry and a positive characteristic method.

It is also important to understand what the spectral sequence actually computes in the topological picture, and this leads us to our second main result which is a generalization of Deligne's logarithmic comparison. However, we will see that there are plenty of examples that make such comparison fail in a naive way. So, we need extra sortedness assumptions, Definition \ref{D: sortedness}, and we will show, to what extend, the logarithmic comparison still holds, with respect to different sortedness assumptions. Since it is naturally a local statement, so we only state it in the toric setting. 

\begin{prop}\label{P: log comparison of sorted tor tri.}
    For a partially-sorted toric triple $(X, !B, *C)$, we have $\DR^\bullet_{(X, !B, *C)}\simeq Rj_{B,!} \circ Ri_{C,*} \C_U[n]$. If $(X, !B, *C)$ is well-sorted, then $\DR^\bullet_{(X, !B, *C)}\simeq Rj_{C,*}\circ Ri_{B,!} \C_U[n].$
    In particular, if $(X, !B, *C)$ is well-sorted, we have $Rj_{B,!} \circ Ri_{C,*} \C_U[n]\simeq Rj_{C,*}\circ Ri_{B,!} \C_U[n],$ as constructable complexes on $X$. 
\end{prop}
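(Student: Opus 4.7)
Since the statement is local on $X$, I reduce to $X=U_\sigma$ affine toric for a strongly convex rational polyhedral cone $\sigma\subset N_\R$, with $B$ and $C$ corresponding to disjoint subsets $I$ and $J$ of the rays $\sigma(1)$. Picking $v$ in the relative interior of $\sigma$, the associated one-parameter $\R_{>0}$-subgroup retracts $U_\sigma$ onto its distinguished closed orbit, and this retraction preserves $U$, $B$, and $C$. Consequently both the algebraic de Rham complex $\DR^\bullet_{(X,!B,*C)}$, viewed as a complex of sheaves of $\C$-vector spaces, and the constructible complex $Rj_{B,!}\circ Ri_{C,*}\C_U$ are determined on $U_\sigma$ by their behaviour along the torus stratification. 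By induction on $\dim\sigma$, since proper faces $\tau\prec\sigma$ carry naturally induced toric triples and all constructions in sight restrict compatibly, the problem reduces to computing both stalks at the distinguished fixed point $x_\sigma$ in the full-dimensional situation.

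On the algebraic side, the torus $T$ acts equivariantly on each $\Omega^p_{(X,!B,*C)}$, so Danilov's combinatorial description of toric log-differentials, modified by the twist $\cO_X(-B)$, decomposes $\Gamma(U_\sigma,\Omega^p_{(X,!B,*C)})$ as a direct sum over characters $m\in M$ of $T$. Assembling with the de Rham differential realises the stalk $R\Gamma(x_\sigma,\DR^\bullet_{(X,!B,*C)})$ as a direct sum, over $m$, of finite Koszul-type complexes supported on exterior powers of a subspace $W(m)\subset M_\C$ whose spanning rays are those $\rho\in\sigma(1)$ for which the signs of $\langle m,\rho\rangle$ satisfy the condition prescribed by the partition $\sigma(1)=I\sqcup J\sqcup(\sigma(1)\setminus(I\cup J))$. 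On the topological side, using the $(S^1)^n$-action to retract onto the link of $x_\sigma$ and stratifying by (real links of) torus orbits, the stalk of $Rj_{B,!}\circ Ri_{C,*}\C_U$ at $x_\sigma$ is computed by an order-type complex over the face poset of $\sigma$, in which a face $\tau$ contributes according to the $!$-extension condition along rays in $I$ and the $*$-extension condition along rays in $J$.

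The proposition is thus equivalent to a character-by-character combinatorial identification: for each $m\in M$, the $m$-isotypic Koszul-type complex must be quasi-isomorphic to the appropriate subcomplex of the face-poset complex of $\sigma$. In the smooth normal-crossing situation this reduces to the standard Koszul resolution for a regular sequence and requires no extra hypothesis, but in the general toric case the spanning set of $W(m)$ is typically dependent and acyclicity fails without further control. The sortedness conditions of Definition \ref{D: sortedness} are tailored precisely for this: partial sortedness makes the $m$-isotypic Koszul complex match the $Rj_{B,!}\circ Ri_{C,*}$-weighted face complex, while well-sortedness promotes the match to the $Rj_{C,*}\circ Ri_{B,!}$-weighted face complex as well. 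I expect this combinatorial matching -- producing for each $m$ a canonical quasi-isomorphism between a Koszul complex on $W(m)$ and a sub-order-complex of the face lattice, and tracking exactly how the sortedness hypothesis enforces its acyclicity above degree zero -- to be the main obstacle. Once it is established, assembling the character pieces yields $\DR^\bullet_{(X,!B,*C)}\simeq Rj_{B,!}\circ Ri_{C,*}\C_U[n]$ in the partially-sorted case and both identifications with $\DR^\bullet_{(X,!B,*C)}$ in the well-sorted case; the third quasi-isomorphism of the Proposition is then formal.
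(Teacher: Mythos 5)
Your outline takes a genuinely different route from the paper's: you propose a direct, Danilov-style computation of both stalks at the torus-fixed point, decomposing the de Rham side into character-indexed Koszul complexes and the topological side into a face-poset complex, and then matching them character by character. The paper never confronts such a combinatorial matching at a singular point. Instead it uses the sortedness hypothesis to produce a sequentially-convex efficient (star-subdivision) resolution $f:X'\to X$ (Corollary \ref{C: seq star for sorted toric trip}), shows $Rf_*\DR^\bullet_{(X',!B',*C')}\simeq \DR^\bullet_{(X,!B,*C)}$ via the local Bott vanishing (Lemma \ref{L: Vanishing induction step}), transfers the extension identities along $f$ (Lemma \ref{L: seq conv implies top log comparison}), and then settles the resulting log-simplicial case by the short exact sequences of Proposition \ref{P: SES of log-sim. tri. by adding B} together with the simplicial case, which is reduced to Deligne's smooth theorem by a finite cover and the trace map (Proposition \ref{P: simplicial comparison}).

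As written, however, your proposal has a genuine gap, and you flag it yourself: the entire content of the proposition is the ``character-by-character combinatorial identification,'' i.e.\ the claim that partial sortedness (resp.\ well-sortedness) forces the $m$-isotypic Koszul complex on $W(m)$ to be quasi-isomorphic to the appropriate weighted sub-order-complex of the face lattice. You do not prove this, nor do you indicate how the existence of a $\fC$-strict sorting function (equivalently, the distinguished face of Proposition \ref{P: geometric prop of part. sort.}) would enter the acyclicity argument; nothing in the sketch constrains the dependent spanning set of $W(m)$, and it is exactly here that the unsorted counterexamples (cf.\ Remark \ref{R: a counter ex} and Theorem \ref{T: log comparison for hypersurface sing.}) must be excluded. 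Two secondary points also need justification: the identification of the stalk of $Rj_{B,!}\circ Ri_{C,*}\C_U$ at $x_\sigma$ with an order complex requires an actual argument for the $!$-extension (compactly supported cohomology of link strata), and the claim that the stalk of the coherent de Rham complex at $x_\sigma$ computes the analytic hypercohomology of a contractible neighborhood needs a GAGA/equivariant-retraction argument. Until the combinatorial matching is actually supplied, the proposal is a plan rather than a proof.
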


We also note that the toric triple $(X, !B, *C)$ is always well-sorted as long as $X$ is quasi-smooth, (equivalently, simplicial,) which means we put no extra assumptions on the initial Deligne's logarithmic comparison.

To simplify the notations, for any $\cF^\bullet\in D^b_c(X)$, the derived category of construable complex of $X$, we set 
\begin{align*}
    \cF^\bullet[!B]&\simeq Rj_{B, !}(\cF^\bullet|_{X\setminus B}),\\
    \cF^\bullet[*C]&\simeq Rj_{C, *}(\cF^\bullet|_{X\setminus C}).    
\end{align*}

In the case that we have $\C_X[*C][!B]=\C_X[!B][*C]$, i.e. the order of two different types of extensions is commutative, we denote both of them by $\C_X[!B+*C]$.
In particular, in the previous proposition, we have that, if the toric triple $(X, !B, *C)$ is well-sorted,
$$\DR^\bullet_{(X, !B, *C)}\simeq\C_X[!B+*C][n].
$$

There is a more general version stated later as Theorem \ref{T: log comparison using complex of MHM}, where we treat such complex in the context of Saito' mixed Hodge modules. On the other hand, we have a more complete picture of the logarithmic comparison in the following special case:
\begin{thm}\label{T: log comparison for hypersurface sing.}
    For a toric triple $(X, !B, *C)$, assume $X$ has at worst hypersurface toric singularities, then we have $(X, !B, *C)$ is partially-sorted if and only if $\DR^\bullet_{(X, !B, *C)}\simeq\C_X[*C][!B][n]$. Moreover, $(X, !B, *C)$ is well-sorted if and only if $\DR^\bullet_{(X, !B, *C)}\simeq\C_X[!B][*C][n]$, which is also equivalent to $\C_X[*C][!B][n]\simeq \C_X[!B][*C][n]$.
\end{thm}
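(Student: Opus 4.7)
The forward implication in each of the three equivalences is immediate from Proposition~\ref{P: log comparison of sorted tor tri.}: partially-sorted supplies $\DR^\bullet_{(X,!B,*C)}\simeq \C_X[*C][!B][n]$, well-sorted supplies both $\DR^\bullet_{(X,!B,*C)}\simeq \C_X[*C][!B][n]$ and $\DR^\bullet_{(X,!B,*C)}\simeq \C_X[!B][*C][n]$, and in particular the commutativity $\C_X[*C][!B][n]\simeq\C_X[!B][*C][n]$. What is left is the three converses, and in particular the assertion that, under the hypersurface hypothesis, bare commutativity of the two extensions is already strong enough to force well-sortedness.

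The plan is to work analytic-locally and stalk-wise. Reduce to the affine chart $X=X_\sigma$ where the hypersurface toric singularity hypothesis translates into the statement that the rays of $\sigma$ admit exactly one primitive integer linear relation $\sum a_i v_i=0$, and partition these rays according to whether the corresponding torus-invariant divisor lies in $B$, in $C$, or in neither. At the torus-fixed point $o\in X_\sigma$, compute all three relevant stalks. The topological stalks $(\C_X[*C][!B][n])_o$ and $(\C_X[!B][*C][n])_o$ are obtained from the toric stratification of a small contracting neighborhood of $o$ as explicit complexes whose terms are indexed by faces $\tau\le\sigma$ with signs and dimensions controlled by the $B$/$C$-incidence of the rays of $\tau$. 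The coherent stalk $(\DR^\bullet_{(X,!B,*C)})_o$ is computed via Danilov's $M$-graded description of $\Omega^p_{(X,!B,*C)}$ as the reflexive hull of logarithmic forms on the cone, combined with the $E_1$-degeneration furnished by Theorem~\ref{T: degen of SS} to extract cohomology in each individual weight.

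Under the single-relation hypothesis every $M$-weight piece in each of these three complexes is elementary, and the combinatorial conditions forcing any two of them to be quasi-isomorphic can be read off directly from how the coefficients $a_i$ interact with the $B$/$C$-partition. Matching the resulting criterion against Definition~\ref{D: sortedness} finishes every converse at once: the criterion coming from $\DR^\bullet\simeq \C_X[*C][!B][n]$ is exactly partial-sortedness, the one coming from $\DR^\bullet\simeq \C_X[!B][*C][n]$ is exactly well-sortedness, and the equality of the two purely topological stalks coincides with the well-sortedness condition on the $B$/$C$-partition relative to the unique relation. If sortedness fails, an explicit class at a specific $M$-weight, where the signs of the $a_i$ are misaligned with the $B$/$C$-partition, distinguishes the complexes and gives the desired contradiction. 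The main obstacle is the explicit stalk computation of $\DR^\bullet_{(X,!B,*C)}$ in the hypersurface case and its term-by-term matching with the topological stratified side; the single-relation hypothesis is precisely what makes this matching sharp, since with multiple independent relations distinct obstruction classes at different weights could cancel and sortedness would no longer be detected by mere existence of a quasi-isomorphism.
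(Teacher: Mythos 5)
Your reduction of the forward implications to Proposition~\ref{P: log comparison of sorted tor tri.} matches the paper, and your instinct to localize to the affine chart with a single linear relation among the rays is the right starting point. But the core of your converse argument --- ``compute all three stalks, read off the combinatorial criterion, and match it against Definition~\ref{D: sortedness}'' --- is asserted rather than carried out, and the assertion hides exactly the hard step. The topological complexes $\C_X[*C][!B][n]$ and $\C_X[!B][*C][n]$ carry no $M$-grading, so there is no ``$M$-weight piece'' of them to match term-by-term against Danilov's graded description of $\DR^\bullet_{(X,!B,*C)}$; comparing the two sides \emph{is} the logarithmic comparison problem, and you have not supplied any mechanism for doing it. (Also, Theorem~\ref{T: degen of SS} is a statement for proper $X$ and is not the tool that isolates weights in the affine local setting; what does that is the $M$-grading of the de Rham differential itself.) Computing the stalk of $\C_X[*C][!B][n]$ at the torus-fixed point amounts to computing the cohomology of the link of the singularity stratified by $B$ and $C$, which is precisely the computation your sketch does not perform.

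The paper closes these gaps by a different route that you should compare against. First, a combinatorial lemma (Lemma~\ref{L: finding THE ray for surface sing.}) uses the single-relation hypothesis to produce, whenever well-sortedness fails, an interior ray $\nu$ such that the rays split into two groups $\fB^+\supset\fB$, $\fC^+\supset\fC$ whose cones meet only along $\nu$. Star-subdividing at $\nu$ gives a simplicial resolution $f:X'\to X$ whose exceptional divisor $E$ is a \emph{product} $F\times G$. Second, Lemmas~\ref{L: equiv cond. for commutative ext} and \ref{L: equiv. cond. with B=0} convert the topological statement ($\C_X[!B][*C][n]\simeq\C_X[*C][!B][n]$, resp.\ $\DR^\bullet_{(X,*C)}\simeq\C_X[*C]$) into the vanishing $H^i(E,\C_E[!B'_E+*C'_E])=0$ for all $i$; this conversion relies essentially on the uniqueness of the mixed Hodge module underlying a perverse sheaf together with the short exact sequences of Proposition~\ref{P: two SES of E-sim. tri.} and the convexity of the subdivision. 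Finally, the product structure plus Verdier duality and K\"unneth force $H^{2\dim F}\neq 0$, giving the contradiction. You also need the observation (present in the paper, absent from your sketch) that under the standing local assumptions failure of partial-sortedness forces $B=0$, which is why the partially-sorted and well-sorted cases require separate lemmas. Without an analogue of the MHM comparison lemmas or an explicit link computation, your proposal is a plausible program but not a proof.
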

Please refer to \S6 for a precise definition for the hypersurface toric singularities. We will see that such assumption is necessary, Remark \ref{R: a counter ex}.

In birational geometry, it is natural to consider the sheaf of twisted log-$p$-forms: $\Omega^p_{(X, !B, *C)}\otimes \cO(L)$, for a Cartier divisor $L$ on $X$. More generally, we will study the sheaves of the form $(\Omega^p_{(X, !B, *C)}\otimes \cO(L))^{\vv}$, for certain Weil divisor $L$ on $X$. Here, we use $\cO(L)$ to denote the rank one reflexive sheaf on $X$ that corresponds to $L$, following the convention in \cite{Schwede}. 

For better understanding such reflexive sheaves, we need to introduce the notion of toric quadruple which generalize the toric triple. For a reduced Weil divisor $D$, with its decomposition of irreducible components $D=D_1+...+D_m$, denote $\bd D=d_1D_1+...+d_mD_m$, for any $\bd\in \Q^m$.
\begin{definition}
    We say $(X, !B, *C, !\!* \bh H)$ is a toric quadruple, if $(X, !B, *C)$ is a toric triple, $H$ is a torus invariant divisor that shares no common components with $B+C$, and all entries of $\bh$ are in $(0, 1)$. We similarly define a toroidal quadruple. 
\end{definition}
We can view a toric triple as a special case of a toric quadruple with $H=0$.  However, we will not define the sheaf of $p$-forms for a toric quadruple. We will only consider a toric/toroidal quadruple that is induced by a Weil divisor in the following sense:
\begin{definition}\label{D: compatible div}
    Given a toroidal triple $(X, !B, *C)$, we say a Weil divisor $L$ is \emph{compatible} with the toroidal triple, if it is $\Q$-linear equivalent to $\bb B-\bc C$, with all entries of $\bb$ and $\bc$ are in $[0,1]$. Given a compatible divisor $L$, we introduce the \emph{induced toroidal quadruple} $(X, !F, *G, !\!* \bh H)$ as follows. Set  $H$ a subdivisor of $B+ C$ consists of those irreducible components with the coefficients in $\bb$ and $\bc$ are in $(0,1)$; the subdivisors $F^B\subset B$ and $G^C\subset C$ (resp. $G^B\subset B$ and $F^C\subset C$ ) consists of those irreducible components with the coefficients in $\bb$ and $\bc$ are $0$ (resp. are $1$); $F=F^B+F^C$, $G=G^C+G^B$. Lastly, $\bh$ is determined by $G+\bh H\equiv_\Q C+L$. Lastly, we will denote 
    $$\Omega^p_{(X, !F, *G, !\!* \bh H)}:=(\Omega^p_{(X, !B, *C)}\otimes \cO(L))^{\vv}.$$
\end{definition}

It is straight forward to check that it is possible that the toroidal quadruple $(X, !F, *G, !\!* \bh H)$ can be induced by different toroidal triples and corresponding compatible divisors, while $\Omega^p_{(X, !F, *G, !\!* \bh H)}$ is independent from such choice, due to Proposition \ref{P: Omega as reflexive sheaf}. However, $(X, !F, *G, !\!* \bh H)$ being able to realized as an induced toroidal quadruple of some toroidal triple and its compatible divisor is required for defining $\Omega^p_{(X, !F, *G, !\!* \bh H)}$.

For a toroidal triple $(X, !B, *C)$ with a compatible divisor $L$, and their induced toroidal quadruple $(X, !F, *G, !\!* \bh H)$, they form a complex:
    \begin{equation}\label{E: twisted log de rham}
            \DR^\bullet_{(X, !F, *G, !\!* \bh H)}:=[\cO_X(-B+L)\to \Omega^1_{(X, !F, *G, !\!* \bh H)}\to...\to \Omega^n_{(X, !F, *G, !\!* \bh H)}][n],
    \end{equation}
where the differential map is defined as follows. Over the smooth locus of $X$, the differential map is induced by \cite[3.2. Theorem.]{EV92}, then it is naturally extended onto its reflexive hull.

The following proposition can be viewed as a twisted version of Logarithmic Comparison Theorem. The $X$ being smooth case is essentially proved in \cite{EV92}, although not be formulated as stated here. 

\begin{prop}\label{P: twisted log comparison of sorted tor tri.}
    For a toroidal triple $(X, !B, *C)$ with a compatible Weil divisor $L$, let $(X, !F, *G, !\!* \bh H)$ be the induced toroidal quadruple. Assume that it is partially sorted.
    Then we have 
    $$\DR^\bullet_{(X, !F, *G, !\!* \bh H)}\simeq \DR^\bullet_{(X, !F, *G, !\!* \bh H)}[*(G+H)][!F]\simeq \DR^\bullet_{(X, !F, *G, !\!* \bh H)}[*G][!(F+H)].
    $$
    If it is well-sorted, we also have 
    $$\DR^\bullet_{(X, !F, *G, !\!* \bh H)}\simeq \DR^\bullet_{(X, !F, *G, !\!* \bh H)}[!F][*(G+H)]\simeq \DR^\bullet_{(X, !F, *G, !\!* \bh H)}[!(F+H)][*G].
    $$
\end{prop}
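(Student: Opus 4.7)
The plan is to adapt the proof of the untwisted Proposition \ref{P: log comparison of sorted tor tri.} to the twisted setting, replacing Deligne's smooth-variety log comparison by the twisted version due to Esnault--Viehweg \cite{EV92} as the local model input.

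First I would reduce to the smooth locus. On $X^{sm}$, the reflexive sheaves $\Omega^p_{(X, !F, *G, !\!* \bh H)}$ become honest locally free sheaves $\Omega^p_{X^{sm}}(\log(F+G+H))(-F) \otimes \cO(L)|_{X^{sm}}$ with the usual log differential. To this twisted complex one associates a rank-one $\C$-local system $\cL$ on $U = X\setminus(F+G+H)$ whose monodromy is trivial around $F$ and $G$ and equal to $e^{2\pi i h_j}$ around each component $H_j$ of $H$. Since $h_j \in (0,1)$, these monodromies are all nontrivial, so the $!$- and $*$-extensions of $\cL$ across each component of $H$ coincide as derived constructible complexes. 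Consequently, the derived form of \cite{EV92} identifies the twisted log de Rham complex on $X^{sm}$ both with $Rj_{F,!}\, Rj_{(G+H),*}\cL[n]$ and with $Rj_{(F+H),!}\, Rj_{G,*}\cL[n]$, and these agree by the monodromy observation. Rewritten in terms of $\DR^\bullet$ itself, this gives precisely the asserted pair of quasi-isomorphisms on $X^{sm}$.

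Next I would globalize to $X$ by rerunning the toroidal descent of the proof of Proposition \ref{P: log comparison of sorted tor tri.}. That argument proceeds chart-by-chart on the affine toric opens $U_\sigma$, decomposing each $\Omega^p$ into torus-weight pieces and matching each weight piece with iterated $Rj_!/Rj_*$-extensions using (partial or full) sortedness of $\sigma$. The twist by $\cO(L)$ simply shifts the weights by the combinatorial data of $L$; since the quadruple is by hypothesis induced by $(B, C, L)$ with $\bh \in (0,1)^m$ governing exactly the fractional parts of the shift, this is precisely the bookkeeping encoded in sortedness of the quadruple. Partial sortedness then yields the first pair of isomorphisms and well-sortedness upgrades to the second.

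The main obstacle is the weight-space identification in the twisted setting: the reflexive hull $(\Omega^p_X(\log(F+G+H))(-F) \otimes \cO(L))^{\vv}$ must be realized torus-equivariantly when $L$ is only Weil (and in general not Cartier) along the non-smooth toric strata, so that the rounding implicit in $\cO(L)$ combines consistently with the rounding in the double-dual. The compatibility hypothesis $L \equiv_\Q \bb B - \bc C$ with $\bb, \bc \in [0,1]^m$ is what ensures these roundings align; with this identity in hand, the descent from $X^{sm}$ to $X$ follows by the same formal argument as in the untwisted case.
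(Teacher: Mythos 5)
Your local analysis on the smooth locus is essentially right, and it is indeed the paper's local model: writing $-B+L\equiv_\Q -F-(1-\bh)H$, the associated rank-one local system has monodromy $e^{2\pi i(1-h_j)}\neq 1$ around each component of $H$ and trivial monodromy around $F$ and $G$, so the $!$- and $*$-extensions across $H$ agree and \cite{EV92} identifies the twisted log complex with the iterated extensions; this is exactly what Lemma \ref{L: twited comparison with Hodge module in simplicial case} extracts from \cite{EV92} and \cite{Wei23}. But ``first reduce to the smooth locus'' is not a valid reduction: the assertion is an isomorphism of constructible complexes on all of $X$, and its entire content sits over the singular toric strata. Knowing $\DR^\bullet_{(X,!F,*G,\im\bh H)}|_{X^{sm}}\simeq (\cdots)|_{X^{sm}}$ says nothing about the stalks and costalks at the deeper strata, which is where sortedness is needed and where the two orders of extension can genuinely differ (Theorem \ref{T: log comparison for hypersurface sing.}, Remark \ref{R: a counter ex}).

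The globalization step then rests on a mischaracterization of the proof of Proposition \ref{P: log comparison of sorted tor tri.}: that proof is not a chart-by-chart matching of torus-weight pieces against iterated $Rj_!/Rj_*$ (a constructible complex on $X$ carries no torus-weight decomposition to match against). It proceeds by (i) reducing to the log-simplicial case via sequentially-convex star subdivisions (Corollary \ref{C: seq star for sorted toric trip}) and the descent Lemma \ref{L: seq conv implies twisted top log comparison} --- and this descent is not formal: it needs $Rf_*$ of the twisted forms to have no higher direct images, which is Lemma \ref{L: Vanishing induction step}, i.e.\ a consequence of convexity plus Bott vanishing, and this is precisely where sortedness enters, through the existence of sorting functions making the star subdivisions convex; (ii) inducting on dimension and on the number of torus-invariant divisors outside $B+C$ via the twisted short exact sequence of Proposition \ref{P: SES of twisted log-sim. tri. by adding B}; (iii) the simplicial base case, handled by finite covers by smooth toric varieties and trace maps, not by restriction to the smooth locus. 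None of the twisted analogues (i)--(iii) --- in particular the pushforward isomorphism (\ref{E: pushforward with twist}) and the twisted SES, both of which require their own arguments because $L$ is only Weil --- are supplied or even identified in your proposal, and the assertion that the descent ``follows by the same formal argument'' conceals exactly the part of the proof that carries the mathematical weight.
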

See also Theorem \ref{T: twisted log comparison using complex of MHM} for a more general version of the proposition, where we also treat such complexes in the context of Saito's mixed Hodge modules.

One major input for the proof of the aforementioned results is Bott Vanishing on singular toric varieties, with a similar condition as in Kawamata-Viehweg Vanishing. Lastly, as an application of the whole theory, we are able to prove Kawamata-Viehweg Vanishing on any projective toroidal triple. We combine the two vanishings in the following
\begin{thm}\label{T: KVB vanishing of toroidal tri.}
    Let $X$ be a proper normal toroidal variety, with $(X, !B, *C)$ being a toroidal triple. For any Weil divisor $L$ on $X$, and it is $\Q$-linear equivalent to $A+\bb B-\bc C,$
    where $A$ is an ample $\Q$-divisor, with all entries of $\bc$ and $\bb$ in $[0, 1]$, then we have
    \begin{equation*}
        H^q(X, (\Omega^p_{(X, !B, *C)}\otimes \cO_X(L))^{\vv})=0, \text{for } p+q>\dim X.
    \end{equation*}
    On the other hand, if $L$ is $\Q$-linear equivalent to $-A+\bb B-\bc C,$ we have
    \begin{equation*}
        H^{n-q}(X, (\Omega^{n-p}_{(X, !B, *C)}\otimes \cO_X(L))^{\vv})=0, \text{for } p+q>\dim X.
    \end{equation*}
Moreover, if $(X, !B, *C)$ is a toric triple, then the both vanishings above holds for all $p\geq 0$ and $q>0$.
\end{thm}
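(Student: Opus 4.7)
My plan is to derive the $+A$ statement by combining the twisted Hodge--de~Rham degeneration (the twisted analogue of Theorem~\ref{T: degen of SS}), the twisted logarithmic comparison (Proposition~\ref{P: twisted log comparison of sorted tor tri.}), and Artin--Grothendieck vanishing on the affine complement of a generic ample section. The $-A$ statement follows from the $+A$ case via Serre--Grothendieck duality and the reflexive wedge pairing $\Omega^p_{(X,!B,*C)}\otimes\Omega^{n-p}_{(X,!C,*B)}\to\omega_X$ (the reflexive hull of the wedge pairing on the smooth locus), which simultaneously exchanges $B\leftrightarrow C$, $p\leftrightarrow n-p$, and the sign of the ample piece. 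Thus it suffices to treat the $+A$ case.

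Given $L\equiv A+\bb B-\bc C$, form the induced toroidal quadruple $(X,!F,*G,!\!*\bh H)$ of Definition~\ref{D: compatible div} and the twisted de~Rham complex $\DR^\bullet_{(X,!F,*G,!\!*\bh H)}$ from~(\ref{E: twisted log de rham}). The twisted $E_1$-degeneration converts the desired vanishing into
$$\H^k\bigl(X,\DR^\bullet_{(X,!F,*G,!\!*\bh H)}\bigr)=0\quad\text{for } k>0.$$
By the twisted log comparison (in the generalized form of Theorem~\ref{T: twisted log comparison using complex of MHM}), the complex $\DR^\bullet$ is quasi-isomorphic, in the derived category, to a shifted constructible complex of the form $\cL[!(F+H)][*G][n]$ (or the other order, depending on sortedness), where $\cL$ is the rank-one local system on the open stratum determined by the fractional divisor $\{A+\bb B-\bc C\}$. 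If the induced quadruple fails sortedness, first pull back along a quasi-smooth toric refinement $\pi:\tilde X\to X$, where sortedness is automatic by the remark following Proposition~\ref{P: log comparison of sorted tor tri.}, and verify via a local Danilov-type computation that $R\pi_*$ of the twisted log sheaves is concentrated in degree zero and recovers the $X$-side data.

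Since $A$ is ample, for $m\gg 0$ choose a general $Y\in |mA|$ with $X\setminus Y$ affine of dimension $n$. The hypercohomology of the mixed extension $\cL[!(F+H)][*G]$ then reduces to a compactly supported cohomology computation on an affine open inside $X\setminus Y$, and Artin--Grothendieck vanishing yields $\H^k=0$ for $k>0$, completing the $+A$ case. For the toric ``moreover'' clause, the stronger vanishing $q>0$ for all $p\geq 0$ is precisely the Bott vanishing with KV-type twist for singular toric varieties that is set up earlier in the paper as a key input; it follows from the character decomposition of coherent sheaves on toric varieties together with combinatorial convexity of the lattice polytope associated to $A+\bb B-\bc C$. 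The main obstacle I anticipate is the last step: tracking the mixed $!/\!*$ extensions together with the fractional Kummer local system $\cL$ coherently enough that Artin--Grothendieck vanishing applies cleanly on the affine complement, since the mixed extensions and the fractional twist must be handled simultaneously. Once this bookkeeping is in place, the remaining pieces assemble from results already established in the paper.
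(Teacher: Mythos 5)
Your proposal captures the core mechanism of the paper's argument (twistor/mixed-Hodge-module degeneration plus Artin--Grothendieck vanishing on the complement of a generic ample section), but it has several genuine gaps. First, the ``induced toroidal quadruple'' of $L\equiv_\Q A+\bb B-\bc C$ is not defined: Definition \ref{D: compatible div} requires $L\equiv_\Q \bb B-\bc C$ with no ample part. The paper must first absorb $A$ into the boundary by choosing a generic $E\in|maA|$ with $(X,!(B+E),*C)$ an $E$-simplicial toroidal triple, so that $L\equiv_\Q eE+\bb B-\bc C$ becomes compatible with the \emph{enlarged} triple; the price is a restriction-to-$E$ term, and the proof is an induction on dimension driven by the short exact sequences of Proposition \ref{P: SES of twisted forms by adding a simplicial E}. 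This induction is entirely absent from your plan; without it, the twisted log comparison simply does not apply to your $L$. Second, your route to the $-A$ case via Serre duality needs $(\Omega^{n-p}_{(X,!B,*C)}\otimes\cO(L))^{\vv}$ to be maximal Cohen--Macaulay, which the paper establishes only in the simplicial case; in the general toroidal setting the paper instead uses the \emph{second} short exact sequence (adding $E$ to $C$) and the fact that the resulting quadruple $(X,!\!*\bh H)$ with $F=G=0$ is simultaneously a $!$- and a $*$-extension from the affine complement $X\setminus H$, so Artin vanishing and its compact-support dual kill $\H^i$ for all $i\neq 0$, yielding both directions of the vanishing at once (see Remark \ref{R: no dual van}).

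Two further points. Your treatment of failure of sortedness (``pull back to a quasi-smooth refinement and check $R\pi_*$ is concentrated in degree zero by a local Danilov-type computation'') glosses over a circularity: in the paper, the degeneration $R^k f_*=0$ for $k>0$ along a locally-convex resolution is itself \emph{deduced from Bott vanishing upstairs} (Lemma \ref{L: Vanishing induction step}, using the good sorting function to manufacture an $f$-ample fractional twist), so the reduction has to be organized carefully --- resolve to log-simplicial, strip boundary components to reach the plenary case, resolve again to simplicial --- rather than invoked as a local computation. Finally, the toric ``moreover'' clause is not obtained by character decomposition and convexity of a polytope; the paper derives Bott vanishing from the Kawamata--Viehweg case by peeling off torus-invariant divisors one at a time via the short exact sequences until the triple is plenary, where $\Omega^p_{(X,!B,*C)}\simeq\wedge^p(\cO_X^{\oplus n})\otimes\cO(-B)$ reduces everything to $p=n$. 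A direct combinatorial proof of Bott vanishing with these fractional twists on singular toric varieties is precisely what is not available and what the paper's machinery is built to circumvent.
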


We call the first part of the vanishings, where we assume $X$ is a toroidal variety, the \emph{generalized Kawamata-Viehweg Vanishing}. We call the second part of the vanishings, where we assume $X$ is a toric variety, the \emph{generalized Bott Vanishing}.

In \S2, we follow Danilov's approach to define the sheaves of logarithmic $p$-forms on a toroidal triple, and study their basic properties. We also recall Saito's theory of mixed Hodge modules, using Sabbah's twistor $\sD$-module approach. In \S3, we prove all main results with an extra assumption that $X$ is simplicial. The strategy of proving the main results in the general setting is to apply \emph{locally-convex} resolution on toric varieties, and reduce it to the simplicial case. The definition and the construction of such resolution will be discussed in \S4. The sortedness conditions will also be introduced there. The proof of the main results of the paper will be given in \S5. \S6 gives an application on studying the hypersurface toric singularity. 

\section{Preliminaries}
Let $X$ be a normal toric variety of dimension $n$. Fix a lattice $N=\Z^n$, and $N_\Q=N\otimes \Q$. Let $\fX$ be the fan in $N_\Q$ that $X$ is associated with,  \cite{Dan78}[\S5]. 

We say $(\fX,  !\fB, *\fC)$ is \emph{a fan triple}, if $\fX$ is a fan in $N_\Q$, both $\fB$ and $\fC$ are subsets of $\fX$ that only consist rays, without overlap. Given a fan triple, we will used $\fA$ to denote the set of rays in $\fX$ that are not in $\fB\cup\fC$. We have that a fan triple $(\fX,  !\fB, *\fC)$ corresponds to a toric triple $(X, !B, *C)$. For any cone $\xi\in \fX$, it induces a fan triple  $(\fX^\xi, *\fC^\xi, !\fB^\xi)$, by only considering those cones that are faces of $\xi$. 

Let $\fX'$ is a subdivision of $\fX$, and we use the map $\ff: \fX'\to \fX$ to denote such subdivision, by mapping each cone $\xi'\in \fX'$ to the smallest cone in $\fX$ whose support contains that of $\xi'$.

We can similarly define a fan quadruple $(\fX,  !\fB, *\fC, !\!* \fH)$ that is associated with a toric quadruple $(X, !B, *C, !\!*\bh H)$, forgetting the coefficients $\bh$. 
In this section, since we do not consider the twisted forms, we will only consider the toric/fan triples.

\subsection{Logarithmic forms of toric triples}
Since using the reflexive hull to define the coherent sheaf $\Omega^p_{(X, !B, *C)}$ is not functorial, it is hard to extract local information at the singular locus. We need an alternative definition, which follows Danilov's construction.

We first consider $X$ being an affine toric variety defined by a rational cone $\sigma$ of maximal dimension in $M_{\Q}$, i.e. $X= \text{Spec}(R)$, where $R=\C[\sigma \cap M]$, with the natural grading on $M$. Consider a toric triple $(X, !B, *C)$, which in this case, we call it an affine toric triple. We set $D\subset X$ being the whole reduced torus invariant divisors on $X$. We will first define the $R$-module associated to $\Omega^p_{(X, !B, *C)}$, following the construction from \cite[\S4, \S15]{Dan78}, and \cite{Dan91}.

We denote $V$ the vector space $M_\C$.  We first set  $\Omega^p_{(R, *D)}=(\wedge^p V)\otimes_\C R,$ as a graded $R$-module. For any face $\theta$ of $\sigma$, we set $V_\theta=\text{Span}(\theta) \subset V$ as a sub-vector space. For any $E\subset D$, a reduced torus-invariant sub-divisor, denote $I_E$ the set of facets (codimension-one faces) of $\sigma$ that corresponds to the irreducible components of $E$. In particular, we have $I_D$ is the set of all facets of $\sigma$.
We set $\sigma(-E):= \sigma \setminus \bigcup_{\theta\in I_{E}} \theta$, i.e. $\sigma$ removing all facets corresponding to the irreducible components of $E$. 
We define a vector space
$$\Omega^p_{(R, !B, *C)}:=\bigoplus_{m\in \sigma(-B)\cap M} \wedge^p (V\cap\bigcap_{\substack{\theta \in I_D\setminus I_C\\ \theta\ni m}}  V_\theta) x^m,$$ 
which can be naturally viewed as a graded $A$-sub-module of $\Omega_A^p(\log D).$ 
We remark that, the $V\cap$ term in the definition is to remove the ambiguity may appear, if the index set of $\bigcap$ is empty. 

We also want to describe $\Omega^p_{(R, !B, *C)}$ via the dual cone $\tau=\sigma^\vee\subset N_\Q$. We denote $(\fX,  !\fB, *\fC)$ the associated fan triple, where $\fX$ is the fan induced by $\tau$, i.e. the set of all faces of $\tau$. We define a map $\phi_{(X, !B, *C)}$ from $\fX$ to the set of linear sub-spaces of $V$. For any $\zeta\prec \tau,$ we set $\zeta^*=\tau^\vee \cap \zeta^\perp$. Recall $\fA$ consists of those rays in $\fX$ that are not in $\fC\cup\fB$.
For $\zeta\in \fX$,
\begin{equation}\label{E: def of phi}
\phi_{(X, !B, *C)}(\zeta) =   
  \begin{cases}
    \{0\}, & \text{if }  \zeta \in \Star_\fX(\fB), \\
    V\cap \bigcap_{\substack{\nu \in \fA \\ \nu\prec\zeta }}  V_{\nu^*}, & \text{otherwise}.
  \end{cases}
\end{equation}
Here, we use $\Star_\fX(\fB)$ to denote the star closure of $\fB$ in $\fX$, i.e. for any $\xi, \zeta \in \fX$, with $\xi\prec \zeta,$ if $\xi \in \fB,$ then $\zeta \in \Star_\fX(\fB)$.

In particular, $\phi_{(X, !B, *C)}$ maps the origin to $V$.
It is straightforward to check that $\phi_{(X, !B, *C)}$ is decreasing in the sense that, if $\zeta_1 \prec \zeta_2$,  $\phi_{(X, !B, *C)}(\zeta_1)\supset \phi_{(X, !B, *C)}(\zeta_2).$  We also note that, if $\phi_{(X, !B, *C)}(\zeta)\neq \{0\}$, then $\phi_{(X, !B, *C)}(\zeta)\supset V_{\zeta^*}$

It is related to $\Omega^p_{(R, !B, *C)}$, as follows. For any $m\in \sigma\cap M,$ 
$$\Omega^p_{(R, !B, *C), m}= (\wedge^p\phi_{(X, !B, *C)}(\Gamma(m)^*))x^m ,$$
where $\Gamma(m)$ is defined as the smallest face of $\sigma$ that contains $m$.  

Note that the definition of $\phi_{(X, !B, *C)}(\zeta)$ only depends on the data of the faces of $\zeta$, hence it is well defined for any fan triple $(X, !B, *C)$.
This implies that we are able to patch such local data on each affine chart to define a coherent sheaf on a general toric triple. To be precise, we need to check that the definition is compatible on taking restriction onto a torus invariant affine open subset. For an affine toric triple $(X, !B, *C)$, let $X'\subset X$ be a torus invariant affine open subset, and $C'=C|_{X'}$, $B'=B|_{X'}$, which induces an affine toric triple  $(X', !B' ,*C')$. We use $\sigma'\supset \sigma$ and $R'\supset R$ to denote the corresponding cone and structure ring on $X'$.
\begin{lemma}
    In the above setting, we have $\Omega^p_{(R', !B' ,*C')}=\Omega^p_{(R, !B, *C)}\otimes_R R'$.
\end{lemma}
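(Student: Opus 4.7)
The plan is to prove the equality degree by degree as graded submodules of $\wedge^p V\otimes_{\C} R'$. Since $R'\supset R$ is a flat localization (inverting the monomials on the face of $\sigma$ dual to $\tau'$), the tensor product $\Omega^p_{(R, !B, *C)}\otimes_R R'$ has, in degree $m'\in M$, the $\C$-subspace
$$
\Bigl(\sum_{\substack{m\in\sigma(-B)\cap M\\ m'-m\in\sigma'\cap M}} W_m\Bigr)\cdot x^{m'},\qquad\text{where}\qquad W_m=\wedge^p\!\Bigl(V\cap\bigcap_{\substack{\theta\in I_D\setminus I_C\\ m\in\theta}}V_\theta\Bigr).
$$
Let $I^0\subset I_D$ be the facets of $\sigma$ removed when passing to $X'$, so $I_{D'}=I_D\setminus I^0$, $I_{B'}=I_B\setminus I^0$, $I_{C'}=I_C\setminus I^0$, and $\sigma'$ is cut out by the facet inequalities outside $I^0$. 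I will compare this graded piece with $W'_{m'}\cdot x^{m'}$.

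The vanishing case and one inclusion both follow from a single inequality. For any $m\in\sigma(-B)\cap M$ with $m'-m\in\sigma'\cap M$, and any facet $\theta\notin I^0$, one has $0\le\langle m,n_\theta\rangle\le\langle m',n_\theta\rangle$. If $m'\in\sigma'\cap M$ lies on some $\theta\in I_{B'}=I_B\setminus I^0$, this forces $m\in\theta$, contradicting $m\in\sigma(-B)$; hence the sum is empty, matching $W'_{m'}=0$. If $m'\in\sigma'(-B')\cap M$, the same inequality applied to any $\theta\in I_{D'}\setminus I_{C'}\subset I_D\setminus I_C$ with $m'\in\theta$ gives $m\in\theta$, so the facets appearing in the intersection defining $W_m$ include all those defining $W'_{m'}$, and thus $W_m\subset W'_{m'}$ for every valid $m$.

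The main step is the reverse inclusion $W'_{m'}\subset\sum_m W_m$, which I establish by exhibiting a single $m$ with $W_m=W'_{m'}$. Pick $m_0\in\sigma\cap M$ in the relative interior of the face $\sigma\cap(\tau')^\perp$ dual to $\tau'$, so that $\langle m_0,n_\theta\rangle=0$ for $\theta\notin I^0$ and $\langle m_0,n_\theta\rangle>0$ for $\theta\in I^0$; note $-m_0$ then lies in the lineality of $\sigma'$. Setting $m=m'+km_0$ for $k\gg 0$, the relation $m'-m=-km_0\in\sigma'\cap M$ is automatic, and the pairings $\langle m,n_\theta\rangle=\langle m',n_\theta\rangle+k\langle m_0,n_\theta\rangle$ show (i) $m\in\sigma\cap M$ for $k$ large (the only possibly negative $\langle m',n_\theta\rangle$ occur for $\theta\in I^0$, where $\langle m_0,n_\theta\rangle>0$ absorbs them), and (ii) for $\theta\in I_D\setminus I_C$, $m\in\theta$ if and only if $\theta\notin I^0$ and $m'\in\theta$. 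This identifies the two index sets exactly, so $W_m=W'_{m'}$.

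The only mild obstacle is confirming $m\in\sigma(-B)$: for $\theta\in I_B\cap I^0$, largeness of $k$ yields $\langle m,n_\theta\rangle>0$, while for $\theta\in I_B\setminus I^0=I_{B'}$ the hypothesis $m'\in\sigma'(-B')$ gives $\langle m',n_\theta\rangle>0$ and hence $\langle m,n_\theta\rangle>0$. Everything else is an elementary manipulation of the defining inequalities of $\sigma$ and $\sigma'$, together with flatness of $R\hookrightarrow R'$.
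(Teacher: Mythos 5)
Your proof is correct. The one point worth double-checking is the key construction $m=m'+km_0$: you need $\langle m_0,n_\theta\rangle=0$ exactly for the surviving facets $\theta\notin I^0$ and $>0$ for $\theta\in I^0$, which is precisely the standard duality statement for a point in the relative interior of the face $\sigma\cap(\tau')^\perp$ dual to $\tau'$; granting that, the identification of the two index sets $\{\theta\in I_D\setminus I_C:\ m\in\theta\}=\{\theta\in I_{D'}\setminus I_{C'}:\ m'\in\theta\}$ and the verification $m\in\sigma(-B)$ both go through, and the two inclusions you prove do combine to give equality degree by degree. Your route differs from the paper's in presentation: the paper works entirely on the dual side, reformulating $\Omega^p_{(R,!B,*C),m}$ as $\wedge^p\phi_{(X,!B,*C)}(\Gamma(m)^*)x^m$ and observing that $\phi_{(X,!B,*C)}(\zeta)$ depends only on the faces of $\zeta$, hence agrees with $\phi_{(X',!B',*C')}$ on the subfan $\fX'\subset\fX$, after which it declares the statement clear. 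You instead work on the primal side with the facet normals of $\sigma$ and carry out the localization $\Omega^p_{(R,!B,*C)}\otimes_R R'=\Omega^p_{(R,!B,*C)}[x^{-m_0}]$ explicitly. What your version buys is precisely the step the paper elides: the paper's ``clear'' conclusion still requires matching graded pieces of the localized module with those of $\Omega^p_{(R',!B',*C')}$, which amounts to your shift-by-$km_0$ argument (the fact that every face of $\sigma'$ meets $\sigma$ in the corresponding face of $\sigma$, realized at the level of lattice points). What the paper's $\phi$-formulation buys is reusability: the same local-on-the-fan description is the engine for the gluing definition and for Propositions \ref{P: SES of adding in B} and \ref{P: pushforward formula}, whereas your computation is self-contained but specific to this lemma.
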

\begin{proof}
    Let $(\fX', !\fB', *\fC')$ be the affine toric triple associated to $(X', !B' ,*C')$. We have $\fX'\subset \fX$, $\fC'=\fC \cap \fX'$ and $\fB'=\fB\cap \fX'$. Again, due to $\phi_{(X, !B, *C)}(\zeta)$ only depends on the data of the faces of $\zeta$, hence, for any $\zeta\in \fX',$ we have 
    $$\phi_{(X, !B, *C)}(\zeta) =\phi_{(X', !B' ,*C')}(\zeta).
    $$
    The statement now is clear.
\end{proof}
Now, we can make the following.
\begin{definition}
    For an affine toric triple $(X, !B, *C)$, with $X=\Spec(R)$, we define the sheaf $\Omega^p_{(X, !B, *C)}$ to be the associated coherent sheaf to the $R$-module $\Omega^p_{(R, !B, *C)}$. For a toric triple $(X, !B, *C)$ in general, we define the sheaf $\Omega^p_{(X, !B, *C)}$ to be the associated coherent sheaf patching the corresponding sheaves on the affine toric open subsets.
\end{definition}

If either $B$, $C$, or both are $\emptyset$, we will just use $\Omega^p_{(X, !B)}$, $\Omega^p_{(X, *C)}$, or $\Omega^p_{X}$ to denote $\Omega^p_{(X, !B, *C)}$. It is straightforward to check that $\Omega^p_{(X, *C)}$ matches $\Omega^p_X(\log C)$ defined in \cite{Dan78}[\S 15.5], and $\Omega^p_{(X, !B)}$ matches $\Omega^p_{(X, B)}$ defined in \cite{Dan91}[\S 2.6]. The next proposition shows that the current definition of $\Omega^p_{(X, !B, *C)}$ is compatible with the one in the Introduction.


\begin{prop}\label{P: Omega as reflexive sheaf}
     For any toric triple $(X, !B, *C)$, the sheave $\Omega^p_{(X, !B, *C)}$ is isomorphic to $(\Omega_X^p(\log (B+C))(-B))^{\vv}$, the reflexive sheaf that corresponds to the counterpart on the smooth locus of $X$.
\end{prop}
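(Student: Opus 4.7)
The plan is to identify both sheaves with a common extension from a big open, exploiting the torus-invariant structure. By the compatibility lemma just established, it suffices to work on an affine chart $X=\Spec R$ with $R=\C[\sigma\cap M]$; let $j\colon U\hookrightarrow X$ be the inclusion of $U=\bigcup_{\nu}U_{\nu}$, where $\nu$ runs over the rays of $\tau=\sigma^{\vee}$ and $U_{\nu}=\Spec\C[\nu^{\vee}\cap M]$. Each $U_{\nu}$ is smooth, and $X\setminus U$ is a union of torus orbits of codimension $\geq 2$, so $U$ is a big open contained in the smooth locus. Consequently $(\Omega_X^p(\log(B+C))(-B))^{\vv}=j_{*}\bigl(\Omega^p_U(\log(B+C))(-B)\bigr)$, and on each smooth affine $U_{\nu}$ a direct check shows that Danilov's sheaf $\Omega^p_{(U_{\nu},\,!B|_{U_{\nu}},\,*C|_{U_{\nu}})}$ agrees with $\Omega^p_{U_{\nu}}(\log(B+C))(-B)$, since the semigroup formula unwinds to the usual local basis of log $p$-forms together with the vanishing along $B|_{U_{\nu}}$.

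The main step is then to establish $\Omega^p_{(X,!B,*C)}=j_{*}j^{*}\Omega^p_{(X,!B,*C)}$ at the level of graded $R$-modules. Since both are $T$-equivariant subsheaves of the generic stalk $\wedge^p V\otimes_{\C}\C(M)$, it suffices to compare graded pieces in each weight $m\in M$. By the restriction lemma, the graded piece on the right in weight $m$ equals $\bigcap_{\nu}\Omega^p_{(U_{\nu},!B|_{U_{\nu}},*C|_{U_{\nu}}),m}$. Computing each contribution: if $\nu\in\fB$ the piece vanishes unless $\langle m,\nu\rangle>0$; if $\nu\in\fC$ no constraint arises; if $\nu\in\fA$ the constraint is $\omega\in\wedge^p V_{\nu^{*}}$ when $\langle m,\nu\rangle=0$ and is trivial when $\langle m,\nu\rangle>0$. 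Thus for $m\in\sigma(-B)\cap M$ the intersection equals $\bigcap_{\nu\in\fA,\,\nu^{*}\ni m}\wedge^p V_{\nu^{*}}$, and for $m\notin\sigma(-B)$ it vanishes; applying the multilinear-algebra identity $\bigcap_i\wedge^p W_i=\wedge^p(\bigcap_i W_i)$ for subspaces of $V$ (which follows by contracting with $\sum_i W_i^{\perp}$), this matches $\wedge^p\phi_{(X,!B,*C)}(\Gamma(m)^{*})$ exactly.

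The main bookkeeping obstacle is reconciling Danilov's formulation---an intersection over facets $\theta\in I_D\setminus I_C$ of $\sigma$ containing $m$---with the sheaf-theoretic intersection over rays $\nu\prec\Gamma(m)^{*}$ of $\tau$ lying in $\fA$; the duality $\theta\leftrightarrow\nu^{*}$, together with the observation that no $\theta\in I_B$ can contain $m$ when $m\in\sigma(-B)$ so such $\theta$ drop out of Danilov's intersection, bridges the two descriptions, and similarly the condition $\Gamma(m)^{*}\in\Star_{\fX}(\fB)$ corresponds precisely to $m\notin\sigma(-B)$.
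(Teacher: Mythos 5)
Your proposal is correct and takes essentially the same route as the paper's proof: reduce to the affine case, identify the reflexive hull with sections over the union of the smooth torus-invariant charts attached to the rays of $\tau$ (equivalently the facets $\theta$ of $\sigma$, the paper's $X_\theta$), check the half-space case directly, and verify the resulting intersection identity degree by degree in $M$. The only difference is that you carry out explicitly the weight-by-weight bookkeeping and the identity $\bigcap_i \wedge^p W_i=\wedge^p\bigl(\bigcap_i W_i\bigr)$, which the paper compresses into ``can be directly checked for each degree $m\in M$.''
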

\begin{proof}
If $\fB=\emptyset$, it is just  \cite[4.3, 15.5]{Dan78}, and the general case follows by essentially the same argument, so we just sketch the proof here. Note that we just need to check the affine case. 

We first check the case that $X= \text{Spec}(R)$, where $R=\C[\sigma \cap M]$, and $\sigma$ is a half space. In this case, the pair $B, C$ has only three choices, and all can be checked via direct comparison. 

In the case that $\sigma$ has more than one facet, we use $\theta$ to denote a facet of $\sigma$. Let us denote $\sigma_\theta$ the half space that is given by $\sigma-\theta$, and $R_\theta=\C[\sigma_\theta \cap M]$, $X_\theta=\Spec (R_\theta)$. $X_\theta$ is naturally a torus invariant smooth open subset of $X$, and $X\setminus \bigcup_\theta X_\theta$ has at least codimension $2$. We use $D_\theta$ to denote the only reduced torus invariant divisor on $X_\theta$. We set $C_\theta=D_\theta$, if $D_\theta\subset C$; $C_\theta=\emptyset$ otherwise. We similarly define $B_\theta$. By definition, combined with the $\sigma$ being half space case, we have 
$$H^0(X, (\Omega_X^p(\log(C+B)(-B))^{\vv})=\bigcap_\theta \Omega^p_{(R_\theta, !B_\theta, *C_\theta)}.
$$
Now, we only need to show 
$$\Omega^p_{(R,!B,*C)}=\bigcap_\theta \Omega^p_{(R_\theta, !B_\theta, *C_\theta)},
$$
which can be directly checked for each degree $m\in M$.
\end{proof}

Recall that any normal toric variety $X$ is Cohen-Macaulay, \cite[Theorem 9.2.9.]{CLS}, and if it is complete, then $\omega_X=\Omega^n_X$ is the dualizing sheaf. The following proposition can be useful if we want to apply Serre duality.

\begin{prop}\label{P: serre dual of dif form}
    Given a toric triple $(X, !B, *C)$ of dimension $n$, we have a natural isomorphism
    $$ 
    \Omega^p_{(X, !B, *C)} \to \mathcal{H}om (\Omega^{n-p}_{(X, !C, *B)}, \omega_X).
    $$
\end{prop}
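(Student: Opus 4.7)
The plan is to construct the natural map on the smooth locus via the wedge pairing of log forms and then extend uniquely to all of $X$ by reflexivity.

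Let $j \colon U \hookrightarrow X$ be the inclusion of the smooth locus; $U$ is torus-invariant and its complement has codimension at least two. Write $D = B+C$. On $U$ the divisor $D|_U$ is simple normal crossing, so the usual wedge product yields a perfect pairing of locally free $\cO_U$-modules
$$\Omega^p_U(\log D) \otimes \Omega^{n-p}_U(\log D) \longrightarrow \Omega^n_U(\log D) \cong \omega_U(D).$$
Twisting the first factor by $\cO_U(-B)$ and the second by $\cO_U(-C)$ moves the target to $\omega_U(D-B-C) = \omega_U$, giving a perfect pairing
$$\Omega^p_U(\log D)(-B) \otimes \Omega^{n-p}_U(\log D)(-C) \longrightarrow \omega_U,$$
equivalently an isomorphism $\Omega^p_U(\log D)(-B) \xrightarrow{\sim} \mathcal{H}om_U\bigl(\Omega^{n-p}_U(\log D)(-C), \omega_U\bigr)$.

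I would then extend this to $X$ by applying $j_*$. By Proposition \ref{P: Omega as reflexive sheaf}, $j_*$ sends the two log sheaves on $U$ to $\Omega^p_{(X,!B,*C)}$ and $\Omega^{n-p}_{(X,!C,*B)}$ respectively, and $j_*\omega_U = \omega_X$ because $\omega_X$ is reflexive on the normal variety $X$. The target $\mathcal{H}om_X(\Omega^{n-p}_{(X,!C,*B)}, \omega_X)$ is itself reflexive, by the standard fact that on a normal scheme the $\mathcal{H}om$ of a coherent sheaf into a reflexive sheaf is reflexive; therefore it equals $j_*$ of its restriction to $U$, which in turn coincides with $j_*\mathcal{H}om_U(\Omega^{n-p}_U(\log D)(-C), \omega_U)$. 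Pushing the $U$-side isomorphism forward via $j_*$ thus delivers the desired natural isomorphism on $X$, with naturality in the toric triple clear from the construction.

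I do not expect a substantive obstacle; the content is essentially the smooth-case perfect pairing combined with the reflexive-extension principle. The only bookkeeping requiring care is the twist cancellation $\Omega^n_U(\log D)(-B-C) = \omega_U$ and the identification $j_*\mathcal{H}om_U = \mathcal{H}om_X$, both of which are immediate from the reflexivity of $\omega_X$ and of $\Omega^{n-p}_{(X,!C,*B)}$.
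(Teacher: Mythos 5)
Your proof is correct and takes essentially the same route as the paper's: the paper also establishes the isomorphism in the smooth case via the log wedge pairing and then reduces the general case to it through the reflexive-hull description of Proposition \ref{P: Omega as reflexive sheaf} (citing Danilov for details). Your write-up simply fills in the bookkeeping — the twist cancellation and the fact that $\mathcal{H}om$ into the reflexive sheaf $\omega_X$ is reflexive, hence determined on the smooth locus — which is exactly the intended argument.
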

\begin{proof}
    In the case that $X$ is smooth, it is clear using $\Omega^p_{(X, !B, *C)}=\Omega^p_X(\log (C+B))(-B)$. For the general case, we can use Proposition \ref{P: Omega as reflexive sheaf} to reduce it to the smooth case. See \cite[4.7. Proposition]{Dan78} for the details.
\end{proof}

As in the smooth case, \cite[2.3. Properties]{EV92} we have the following:
\begin{prop}\label{P: SES of adding in B}
    Given a toric triple $(X, !B, *C)$, let $E$ be a reduced irreducible torus invariant divisor that is not contained in $B+C$. Denote $k:E\to X $ the natural inclusion, and $B^\circ=B+ E$, $C^\circ=C+E$. Assume that $B_E=B\cap E$, $C_E=C\cap E$ are reduced torus invariant divisors on $E$, which induces a toric triple $(E, !B_E ,*C_E)$. We make an additional assumption that, for any reduced torus invariant divisor $F$ in $X$, if $F\cap E\subset C_E$, while $F\cap E$ is not contained in $B$, then $F\subset C$. 
    Then we have the following short exact sequence:
    \begin{equation*}
        0\to \Omega_{(X, !B^\circ, *C)}^p\to \Omega_{(X, !B, *C)}^p \to k_*\Omega_{(E, !B_E, *C_E)}^p \to 0.
    \end{equation*}
    If we further assume that, for each torus-invariant subvariety of the form $F=\bigcap_i A_i$, with each $A_i$ is a reduced irreducible torus-invariant divisor that is not part of $B+C+E$, then $F$ is not contained in $E$.
    \begin{equation*}
        0\to \Omega_{(X, !B, *C)}^p\to \Omega_{(X, !B, *C^\circ)}^p \to k_*\Omega_{(E, !B_E, *C_E)}^{p-1} \to 0.
    \end{equation*}

\end{prop}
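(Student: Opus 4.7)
The plan is to verify both sequences weight-by-weight on torus-invariant affine charts, using the explicit description $\Omega^p_{(R,!B,*C),m} = \bigl(\wedge^p \phi_{(X,!B,*C)}(\Gamma(m)^*)\bigr) x^m$. Reduce to $X = \Spec \C[\sigma\cap M]$ with $\sigma\subset M_\Q$ of maximal dimension, and let $\theta_E$ be the facet of $\sigma$ dual to the ray $\eta_E\in\fX$ corresponding to $E$; the assumption $E\not\subset B+C$ says $\eta_E\in\fA$. The closed embedding $k\colon E\hookrightarrow X$ sends $x^m\mapsto x^m$ when $m\in\theta_E$ and $x^m\mapsto 0$ otherwise, realizing $E$ as the affine toric variety of the cone $\theta_E$ in $M\cap\Span\theta_E$. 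When $\sigma$ is regular (equivalently $X$ is smooth), both sequences reduce to the standard residue and restriction sequences \cite[2.3]{EV92} after twisting by $\cO_X(-B)$; the task is to upgrade these to general $\sigma$ by direct graded comparison.

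For the first sequence, define restriction $\Omega^p_{(X,!B,*C)}\to k_*\Omega^p_{(E,!B_E,*C_E)}$ on weight $m$ as zero if $m\notin\theta_E$ and as the natural map of wedge factors otherwise. Replacing $B$ by $B^\circ=B+E$ only removes $\eta_E$ from $\fA$, so $\phi_{(X,!B^\circ,*C)}(\zeta) = \phi_{(X,!B,*C)}(\zeta)$ when $\eta_E\not\prec\zeta$ and equals $\{0\}$ when $\eta_E\prec\zeta$, since then $\zeta\in\Star_\fX(\fB^\circ)$. A direct weight-by-weight comparison identifies $\Omega^p_{(X,!B^\circ,*C)}$ with the kernel. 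Surjectivity asks that every ray of the $\fA_E$ on $E$ lifts to a ray of $\fA$ on $X$, which is exactly the content of the first hypothesis: if the restriction of a facet $F$ to $E$ contributes a component of $C_E$ not already in $B$, then $F$ itself must lie in $C$. For the second sequence, note that passing from $C$ to $C^\circ = C+E$ enlarges $\phi$ exactly on cones $\zeta$ containing $\eta_E$, by a direction pointing out of $V_{\theta_E}$. Define the residue as contraction with the primitive generator $u_{\eta_E}\in N$, followed by projection to $\wedge^{p-1}V_{\theta_E}$ and restriction to weights $m\in\theta_E$; its kernel on each graded piece is exactly $\wedge^p\phi_{(X,!B,*C)}(\Gamma(m)^*)x^m$.

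The main obstacle is surjectivity of the second residue map: for each $m\in\theta_E$ one must lift any class in $\wedge^{p-1}\phi_{(E,!B_E,*C_E)}(\Gamma_E(m)^*)$ to a nonzero class in $\wedge^p\phi_{(X,!B,*C^\circ)}(\Gamma(m)^*)$. Here the second hypothesis — that no intersection of $\fA$-divisors on $X$ is contained in $E$ — is the combinatorial condition preventing $\Gamma(m)$ from acquiring an $\fA$-ray which would force $\phi$ to collapse even though the image on $E$ looks harmless. The bookkeeping amounts to comparing the face structure of $\theta_E$ with that of $\sigma$: distinct facets of $\sigma$ can meet $\theta_E$ in the same codimension-one face, so rays of $\fA_E$ need not uniquely lift to rays of $\fA$, and the two hypotheses are precisely calibrated to exclude the configurations that would break surjectivity in each case. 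Once these are encoded into the comparison of the $\phi$ functions on matching cones, both short exact sequences follow by summing the verified weight-by-weight exactness over $m\in\sigma\cap M$ and gluing across affine charts, using that the $\phi$-construction is functorial with respect to torus-invariant affine restriction.
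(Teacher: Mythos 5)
Your proposal follows essentially the same route as the paper: reduce to an affine chart, use the graded description $\Omega^p_{(R,!B,*C),m}=(\wedge^p\phi_{(X,!B,*C)}(\Gamma(m)^*))x^m$, identify the kernels weight by weight, and locate all of the difficulty in the surjectivity of the restriction and residue maps. The identification of the kernels, and of which hypothesis feeds which surjectivity statement, is correct, and describing the residue as contraction with the primitive generator of the ray of $E$ is a clean way to phrase what the paper extracts from Danilov's linear-algebra lemma.

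However, the two surjectivity statements are not ``bookkeeping'': they are essentially the entire content of the paper's proof, and you have not derived them from the hypotheses. For the first sequence, surjectivity at a weight $m$ on the facet of $E$ amounts to the equality $\phi_{(X,!B,*C)}(\zeta)=\phi_{(E,!B_E,*C_E)}(\zeta_E)$ for $\zeta=\Gamma(m)^*$; since the inclusion $\subset$ is automatic, what must be shown is that $\phi_{(E,!B_E,*C_E)}(\zeta_E)\subset V_{\iota^*}$ for \emph{every} $\iota\in\fA$ with $\iota\prec\zeta$, i.e.\ that every $\fA$-ray of $\zeta$ on $X$ is already detected from $E$. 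Your phrase ``every ray of $\fA_E$ lifts to a ray of $\fA$'' has this backwards, and the actual derivation is not a one-liner: the paper splits into the case $\{\nu\in\fA_E:\nu\prec\zeta\}=\emptyset$ (where the first hypothesis is applied directly) and the general case, where one passes to the minimal face $\mu\prec\zeta$ containing both $\xi$ and $\iota$ and shows $V_{\xi^*}\cap\bigcap_{\nu\prec\mu,\,\nu\in\fA_E}V_{\nu^*}=V_{\mu^*}\subset V_{\iota^*}$. Similarly, surjectivity of your contraction map for the second sequence requires $\phi_{(X,!B,*C^\circ)}(\zeta)\not\subset V_{\xi^*}$, which the paper obtains by taking the smallest face $\eta\prec\zeta$ containing all rays of $\fA\setminus\{\xi\}$ adjacent to $\zeta$ and applying the second hypothesis to the corresponding stratum; your proposal gestures at this but does not carry it out. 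So the skeleton is right, but the proof is incomplete precisely at the steps where the hypotheses must actually be used.
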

\begin{proof}
    We just need to prove the case that $X$ is affine, and we use the notations in the settings of (\ref{E: def of phi}). We set $\xi\in \fX$ the ray that corresponds to $E$. We use $Z$ to denote the torus invariant subvariety of $X$ that corresponds to a cone $\zeta\in \fX$, and $Z_E=Z\cap E$, which is the torus invariant subvariety of $Z$ that corresponds to $\zeta_E$, the projection of $\zeta$ to $N_\Q(\xi):=N_\Q/(\Span(\xi)\cap N_\Q)$, see \cite[\S 3.2]{CLS}.
    
    We first note that, if $Z$ is not contained in $E$, and equivalently, $\xi$ is not a face of $\zeta$, then by definition, 
    $$\phi_{(X, !B^\circ, *C)}(\zeta)= \phi_{(X, !B, *C)}(\zeta).$$
    
    Now, we consider the case that $\xi\prec\zeta$.
    We have $\phi_{(X, !B^\circ, *C)}(\zeta)=\{0\}$. Hence, to prove the statement, we just need to show 
    \begin{equation}\label{E: equation for phi(zeta)}
        \phi_{(X, !B, *C)}(\zeta) = \phi_{(E, !B_E ,*C_E)}(\zeta_E).
    \end{equation}

    If $Z\subset B$, then by the assumption that $B_E=B\cap E$ is a divisor, we have both sides above are $\{0\}$. 
    
    Now we further assume that $Z$ is not contained in $B$. Then, we have 
    $$\phi_{(X, !B, *C)}(\zeta) =  V\cap \bigcap_{\substack{\nu \in \fA \\ \nu\prec\zeta }}  V_{\nu^*}.
    $$
    Recall that $\fA\subset \fX$ consists those rays are not in $\fC\cup\fB$. Denote 
    $$\fA_E=\{\nu\in\fA| \text{there exists $\mu \in \fX$ that contains both $\xi$ and $\nu$, satisfying} \dim \mu= 2\}.$$
    Then we have 
    $$\phi_{(E, !B_E ,*C_E)}(\zeta_E)= V_{\xi^*} \cap \bigcap_{\substack{\nu \in \fA_E\\ \nu\prec\zeta}}  V_{\nu^*}.
    $$
    
    In the case that $\{\nu \in \fA_E| \nu\prec\zeta\}=\emptyset$, i.e. $C^E=D^E$, (\ref{E: equation for phi(zeta)}) is equivalent to that $\{\nu \in \fA| \nu\prec\zeta \}=\{\xi\}$. Since, by assumption, $E$ is not contained in $C$, it implies that $\xi\in \{\nu \in \fA| \nu\prec\zeta \} $. For any ray $\nu$ of $\zeta$, which is not $\xi$, we denote the corresponding reduced torus invariant divisor $F$. We have $F\cap E\subset C^E$. The extra assumption in the statement implies that $\nu\in \fC$, hence we can conclude this case.
     
    Consider the case that $\{\nu \in \fA_E| \nu\prec\zeta\}\neq \emptyset$. We clearly have 
    $\{\xi\}\cup \{\nu \in \fA_E| \nu\prec\zeta\} \subset \{\nu \in \fA| \nu\prec\zeta\}$, by definition. For any $\iota\in \{\nu \in \fA| \nu\prec\zeta\}$, to show (\ref{E: equation for phi(zeta)}), we just need to show $\phi_{(E, !B_E ,*C_E)}(\zeta_E)\subset V_{\iota^*}.$ 
    Consider the minimal face $\mu\prec \zeta$ that contains both $\xi$ and $\iota$. According to the additional assumption, all rays of $\mu$ are contained in $\fA$. This implies that 
    $$V_{\xi^*}\cap \bigcap_{\substack{\nu\prec \mu\\ \nu \in \fA_E}}V_{\nu^*}=\bigcap_{\nu\prec \mu}V_{\nu^*}=V_{\mu^*}.$$
    In particular, $\phi_{(E, !B_E ,*C_E)}(\zeta_E)\subset V_{\mu^*}\subset V_{\iota^*},$
    which concludes the proof of the first statement. 

    For the second statement, following a same argument as above, combining a linear algebra fact, e.g. \cite[page 151 (3)]{Dan78}, we are left to prove: for any $\zeta\in \fX$, such that $\xi\prec \zeta$, and all rays of $\zeta$ are not in $\fB$,
    $$\dim \phi_{(X, !B, *C^\circ)}(\zeta) =\dim \phi_{(X, !B, *C)}(\zeta)+1,
    $$
    equivalently, $\phi_{(X, !B, *C^\circ)}(\zeta)$ is not contained in $V_{\xi^*}$. We have
    $$\phi_{(X, !B, *C^\circ)}(\zeta) =  V\cap \bigcap_{\substack{\nu \in \fA\setminus \{\xi\} \\ \nu\prec\zeta }}  V_{\nu^*}.
    $$
    Set $\eta\prec \zeta$ the smallest face that contains all the rays in $\{\nu \in \fA\setminus \{\xi\} | \nu\prec\zeta\}$, hence we have 
    $$\phi_{(X, !B, *C^\circ)}(\zeta)\supset V_{\eta^*}.$$
    Let $F$ be the corresponding torus invariant subvariety that corresponds to $\eta$. Then according to the extra assumption, we have $F$ is not contained in $E$. This implies $\xi$ is not a face of $\eta$. Hence $V_{\eta^*}$ is not a contained in $V_{\xi^*}$.    
\end{proof}

Next, we want to study the behaviour of such forms under a complete toric birational map $f:X'\to X$. We will study the local property over $X$, so we can assume that $X$ is an affine toric variety defined by a (strongly convex rational polyhedral) cone $\tau \subset N_\Q$. 
Let $\sigma=\tau^\vee\subset M_\Q$, $R=\C[\sigma \cap M]$,
so we have $X=\Spec (R)$. $X'$ is defined by a fan $\fX'$ that is a refinement of the cone $\fX$. We set that $\fX'$ is induced by the cones $\tau_1,..., \tau_k$, with the same dimension as $\tau$. For each $i$, set $\sigma_i=\tau_i^\vee$, $R_i=\C[\sigma_i \cap M]\supset R$ and $X'_i=\Spec (R_1)\subset X'$, as affine open sub-toric-varieties of $X'$. If we have a toric triple $(X', *C', B')$, we further set $C'_{i}$, $B'_{i}$ the restriction of $C'$, $B'$ onto $X'_i$. 

\begin{prop}\label{P: pushforward formula}
    Let $f: X'\to X$ as above, $(X, !B, *C), (X', !B' ,*C')$ are two toric triple. We require that $B'$ is contained in $f^{-1}(B),$ and $C'$ is contained in $f^{-1}(C)$ union all $f$-exceptional divisors on $Y$. ($f^{-1}$ stands for taking the inverse image set theoretically, so it will be a torus invariant subvariety in $X'$.) We also require that, for each irreducible component $B_\circ$ of $B$, $B'$ contains at least one irreducible component that dominates $B_\circ$. Moreover, $C'$ contains all divisorial components of the closure of $f^{-1}(C\setminus (C\cap B))$, then we have 
    $$f_*\Omega^p_{(X', !B' ,*C')}= \Omega^p_{(X, !B, *C)}.$$
\end{prop}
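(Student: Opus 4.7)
The plan is to reduce to the affine-local case on $X$ and verify the identity degree by degree on the $M$-grading. Since the claim is local on $X$, I may assume $X=\Spec R$ with $R=\C[\sigma\cap M]$ and $\sigma=\tau^\vee$ of maximal dimension. The affines $X'_i=\Spec R_i$ cover $X'$, and each $\Omega^p_{(X'_i,!B'_i,*C'_i)}$ sits as an $M$-graded submodule of the common ambient $(\wedge^p V)\otimes_\C\C[M]$, so
\[
\Gamma\bigl(X,\,f_*\Omega^p_{(X',!B',*C')}\bigr)\;=\;\bigcap_i\Omega^p_{(R_i,!B'_i,*C'_i)}.
\]
It then suffices to prove $\Omega^p_{(R,!B,*C)}=\bigcap_i\Omega^p_{(R_i,!B'_i,*C'_i)}$ as $M$-graded submodules, which I verify degree by degree.

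Fix $m\in M$. The degree-$m$ piece of the left (resp.\ right) side is nonzero iff $m\in\sigma(-B)$ (resp.\ $m\in\sigma_i(-B'_i)$ for every $i$); both force $m\in\sigma=\bigcup\tau_i^\vee$, so it is enough to match the $B$-vanishing. If $\rho'\in\fB'$ and $\mu\in\fX$ is the smallest cone containing $\rho'$, then $B'\subset f^{-1}(B)$ forces $D_\mu\subset B$, hence $\mu$ has a ray $\rho\in\fB$; since $\rho'$ lies in the relative interior of $\mu$, $\langle m,\rho\rangle>0$ implies $\langle m,\rho'\rangle>0$. Conversely, the dominance hypothesis on $B'$ forces every $\rho\in\fB$ to itself be a ray of $\fX'$ in $\fB'$, so $\langle m,\rho'\rangle>0$ for all $\rho'\in\fB'$ gives $\langle m,\rho\rangle>0$ for all $\rho\in\fB$. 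The supports agree.

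Past the support step, both sides use the second case of \eqref{E: def of phi}, and comparing linear pieces reduces to
\[
V\cap\bigcap_{\substack{\nu\in\fA\\ \nu\prec\zeta}}V_{\nu^*}\;=\;V\cap\bigcap_{\substack{\nu'\in\fA'\\ \nu'\prec\zeta^{(i)}\ \text{some }i}}V_{(\nu')^*},
\]
where $\zeta$ and $\zeta^{(i)}$ are the faces of $\tau$ and $\tau_i$ dual to $\Gamma(m)$ and $\Gamma_i(m)$. The inclusion $\supset$ is immediate: every ray $\nu\in\fA$ below $\zeta$ is still a ray of some $\tau_i$ below $\zeta^{(i)}$, with $B'\subset f^{-1}(B)$ keeping $\nu\notin\fB'$ and the hypothesis that $C'\subset f^{-1}(C)\cup(\text{exceptional})$ keeping $\nu\notin\fC'$; thus the right-hand intersection is indexed by a superset and is contained in the left.

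The main obstacle is the reverse inclusion: every exceptional ray $\nu'\in\fA'$ on the right must impose no genuinely new linear constraint, and this is where the hypothesis that $C'$ contains all divisorial components of the closure of $f^{-1}(C\setminus(C\cap B))$ is essential. Let $\mu\in\fX$ be the smallest cone containing $\nu'$. Because $\langle m,\nu'\rangle=0$ and $\nu'$ lies in the relative interior of $\mu$, every ray of $\mu$ satisfies $\langle m,\cdot\rangle=0$ and hence lies in $\zeta$; the support step excludes $\fB$-rays of $\zeta$, and a $\fC$-ray of $\mu$ would force $O_{\nu'}\subset f^{-1}(C\setminus B)$ and thus $\nu'\in\fC'$, contradicting $\nu'\in\fA'$. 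So every ray of $\mu$ lies in $\fA$, and the linear-algebra identity $V_{\mu^*}=\bigcap_{\eta\text{ ray of }\mu}V_{\eta^*}$ gives
\[
V_{(\nu')^*}\;\supset\;V_{\mu^*}\;\supset\;V\cap\bigcap_{\substack{\nu\in\fA\\ \nu\prec\zeta}}V_{\nu^*},
\]
so adjoining $\nu'$ to the right-hand intersection does not shrink it below the left. Running this over all exceptional $\nu'$ completes the comparison.
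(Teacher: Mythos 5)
Your proof is correct and follows essentially the same route as the paper: reduce to the affine case, identify $f_*\Omega^p_{(X',!B',*C')}$ with the intersection $\bigcap_i\Omega^p_{(R_i,!B'_i,*C'_i)}$ inside $(\wedge^p V)\otimes_\C\C[M]$, and compare graded pieces through the combinatorial description (\ref{E: def of phi}) — your relative-interior argument for the $B$-support and your treatment of the new rays via $V_{\mu^*}=\bigcap_{\eta}V_{\eta^*}$ correspond to the paper's three-case analysis of $\phi(\zeta)$ (and are, if anything, slightly more careful about exceptional rays left in $\fA'$). One typo: $\sigma=\bigcap_i\tau_i^\vee$, not $\bigcup_i\tau_i^\vee$.
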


\begin{proof}
Notations as above, working locally over $X$, we just need to show 
$$\Omega^p_{(R, !B, *C)}=\bigcap_{i=1}^k \Omega^p_{(R_i,*C'_i, !B'_i)},$$
as graded sub-spaces of $\bigoplus_{m\in M} \wedge^p V x^m$. 

Recall the map $\phi_{(X'_i, *C'_i, !B'_i)}$ we defined earlier that is associated to each $(X'_i, *C'_i, !B'_i)$, and we simply denote them by $\phi_i$.  For $\zeta\in \tau,$ we set $\Xi_i(\zeta)$ the largest face of $\tau_i$ whose support is contained in $|\zeta|$. Due to the convexity of the cones and the fact that all cones contain the origin as the smallest face, such face uniquely exists. For any $m\in \sigma\cap M$, we use $\Gamma_i(m)$ to denote the smallest face of $\sigma_i$ that contains $m$, we can check that $\Gamma_i(m)^*=\Xi_i(\Gamma(m)^*)$. Hence we have, 
$$\Omega^p_{(R_i,*C'_i, !B'_i), m}=(\wedge^p\phi_i(\Gamma_i(m)^*))x^m= (\wedge^p\phi_i(\Xi_i(\Gamma(m)^*)))x^m.
$$

Now we use these data to define a new map $\phi$ that maps $\fX$ to a linear subspace of $V$, by
\begin{equation*}
\phi(\zeta) = \bigcap_{i=1}^k \phi_i(\Xi_i(\zeta)).
\end{equation*}
Hence we have 
$$\bigcap_{i=1}^k \Omega^p_{(R_i,*C'_i, !B'_i), m}= (\wedge^p \phi(\Gamma(m)^*) )x^m.
$$
Now, we just need to show that $\phi_{(R, !B, *C)}=\phi$.

Let $\zeta\in \fX.$ Use $E_\zeta$ to denote the torus invariant stratum on $X$ that corresponds to $\zeta$. Consider the case that $E_\zeta$ that is an irreducible component of $B,$ i.e. $\zeta\in \fB$. According to the assumption, there is an irreducible component $F'$ in $B'$, that dominates $E_\zeta$. Consider the face $\zeta'\in \fX'$ that corresponds to $F'$. We have $|\zeta'|\subset |\zeta|\subset N_\Q.$ Hence $\zeta'\prec \Xi_i(\zeta),$ for all $i$. Since $\phi_i(\zeta')=\{0\}$, $\phi_i(\Xi_i(\zeta))=\{0\}$, due to $\phi$ being decreasing. This implies $\phi(\zeta)=\{0\}$. Furthermore, since $\phi$ is also decreasing, we have
$$\phi(\zeta)=\{0\},  \text{if }  \zeta \in \Star_{\fX}(\fB).
$$
This concludes the case that $E_\zeta$ is contained in $B$.

Then, we consider the case that $E_\zeta$ is contained in $C$ while not in $B$. The assumption that $C'$ contains all divisorial components of $f^{-1}(C\setminus (C\cap B))$ implies that 
$$\{\nu\in \fA|\nu\prec \zeta\}=\{\nu \in \fA'| |\nu|\subset |\zeta|\}.$$
Since it is always true that
$$\bigcup_i \{\nu\in \fA'_i|\nu \prec \Xi_i(\zeta)\}= \{\nu \in \fA'| |\nu|\subset |\zeta|\}.$$
Hence
$$\{\nu\in \fA|\nu\prec \zeta\}=\bigcup_i \{\nu\in \fA'_i|\nu \prec \Xi_i(\zeta)\},
$$
and we can conclude this case using the definition of $\phi_{(R, !B, *C)}$ (\ref{E: def of phi}).

Lastly, we consider the case that $E_\zeta$ is not contained in $C^X\cup B^X$. In this case,  on the one hand, we have $\phi_{(R, *C^{Y}, !B^{Y})}(\zeta)=V_{\zeta^*}.$ On the other hand, for each $i$, the torus invariant stratum in $X'_i$ that corresponds to $\Xi_i(\zeta)$ is not contained in $B'_i$, due to $B'$ is contained in $f^{-1}B$. Hence $\phi_i(\Xi_i(\zeta))\neq \{0\}$, which further implies $\phi_i(\Xi_i(\zeta))\supset V_{\Xi_i(\zeta)^*}$.
By definition, $\Xi_i(\zeta)^*= \sigma_i \cap \Xi_i(\zeta)^\perp,  \zeta^*= \sigma \cap \zeta^\perp$. Hence we have $\Xi_i(\zeta)^*\supset \zeta^*$, which implies $\phi_i(\Xi_i(\zeta))\supset V_{\zeta^*}$, and $\phi(\zeta)\supset V_{\zeta^*}$. Moreover, for each ray $\nu \prec \zeta$, we can find $j\in\{1,...,k\}$ such that $\nu \in \fX_{j}$. Then, we have $\nu\in \fA'_{j},$ which is due to the assumption that $C'$ is contained in $f^{-1}(C)$ union all $f$-exceptional divisors on $X'$. Hence, $\phi_{j}(\Xi_j(\zeta))\subset V_{\nu^*}.$ 
This implies $\phi(\zeta)\subset \bigcap_{\nu \prec \zeta} V_{\nu^*}= V_{\zeta^*}.$
\end{proof}

\subsection{Twistor de Rham complexes of a toric triple}
In this paper, instead of considering de Rham complexes with a Hodge filtration, we will deal with its induced twistor de Rham complex, for the ease of exposition. 

For any toric triple $(X, !B, *C)$, we can define the de Rham complex associated to it:
$$\DR^\bullet_{(X, !B, *C)}=[\cO_X(-B) \to \Omega^1_{(X, !B, *C)} \to ... \to \Omega^n_{(X, !B, *C)}][n].
$$
Please refer to \cite[4.4]{Dan78} for more details. This is naturally a sub-complex of $\DR^\bullet_{(X, *D)}$.

Consider the so called "stupid" Hodge filtration $F_\bullet$ on $\DR^\bullet_{(X, !B, *C)}$, i.e. 
\begin{equation*}
    F_i\DR^\bullet_{(X, !B, *C)}=\begin{cases}
    [0], & \text{for }  i<-n, \\
    [\Omega^{-i}_{(X, !B, *C)} \to ... \to \Omega^n_{(X, !B, *C)}][n+i], &\text{for $-n\leq i\leq 0$},\\
    \DR^\bullet_{(X, !B, *C)}, &\text{for $i>0$}.
  \end{cases}
\end{equation*}

Consider the Rees construction of $F_\bullet \DR^\bullet_{(X, !B, *C)}$:
$$\bigoplus_{i\in \Z} F_i\DR^\bullet_{(X, !B, *C)}\cdot z^i,$$ 
as a graded $\C[z, z^{-1}]$-complex on $X$. Let $\cX= X\times  \A^1_z$, where $\A^1_z$ is the affine line with $z$ as its coordinate. The Rees construction naturally induces a graded $\cO_{\A^1_z}$-linear complex, and we denote it by $\widetilde\DR^\bullet_{(X, !B, *C)}$, and we call it the \emph{twistor de Rham complex} of $(X, !B, *C)$. It can be checked directly that 
\begin{align*}
    \widetilde\DR^\bullet_{(X, !B, *C)}|_{z=1}&= \DR^\bullet_{(X, !B, *C)},\\
    \widetilde\DR^\bullet_{(X, !B, *C)}|_{z=0}&= \oplus_p \Omega^p_{(X, !B, *C)}[n-p].
\end{align*}

\subsection{Saito's mixed Hodge module}\label{S: mixed Hodge module}
Although the main results in this paper does not essentially rely on Saito's theory of mixed Hodge module, it can be very helpful for using such theory to understand the underlying mechanism. We briefly recall the notion of a (graded polarizable) mixed Hodge module.

When $X$ is a complex manifold of dimension $n$, a mixed Hodge module (forgetting the data of the weight filtration and the polarization) is represented by a filtered $\sD_X$-module $(M, F_\bullet)$. Usually, the filtered objects are hard to deal with, especially in the derived setting. In this paper, we use Sabbah's notion of twistor $\sD$-module to replace the role of such filtered $\sD$-modules. Set $\cX=X \times \A^1_z$, with $\pi: \cX\to \A^1_z$ the natural projection. Recall the non-commutative $\cO_\cX$-algebra $\sR_X$, which is the induced sheaf of the graded Rees algebra of $(\sD_X, F_\bullet)$, the ring of differential operators with the filtration induced by the degree, \cite{Sab05}.  We will use a strict (i.e. $z$-torsion free) holonomic $\sR_X$-module $\cM$ on $\cX$, that is induced by the Rees construction of $(M, F_\bullet)$, to represent a mixed Hodge module. 
As in the previous subsection, also \cite[0.3.]{Sab05}, the Rees construction of the filtered de Rham complex of $(M, F_\bullet)$ will give us the twistor de-Rham complex,
$$\widetilde\DR(\cM)=[\cM\to \cM\otimes_{\cO_\cX} \Omega^1_\cX\to ...\to \cM\otimes_{\cO_\cX} \Omega^n_\cX][n],$$ 
with $\Omega^1_\cX:= z^{-1}\Omega^1_{\cX/\A^1_z}$, and differential maps being $\pi^*\cO_{\A^1_z}$-linear.
In particular, 
$$(\widetilde\DR(\cM))|_{z=1}=\DR(\cM|_{z=1}),$$
is a perverse sheaf, and we call $\cM$ the induced mixed Hodge module by such perverse sheaf. It is unique (, as a $\sR$-module,) due to \cite[25.5.]{Moc07b}.

In the case that $X$ is a variety (possibly singular), $\{U_i\}_{i\in I}$ is a locally finite open cover of $X$, with a closed embedding $k_i: U_i\to V_i$ to a smooth variety, for each $i\in I$. This induces the covering $\{\cU_i\}_{i\in I}$, (with $\cU_i=U_i\times \A^1_z$,) with closed embeddings $\tilde k_i: \cU_i\to \cV_i$.  Then, a mixed Hodge module on $X$, is defined as a set $\{\cM_i\}_i\in I$, with each $\cM_i$ is a $\sR_{V_i}$-module on $\cV_i$ represents a mixed Hodge module on $V_i$ that is supported on $U_i$, and they are compatible on the intersections. Please refer to \cite[\S2]{Sa90} for the precise definition. 

Note that $\widetilde\DR(\cM_i)|_{z=1}$ can be naturally restricted to a perverse sheave on $U_i$ and the compatibility guarantees that we can glue them together getting a perverse sheaf on $X$, and we call that $\{\cM_i\}_{i\in I}$ is the mixed Hodge module (forgetting the data of the weight filtration and the polarization) induced by such perverse sheaf on $X$. 

Similarly, we can consider $\{\cM_i^\bullet\}_{i\in I}$ as an object in the (bounded) derived category of mixed Hodge modules on $X$.
\begin{definition}\label{D: DR on singular variety}
    Notations as above, we define the \emph{twistor de Rham complex} of the mixed Hodge module $\cM=\{\cM_i\}_{i\in I}$ on $X$ to be an object in the derived category of $\pi^*\cO_{\A^1_z}$-modules on $\cX$, denoted by $\widetilde\DR(\cM)$, satisfying that, $k_{i,*}\widetilde\DR(\cM)|_{U_i}\simeq \widetilde \DR_V^\bullet \cM_i$.
    Given $\cM^\bullet=\{\cM_i^\bullet\}_{i \in I}$ as an object in the derived category of mixed Hodge modules on $X$, we can similarly define $\widetilde\DR(\cM^\bullet)$, also as an object in the derived category of $\pi^*\cO_{\A^1_z}$-modules on $\cX$.
\end{definition}

\subsection{Reformulation of the smooth case}\label{S: smooth case}


Let's reformulate Deligne's logarithmic comparison theorem in the case that $(X, !B, *C)$ is a smooth toroidal triple.

\begin{thm}
    In the above setting, the two perverse sheaves $Rj_{C,*}\circ Ri_{B,!} \C_U[n]$ and $Rj_{B,!} \circ Ri_{C,*} \C_U[n]$ induce a same mixed Hodge module. (Hence, they are quasi-isomorphic.) If we use $\cM$ to denote the $\sR_X$-module underling such mixed Hodge module, we have 
    $$\widetilde\DR_X \cM\simeq \widetilde\DR^\bullet_{(X, !B, *C)}.
    $$
\end{thm}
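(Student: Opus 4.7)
The plan is to combine Deligne's Logarithmic Comparison Theorem (Theorem~1.1) with Saito's theory of mixed Hodge modules as recalled in \S2.3. Since Theorem~1.1 already gives a quasi-isomorphism between the two perverse sheaves via $\DR^\bullet_{(X, !B, *C)}$, the content to prove is that this quasi-isomorphism lifts to an equality of mixed Hodge modules, and that the twistor de Rham complex of the common underlying $\sR_X$-module is indeed $\widetilde\DR^\bullet_{(X, !B, *C)}$.

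First, I would apply the mixed Hodge module versions of the functors $Rj_{C,*}$, $Ri_{B,!}$, $Rj_{B,!}$, $Ri_{C,*}$ to the trivial Hodge module $\Q_U^H[n]$ on $U$, obtaining two a priori distinct mixed Hodge modules on $X$. Their underlying perverse sheaves coincide by Theorem~1.1. Invoking the uniqueness of the $\sR_X$-module lift of a given perverse sheaf, recalled in \S2.3 via \cite[25.5]{Moc07b}, the two mixed Hodge modules must agree; let $\cM$ denote their common underlying $\sR_X$-module.

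Next, I would identify $\widetilde\DR_X\cM$ with $\widetilde\DR^\bullet_{(X, !B, *C)}$. In the smooth normal-crossing setting, Saito's explicit construction describes $\cM|_{z=1}$ as the localization $\cO_X(*D)$ viewed as a $\sD_X$-module, endowed with the Hodge filtration $F_\bullet$ whose $0$-step is $\cO_X(-B)$ and whose subsequent steps are generated by applying $\sD_X$ subject to the prescribed order-of-pole along $C$ and vanishing along $B$. Deligne's filtered quasi-isomorphism, in its twisted form as recorded in \cite[2.13]{EV92}, then gives
\[
\bigl(\Omega^\bullet_X(\log D)(-B), F^{\mathrm{stupid}}_\bullet\bigr) \xrightarrow{\sim} \bigl(\cO_X(*D) \otimes_{\cO_X} \Omega^\bullet_X, F_\bullet\bigr);
\]
applying the Rees construction to both sides converts this filtered quasi-isomorphism into the desired isomorphism $\widetilde\DR^\bullet_{(X, !B, *C)} \simeq \widetilde\DR_X\cM$ of graded $\pi^*\cO_{\A^1_z}$-linear complexes on $\cX$.

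The main obstacle is pinning down the exact Hodge filtration on $\cO_X(*D)$ corresponding to the mixed $!$/$*$-extensions and verifying the filtered quasi-isomorphism above in the twisted $(-B)$ setting. This is classical but requires a careful local computation in coordinates adapted to the normal-crossing divisor $D = B + C$. Once this is verified, strictness of the Hodge filtration on an MHM (a built-in feature of Saito's formalism) ensures that the Rees construction produces an honest isomorphism at the twistor level rather than merely a quasi-isomorphism.
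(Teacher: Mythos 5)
Your overall strategy is the one the paper itself implicitly relies on: the paper offers no proof of this theorem, presenting it as a reformulation of Deligne's Theorem~1.1 together with the uniqueness of the $\sR_X$-module lift of a perverse sheaf (cited from \cite[25.5.]{Moc07b} in \S\ref{S: mixed Hodge module}) and the filtered log--de Rham comparison of \cite{EV92}, which is exactly the skeleton of your argument. Your step deducing the identity of the two mixed Hodge modules from Deligne's quasi-isomorphism plus uniqueness is fine (one only needs to observe, as you implicitly do, that the two complexes of mixed Hodge modules are concentrated in perverse degree zero because their underlying constructible complexes are perverse and the forgetful functor is conservative).

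There is, however, one concrete error in your third step: when $B\neq\emptyset$, the $\sD_X$-module $\cM|_{z=1}$ underlying $\C_U[n][*C][!B]$ is \emph{not} the localization $\cO_X(*D)$. A $!$-extension is not a localization; already for $X=\A^1$, $B=\{0\}$, $C=\emptyset$, the module $j_{B!}\cO_U$ is an extension of $\cO_X$ by the delta-function module $i_{+}\C$ and in particular has $\cO_X$-torsion supported on $B$, whereas $\cO_X(*D)$ is torsion free and its de Rham complex computes $Rj_{*}\C_U[1]$ rather than $Rj_{!}\C_U[1]$. The correct object is the mixed extension (dual-localized along $B$, localized along $C$), described via the $V$-filtration or Deligne lattices; it is for \emph{that} filtered module that the filtered quasi-isomorphism with $\bigl(\Omega^\bullet_X(\log D)(-B), F^{\mathrm{stupid}}\bigr)$ holds, as in \cite[\S 2]{EV92}. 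Since you defer the verification to a ``careful local computation,'' starting that computation from $\cO_X(*D)$ would lead you to the wrong comparison (you would recover $Rj_{D,*}\C_U[n]$ for the whole of $D$). With the underlying module corrected, the rest of your argument --- the filtered quasi-isomorphism inducing an isomorphism of Rees complexes --- goes through; note that for this last step you do not even need strictness, only that the map is a filtered quasi-isomorphism, since the Rees construction is exact in each graded degree.
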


Set $\pi: \cX\to \A^1_z$ being the projection. Now, the Hodge-de Rham degeneration can be reformulated as the following theorem, which is a direct corollary of the strictness of proper direct image of mixed Hodge modules. 
\begin{thm}\label{T: torsion free smooth case}
    In the above setting, we have $R^i\pi_*\widetilde\DR^\bullet_{(X, !B, *C)}$ is a torsion free $\cO_{\A^1_z}$-module, for all $i$.
\end{thm}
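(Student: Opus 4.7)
The plan is to deduce this from the reformulation in the previous theorem together with Saito's strictness theorem for the proper direct image of a mixed Hodge module (equivalently, in Sabbah--Mochizuki's language, the preservation of strictness under proper pushforward of a mixed twistor $\sD$-module), which is tailor-made for exactly this kind of assertion.

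First, I would invoke the previous theorem, which produces a strict holonomic $\sR_X$-module $\cM$ underlying a mixed Hodge module on $X$, together with an isomorphism
\[
\widetilde\DR_X \cM \;\simeq\; \widetilde\DR^\bullet_{(X,!B,*C)}
\]
in the derived category of $\pi^*\cO_{\A^1_z}$-modules on $\cX$. Applying $R\pi_*$ to both sides reduces the theorem to showing that $R^i\pi_*\widetilde\DR_X\cM$ is $z$-torsion-free for every $i$.

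Second, since $X$ is proper, so is $\pi:\cX\to\A^1_z$, and the graded $\C[z]$-modules $R^i\pi_*\widetilde\DR_X\cM$ are precisely the cohomologies of the pushforward of $\cM$ along the structure map $a_X:X\to\mathrm{pt}$ in the category of mixed Hodge modules. Saito's theorem asserts that this pushforward is again (a complex of) mixed Hodge module(s) on a point; in the twistor formalism this is exactly the strictness of each cohomology $\sR_{\mathrm{pt}}$-module, which over the one-dimensional base $\A^1_z$ coincides with being $z$-torsion-free as an $\cO_{\A^1_z}$-module. Combined with the reduction above, this yields the desired conclusion.

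The only step requiring genuine attention is the compatibility in the first identification: one must verify that the isomorphism supplied by the previous theorem respects the $\pi^*\cO_{\A^1_z}$-module structure, so that after $R\pi_*$ both sides acquire matched graded $\C[z]$-module structures. This is built into the Rees-algebra construction of $\widetilde\DR$ and so presents no real obstacle; beyond it, the result is a direct corollary. As a sanity check, specializing the resulting torsion-free $\cO_{\A^1_z}$-modules to $z=1$ and $z=0$ recovers the topological cohomology and the Hodge cohomologies respectively, thereby reproducing the $E_1$-degeneration of Theorem \ref{T: smooth SS deg}.
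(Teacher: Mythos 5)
Your argument is exactly the paper's: Theorem \ref{T: torsion free smooth case} is stated there as a direct corollary of the preceding reformulation $\widetilde\DR_X\cM\simeq\widetilde\DR^\bullet_{(X,!B,*C)}$ together with the strictness of the proper direct image of mixed Hodge modules (in the twistor formalism, $z$-torsion-freeness of the pushforward to a point). The proposal is correct and adds only the harmless verification that the identification is $\pi^*\cO_{\A^1_z}$-linear.
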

The above theorem implies Theorem \ref{T: smooth SS deg}. This is because, by cohomology and base change,
\begin{align*}
    \H^i \DR^\bullet_{(X, !B, *C)}&=(R^i\pi_*\widetilde\DR^\bullet_{(X, !B, *C)})|_{z=1}\\
    &=(R^i\pi_*\widetilde\DR^\bullet_{(X, !B, *C)})|_{z=0}\\
    &=\oplus_{p+q=n+i} H^q(X, \Omega^p_{(X, !B, *C)}).
\end{align*}

\section{Simplicial case}
In this section, we prove all of our main results: Theorem \ref{T: degen of SS}, Proposition \ref{P: log comparison of sorted tor tri.}, Proposition \ref{P: twisted log comparison of sorted tor tri.} and Theorem \ref{T: KVB vanishing of toroidal tri.}, hold with an extra assumption that the $X$ is simplicial. The extra assumption is equivalent to that $X$ admits only quotient singularities. We say $(X, !B, *C, !\!*\bh H)$ is a simplicial toroidal/toric quadruple, if $X$ is so.

\begin{prop}\label{P: simplicial comparison}
    We follow the notations at the beginning of Introduction. Assume that we have a simplicial toroidal triple  $(X, !B, *C)$. Then we have the following quasi-isomorphism of perverse sheaves on $X$.
    $$\C_X[*C][!B] [n]\simeq \C_X[!B][*C] [n].
    $$ 
    Denote both of them by $\C_X[!B+*C][n]$. Then we have $\widetilde\DR^\bullet_{(X, !B, *C)}$ is the twistor de Rham complex of the mixed Hodge module induced by the perverse sheaf $\C_X[!B+*C][n]$ on $X$.
\end{prop}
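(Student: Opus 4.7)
\emph{Proof sketch.} The plan is to reduce to the smooth toric setting of Subsection \ref{S: smooth case} via the standard finite cover of a simplicial affine toric variety by a smooth one, and then descend by proper base change and $G$-invariants. Since the claim is analytically local on $X$, I would first assume $X$ is an affine simplicial toric variety associated to a simplicial cone $\sigma\subset N_\Q$ with primitive ray generators $v_1,\ldots,v_r$; then set $N':=\Z v_1+\cdots+\Z v_r\subset N$, let $Y$ be the smooth affine toric variety defined by $\sigma$ in the lattice $N'$, and consider the finite surjective toric quotient morphism $\pi\colon Y\to X$ presenting $X=Y/G$ for the finite abelian group $G:=N/N'$. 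Pulling back produces a smooth toroidal triple $(Y,!\tilde B,*\tilde C)$ with $\tilde B:=(\pi^{-1}B)_{\mathrm{red}}$ and $\tilde C:=(\pi^{-1}C)_{\mathrm{red}}$.

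For the perverse sheaf assertion, next I would invoke the smooth case on $(Y,!\tilde B,*\tilde C)$, obtaining a canonical quasi-isomorphism $\C_Y[!\tilde B][*\tilde C][n]\simeq \C_Y[*\tilde C][!\tilde B][n]$ together with an underlying mixed Hodge module $\tilde\cM$ satisfying $\widetilde\DR_Y(\tilde\cM)\simeq \widetilde\DR^\bullet_{(Y,!\tilde B,*\tilde C)}$. Because $\pi$ is finite and proper, $\pi_*=\pi_!$ is $t$-exact for the perverse $t$-structure, and proper base change applied to the Cartesian squares cut out by $\tilde B$ and $\tilde C$ shows that $\pi_*$ commutes with $[!B]$ and $[*C]$. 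Taking $G$-invariants is an exact functor on sheaves of $\C$-vector spaces with $G$-action and sends $\pi_*\C_Y$ to $\C_X$, since $\pi$ is a geometric quotient. Combining these facts yields $(\pi_*\C_Y[!\tilde B][*\tilde C])^G\simeq \C_X[!B][*C][n]$ and analogously with the orders of $!$ and $*$ reversed, proving the first claim and defining $\C_X[!B+*C][n]$.

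For the twistor de Rham identification, I would set $\cM:=(\pi_*\tilde\cM)^G$; interpreted through a local embedding of $X$ into a smooth ambient, $\cM$ becomes a $\sR_X$-module underlying a mixed Hodge module, and by the perverse identification above together with the uniqueness statement \cite[25.5.]{Moc07b}, $\cM$ is the mixed Hodge module attached to $\C_X[!B+*C][n]$. Since the twistor de Rham functor of Definition \ref{D: DR on singular variety} commutes with finite proper pushforward and with the exact functor $(-)^G$, we obtain
\[
\widetilde\DR_X(\cM)\simeq \bigl(\pi_*\widetilde\DR^\bullet_{(Y,!\tilde B,*\tilde C)}\bigr)^G,
\]
so the proof reduces to checking, for each $p$, the isomorphism $(\pi_*\Omega^p_{(Y,!\tilde B,*\tilde C)})^G\simeq \Omega^p_{(X,!B,*C)}$ compatibly with the de Rham differentials and the Rees variable $z$.

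This last step is a direct graded computation using the explicit modules from Section 2.1: the dual inclusion $M\hookrightarrow M'$ has cokernel $G^\vee$, so the $G$-action on $\C[\sigma^\vee\cap M']$ is the character action of the $M'/M$-grading and its invariants recover $\C[\sigma^\vee\cap M]$; and since the map $\phi$ of (\ref{E: def of phi}) depends only on the rational fan and on the partition $\fA\sqcup\fB\sqcup\fC$ of its rays, which coincide for $X$ and $Y$, the $G$-invariants of $\Omega^p_{(R',!\tilde B,*\tilde C)}=\bigoplus_{m'\in \sigma(-\tilde B)\cap M'}(\wedge^p\phi(\Gamma(m')^*))x^{m'}$ pick out precisely $\Omega^p_{(R,!B,*C)}$; compatibility with the differentials is automatic since both arise by restriction from the de Rham differential on the big torus. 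The main obstacle, in my view, is not the combinatorics but rather the careful verification, using Sabbah's twistor framework from Subsection \ref{S: mixed Hodge module}, that finite pushforward followed by $G$-invariants really sends the smooth mixed Hodge module $\tilde\cM$ to the intended mixed Hodge module on the singular quotient; once that functoriality is set up, everything else is proper base change and a direct graded calculation.
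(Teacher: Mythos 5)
Your proposal is correct and follows essentially the same route as the paper: both reduce to the affine case, pass to the finite torus-equivariant smooth cover $Y\to X=Y/G$, invoke the smooth case upstairs, and descend — your $G$-invariants functor is exactly the image of the trace projector that the paper uses, and the key graded computation $(\pi_*\Omega^p_{(Y,!\tilde B,*\tilde C)})^G\simeq\Omega^p_{(X,!B,*C)}$ is the paper's identity $\Omega^p_{(R,!B,*C)}\otimes_R\hat R\simeq\Omega^p_{(\hat R,!\hat B,*\hat C)}$ read in the other direction.
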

\begin{proof}
    Since the statement is local, we can further assume that $(X, !B, *C)$ is an affine toric triple. Since it is also simplicial, we can find $f:\hat X \to X$ being a torus-compatible finite cover, with $\hat X$ being smooth, \cite[Example 1.3.20.]{CLS}. Set $\hat C= f^{-1}C, \hat B=f^{-1} B$, and $(\hat X, !\hat B, *\hat C)$ is a divisorial and smooth affine toric triple. 

    We have the following two commutative diagrams:
$$\begin{tikzcd}
    \hat U \arrow[r, " i_{\hat B}"] \arrow[d, "f_0"] & \hat U_{\hat B} \arrow[d, "f_B"] \arrow[r, "j_{\hat C}"] & \hat X \arrow[d, "f"] \\
    U \arrow[r, "i_B"]   &  U_B   \arrow[r, "j_C"] &  X
\end{tikzcd}
$$
$$\begin{tikzcd}
    \hat U \arrow[r, " i_{\hat C}"] \arrow[d, "f_0"] & \hat U_{\hat C} \arrow[d, "f_C"] \arrow[r, "j_{\hat B}"] & \hat X \arrow[d, "f"] \\
    U \arrow[r, "i_C"]   &  U_C   \arrow[r, "j_B"] &  X
\end{tikzcd}
$$
    The trace map makes $\C_U[n]$ a direct summand of $f_{0,*} \C_{\hat U}[n]$ as a perverse sheaf on $U$. Since taking trace is compatible with open embeddings  $Rj_{B,!}$, $Ri_{C,*}$, etc, combining the previous two commutative diagram, we get that 
    $$Rj_{B,!} \circ Ri_{C,*} \C_U[n]\simeq Rj_{C,*} \circ Ri_{B,!} \C_U[n],$$
    with both being a direct summand of $f_* \C_{\hat X}[!\hat B+*\hat C][n]$ as the image of the trace map, in the abelian category of perverse sheaves on $X$. This proves the first statement.
        
    Denote $X=\Spec (R)$, and $\hat X=\Spec (\hat R)$. By the definition, we see that 
    $\Omega^p_{(R, !B, *C)}\otimes \hat R\simeq \Omega^p_{(\hat R, !\hat B, *\hat C)},$
    Hence, $f^*\Omega^p_{(X, !B, *C)}=\Omega^p_{(\hat X, !\hat B, *\hat C)}$. By projection formula, we have $\Omega^p_{(\hat X, !\hat B, *\hat C)}$ is a direct summand of $f_*\Omega^p_{(\hat X, !\hat B, *\hat C)}$ as the image of the trace map. This implies that the complex $\widetilde \DR^\bullet_{(\hat X, !\hat B, *\hat C)}$ is a direct summand of $\tilde f_* \widetilde \DR_{(\hat X, !\hat B, *\hat C)}$ as the image of the trace map, where $\tilde f: \hat \cX\to \cX$ is the naturally induced map. Let $\hat\cM$ be the mixed Hodge module induced by $\C_{\hat X}[!\hat B+*\hat C]$. Then, due to the compatibility of direct image functor and the de Rham functor, we have $\tilde f_*(\widetilde\DR_{\hat\cX}(\hat\cM))\simeq \widetilde\DR(f_\dagger \hat\cM)$. Combining the smooth case, \S 2.5., and the computation in the previous paragraph, we can conclude the proof.
    \end{proof}

The previous proposition clearly implies Proposition \ref{P: log comparison of sorted tor tri.}. It also implies Theorem \ref{T: degen of SS} as in the smooth case, \S \ref{S: smooth case}.

\begin{lemma}\label{L: twited comparison with Hodge module in simplicial case}
    In the setting of Proposition \ref{P: simplicial comparison}, and assume that there is a compatible Weil divisor $L$ on $X$. Let $(X, !F, *G, !\!* \bh H)$ be the induced toroidal quadruple. Then, there exists a mixed Hodge module $\cN$ on $X$, satisfying 
    $$\widetilde\DR(\cN)\simeq \widetilde \DR^\bullet_{(X, !F, *G, !\!* \bh H)}.$$
    Moreover, if we use $\cP$ to denote the perverse sheaf $\DR(\cN|_{z=1})$, we have
    $$\cP\simeq \cP[*(G+H)][!F]\simeq \cP[!F][*(G+H)] \simeq \cP[*G][!(F+H)]\simeq \cP[!(F+H)][*G].$$
    as perverse sheaves on $X$.
\end{lemma}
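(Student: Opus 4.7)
The plan is to mimic the argument of Proposition \ref{P: simplicial comparison} in the twisted setting, using Esnault--Viehweg on a smooth finite cover and descending via a trace projector. Since both the existence of $\cN$ and the four-way equivalence for $\cP$ are local on $X$, we may assume $X=\Spec(R)$ is an affine simplicial toric variety. By \cite[Example 1.3.20]{CLS}, there is a torus-equivariant finite cover $f:\hat X\to X$ with $\hat X$ smooth; set $\hat B=f^{-1}(B)$, $\hat C=f^{-1}(C)$, and $\hat L=f^{*}L$ (well-defined as a $\Q$-Cartier divisor by simpliciality). Then $\hat L$ is compatible with the smooth toroidal triple $(\hat X,!\hat B,*\hat C)$, and induces a smooth toroidal quadruple $(\hat X,!\hat F,*\hat G,!\!*\hat\bh \hat H)$ whose divisorial data is the set-theoretic preimage of the data on $X$.

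On the smooth cover, the twisted logarithmic comparison theorem of \cite{EV92} provides a $\sD_{\hat X}$-module $\hat M$ obtained from $\cO_{\hat X}(\hat L)$ together with the logarithmic connection whose residues along the components of $\hat H$ are prescribed by $\hat\bh\in(0,1)^{\bullet}$, and whose filtered (equivalently, twistor) de Rham complex is quasi-isomorphic to $\widetilde\DR^\bullet_{(\hat X,!\hat F,*\hat G,!\!*\hat\bh \hat H)}$. Because the residues of $\hat H$ lie in the open interval, the attached mixed Hodge module $\hat\cN$ has the property that its underlying perverse sheaf $\hat\cP:=\DR(\hat\cN|_{z=1})$ satisfies $\hat\cP\simeq\hat\cP[*(\hat G+\hat H)][!\hat F]\simeq\hat\cP[!\hat F][*(\hat G+\hat H)]\simeq\hat\cP[*\hat G][!(\hat F+\hat H)]\simeq\hat\cP[!(\hat F+\hat H)][*\hat G]$, the point being that $j_!$ and $j_*$ extensions along $\hat H$ coincide for a local system with residues in $(0,1)$.

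To descend, I would establish a twisted variant of Proposition \ref{P: pushforward formula}: since $f$ is torus-equivariant, quasi-\'etale in codimension one, and compatible with the reflexive hulls, the trace map realizes $\Omega^p_{(X,!F,*G,!\!*\bh H)}$ as a direct summand of $f_{*}\Omega^p_{(\hat X,!\hat F,*\hat G,!\!*\hat\bh \hat H)}$. Consequently $\widetilde\DR^\bullet_{(X,!F,*G,!\!*\bh H)}$ is the image of the trace projector on $\tilde f_{*}\widetilde\DR^\bullet_{(\hat X,!\hat F,*\hat G,!\!*\hat\bh \hat H)}$. Defining $\cN$ to be the summand of $f_{\dagger}\hat\cN$ cut out by the same projector (a morphism of mixed Hodge modules, since $f$ is finite), the compatibility of $\widetilde\DR$ with finite direct images yields $\widetilde\DR(\cN)\simeq\widetilde\DR^\bullet_{(X,!F,*G,!\!*\bh H)}$. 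The four-way equivalence for $\cP=\DR(\cN|_{z=1})$ follows from the corresponding equivalences on $\hat X$ by applying $Rf_{*}$ (which commutes with $[*\bullet]$ and $[!\bullet]$ along the preimage divisors since $f$ is finite) and taking the trace summand on both sides.

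The main technical obstacle is matching the trace projection with the twisted differentials: the differential in $\DR^\bullet$ is built from the logarithmic connection of \cite{EV92}, which on $\hat X$ has fractional residues along $\hat H$. One must verify that the Galois group of $f$ acts by bundle automorphisms compatibly with this connection (so that averaging produces a well-defined splitting of complexes on $X$), and that the reflexive-hull definition of $\Omega^p_{(X,!F,*G,!\!*\bh H)}$ agrees with the invariants of $f_{*}\Omega^p_{(\hat X,!\hat F,*\hat G,!\!*\hat\bh \hat H)}$ under this action. Both reduce to direct computations on the graded pieces indexed by $M$, analogous to those in the proofs of Proposition \ref{P: Omega as reflexive sheaf} and Proposition \ref{P: pushforward formula}.
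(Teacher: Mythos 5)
Your proposal is correct and follows essentially the same route as the paper: reduce to the affine simplicial case, invoke the Esnault--Viehweg twisted logarithmic comparison on the smooth torus-equivariant finite cover to produce $\hat\cN$, and descend both $\cN$ and the four-way extension property via the trace projector on $f_\dagger\hat\cN$, with the reflexive-hull identification checked over the smooth locus and extended across codimension two. The only point worth tightening is that the well-definedness of $\hat L$ and the preservation of the coefficients $\bb,\bc,\bh$ rest on the cover of \cite[Example 1.3.20]{CLS} being unramified at the generic points of the components of $B+C$ (so $f^*(B+C)=\hat B+\hat C$), not merely on simpliciality — exactly the observation the paper makes at the corresponding step.
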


\begin{proof}
    We first show that the lemma holds if we further assume that $X$ is smooth, with $C+B$ is a reduced normal crossing divisor on $X$. This has essentially been constructed in \cite[3.2. Theorem.]{EV92}. See also the construction in \cite[\S4]{Wei23}. More precisely, in that section, we take $\cM$ the mixed Hodge module corresponds to $\C_X[*C+!B][n]$, $D$ is $H$ here, and $\delta=1$. We can use the second identification in Lemma 4.1, \emph{op. cit.}, combined with logarithmic comparison, Proposition 3.7, \emph{op. cit.}, to conclude the proof, with $\cN= \cM_{1}$.
    
    For the simplicial case, since the statement is local, as in the proof of Proposition \ref{P: simplicial comparison}, we can assume $X$ is affine and it has a torus-compatible finite cover $f:\hat X \to X$, with $\hat X$ being smooth. Also, using the construction of \cite[Example 1.3.20.]{CLS}, it is not ramified at the generic point of each irreducible component of $C+B$, hence $f^*(C+B)=\hat C+\hat B$, as $\Q$-Cartier divisors. Then, we can argue that 
    $$(\widetilde\DR^\bullet_{(X, !B, *C)}\otimes p^*\cO_X(L))^{\vv} \hookrightarrow  \tilde f_* (\widetilde \DR_{(\hat X, !\hat B, *\hat C)}\otimes p^*\cO_{\hat X}(f^*L)),$$ which is a direct summand as the image of the trace map. This is true over $X_0\subset X$, the smooth locus of $X$, by using the previous lemma and projection formula. Then, since $\hat X\setminus f^{-1}X_0$ is of at least codimension two, we can conclude the claim by the definition of reflexive extension. 
    
    Now, apply the smooth case of the lemma, we can find $\hat\cN$ on $\hat X$, which satisfies the statement for the smooth toroidal triple $(\hat X, !\hat B, *\hat C)$. Then, we set  $\cN$ being the direct summand of $R \tilde f_\dagger \hat\cN$ induced by the image of the trace map, and the rest follows by the same argument as in the proof of Proposition \ref{P: simplicial comparison}.
\end{proof}

Now, we are ready to prove the generalized Kawamata-Viehweg Vanishing and the generalized Bott Vanishing, in the simplicial setting.

\begin{proof}[Proof of Theorem \ref{T: KVB vanishing of toroidal tri.} with simplicial $X$]
    We first focus on the case that 
    $$L\equiv_\Q A+\bb B-\bc C.$$
    To prove the generalized Kawamata-Viehweg vanishing on $(X, !B, *C)$, let $E$ be a reduced divisor corresponding to a generic section of $\cO_X(nA)$, with $n$ sufficient dividable and positive. Let $B^\circ=B+ E$, $B_E=B\cap E$, $C_E=C\cap F$. Since $E$ is sufficiently generic, we can define $L_E$ as the Weil divisor corresponds to the restriction of $L$ onto $E$. Then $(X, !B^\circ, *C)$ and $(E, !B_E, *C_E)$ are also simplicial toroidal triples. By Proposition \ref{P: SES of adding in B}, we have the following short exact sequence:
    \begin{equation*}
        0\to \Omega_{(X, !B^\circ, *C)}^p\to \Omega_{(X, !B, *C)}^p \to i_*\Omega_{(E, !B_E, *C_E)}^p \to 0.
    \end{equation*}
    Actually, we can argue that this induces another short exact sequence:
    \begin{equation}\label{E: SES}
        0\to (\Omega_{(X, !B^\circ, *C)}^p\otimes \cO_X(L))^{\vv} \to (\Omega_{(X, !B, *C)}^p\otimes \cO_X(L))^{\vv} \to i_*(\Omega_{(E, !B_E, *C_E)}^p\otimes \cO_E(L_E))^{\vv} \to 0.
    \end{equation}
    This is a local property, so we can assume that $X$ is affine. Now, we just need to check the corresponding short exact sequence holds on $\hat X$, the finite torus invariant resolutions of $X$, as in the proof of Proposition \ref{P: simplicial comparison}, which is clear. See Proposition \ref{P: SES of twisted forms by adding a simplicial E}, for a more general version of (\ref{E: SES}), and the details of the proof.

    Note that $L$ is a compatible Weil divisor with respect to $(X, !B^\circ, *C)$, and we denote its induced quadruple being $(X, !F, *E, !\!*\bh H)$, with $H$ containing $E$. Then, by induction on dimension, we just need to show that 
    $$H^q(X, (\Omega^p_{(X, !B^\circ, *C)}\otimes \cO_X(L))^{\vv} )=H^q(X, (\Omega^p_{(X, !F, *E, !\!*\bh H)}) =0, \text{for } p+q>\dim X.
    $$
    
    Apply the previous lemma, we have that $\widetilde \DR^\bullet_{(X, !F, *E, !\!*\bh H)}$ is the twistor de Rham complex of a mixed Hodge module $\cN$ on $X$, which implies $R^i\pi_*\widetilde \DR^\bullet_{(X, !F, *E, !\!*\bh H)}$ is torsion free, for all $i$.
    Since $\DR(\cN|_{z=1})=\DR(\cN|_{z=1})[*E]$, and $X\setminus E$ is affine, we can apply Artin Vanishing and getting
    $$(R^i\pi_*\widetilde \DR^\bullet_{(X, !F, *E, !\!*\bh H)})|_{z=1}= \H^i (X, \DR^\bullet_{(X, !F, *E, !\!*\bh H)})=0, \text{for } i>0.$$
    This implies 
    $$(R^i\pi_*\widetilde \DR^\bullet_{(X, !F, *E, !\!*\bh H)})|_{z=0}= \oplus_{p+q=n+i} H^q(X, \Omega^p_{(X, !F, *E, !\!*\bh H)})=0, \text{for } i>0.$$
    This gives us the Kawamata-Viehweg Vanishing. 
    
    In the case that $(X, !B, *C)$ is actually a toric triple, we can reduce the Bott Vanishing to Kawamata-Viehweg Vanishing using a similar argument as in \cite[Proof of Theorem 1.1]{Wei23}. We set $E$ is an irreducible torus-invariant Weil divisor on $X$ that is not contained in $C+B$, and apply the short exact sequence (\ref{E: SES}).  By induction on dimension and the number of components in $C+B$, we can reduce the Bott Vanishing to the case that $(X, !B, *C)$ is a plenary triple. In such case, according to Proposition \ref{P: Omega as reflexive sheaf}, we have $\Omega^p_{(X, !B, *C)}\simeq \wedge^p(\cO_X^{\oplus n})\otimes \cO(-B)$. Hence we just need to prove the Bott vanishing for $p=n$, which is reduced to the Kawamata-Viehweg Vanishing proved above.

    For the dual vanishings, i.e. the case that 
        $$L\equiv_\Q -A+\bb B-\bc C,$$
    we first note that in our case $\Omega^p_{(X, !B, *C)}$ is maximal Cohen-Macaulay, which can be argued by a same argument as in the proof of \cite[Theorem 9.2.10 (b)]{CLS}. Then both dual vanishings can be deduced from the previous case, using Serre Duality in the sense of \cite[Theorem 9.2.12]{CLS}. 
\end{proof}
\begin{remark}\label{R: no dual van}
    The dual vanishings can also be proved using a dual construction of the mixed Hodge modules, combined with the second short exact sequence in Proposition \ref{P: SES of twisted forms by adding a simplicial E}, without using Serre Duality. Such method will be used for proving Theorem \ref{T: KVB vanishing of toroidal tri.} in the general setting. 
\end{remark}

\section{Resolution of fan triples}
In this section, we will only study the fan structure of toric quadruples, so it will be purely combinatorial. We will try to motivate each newly introduced concept with its later geometric usage. 
\subsection{Sorting functions, and sortedness}
In this subsection, we only consider fan quadruples, and view fan triple as a special case with the last entry being an empty set. We first define sorting functions on a fan quadruple, which will be used for defining varies sortedness on a toric quadruple. 

\begin{definition}
    Given an affine fan quadruple $(\fX,  !\fB, *\fC, !\!*\fH)$, with $\fX$ induced by a cone $\tau$, recall $\fA$ being the set of rays in $\fX$ that are not in $\fB\cup\fC\cup\fH$. We say $\rho$ is \emph{a sorting function} on it, if $\rho$ is a rational linear function on $|\fX|=\tau$, such that
        \begin{equation*}
       \rho|_{\xi}\begin{cases}
        \geq 0, \text{ if $\xi \in \fB$;}
        \\
        \leq 0,  \text{ if $\xi \in \fC$;}
        \\
        =0, \text{ if $\xi\in \fA$.}
        \end{cases}
    \end{equation*}
We say $\rho$ is a sorting function on a general fan quadruple $(\fX,  !\fB, *\fC, !\!*\fH)$, if, for any $\xi\in \fX$, $\rho|_\xi$ is a sorting function on the affine quadruple $(\fX^\xi, !\fB^\xi, *\fC^\xi, !\!*\fH^\xi)$.
    
Moreover, given a ray $\nu \in \fC$, we say the $\rho$ above is \emph{a $\nu$-strict sorting function}, if we further require that,  $\rho|_{\nu\setminus \{0\}}<0$. Moreover, for a subset $\fC^\flat\subset \fC$, we say $\rho$ above is \emph{a $\fC^\flat$-strict sorting function}, (with respect to the fan quadruple $(\fX,  !\fB, *\fC, !\!*\fH)$,) if it is a $\nu$-strict sorting function for all $\nu\in \fC^\flat$.
\end{definition}
        It is clear that if $\rho$ is a $\nu$-strict sorting function on $(\fX, !\fB, *\fC, !\!* \fH)$, then for any cone $\xi\in \fX$ such that $\nu\prec \xi$, $\rho|_\xi$ is also a $\nu$-strict sorting function on the affine fan quadruple $(\fX^\xi, *\fC^\xi, !\fB^\xi, !\!* \fH^\xi)$.

To motivate the following definition, please refer to Theorem \ref{T: log comparison using complex of MHM} and Theorem \ref{T: twisted log comparison using complex of MHM}, which show how various sortedness assumptions affect the freedom we have, for the commutativity of toroidal extensions, and their compatibility with the logarithmic comparison.
\begin{definition}\label{D: sortedness}
    We fix a subset of cones $\fE\subset \fX$, we say a cone $\xi\in \fX$ is \emph{$\fE$-unsettled} if $\fE^\xi=\emptyset$. 
    We fix subsets $\fB^\sharp\subset \fB, \fC^\sharp\subset \fC, \fH^\sharp\subset \fH$. Denote $\fB^\flat=\fB\setminus \fB^\sharp, \fC^\flat=\fC\setminus \fC^\sharp, \fH^\flat=\fH\setminus \fH^\sharp$.
    We say a fan quadruple $(\fX,  !\fB, *\fC, !\!* \fH)$ is $(\fB^\sharp, \fC^\flat, \fH^\sharp)$-\emph{sorted} if for any $\fB^\flat\cup\fH^\flat$-unsettled cone $\xi \in \fX$, the affine quadruple $(\fX^\xi, *\fC^\xi, !\fB^\xi, !\!*\fH^\xi)$ admits a $\fC^{\flat\xi}$-strict sorting function $\rho^\xi$ on it.
    Moreover, we say $(\fX,  !\fB, *\fC, !\!* \fH)$ is \emph{partially-sorted} (resp. \emph{well-sorted}), if it is ($(\emptyset, \fC, \emptyset)$)-sorted (resp. ($(\fB, \fC, \fH)$)-sorted).
\end{definition}

Fix $\fB^\sharp$ and $\fH^\sharp$, and if $(\fX,  !\fB, *\fC, !\!*\fH)$ is both $(\fB^\sharp, \fC^{\flat 1}, \fH^\sharp)$-sorted, and $(\fB^\sharp, \fC^{\flat 2}, \fH^\sharp)$-sorted, then it is $(\fB^\sharp, \fC^{\flat 1}\cup \fC^{\flat 2}, \fH^\sharp)$-sorted, by just adding the two different sorting functions together. 
The following proposition is also clear via the definition.
\begin{prop}\label{P: sortedness by adding boundary}
    Assume a fan quadruple $(\fX,  !\fB, *\fC, !\!*\fH)$ is $(\fB^\sharp, \fC^\flat, \fH^\sharp)$-sorted. Then the fan triple $(\fX, !\fB^\circ, *\fC^\circ, !\!* \fH^\circ)$ is also $(\fB^\sharp, \fC^\flat, \fH^\sharp)$-sorted, if $\fH^\circ \supset \fH$, $\fB^\circ \cup \fH^\circ\supset \fB\cup \fH$, $\fC^\circ \cup \fH^\circ \supset \fC\cup \fH$, $\fB^\circ\supset\fB^\sharp$, $\fC^\circ\supset \fC^\flat$.
\end{prop}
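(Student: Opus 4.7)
The plan is to show that the sorting functions provided by the $(\fB^\sharp, \fC^\flat, \fH^\sharp)$-sortedness of $(\fX, !\fB, *\fC, !\!*\fH)$ can be reused verbatim for $(\fX, !\fB^\circ, *\fC^\circ, !\!*\fH^\circ)$, after first checking that the set of cones where a sorting function is required has only shrunk.

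First, I would verify the containment
\[
\fB^\flat \cup \fH^\flat \;\subset\; \fB^{\circ\flat} \cup \fH^{\circ\flat},
\]
where $\fB^{\circ\flat} := \fB^\circ \setminus \fB^\sharp$ and $\fH^{\circ\flat} := \fH^\circ \setminus \fH^\sharp$. A ray $\nu \in \fB^\flat$ lies in $\fB \cup \fH$, hence in $\fB^\circ \cup \fH^\circ$; if $\nu \in \fB^\circ$, then $\nu \notin \fB^\sharp$ forces $\nu \in \fB^{\circ\flat}$, and if $\nu \in \fH^\circ$, then because $\fH^\sharp \subset \fH$ while $\nu \in \fB$ is disjoint from $\fH$, we get $\nu \notin \fH^\sharp$, so $\nu \in \fH^{\circ\flat}$. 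The case $\nu \in \fH^\flat$ is analogous and easier since $\nu \in \fH \subset \fH^\circ$ while $\nu \notin \fH^\sharp$. This containment implies that every $\fB^{\circ\flat}\cup\fH^{\circ\flat}$-unsettled cone $\xi$ is also $\fB^\flat \cup \fH^\flat$-unsettled, so by hypothesis the affine quadruple $(\fX^\xi, !\fB^\xi, *\fC^\xi, !\!*\fH^\xi)$ admits a $\fC^{\flat\xi}$-strict sorting function $\rho^\xi$.

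Next, I would check that this same $\rho^\xi$ is still a $\fC^{\flat\xi}$-strict sorting function on $(\fX^\xi, !\fB^{\circ\xi}, *\fC^{\circ\xi}, !\!*\fH^{\circ\xi})$ by a short case analysis on each ray $\nu$ of $\xi$. Using that $\fB^\circ, \fC^\circ, \fH^\circ$ are pairwise disjoint, the inclusions $\fB^\circ \cup \fH^\circ \supset \fB \cup \fH$ and $\fC^\circ \cup \fH^\circ \supset \fC \cup \fH$ give by contradiction that $\fB^\circ \subset \fB \cup \fA$ and $\fC^\circ \subset \fC \cup \fA$, while any ray outside $\fB^\circ \cup \fC^\circ \cup \fH^\circ$ must lie in $\fA$. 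Consequently each of the inequalities $\rho^\xi|_\nu \geq 0$ for $\nu \in \fB^{\circ\xi}$, $\rho^\xi|_\nu \leq 0$ for $\nu \in \fC^{\circ\xi}$, and $\rho^\xi|_\nu = 0$ for $\nu$ in none of $\fB^\circ, \fC^\circ, \fH^\circ$ follows from the corresponding property of $\rho^\xi$ on the original quadruple; rays in $\fH^{\circ\xi}$ impose no constraint. Finally, the strict negativity on $\fC^{\flat\xi}$ transfers automatically since $\fC^\flat$ is unchanged and $\fC^\flat \subset \fC^\circ$ by hypothesis.

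The whole argument is purely combinatorial and the only care needed is in the disjointness-driven case analysis of the second step, which is where I would spend most of the attention; otherwise it is a direct unpacking of the definitions.
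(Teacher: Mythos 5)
Your proof is correct and is exactly the unpacking the paper intends: the paper offers no written argument (it declares the proposition ``clear via the definition''), and the intended reasoning is precisely to reuse the original sorting functions after observing that $\fB^\flat\cup\fH^\flat\subset\fB^{\circ\flat}\cup\fH^{\circ\flat}$, so the set of unsettled cones to be checked only shrinks. Both your containment verification and the disjointness-driven case analysis showing $\fB^\circ\subset\fB\cup\fA$, $\fC^\circ\subset\fC\cup\fA$, and that the remaining rays lie in $\fA$, are carried out correctly.
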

It is also straightforward to see the following geometric description of being partially-sorted.
\begin{prop}\label{P: geometric prop of part. sort.}
     Any fan quadruple  $(\fX,  !\fB, *\fC, !\!* \fH)$ is partially-sorted, if and only if, for any $\fB\cup\fH$-unsettled cone $\xi\in \fX$, there exists a face $\zeta\prec \xi$ such that $\fA^\xi$ consists of all the rays of $\zeta$.
\end{prop}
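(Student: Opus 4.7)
The plan is to prove the two directions of the equivalence separately, using only elementary convex geometry of rational polyhedral cones. Throughout, fix a $\fB\cup\fH$-unsettled cone $\xi\in\fX$; by definition this means $\fB^\xi=\fH^\xi=\emptyset$, so the rays of $\xi$ partition as $\fA^\xi\sqcup\fC^\xi$. For partially-sorted we have $\fB^\sharp=\fH^\sharp=\emptyset$ and $\fC^\flat=\fC$, so the sortedness condition at such a $\xi$ collapses to the existence of a rational linear functional $\rho^\xi$ on $\xi$ that vanishes on every ray in $\fA^\xi$ and is strictly negative on every ray in $\fC^\xi$.

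For the ($\Leftarrow$) direction, suppose we are given a face $\zeta\prec\xi$ whose rays are exactly $\fA^\xi$. I would invoke the standard fact that every face of a rational polyhedral cone is cut out by a rational supporting hyperplane: there exists a rational linear functional $\rho^\xi$ with $\rho^\xi\leq 0$ on $\xi$ and $\{\rho^\xi=0\}\cap\xi=\zeta$. Such $\rho^\xi$ vanishes on the rays of $\zeta$, namely $\fA^\xi$, and is strictly negative on all remaining rays of $\xi$, which by the partition above are exactly $\fC^\xi$; thus $\rho^\xi$ is a $\fC^\xi$-strict sorting function as required.

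For the ($\Rightarrow$) direction, suppose $(\fX,!\fB,*\fC,!\!*\fH)$ is partially-sorted and let $\rho^\xi$ be the guaranteed $\fC^\xi$-strict sorting function on $\xi$. Since $\xi$ is the convex hull of its rays and $\rho^\xi\leq 0$ on each of them (being $0$ on $\fA^\xi$ and strictly negative on $\fC^\xi$), we have $\rho^\xi\leq 0$ on all of $\xi$; hence $\zeta:=\xi\cap\{\rho^\xi=0\}$ is a face of $\xi$. A ray $\nu\prec\xi$ lies in $\zeta$ iff $\rho^\xi|_\nu=0$, which by the defining properties of $\rho^\xi$ occurs precisely for $\nu\in\fA^\xi$; hence the rays of $\zeta$ are exactly $\fA^\xi$.

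There is no real obstacle here: both directions amount to rephrasing the sortedness data via convex duality on a single cone. The only minor technicality is ensuring the supporting hyperplane in ($\Leftarrow$) can be taken rational, which holds because $\xi$ is a rational polyhedral cone, so the annihilator of any face is a rational subspace of the dual.
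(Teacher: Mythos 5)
Your proof is correct and follows essentially the same route as the paper: in both, the key observation is that the zero locus of the nonpositive sorting function cuts out a face of $\xi$ with ray set $\fA^\xi$, and conversely a rational supporting functional of such a face is a $\fC^\xi$-strict sorting function. You spell out the converse direction, which the paper's proof leaves implicit, but there is no substantive difference in method.
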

\begin{proof}
    Let $\zeta$ be the smallest face of an $\fB\cup\fH$-unsettled cone $\xi$ that contains $\fA^\xi$. We just need to show that the strict sorting function $\rho^\xi$ on $\zeta$ must be constantly $0$. Actually, otherwise, the zero locus of $\rho^\xi$ defines a smaller face that contains $\fA^\xi$.
\end{proof}

\begin{definition}
    We call such cone $\zeta$ \emph{the distinguished face} in the $\fB\cup\fH$-unsettled cone $\xi$. 
\end{definition}

It is straightforward to check that, by using the standardized coordinates, if $(\fX,  !\fB, *\fC, !\!* \fH)$ is simplicial, then it is always well-sorted. We want to reduce the general case to the simplicial case. To describe the intermediate step, we make the following

\begin{definition}
    Given a fan $\fX$, assume we have a subset of rays $\fE\subset \fX$, we say $\fX$ is $\fE$-simplicial if, for any ray $\nu \in \fE$ and $\tau\in \fX$ with $\nu\prec \tau,$ $\tau=\Cone(\zeta, \nu)$, for some $\zeta\in \fX, \dim \tau=\dim\zeta+1$. 
\end{definition}

In the case that $\fX$ is an affine fan induced by a cone $\tau$, then $\fX$ being $\fE$-simplicial is equivalent to that we can find $\zeta\in \fX$, with $\dim \zeta=\dim \tau- |\fE|$, such that $\tau=\Cone(\xi, \fE)$(, where $|\fE|$ means the number of elements in $\fE$).

\begin{prop}\label{P: sort. of simplicial ext}
    Given an affine fan quadruple $(\fX,  !\fB, *\fC, !\!*\fH)$, with $\fX$ induced by the cone $\tau$, assume there is a subset $\fE\subset \fX$, such that $\fX$ is $\fE$-simplicial, and $\tau=\Cone(\xi, \fE)$ for some $\xi\prec \tau$ as above. Then, $(\fX,  !\fB, *\fC, !\!*\fH)$ is $(\fB^\sharp, \fC^\flat, \fH^\sharp)$-sorted, if and only if $(\fX^\xi, *\fC^\xi, !\fB^\xi, !\!* \fH^\xi)$ is $(\fB^{\sharp \xi}, \fC^{\flat \xi}, \fH^{\sharp \xi})$-sorted.
\end{prop}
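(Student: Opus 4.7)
The plan is to prove both directions separately by analyzing how faces of $\tau$ decompose under the $\fE$-simplicial hypothesis. The starting structural observation is that since $\fX$ is $\fE$-simplicial and $\tau = \Cone(\xi,\fE)$, every cone $\mu\in\fX$ admits a canonical decomposition $\mu = \Cone(\zeta_\mu, \fE'_\mu)$, where $\fE'_\mu = \fE\cap\fX^\mu$ and $\zeta_\mu\prec\xi$ is the face generated by the remaining rays of $\mu$, with $\dim\mu = \dim\zeta_\mu + |\fE'_\mu|$ (obtained by iteratively peeling off the $\fE$-rays via the simplicial condition). In particular, linear functions on $\zeta_\mu$ extend freely to $\mu$ by prescribing their values on each ray of $\fE'_\mu$ independently.

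For the only if direction, I would take any $\zeta\prec\xi$ that is $\fB^{\flat\xi}\cup\fH^{\flat\xi}$-unsettled in $\fX^\xi$. Since every face of $\zeta$ lies in $\fX^\zeta\subset\fX^\xi$, and $\fB^{\flat\xi}=\fB^\flat\cap\fX^\xi$ (similarly for $\fH^\flat$), this cone is automatically $\fB^\flat\cup\fH^\flat$-unsettled when viewed in $\fX$. Hypothesis on $\fX$ then provides a $\fC^{\flat\zeta}$-strict sorting function on $\fX^\zeta$, and $\fC^{\flat\zeta}=\fC^{\flat\xi}\cap\fX^\zeta$ gives exactly what is needed for sortedness of the affine quadruple on $\xi$.

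For the if direction, take any $\fB^\flat\cup\fH^\flat$-unsettled cone $\mu\in\fX$ and decompose $\mu = \Cone(\zeta_\mu,\fE'_\mu)$ as above. Faces of $\zeta_\mu$ are faces of $\mu$, so $\zeta_\mu$ is also $\fB^\flat\cup\fH^\flat$-unsettled, equivalently $\fB^{\flat\xi}\cup\fH^{\flat\xi}$-unsettled in $\fX^\xi$. The hypothesis on $\fX^\xi$ provides a $\fC^{\flat\zeta_\mu}$-strict sorting function $\rho^{\zeta_\mu}$ on $\fX^{\zeta_\mu}$. I would then extend $\rho^{\zeta_\mu}$ linearly to $\mu$ by assigning, for each ray $\nu\in\fE'_\mu$, the value $-1$ if $\nu\in\fC^\flat$ and the value $0$ otherwise. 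Unsettledness forces $\fE'_\mu\cap(\fB^\flat\cup\fH^\flat)=\emptyset$, so the rays of $\fE'_\mu$ land in $\fB^\sharp$, $\fC^\sharp$, $\fC^\flat$, $\fH^\sharp$, or $\fA$, and a routine check against the defining inequalities shows the extension is a $\fC^{\flat\mu}$-strict sorting function on $\fX^\mu$.

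The main obstacle, such as it is, is purely bookkeeping: one must verify that the face decomposition $\mu=\Cone(\zeta_\mu,\fE'_\mu)$ with $\zeta_\mu\prec\xi$ actually exists for every $\mu\prec\tau$ (and not just for top-dimensional $\tau$ itself), which is where the full strength of $\fE$-simplicial, as opposed to just the statement for $\tau$, enters. Once this is in hand, the sorting conditions match up piece by piece between $\zeta_\mu$ and $\fE'_\mu$ without any interference because the two groups of rays are linearly independent, and the indexing identities $\fC^{\flat\zeta_\mu}=\fC^{\flat\mu}\cap\fX^{\zeta_\mu}$ etc.\ make the strictness requirements transparent.
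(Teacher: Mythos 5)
Your proof is correct and follows essentially the same route as the paper: the forward direction is immediate since faces of $\xi$ are faces of $\tau$, and the converse is obtained by writing each unsettled cone as $\Cone(\zeta,\fE'_\mu)$ with $\zeta\prec\xi$ and extending the sorting function linearly by prescribing a negative value on the $\fE$-rays lying in $\fC^\flat$ and zero on the rest. The only cosmetic difference is that the paper peels off one ray of $\fE$ at a time and inducts on $|\fE|$, whereas you perform the full decomposition in one step; your supporting claim that every face of $\tau$ with no rays in $\fE$ is a face of $\xi$ does hold (its rays are all rays of $\xi$, and a face of $\tau$ is the cone generated by its rays), so the bookkeeping you flag goes through.
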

\begin{proof}
    The necessity part is trivial, we just need to show that $(\fX^\xi, !\fB^\xi, *\fC^\xi, !\!*\fH^\xi)$ being $(\fB^{\sharp\xi}, \fC^{\flat\xi}, \fH^{\sharp \xi})$-sorted implies $(\fX,  !\fB, *\fC, !\!*\fH)$ being $(\fB^\sharp, \fC^\flat, \fH^\sharp)$-sorted.

    We only need to show the case that $\fE=\{\nu\}$, for some ray $\nu\prec \tau$, since the general case can then be deduced inductively.
    Now, for any $\fB^\flat\cup \fH^\flat$-unsettled cone $\zeta \prec\xi$, any $\fC^{\flat\zeta}$-strict sorting function $\rho^\zeta$ on $\zeta$ can be extended to a $\fC^{\flat\eta}$-strict sorting function $\rho^\eta$ on $\eta=\Cone(\zeta, \nu)$, by specifying $\rho^\eta|_{\nu\setminus \{0\}}<0$ or $=0$, depending on $\nu\in \fC^\flat$ or $\notin \fC^\flat$. We can conclude the proof.
\end{proof}
\subsection{Efficient and convex subdivisions}
\begin{definition}
    Given a fan $\fX$ and $\fX'$ being a subdivision of $\fX$,
    we say that  $\fX'$ is \emph{an efficient subdivision} of  $\fX$, if it introduces no new rays.
    
    If we further have a fan quadruple $(\fX,  !\fB, *\fC, !\!*\fH)$, we set $\fB'=\fB, \fC'=\fC, \fH'=\fH$. We call such a fan quadruple $(\fX', !\fB', *\fC', \fH')$ \emph{the induced fan quadruple} of the efficient subdivision. We also say $(\fX', !\fB', *\fC', !\!*\fH')$ is \emph{an efficient model} of $(\fX,  !\fB, *\fC, !\!*\fH)$.
\end{definition}

We say that a subdivision $\fX'$ of $\fX$ is \emph{simplicial} if the fan $\fX'$ is so, i.e. all cones in $\fX'$ are simplicial. The subdivision is \emph{locally projective} if, for each cone $\tau\in \fX$, there is a continuous piecewise linear function $\psi_\tau$ on $\tau$, such that it is convex, and the largest pieces where $\psi$ is linear are cones in $\fX'$. Such set of functions $\{\psi_\tau\}$ is called \emph{a set of good functions} for the subdivision $\fX'$. It is \emph{projective} if there exists a set of good functions that match on the the faces of each $\tau,$ hence they glue to a continuous piecewise linear function $\psi$ on $|\fX|$, we call such $\psi$ \emph{a good function}  for the subdivision $\fX'$. For our application, we want the good functions are compatible with our fan quadruple structure. See Lemma \ref{L: Vanishing induction step} for the motivation of the following definition.
\begin{definition}
    We say a subdivision $\fX'$ with a fan quadruple $(\fX', !\fB', *\fC', !\!* \fH')$ is \emph{locally-convex}, with respect to a fan $\fX$, if, for each cone $\tau\in \fX$,  there exists a set of good functions $\{\psi^\tau\}$, such that each $\psi^\tau$ is a sorting function on $(\fX', !\fB', *\fC', !\!* \fH')$ restricted on $\tau$. 
    We call such  $\{\psi_\tau\}$ \emph{a set of good sorting functions with respect to} $(\fX', !\fB', *\fC', !\!* \fH')$ and $\fX$. 
    We say the subdivision is \emph{convex}, if there exists a good function $\psi$, such that it is also a sorting function of $(\fX', !\fB', *\fC', !\!* \fH')$, and we call such $\psi$ \emph{a good sorting function}. 
\end{definition}
Similar to Proposition \ref{P: sortedness by adding boundary}, we also have
\begin{prop}\label{P: convexity preserved by adding}
    If $(\fX', !\fB', *\fC', !\!* \fH')$ is a (locally) convex subdivision of $\fX$, and another fan quadruple $(\fX', *\fC'', !\fB'', !\!* \fH'')$ satisfying $\fH''\supset\fH', \fB''\cup \fH''\supset \fB'\cup \fH', \fC''\cup\fH''\supset \fC'\cup\fH'$,  and  then  $(\fX', *\fC'', !\fB'', !\!* \fH'')$ is also a (locally) convex subdivision of $\fX$.
\end{prop}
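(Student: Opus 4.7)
The plan is to show that the very same (set of) good function(s) witnessing the (local) convexity of the original subdivision $(\fX', !\fB', *\fC', !\!* \fH')$ also witnesses it for the new quadruple $(\fX', *\fC'', !\fB'', !\!* \fH'')$. The ``good function'' part of the definition (piecewise linear on each $\tau \in \fX$, with pieces being cones of $\fX'$, convex, and in the projective case gluing across the faces of the $\tau$'s) depends only on the underlying fans $\fX$ and $\fX'$ and not on any decoration. Therefore the whole content of the proposition is the claim that a sorting function for the primed quadruple is automatically a sorting function for the double-primed one.

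Let $\rho$ be a sorting function for $(\fX', !\fB', *\fC', !\!* \fH')$ on a cone $\tau \in \fX$. Since sorting is a ray-by-ray condition, fix a ray $\nu \prec \tau$ in $\fX'$ and analyze three cases based on the new decoration, using the disjointness of $\fB'', \fC'', \fH''$ within the new fan quadruple together with the three hypotheses $\fH'' \supset \fH'$, $\fB'' \cup \fH'' \supset \fB' \cup \fH'$, and $\fC'' \cup \fH'' \supset \fC' \cup \fH'$. If $\nu \in \fB''$, then $\nu \notin \fC'' \cup \fH''$, so $\nu \notin \fH'$ and $\nu \notin \fC'$, giving $\nu \in \fB' \cup \fA'$ and hence $\rho|_\nu \geq 0$. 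If $\nu \in \fC''$, a symmetric argument gives $\nu \in \fC' \cup \fA'$ and $\rho|_\nu \leq 0$. If $\nu$ is outside $\fB'' \cup \fC'' \cup \fH''$, then $\nu$ is outside each of $\fH'$, $\fB'$, $\fC'$ by the three containments, so $\nu \in \fA'$ and $\rho|_\nu = 0$.

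Hence the sorting conditions for the new quadruple are satisfied by the same $\rho$, and the proposition follows in both the locally convex and convex cases (with the obvious bookkeeping for the local-to-global gluing of the good functions being unaffected). The argument is entirely formal, and there is no real obstacle — it is just a direct verification that the three containment hypotheses are exactly calibrated to preserve the sorting inequalities.
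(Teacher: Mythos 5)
Your proof is correct and is essentially the argument the paper has in mind: the paper omits the verification, remarking only that the statement (like Proposition \ref{P: sortedness by adding boundary}) is clear from the definitions, and your ray-by-ray check that the three containments force the same good (sorting) function to satisfy the new sign conditions is exactly that verification. No gaps.
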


The following proposition shows that the convexity of subdivisions is preserved under composition.
\begin{prop}\label{P: convexity of composition}
    If a subdivision $\fX'$ with a fan quadruple $(\fX', !\fB', *\fC', !\!*\fH')$ is convex with respect to $\fX$, and a subdivision $\fX''$ with a fan triple $(\fX'', *\fC'', !\fB'', !\!*\fH'')$ is convex with respect to $\fX'$, satisfying $\fH''\supset\fH', \fB''\cup \fH''\supset \fB'\cup \fH', \fC''\cup\fH''\supset \fC'\cup\fH'$. Then $(\fX'', *\fC'', !\fB'')$ is convex with respect to $\fX$.
\end{prop}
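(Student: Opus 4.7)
The plan is to take $\Psi = \psi' + \epsilon \psi''$ for a sufficiently small $\epsilon > 0$, where $\psi'$ is a good sorting function witnessing the convexity of $\fX'$ over $\fX$ and $\psi''$ is one witnessing the convexity of $\fX''$ over $\fX'$. I would then argue that $\Psi$ is a good sorting function for $(\fX'', *\fC'', !\fB'')$ over $\fX$.

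First I would check that $\Psi$ is linear on every cone of $\fX''$: $\psi'$ is linear on cones of $\fX'$, hence a fortiori on cones of $\fX''$ (which are subcones of cones of $\fX'$), while $\psi''$ is linear on cones of $\fX''$ by hypothesis, so the sum is too.

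Next, I would verify strict convexity of $\Psi$ across each wall of $\fX''$ within a fixed $\tau \in \fX$. Such walls split into two types: those that are already walls of $\fX'$ inside $\tau$, and those newly introduced in the interior of a single cone of $\fX'$. At a wall of the first type, $\psi'$ has a strictly positive convexity defect while $\psi''$, being only convex on each cone of $\fX'$ separately, may bend the wrong way; shrinking $\epsilon$ below the minimum over such walls of the ratio between the defect of $\psi'$ and the worst deficit of $\psi''$ keeps $\Psi$ strictly convex. At walls of the second type, $\psi'$ is affine across the wall while $\psi''$ is strictly convex across it by the second hypothesis, so $\Psi$ inherits strict convexity independently of $\epsilon$. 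Hence $\Psi$ is a good function for the subdivision $\fX''$ over $\fX$.

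Finally, I would verify the sorting property ray by ray on $\fX''$. For rays $\nu$ already in $\fX'$, the conditions $\fH''\supset\fH'$, $\fB''\cup \fH''\supset \fB'\cup \fH'$, and $\fC''\cup\fH''\supset \fC'\cup\fH'$ ensure that the sign of $\psi'(\nu)$ prescribed by the type of $\nu$ in $\fX'$ is compatible with the sign demanded by its type in $\fX''$, while $\psi''(\nu)$ already satisfies the latter, so for small $\epsilon > 0$ the sum has the correct sign. For new rays of $\fX''$, one uses that $\psi''(\nu)$ carries the correct sign on its own and that the contribution of $\psi'(\nu)$, determined by linear extension on the minimal cone of $\fX'$ containing $\nu$, can be controlled by $\epsilon$ together with the geometric constraints that strict convexity of $\psi''$ imposes on which $\fX''$-types such a new ray can carry. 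I expect this final step to be the main obstacle, because $\psi'$ has no a priori sorting condition relative to the type assigned to a genuinely new ray of $\fX''$, so the careful interplay between the compatibility conditions, the strict convexity of $\psi''$ at new rays, and the freedom to shrink $\epsilon$ is essential in order to conclude that $\Psi$ is a good sorting function realizing the convexity of $\fX''$ over $\fX$.
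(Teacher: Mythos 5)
Your proposal is precisely the paper's proof: the paper defines $\psi_\epsilon=\psi_0+\epsilon\psi_1$ for $0<\epsilon\ll 1$ (your $\Psi=\psi'+\epsilon\psi''$) and asserts the required properties "can be checked directly," which is exactly the verification you carry out for linearity, strict convexity across old and new walls, and the sorting signs. The one step you rightly flag as delicate --- the sorting condition at rays of $\fX''$ not present in $\fX'$, where $\psi'$ imposes no a priori sign --- is also the only step the paper leaves implicit; it causes no trouble in the paper's applications (where the relevant subdivisions are efficient or every new ray is produced by an explicit construction that controls its type), but your instinct that this is where the general statement needs care is sound.
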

\begin{proof}
    Let $\psi_0$ being a good sorting function with respect to $(\fX', !\fB', *\fC')$ and $\fX$, and $\psi_1$ being a good sorting function with respect to $(\fX'', *\fC'', !\fB'')$ and $\fX'$. Now, with abusing notations, we define $\psi_\epsilon=\psi_0+\epsilon \psi_1$, which can be checked directly that, when $0<\epsilon\ll 1$, $\psi_\epsilon$ is a good sorting function with respect to $(\fX'', *\fC'', !\fB'')$ and $\fX$.
\end{proof}

In this paper, we will only use two types of convex subdivisions, one of them is induced by star subdivision, which will be discussed in the following two lemmas. The other type will be introduced in the next subsection.

Let $\fX$ be a fan.  Let $\nu$ be a ray that is supported on $|\fX|$. We use $\fX^*(\nu)$ to denote the star subdivision of $\fX$ at $\nu$, \cite[\S 11.1.]{CLS}. We note that $\fX^*(\nu)$ is $\{\nu\}$-simplicial.

\begin{lemma}\label{L: star convexity}
    $(\fX^*(\nu), !\{\nu\})$ is a convex subdivisions of $\fX$.
\end{lemma}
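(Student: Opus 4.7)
The plan is to exhibit a single continuous piecewise linear function $\psi$ on $|\fX|$ that simultaneously witnesses the convexity of the subdivision $\fX^*(\nu) \to \fX$ and the sorting condition imposed by $!\{\nu\}$; the natural candidate is the classical tent function that vanishes on every original ray of $\fX$ and equals $1$ on $u_\nu$. Let $u \in N$ denote the primitive generator of $\nu$ and let $\sigma \in \fX$ be the minimal cone containing $u$ (so $u$ lies in the relative interior of $\sigma$). Recall that the cones of $\fX^*(\nu)$ are of two types: (i) cones $\zeta \in \fX$ with $u \notin \zeta$, and (ii) cones of the form $\Cone(F, \nu)$, where $F$ is a face of some $\tau \in \fX$ with $\sigma \prec \tau$ and $u \notin F$.

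On cones of type (i), I will set $\psi \equiv 0$; on each cone of type (ii), I will define $\psi(x) = t$, where $x = f + tu$ is the decomposition with $f \in F$ and $t \geq 0$. The key point is that this decomposition is unique: by the standard identity $F = \Span(F) \cap \tau$ for faces of a rational polyhedral cone, together with $u \in \tau$, the hypothesis $u \notin F$ forces $u \notin \Span(F)$, and uniqueness of $t$ follows. Linearity of $\psi$ on each $\Cone(F, \nu)$ is then immediate. For continuity, two type-(ii) pieces $\Cone(F_1, \nu)$ and $\Cone(F_2, \nu)$ meet along $\Cone(F_1 \cap F_2, \nu)$, on which both definitions give the same $t$, while a type-(ii) piece meets a type-(i) cone only along a subface of $F$, where $t = 0$ and $\psi = 0$ by either rule.

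It then remains to verify the two defining properties. For the sorting condition on $(\fX^*(\nu), !\{\nu\})$: $\psi|_\xi$ is linear on each $\xi \in \fX^*(\nu)$ by construction, $\psi(u_\nu) = 1 > 0$ (matching $\nu \in \fB'$), and $\psi$ vanishes on every original ray of $\fX$ (matching that all other rays lie in $\fA'$, since $\fC'$ and $\fH'$ are empty). For convexity on each $\tau \in \fX$, the case $u \notin \tau$ is trivial as $\psi \equiv 0$ on $\tau$; for $\tau \supset \sigma$ I will verify the clean formula
\[\psi(x) = \max\{\, t \geq 0 : x - tu \in \tau\,\}, \qquad x \in \tau,\]
from which the desired upper-convexity follows at once: if $\psi(x) = t_x$ and $\psi(y) = t_y$, convexity of $\tau$ gives $\tfrac{1}{2}((x - t_x u) + (y - t_y u)) = \tfrac{x+y}{2} - \tfrac{t_x+t_y}{2} u \in \tau$, whence $\psi(\tfrac{x+y}{2}) \geq \tfrac{t_x+t_y}{2}$. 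The maximal pieces of linearity of $\psi|_\tau$ are by inspection exactly the cones $\Cone(F, \nu)$ appearing in the subdivision of $\tau$. The one subtlety worth flagging, rather than a genuine obstacle, is that $\fX$ need not be simplicial, so prescribing $\psi$ by its values on rays alone would not determine a linear function on a non-simplicial piece; the intrinsic decomposition $x = f + tu$ circumvents this, with the face identity $F = \Span(F) \cap \tau$ as the crucial input.
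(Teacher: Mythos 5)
Your construction is exactly the paper's: the paper fixes a positive rational linear function on $\nu$ and extends it by requiring $\psi|_\xi=0$ whenever $\nu\not\prec\xi$, which determines precisely your tent function $\psi(f+tu)=t$ on each $\Cone(F,\nu)$, and then declares convexity ``clear.'' Your write-up is correct and simply supplies the details the paper omits (well-definedness via $F=\Span(F)\cap\tau$ and the $\max$-formula argument for convexity), so it matches the paper's approach.
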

\begin{proof}
    Fix a rational linear function $\psi_\nu$ on $\nu$ such that $\psi_\nu|_{\nu\setminus\{0\}}>0$. Then we can extend it to get a sorting function $\psi$ with respect to $(\fX^*(\nu), !\{\nu\})$, by requiring $\psi|_{\xi}=0$, if $\xi\in \fX^*(\nu)$ and $\nu$ is not a face of $\xi$. It is clear that it is a good sorting function.
\end{proof}
\begin{lemma}\label{L: star convexity well-sorted}
    Let $\fX$ be an affine fan induced by a cone $\tau$. Assume that we have a fan quadruple $(\fX,  !\fB, *\fC, !\!* \fH)$ and $\nu$ is a ray in $\tau$, such that there exists a sorting function $\rho$ satisfying $\rho|_{\nu\setminus\{\0\}}<0$, i.e. $(\fX,  !\fB, *\fC, !\!* \fH)$ is $(\fB, \{\nu\}, \fH)$-sorted. (In particular, $\nu\in \fC.$) Denote $\fX'=\fX^*(\nu)$. Then, we have the induced quadruple $(\fX',  !\fB' ,*\fC', !\!* \fH')$ is convex with respect to $\fX$.
\end{lemma}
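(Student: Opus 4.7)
My plan is to exhibit the required good sorting function as a small perturbation of $\rho$ by the good function supplied by Lemma \ref{L: star convexity}. First I would record the structural point that since $\nu$ is already a ray of $\fX$ (as $\nu \in \fC$), the star subdivision $\fX^*(\nu)$ introduces no new rays: it merely refines each cone containing $\nu$ into pieces of the form $\Cone(\tau, \nu)$ with $\tau$ a face not containing $\nu$. Consequently the rays of $\fX'$ coincide with those of $\fX$, and the induced quadruple carries the same labels $\fB, \fC, \fH$.

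By Lemma \ref{L: star convexity}, I would fix a continuous convex piecewise linear function $\psi_0$ on $\tau = |\fX|$ whose maximal pieces of linearity are exactly the cones of $\fX'$, with $\psi_0|_{\nu \setminus \{\0\}} > 0$ and $\psi_0$ vanishing identically on every other ray. The candidate would then be
\[
\psi := \rho + C\,\psi_0
\]
for a small rational $C > 0$, specifically $C \leq -\rho(u_\nu)/\psi_0(u_\nu)$ with $u_\nu$ a primitive generator of $\nu$. Such a $C > 0$ exists precisely because the $\nu$-strictness hypothesis gives $\rho(u_\nu) < 0$, making the right-hand side strictly positive; this choice forces $\psi|_\nu \leq 0$ as required for the $\fC$-sign condition at $\nu$.

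The remaining verifications should be routine: convexity of $\psi$ follows from that of $\psi_0$ combined with the linearity of $\rho$; the maximal linearity pieces of $\psi$ remain the cones of $\fX'$, since adding a globally linear function cannot merge distinct linear pieces of $\psi_0$; and on every ray of $\fX'$ other than $\nu$ the value of $\psi$ agrees with that of $\rho$, so the sign conditions for $\fB$, $\fC \setminus \{\nu\}$ and $\fA$ are inherited directly, while $\fH$ imposes no sign condition. The one place that merits genuine care, and which I expect to be the main (albeit mild) obstacle, is extracting from the construction in Lemma \ref{L: star convexity} the precise statement that $\psi_0$ vanishes identically on every ray of $\fX$ other than $\nu$; this structural fact is what decouples the roles of $\psi_0$ (controlling the subdivision and the sign at $\nu$) and $\rho$ (controlling all remaining signs), making the perturbation argument go through cleanly.
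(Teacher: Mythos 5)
Your proof is correct and follows essentially the same route as the paper's: the paper takes the good function $\psi_0$ of Lemma \ref{L: star convexity} and forms $\psi_0+\alpha\rho$ with $\alpha>0$ chosen so that the sum vanishes on $\nu$, which is your $\rho+C\psi_0$ up to positive rescaling (the paper arranges exact vanishing at $\nu$ where you allow $\le 0$; both satisfy the sign condition since $\nu\in\fC$). The fact you flag as the delicate point --- that $\psi_0$ vanishes on every ray other than $\nu$ --- is immediate from the explicit construction in Lemma \ref{L: star convexity}, which sets $\psi_0$ to zero on all cones not containing $\nu$ as a face.
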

\begin{proof}
    We use the convex good function $\psi$ in the previous lemma, and it is clear that $\psi_\alpha=\psi+\alpha \rho$ is always convex for any $\alpha\in \R$. According to the assumption of this lemma, we can find $\alpha>0$, such that $\psi_\alpha|_{\nu}=0$, so it gives a convex good function with respect to $(\fX', !\fB', *\fC', !*\fH')$.
\end{proof}

\subsection{Extension of fans}
Let $\tau$ be a cone in $N_\Q$, and $\fT$ be the affine fan naturally induced by $\tau$.  Let $\nu$ be a ray in $N_\Q$ that is not in $\tau$. We denote the convex hull of $\tau\cup \nu$ by $\Cone(\tau, \nu)$, which is still a cone, and we use $\fX_0$ to denote the fan induced by it, i.e. $\fX_0$ consisting of all faces of $\Cone(\tau, \nu)$. We want to construct the "minimal subdivision" of $\fX_0$, that contains $\fT$, which will be denoted as $\Ext(\fT, \nu)$, named as the extension of $\fT$ by $\nu$.

In the case that $\dim\Cone(\tau, \nu)=\dim \tau+1$, equivalently, $\fX_0$ is $\{\nu\}$-simplicial, we set $S=\{\tau\}\subset \fT$. In the case that $\dim\Cone(\tau, \nu)=\dim \tau$, we restrict ourselves to the vector space $\Span(\tau)$, and denote it by $N_Q$. We define a subset $S\subset \fT$ that consists of the facets (faces of codimension one) of $\tau$ satisfying that the hyperplane spanned by each of the facets separates $\tau$ and $\nu$. More precisely, it can be described as follows.  Fix a point $\0 \neq \bv\in \nu$. 
For any $\zeta\in S$, there exists a $\Q$-linear functional $m_\zeta$ on $N_Q$, such that 
    \begin{equation}\label{E: m_zeta}
            m_\zeta(\zeta)=0,  m_\zeta(\bv)=-1, m_\zeta(\bt)> 0,
    \end{equation}
for all $\bt\in\tau\setminus \zeta$.  
    
We denote the set consists of all faces of the cones in $S$ by $\fS$. We define $\Ext(\fT, \nu)$, named as the extension of $\fT$ by $\nu$, be the set of the following cones:
\begin{enumerate}
    \item $\nu$
    \item $\xi \in \fT$.
    \item $\Cone(\xi, \nu)$, with $\xi\in \fS$.
\end{enumerate}

It is not hard to see the following
\begin{lemma}\label{L: simplicity of ext}
    $\Ext(\fT, \nu)$ is a fan. If we assume $\fT$ is $\fE$-simplicial, for some $\fE\subset \fT$, then $\Ext(\fT, \nu)$ is $\fE\cup \{\nu\}$-simplicial.
\end{lemma}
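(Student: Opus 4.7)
The plan is to split the proof into verifying the fan axioms for $\Ext(\fT,\nu)$ and then the $\fE\cup\{\nu\}$-simpliciality. I would handle the two regimes $\dim\Cone(\tau,\nu)=\dim\tau+1$ and $\dim\Cone(\tau,\nu)=\dim\tau$ uniformly, relying in the first regime on the linear independence of $\bv$ from $\Span(\tau)$, and in the second regime on the separating functionals $m_\zeta$ of (\ref{E: m_zeta}). The first observation I would record is that in both regimes $\bv\notin\Span(\xi')$ for every $\xi'\in\fS$: in the codimension-one regime because $\bv\notin\Span(\tau)\supset\Span(\xi')$, and in the equidimensional regime because $\xi'\prec\zeta$ for some $\zeta\in S$ gives $m_\zeta(\xi')=0$ while $m_\zeta(\bv)=-1$.

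For the fan axioms, closure under faces is immediate from the three-fold structure of $\Ext(\fT,\nu)$ together with the fact that $\fS$ is face-closed by definition. The real content is that any two cones meet in a common face. The nontrivial cases are (a) $\xi\in\fT$ against $\Cone(\xi',\nu)$ with $\xi'\in\fS$, and (b) $\Cone(\xi_1,\nu)$ against $\Cone(\xi_2,\nu)$ with $\xi_i\in\fS$. In (a), writing $p=q+t\bv\in\xi\cap\Cone(\xi',\nu)$ with $q\in\xi'$, $t\geq 0$, and using that $p\in\xi\subset\tau$, the functional $m_\zeta$ (where $\xi'\prec\zeta\in S$) gives $m_\zeta(p)=-t\geq 0$, or else the projection transverse to $\Span(\tau)$ forces $t=0$; in either case $p\in\xi\cap\xi'$, yielding $\xi\cap\Cone(\xi',\nu)=\xi\cap\xi'$, which is a face of both. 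In (b), writing $p=q_1+t_1\bv=q_2+t_2\bv$ with $t_1\geq t_2$ and applying $m_{\zeta_1}$ (or the transverse projection) to $q_2=q_1+(t_1-t_2)\bv$ forces $t_1=t_2$ and hence $q_1=q_2\in\xi_1\cap\xi_2$, giving $\Cone(\xi_1,\nu)\cap\Cone(\xi_2,\nu)=\Cone(\xi_1\cap\xi_2,\nu)$; since $\xi_1\cap\xi_2$ is a face of both $\xi_i$ inside the fan $\fT$ and $\fS$ is face-closed, this intersection is a common face.

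For the simpliciality claim, $\bv\notin\Span(\xi')$ for $\xi'\in\fS$ yields $\dim\Cone(\xi',\nu)=\dim\xi'+1$, so taking $\zeta=\xi'$ shows $\{\nu\}$-simpliciality immediately. Given $\mu\in\fE$ and a cone $\eta\in\Ext(\fT,\nu)$ with $\mu\prec\eta$, two cases arise. If $\eta\in\fT$, the $\fE$-simpliciality hypothesis on $\fT$ produces the desired decomposition. Otherwise $\eta=\Cone(\xi',\nu)$ for some $\xi'\in\fS$; I would first verify that the rays of $\Cone(\xi',\nu)$ are precisely the rays of $\xi'$ together with $\nu$ (a short extremality argument using $\bv\notin\Span(\xi')$), so $\mu\neq\nu$ forces $\mu\prec\xi'$. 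Then $\fE$-simpliciality of $\fT$ gives $\xi'=\Cone(\zeta,\mu)$ with $\zeta\in\fT$ and $\dim\xi'=\dim\zeta+1$; since $\zeta\prec\xi'\in\fS$ and $\fS$ is face-closed, $\zeta\in\fS$, so $\Cone(\zeta,\nu)\in\Ext(\fT,\nu)$, and $\eta=\Cone(\Cone(\zeta,\nu),\mu)$ with $\dim\eta=\dim\Cone(\zeta,\nu)+1$.

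The main obstacle is case (b) in the fan argument: both cones contain $\nu$, so I cannot rule out a $\bv$-contribution just by inclusion in $\tau$; this is where the separating functionals $m_\zeta$ (or transverse projection in the codimension-one regime) are essential. The rest of the steps are largely bookkeeping about face-closure of $\fS$ and dimensional additivity.
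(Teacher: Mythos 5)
The paper states this lemma with no proof (``It is not hard to see the following''), so there is no argument of the paper's to compare against; your proof is correct and supplies the missing verification. The two essential points are exactly the ones you isolate: the observation that $\bv\notin\Span(\xi')$ for every $\xi'\in\fS$ (so that $\Cone(\xi',\nu)$ decomposes as $\xi'\oplus\nu$, which gives both the face structure and the dimension count $\dim\Cone(\xi',\nu)=\dim\xi'+1$), and the use of the separating functionals $m_\zeta$ of (\ref{E: m_zeta}) to force the $\bv$-coefficient to vanish in case (a) and to agree in case (b), which is where the hypothesis defining $S$ is genuinely used. The remaining steps (face-closure of $\fS$, and that the $\zeta$ produced by $\fE$-simpliciality of $\fT$ is automatically a face of $\xi'$ and hence lies in $\fS$) are correctly accounted for.
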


\begin{lemma}
    Denote $\Ext(\fT, \nu)$ by $\fX$. In the case that $\dim\Cone(\tau, \nu)=\dim \tau$, it is a subdivision of $\fX_0$, and $(\fX, *\{\nu\})$ is a convex subdivision of $\fX_0$. In the case that $\dim\Cone(\tau, \nu)=\dim \tau+1$, $\fX=\fX_0$.
\end{lemma}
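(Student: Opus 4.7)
I would treat the two dimension cases separately. Case 1 ($\dim\Cone(\tau,\nu)=\dim\tau+1$) is essentially formal: since $\nu\notin\Span(\tau)$, the cone $\Cone(\tau,\nu)$ is the Minkowski sum $\tau+\R_{\geq 0}\bv$ along linearly independent summands, so its faces are exactly the faces of $\tau$ together with the cones $\Cone(\xi,\nu)$ for each face $\xi\prec\tau$ (taking $\Cone(0,\nu)=\nu$ for the trivial face). By construction $S=\{\tau\}$ and $\fS$ is the set of all faces of $\tau$, so comparing the explicit lists of cones in $\Ext(\fT,\nu)$ and in $\fX_0$ yields $\Ext(\fT,\nu)=\fX_0$.

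For Case 2 ($\dim\Cone(\tau,\nu)=\dim\tau$), the first step is the support equality $|\fX|=\Cone(\tau,\nu)$. Given $\bw\in\Cone(\tau,\nu)$, write $\bw=\bt+\lambda\bv$ with $\bt\in\tau$, $\lambda\geq 0$, and minimize $\lambda$; the minimum is attained since the set of admissible $(\bt,\lambda)$ is closed and $\lambda$ is bounded below. If $\lambda=0$ then $\bw\in\tau$. Otherwise $\bt$ must lie in $\partial\tau$, because replacing $\bt$ by $\bt+\epsilon\bv$ stays in $\tau$ for small $\epsilon>0$ when $\bt$ is in the relative interior, contradicting minimality. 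By the local tangent-cone description of $\tau$ at $\bt$, some facet $\zeta$ containing $\bt$ must satisfy $m_\zeta(\bv)<0$ — otherwise $\bv$ lies in the tangent cone of $\tau$ at $\bt$ and $\lambda$ could be lowered further. After rescaling this $m_\zeta$ to $m_\zeta(\bv)=-1$ we obtain $\zeta\in S$ with $\bw\in\Cone(\zeta,\nu)$. Standard convex-geometry checks then confirm that the listed cones meet in common faces, so $\fX$ is a bona fide refinement of $\fX_0$.

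For the convexity of $(\fX,*\{\nu\})$, I would exhibit the good sorting function $\psi$ on $\Cone(\tau,\nu)$ directly: set $\psi\equiv 0$ on $\tau$ and $\psi:=m_\zeta|_{\Cone(\zeta,\nu)}$ on each $\Cone(\zeta,\nu)$ with $\zeta\in S$. Continuity on overlaps is immediate: on $\tau\cap\Cone(\zeta,\nu)=\zeta$ both descriptions vanish, and on $\Cone(\zeta_1,\nu)\cap\Cone(\zeta_2,\nu)=\Cone(\zeta_1\cap\zeta_2,\nu)$ each $m_{\zeta_i}$ acts as $\bz+\mu\bv\mapsto-\mu$. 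The sorting conditions for $(\fX,*\{\nu\})$ then follow automatically: $\psi|_\rho=0$ for every ray $\rho\subset\tau$ (so on every ray of $\fA$), and $\psi|_\nu\leq 0$. Convexity in the sense used by the paper (cf.\ Lemma \ref{L: star convexity}) reduces to a wall-crossing check across adjacent maximal cones of $\fX$: between $\tau$ and $\Cone(\zeta,\nu)$ the linear extension of $\psi|_\tau=0$ dominates $\psi|_{\Cone(\zeta,\nu)}\leq 0$, while between $\Cone(\zeta_1,\nu)$ and $\Cone(\zeta_2,\nu)$ the required inequality reduces to $m_{\zeta_1}(\bz)\geq 0$ for $\bz\in\zeta_2\subset\tau$, which is precisely the defining positivity of $m_{\zeta_1}$ on $\tau$.

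The main obstacle I anticipate is the support-equality step in Case 2: one must justify carefully that the minimum-$\lambda$ argument lands on a facet of $\tau$ that actually lies in $S$, not merely on some boundary face or on a facet with $m_\zeta(\bv)\geq 0$. This amounts to the tangent-cone analysis sketched above, which is geometrically transparent but requires some notational care when $\bt$ lies in a face of codimension greater than one, since then the visible facet witnessing $\zeta\in S$ must be extracted from the local normal fan at $\bt$.
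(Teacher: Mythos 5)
Your proposal is correct and follows essentially the same route as the paper: the same good sorting function ($\psi\equiv 0$ on $\tau$, $\psi=m_\zeta$ on $\Cone(\zeta,\nu)$), the same wall-crossing inequality $m_\zeta\leq m_{\zeta'}$ on $\Cone(\zeta,\nu)$ coming from the positivity of each $m_{\zeta'}$ on $\tau$, and a sweep along the $\bv$-direction to prove that the cones $\Cone(\zeta,\nu)$, $\zeta\in S$, together with $\tau$ cover $\Cone(\tau,\nu)$. The only (cosmetic) difference is that you minimize the $\nu$-coefficient $\lambda$ of a point $\bw$ and read off the witnessing facet from the tangent cone at the minimizer, whereas the paper pushes a point of $\tau$ forward along $\bv$ until it first exits through the silhouette $\fS$; your tangent-cone argument in fact already handles the codimension-$>1$ case you flag as a potential obstacle.
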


\begin{proof}
    The case that $\dim\Cone(\tau, \nu)=\dim \tau+1$ is clear. We only focus on the case that $\dim\Cone(\tau, \nu)=\dim \tau$.

    To show that $\fX$ is indeed a subdivision of $\fX_0$, it is clear that all cones in $\fX$ is contained in $\Cone(\tau, \nu)$, now we just need to show that each cone in $\fX_0$, i.e. each face of $\Cone(\tau, \nu)$, is a union of cones in $\fX$. Since we have $\fX$ is a fan, we just need to show the cone $\Cone(\tau, \nu)$ itself is a union of cones in $\fX$. Let $\iota\neq \nu$ be a ray contained in $\Cone(\tau, \nu)$ but not in $\tau$. Intuitively speaking, $\fS$ separates $\Cone(\tau, \nu)$ into two parts. The plane $\Span(\iota, \nu)$ intersects $\fS$ at a ray $\iota_0$, which is contained in some $\zeta_0\in S$. Then we can conclude $\iota \subset \Cone(\iota_0, \nu)\subset \Cone(\zeta_0, \nu)$, which is a cone in $\fX$. 
    
    Now we give a precise argument to find such $\iota_0$ and $\zeta_0$. Fix any $\0\neq \bi\in \iota$, we can write $\bi=a\bv+b\bt$, with some $a>0, b>0,$ $\bt\in\tau,$ and $\bv\in \nu$ satisfying (\ref{E: m_zeta}), for all $\zeta\in S$. Fix $b$ and set $\bi_x=x\bv+b\bt$. Since, for all $\zeta\in S$, $m_\zeta(\bi_0)\geq 0$, and  $m_\zeta(\bi_k)<0$, when $k\gg 0$. Hence, we can find $x_0\geq 0,$ such that $m_{\zeta_0}(\bi_{x_0})=0$, for some $\zeta_0\in S$, and $m_{\zeta}(\bi_{x_0})\leq 0$, for the other $\zeta\in S$. (Such $\zeta_0\in S$ may not be unique.) Hence we get $\bi_{x_0}\in \zeta_0$. Denote the ray generated by scaling $\bi_{x_0}$ by $\iota_0$. We can conclude that $\fX$ is a subdivision of $\fX_0$ using the previous argument.

    Lastly, we show the convexity. According to the definition of $S$, we have, for any $\bz\in \tau$, $m_\zeta(\bz)\geq 0$, with the equality achieved only at $\bz\in \zeta$. Hence, for two different $\zeta, \zeta' \in S$, we have $0=m_\zeta|_\zeta\leq m_{\zeta'}|_\zeta$, with the equality achieved only at $\zeta\cap\zeta'$. Since $m_\zeta|_\nu=m_{\zeta'}|_\nu$, using linearity, we can argue that, for any $\bz\in \Cone(\zeta, \nu), m_{\zeta}(\bz)\leq m_{\zeta'}(\bz),$   with the equality achieved only at $\bz\in \Cone(\zeta, \nu)\cap \Cone(\zeta', \nu)$. Now, we set a piecewise linear function $\psi_0$ on $\Cone(\tau, \nu)$, by 
    $$       \psi_0(\bz)=
    \begin{cases}
        0, \text{ if $\bz \in \tau$;}
        \\
        m_\zeta(\bz),  \text{ if $\bz \in \Cone(\zeta, \tau)$, for some $\zeta\in S$.}
    \end{cases}
    $$
    According to the previous argument, we can conclude that $\psi_0$ is a convex good function with respect to $(\fX, *\fC')$ and $\fX_0$. 
\end{proof}

Under the same assumptions of the previous lemma, let $\fT'$ be any subdivision of an affine fan $\fT$. Set $S'\subset \fT'$ are those facets that supported on a facet in $S\subset \fT$, and $\fS'$ is the fan induced by $S'$, which clearly is a subdivision of $\fS$. We define $\Ext(\fT', \nu)$, named as the extension of $\fT'$ by $\nu$, be the set of the following cones:
\begin{enumerate}
    \item $\nu$
    \item $\xi \in \fT'$.
    \item $\Cone(\xi, \nu)$, with $\xi\in \fS'$.
\end{enumerate}
\begin{lemma}\label{L: Ext subdivision preserving convexity}
    Denote $\Ext(\fT', \nu)$ by $\fX'$. It is a subdivision of $\fX_0$. Furthermore, if we assume that a fan triple $(\fT', !\fB', *\fC')$ is a convex subdivision of $\fT$, and we set $\fC'^\circ=\{\nu\}\cup \fC'$,
    then $(\fX', !\fB', *\fC'^\circ)$ is a convex subdivision of $\fX_0$. Furthermore, if we are in the setting that $\dim\Cone(\tau, \nu)=\dim \tau+1$, $(\fX',  !\fB', *\fC')$ is a convex subdivision of $\fX_0$.
\end{lemma}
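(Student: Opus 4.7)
The plan is to verify the three assertions in sequence, viewing $\fX' := \Ext(\fT', \nu)$ as a subdivision of $\Ext(\fT,\nu)$ (which is itself a subdivision of $\fX_0$ by the preceding lemma) and applying Proposition \ref{P: convexity of composition}. First, $\fX'$ refines $\Ext(\fT,\nu)$ because each cone of $\fT'$ lies in a cone of $\fT$, and each $\Cone(\zeta',\nu)$ with $\zeta' \in \fS'$ lies in $\Cone(\zeta,\nu)$ for the unique $\zeta \in \fS$ containing $\zeta'$; combined with the preceding lemma this gives the subdivision claim.

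For the main convexity statement, the preceding lemma gives $(\Ext(\fT,\nu), *\{\nu\})$ as a convex subdivision of $\fX_0$, so by Proposition \ref{P: convexity of composition} it suffices to exhibit $(\fX', !\fB', *\fC'^\circ)$ as a convex subdivision of $\Ext(\fT,\nu)$; the containment hypotheses of that proposition are trivially satisfied since $\fC'^\circ = \{\nu\} \cup \fC' \supset \{\nu\}$ and the outer $\fB, \fH$ are empty. To construct a good sorting function $\psi_1$ for $(\fX', !\fB', *\fC'^\circ)$ over $\Ext(\fT,\nu)$, fix $\bv \in \nu \setminus \{\0\}$ and let $\psi'$ be the given good sorting function of $(\fT', !\fB', *\fC')$ over $\fT$. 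Define $\psi_1|_\tau := \psi'$, $\psi_1(\bv) := 0$, and extend linearly on each $\Cone(\zeta',\nu)$ for $\zeta' \in \fS'$. The extension is well defined because $\zeta' \subset \zeta$ for some facet $\zeta \in S$ with $\nu \not\subset \Span(\zeta)$, so the decomposition $\bz = \bt + s\bv$ with $\bt \in \zeta'$, $s \geq 0$ is unique; continuity across the seams and the simplified formula $\psi_1(\bz) = \psi'(\bt)$ extend globally to each $\Cone(\zeta,\nu)$ with $\zeta \in \fS$.

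The main technical verification is that $\psi_1$ is convex on each cone of $\Ext(\fT,\nu)$ with pieces being cones of $\fX'$. On cones $\xi \in \fT$ this is inherited from the corresponding property of $\psi'$. On $\Cone(\zeta,\nu)$ for $\zeta \in \fS$, writing $p = \bt_1 + s_1\bv$ and $q = \bt_2 + s_2\bv$ (the unique decompositions), convexity follows from the midpoint inequality
\[
\psi_1\bigl((p+q)/2\bigr) = \psi'\bigl((\bt_1+\bt_2)/2\bigr) \leq \bigl(\psi'(\bt_1) + \psi'(\bt_2)\bigr)/2 = \bigl(\psi_1(p) + \psi_1(q)\bigr)/2,
\]
which is just the convexity of $\psi'$ on $\zeta$. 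The sorting conditions are inherited from those of $\psi'$, together with $\psi_1|_\nu = 0 \leq 0$ for $\nu \in \fC'^\circ$. Applying Proposition \ref{P: convexity of composition} then yields the second assertion.

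For the third assertion, when $\dim\Cone(\tau,\nu) = \dim\tau+1$ the same construction produces $\psi_1$ directly on the single maximal cone $\Cone(\tau,\nu) = |\fX_0|$ as a good sorting function for $(\fX', !\fB', *\fC')$: now $\nu \in \fA'$ (since we do not enlarge $\fC'$) and $\psi_1|_\nu = 0$ matches the sorting equality, while convexity follows by the same midpoint argument using $\nu \not\subset \Span(\tau)$. The only subtlety throughout is the bookkeeping of whether $\nu$ lies in $\fC'^\circ$ or in $\fA'$; the geometric extension construction is uniform across both cases.
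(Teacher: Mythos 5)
Your proof is correct and follows essentially the same route as the paper's: reduce via the composition proposition to convexity over $\Ext(\fT,\nu)$, then extend the good sorting function $\psi'$ linearly along $\nu$ with value $0$ on $\nu$ (your formula $\psi_1(\bt+s\bv)=\psi'(\bt)$ is exactly the paper's linear functional $m_{\zeta}$ determined by $m_\zeta|_\zeta=\psi'|_\zeta$, $m_\zeta|_\nu=0$). The only point worth adding is that a good function also requires the \emph{maximal} linearity pieces to be exactly the cones $\Cone(\zeta',\nu)$, which needs the strict form of your midpoint inequality when $\bt_1,\bt_2$ lie in different maximal linearity pieces of $\psi'$ --- immediate from the corresponding property of $\psi'$, and recorded in the paper's proof as ``equality achieved only at the intersection.''
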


\begin{proof}  
    By checking each cone in $\fX=\Ext(\fT, \nu)$, it is clear that $\fX'$ is a subdivision of $\fX$, hence it is also a subdivision of $\fX_0$, by the previous proposition. Similarly, to see the convexity, by Proposition \ref{P: convexity of composition}, we just need to show that $(\fX',  !\fB', *\fC')$ is a convex subdivision of $\fX$.

    Let $\psi_\tau$ be a good sorting function on $\tau$ that corresponds to $(\fT', !\fB', *\fC')$ as a convex subdivision of $\fT$. For any $\zeta\in S'$, we can find a $\Q$-linear function $m_\zeta$ on $N_Q$ satisfying, $m_\zeta|_\zeta=\psi_\tau|_\zeta, $ and $m_\zeta|_\nu=0$. We set a piecewise linear function $\psi_1$ on $\Cone(\tau, \nu)$, by
    $$
    \psi_1(\bz)=
    \begin{cases}
        \psi_\tau(\bz), \text{ if $\bz \in \tau$;}
        \\
        m_\zeta(\bz),  \text{ if $\bz \in \Cone(\zeta, \nu), \zeta\in S'.$}
    \end{cases}
    $$
    We just need to argue that $\psi_1$ actually gives a good sorting function with respect to $(\fX',  !\fB', *\fC')$ and $\fX$.

    We first consider the case that $\dim\Cone(\tau, \nu)=\dim \tau+1$. For two different cones $\zeta, \zeta'\in S'$, i.e. cones of maximal dimension in $\fT'$, by $m_\zeta|_\nu=m_{\zeta'}|_\nu$, the convexity of $\psi_\tau$, and the linearity of $m_\zeta$ and $m_{\zeta'}$, we have that, for any $\bz\in \Cone(\zeta, \nu), m_{\zeta}(\bz)\leq m_{\zeta'}(\bz),$  with the equality achieved only at $\bz\in \Cone(\zeta, \nu)\cap \Cone(\zeta', \nu)$. We can conclude this case.

    In the case that $\dim\Cone(\tau, \nu)=\dim \tau$, we need to check the convexity of $\psi_1$ on each cone of maximal dimension in $\fX$. It is clear that if the cone is $\tau$ itself. If the cone is of the form $\Cone(\zeta, \nu)$, with $\zeta\in S'$, then it is the case with $\dim\Cone(\zeta, \nu)=\dim \zeta+1$, as argued above.
\end{proof}

\begin{lemma}\label{L: subdiv of simplicial must be Ext}
    Assume $\fX=\Ext(\fT, \nu)$, where the affine fan $\fT$ is induced by a cone $\tau$, and $\dim\Cone(\tau, \nu)=\dim \tau+1$. Then, for any efficient subdivision $\fX'$ of $\fX$, there exists $\fT'$ as an efficient subdivision of $\fT$, such that $\fX'=\Ext(\fT', \nu)$.
\end{lemma}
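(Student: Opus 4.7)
The plan is to define $\fT'$ as the ``horizontal'' subfan of $\fX'$ and show directly that $\fX'=\Ext(\fT',\nu)$. Concretely, set
$$\fT':=\{\xi'\in\fX':\xi'\subset\tau\}.$$
Since $\fX'$ is a subdivision of $\fX$ with no new rays, and the rays of $\fX$ are exactly the rays of $\tau$ together with $\nu$, the rays of $\fT'$ are exactly the rays of $\tau$; hence $\fT'$ is an efficient subdivision of $\fT$. The hypothesis $\dim\Cone(\tau,\nu)=\dim\tau+1$ is equivalent to $\nu\not\subset\Span(\tau)$, so one can choose a linear functional $\ell$ vanishing on $\Span(\tau)$ with $\ell|_{\nu\setminus\{0\}}>0$; this functional will be the main tool throughout.

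Next I would verify the inclusion $\fX'\subseteq\Ext(\fT',\nu)$ by a case analysis on the rays of a cone. Given any $\xi'\in\fX'$, its rays are rays of $\fX$, i.e.\ rays of $\tau$ or the ray $\nu$. If $\nu$ is not among the rays of $\xi'$, then $\xi'\subset\Span(\tau)$ forces $\xi'\subset\tau$, giving $\xi'\in\fT'$. If $\xi'=\nu$, it is already in $\Ext(\fT',\nu)$. Otherwise, let $\zeta'\subset\xi'$ be the subcone generated by the rays of $\xi'$ other than $\nu$. Since $\ell\geq 0$ on $\xi'$, the equality $\zeta'=\xi'\cap\{\ell=0\}$ exhibits $\zeta'$ as a face of $\xi'$, so $\zeta'\in\fX'$; and $\zeta'\subset\tau$ gives $\zeta'\in\fT'$, so $\xi'=\Cone(\zeta',\nu)\in\Ext(\fT',\nu)$.

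The main step I expect to be the bulk of the work is the reverse inclusion $\Ext(\fT',\nu)\subseteq\fX'$. Since $\fX'$ is already closed under taking faces and $|\fX'|=|\Ext(\fT',\nu)|$, it suffices to check that each top-dimensional cone of $\Ext(\fT',\nu)$ lies in $\fX'$; these are precisely the cones $\Cone(\zeta',\nu)$ with $\zeta'$ a maximal cone of $\fT'$. I would argue by a ``drag a point'' trick: fix $p$ in the relative interior of $\zeta'$ and $v\in\nu\setminus\{0\}$, and for small $\epsilon>0$ consider $p+\epsilon v$, which lies in the relative interior of a unique minimal cone $\xi'\in\fX'$. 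Since $\ell(p+\epsilon v)=\epsilon\ell(v)>0$ and the $\Span(\tau)$-component of $p+\epsilon v$ equals $p\neq 0$, the cone $\xi'$ is neither contained in $\tau$ nor equal to $\nu$, so by the case analysis above $\xi'=\Cone(\eta',\nu)$ for some $\eta'\in\fT'$. Writing $p+\epsilon v=q+cv$ with $q$ in the relative interior of $\eta'$ and $c>0$ and then applying $\ell$ forces $c=\epsilon$ and hence $q=p$, so $p$ lies in the relative interior of $\eta'$; by the maximality of $\zeta'$ this forces $\eta'=\zeta'$, and therefore $\Cone(\zeta',\nu)\in\fX'$.

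The only mildly delicate point I anticipate is this ``drag'' step, which relies crucially on the hypothesis $\dim\Cone(\tau,\nu)=\dim\tau+1$: it is precisely this hypothesis that guarantees the decomposition of any point along $\nu$ and $\Span(\tau)$ is canonical, so that the $\ell$-computation pins down both $c$ and $q$ uniquely. Without it, the $\tau$-component of a dragged point would not be uniquely determined and the conclusion $\eta'=\zeta'$ would fail, which is consistent with the companion Lemma~\ref{L: Ext subdivision preserving convexity} needing additional data ($\fC'^\circ$ containing $\nu$) in the $\dim\Cone(\tau,\nu)=\dim\tau$ case.
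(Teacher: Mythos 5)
Your proposal is correct and follows essentially the same route as the paper: the key observation in both is that $\nu$ is the only ray of $\fX'$ not lying in $\tau$, so every cone of $\fX'$ is either contained in $\tau$ or equals $\Cone(\zeta',\nu)$ for a cone $\zeta'\subset\tau$. The paper states only this observation and leaves the rest implicit, whereas you carefully supply the remaining details (that the horizontal cones form an efficient subdivision $\fT'$, that $\zeta'$ is genuinely a face of $\xi'$ via the functional $\ell$, and the reverse inclusion $\Ext(\fT',\nu)\subseteq\fX'$), all of which check out.
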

\begin{proof}
    We note that, for each cone of $\xi\in \fX'$, if it is not contained in $\tau$, it must contain the ray $\nu$, since it is the only ray that is not in $\tau$. Hence, $\xi=\Cone(\zeta, \nu)$, for some $\zeta\in \fT'$.
\end{proof}

\subsection{Order of fan triples and sequential star resolution}
To make the local construction compatible with each other, and to record the process of the resolution, we need the following
\begin{definition}
    Given a fan $\fX$, we say that \emph{we give an order} $\sO$ on a set of rays $\fE\subset \fX$, if we have a strict total order $\sO$ on the set $\fE$. 
    If we have two subsets $\fA, \fB$ of $\fE$, without intersection, we denote $\fA\prec_\sO \fB$ if, for any element $\alpha\in \fA$, $\beta\in \fB$, we have $\alpha \prec_\sO \beta.$ 
    We will use $\sO^-$ to denote the opposite order, i.e. $\xi_1\prec_{\sO}\xi_2$ if and only if  $\xi_2\prec_{\sO^-}\xi_1$, for any $\xi_1, \xi_2\in \fE$.
    We say that we give an order $\sO$ on a fan quadruple $(\fX, !\fB, *\fC, !\!*\fH)$, if we give an order $\sO$ on $\fB \cup \fC\cup \fH$.
\end{definition}

The following construction can be useful for logarithmic comparison. See Lemma \ref{L: seq conv implies twisted top log comparison} for the motivation.

Given a fan quadruple  $(\fX,  !\fB, *\fC, !\!*\fH)$ and $\fE\subset \fD=\fB\cup \fC\cup \fH,$ with an order $\sO$ on $\fE$, we use $\fX^*_{\sO^-}(\fE)$ to denote the composition of a sequence of star subdivisions, by inductively applying star subdivision of each element in $\fE$, using the order $\sO^-$. More precisely, using the order $\sO$, we denote $\fE=\{\xi_1,...,\xi_m\}$. Set $\fX_0=\fX,$ $\fX_i= \fX_{i-1}^*(\xi_{m-i+1}),$ $\ff_i: \fX_i\to \fX_{i-1}$ the map induced by the subdivision, and $\fX^*_{\sO^-}(\fE):=\fX_m$. It is clear that $\fX^*_{\sO^-}(\fE)$ is $\fE$-simplicial. We denote $\ff: \fX^*_{\sO^-}(\fE)\to \fX$ the induced map.

\begin{definition}
    We say that $\fX^*_{\sO^-}(\fE)$ is \emph{sequentially-convex} with respect to $(\fX,  !\fB, *\fC, !\!*\fH)$, if each of the star subdivision $\ff_i$ is locally-convex with respect to the induced quadruple on $\fX_i$. 
\end{definition}
In Lemma \ref{L: seq conv implies top log comparison} and Lemma \ref{L: seq conv implies twisted top log comparison}, we will see that being sequentially-convex is related to the order of toroidal extension, for the logarithmic comparison.

We set $\fE_{i}=\{\xi_{m-i+1},...,\xi_m\}$. We note that $\fX_{i}$ is $\fE_{i}$-simplicial. By induction on $i$, we have that the set of $\fE_{i}$-unsettled cones in $\fX$, is the same as the set of $\fE_{i}$-unsettled cones in $\fX_{i}$.
\begin{lemma}\label{L: seq conv. only need to check unsettled cones}
    Given a fan quadruple $(\fX,  !\fB, *\fC, !\!*\fH)$, $\ff_i$ is locally convex with respect to the induced quadruple, if and only if the induced quadruple $ (\fX_i^\zeta,  !\fB_i^\zeta, *\fC_i^\zeta, !\!*\fH_i^\zeta)$ is convex over $\fX^\zeta_{i-1}$, for all $\fE_{i-1}$-unsettled cone $\zeta\in \fX$.
\end{lemma}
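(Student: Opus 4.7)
The forward implication is immediate from the definition: if $\ff_i$ is locally convex, then a good sorting function exists on every cone of $\fX_{i-1}$, so in particular on every $\fE_{i-1}$-unsettled cone $\zeta \in \fX$, which (by the paragraph preceding the lemma) coincides with its image cone in $\fX_{i-1}$.

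For the converse, my plan is to reduce an arbitrary cone $\tau \in \fX_{i-1}$ to an $\fE_{i-1}$-unsettled sub-face, reusing the simplicial-extension mechanism of Proposition \ref{P: sort. of simplicial ext}. Since $\fX_{i-1}$ is $\fE_{i-1}$-simplicial, I iterate the defining property to write
$$\tau = \Cone(\zeta, \xi_{j_1}, \ldots, \xi_{j_k}),$$
where $\{\xi_{j_1},\ldots,\xi_{j_k}\} = \fE_{i-1}^\tau$, the face $\zeta \prec \tau$ is $\fE_{i-1}$-unsettled, and $\dim \tau = \dim \zeta + k$. The ray $\xi_{m-i+1}$ being star-subdivided in the step $\ff_i$ lies outside $\fE_{i-1}$, so if it is a ray of $\tau$ at all it must already be a ray of $\zeta$; hence the star subdivision $\fX_i^\tau \to \fX_{i-1}^\tau$ is obtained by applying the star subdivision only to the $\zeta$-factor, while leaving the rays $\xi_{j_l}$ untouched.

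Concretely, I expect the cones of $\fX_i^\tau$ to be exactly the cones $\Cone(\sigma, J)$ with $\sigma \in \fX_i^\zeta$ and $J \subseteq \{\xi_{j_1},\ldots,\xi_{j_k}\}$. Given a good sorting function $\rho_\zeta$ on $\zeta$ supplied by the hypothesis, I extend it to $\rho_\tau$ on $\tau$ by taking the unique decomposition $v = v_\zeta + \sum_l a_l e_{j_l}$ (with $e_{j_l}$ a generator of $\xi_{j_l}$) and declaring
$$\rho_\tau(v) := \rho_\zeta(v_\zeta) + \sum_l a_l \, c_{j_l},$$
where $c_{j_l}$ is chosen positive if $\xi_{j_l}\in \fB$, negative if $\xi_{j_l}\in \fC$, and arbitrary (say $0$) if $\xi_{j_l}\in \fH$. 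This is well-defined since $\fE_{i-1}\subset \fB\cup \fC\cup \fH$, and it manifestly satisfies the sorting-function sign conditions on every ray of $\tau$.

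The only real content is checking that $\rho_\tau$ is a \emph{convex} piecewise-linear function on $\tau$ with linearity domains the cones of $\fX_i^\tau$. Linearity on each $\Cone(\sigma, J)$ follows because $\rho_\zeta$ is linear on $\sigma \in \fX_i^\zeta$ and the extension is linear in the $\xi_{j_l}$-directions. For convexity across a wall, walls purely in the $\zeta$-direction inherit convexity from $\rho_\zeta$ on $\fX_i^\zeta$, while walls in the $J$-direction are not really bends at all (the function is genuinely linear across them). I expect the main obstacle to be purely bookkeeping: verifying that the cone structure of $\fX_i^\tau$ really decomposes as the product $\fX_i^\zeta \times \{\text{faces of the simplex on }\{\xi_{j_l}\}\}$, which is a direct consequence of $\xi_{m-i+1}\prec\zeta$ together with the $\fE_{i-1}$-simpliciality. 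This is the combinatorial analogue, at the level of convex piecewise-linear functions, of the sorting-function extension carried out in Proposition \ref{P: sort. of simplicial ext}.
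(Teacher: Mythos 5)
Your proof is correct and follows essentially the same route as the paper: the necessity is immediate, and the sufficiency is obtained by writing each maximal cone $\tau\in\fX_{i-1}$ as $\Cone(\zeta,\fE_{i-1}^\tau)$ with $\zeta$ unsettled and extending the good sorting function from $\zeta$ to $\tau$ exactly as in Proposition \ref{P: sort. of simplicial ext}; you merely make explicit the extension formula and the convexity/product-structure check that the paper leaves implicit. The only detail worth adding is that the remark preceding the lemma (identifying the $\fE_{i-1}$-unsettled cones of $\fX$ with those of $\fX_{i-1}$) is also needed in the converse direction, to see that the unsettled face $\zeta$ of $\tau$ is a cone of $\fX$ so that the hypothesis applies to it.
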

\begin{proof}
    Since $\fX_{i-1}$ is $\fE_{i-1}$-simplicial, $\fX_{i}$ with its induced triple being a locally-convex subdivision of $\fX_{i-1}$ is equivalent to that it is convex restricted on each of its $\fE_{i-1}$-unsettled cone $\zeta\in\fX_{i-1}$. The necessity part is trivial. The sufficiency is because, working on each cone of maximal dimension $\tau\in\fX_{i-1}$, the good sorting function $\psi_\zeta$ on $\zeta$ can be extended onto $\tau$ as in Proposition \ref{P: sort. of simplicial ext}. Combining the remark prior to the lemma, we can conclude the proof.
\end{proof}

If $\xi_i\in \fB\cup \fH$, $\ff_i$ is convex by Lemma \ref{L: star convexity} and Proposition \ref{P: convexity preserved by adding}. More generally, we have the following

\begin{lemma}\label{L: seq star of sorted fan triples}
    Given a fan quadruple $(\fX,  !\fB, *\fC, !\!*\fH)$, assume that it is $(\fB^\sharp, \fC^\flat, \fH^\sharp)$-sorted, then, for any order $\sO$ on $\fE:=\fB \cup \fC^\flat \cup \fH$ satisfying $\fC^\flat\prec_{\sO}\fB^\flat\cup \fH^\flat,$ $\fX^*_{\sO^-}(\fE)$ is sequentially-convex. 

    More generally, if $(\fX,  !\fB, *\fC)$ is simultaneously $(\fB^\sharp_{(1)}, \fC^\flat_{(1)}, \fH^\sharp_{(1)})$-sorted, $(\fB^\sharp_{(2)}, \fC^\flat_{(2)}, \fH^\sharp_{(2)})$-sorted,..., $(\fB^\sharp_{(k)}, \fC^\flat_{(k)}, \fH^\sharp_{(k)})$-sorted, and $\fE=(\bigcup_i\fC^\flat_{(i)})\cup \fB\cup \fH$. Assume we have an order $\sO$ on $\fE$ satisfying that, for any $\xi \in \bigcup_i\fC^\flat_{(i)}$, there exists $j_\xi\in \{1,...,k\}$, such that $\xi\in \fC^\flat_{(j_\xi)}$ and $\{\xi\}\prec_\sO \fB^\flat_{(j_\xi)}\cup\fH^\flat_{(j_\xi)}$. Then $\fX^*_{\sO^-}(\fE)$ is sequentially-convex.
\end{lemma}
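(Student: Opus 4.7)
The plan is to check, at each intermediate step $\ff_i: \fX_i \to \fX_{i-1}$, that $\ff_i$ is locally-convex with respect to the induced fan quadruple on $\fX_i$. By Lemma~\ref{L: seq conv. only need to check unsettled cones}, this reduces to verifying convexity over each $\fE_{i-1}$-unsettled cone $\zeta$ of $\fX$. Since $\fE_{i-1}$-unsettledness means $\zeta$ meets none of the rays subdivided in the first $i-1$ steps, such a $\zeta$ is unaffected by those subdivisions, so $\fX_{i-1}^\zeta = \fX^\zeta$. We may further assume $\xi_{m-i+1} \prec \zeta$, as otherwise $\fX_i^\zeta = \fX_{i-1}^\zeta$ and convexity is automatic.

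When $\xi_{m-i+1} \in \fB \cup \fH$, Lemma~\ref{L: star convexity} produces a good sorting function for $(\fX_i^\zeta, !\{\xi_{m-i+1}\})$, and Proposition~\ref{P: convexity preserved by adding} upgrades this to a good sorting function for the full induced quadruple on $\fX_i^\zeta$, since $\xi_{m-i+1}$ is still a member of $\fB \cup \fH$ in the induced quadruple.

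The more delicate case is $\xi_{m-i+1} \in \fC^\flat$. Here the hypothesis $\fC^\flat \prec_\sO \fB^\flat \cup \fH^\flat$ ensures that every ray of $\fB^\flat \cup \fH^\flat$ is $\sO$-larger than $\xi_{m-i+1}$, hence lies in $\fE_{i-1}$; so $\zeta$, being $\fE_{i-1}$-unsettled, is in particular $\fB^\flat \cup \fH^\flat$-unsettled in $\fX$. By the $(\fB^\sharp, \fC^\flat, \fH^\sharp)$-sortedness, the affine quadruple $(\fX^\zeta, !\fB^\zeta, *\fC^\zeta, !\!*\fH^\zeta)$ carries a $\fC^{\flat\zeta}$-strict sorting function $\rho^\zeta$. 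Since $\xi_{m-i+1} \in \fC^{\flat\zeta}$, we have $\rho^\zeta|_{\xi_{m-i+1} \setminus \{\0\}} < 0$, so Lemma~\ref{L: star convexity well-sorted} yields the convexity of the induced quadruple on $\fX_i^\zeta = (\fX^\zeta)^*(\xi_{m-i+1})$ over $\fX^\zeta = \fX_{i-1}^\zeta$.

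For the generalization with several sortedness conditions, the same scheme applies; only the $\fC^\flat$-case needs adjustment. For each $\xi_{m-i+1} \in \bigcup_j \fC^\flat_{(j)}$ we select the index $j_\xi$ supplied by the hypothesis, and then the inequality $\{\xi_{m-i+1}\} \prec_\sO \fB^\flat_{(j_\xi)} \cup \fH^\flat_{(j_\xi)}$ forces $\zeta$ to be $\fB^\flat_{(j_\xi)} \cup \fH^\flat_{(j_\xi)}$-unsettled. Applying the $(\fB^\sharp_{(j_\xi)}, \fC^\flat_{(j_\xi)}, \fH^\sharp_{(j_\xi)})$-sortedness produces the required $\xi_{m-i+1}$-strict sorting function, and Lemma~\ref{L: star convexity well-sorted} concludes as before. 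The main obstacle, which is really just careful bookkeeping, is precisely this combinatorial matching: ensuring that for every $\fC^\flat$-ray under consideration, a well-chosen sortedness condition is available whose $\fB^\flat$ and $\fH^\flat$ rays have all been disposed of by the time its star subdivision is performed, which is exactly what the ordering condition on $\sO$ guarantees.
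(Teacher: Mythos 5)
Your proof is correct and follows essentially the same route as the paper: both arguments reduce each star subdivision to a convexity check on $\fE_{i-1}$-unsettled cones via Lemma \ref{L: seq conv. only need to check unsettled cones}, invoke Lemma \ref{L: star convexity} (with Proposition \ref{P: convexity preserved by adding}) for rays in $\fB\cup\fH$ and Lemma \ref{L: star convexity well-sorted} for rays in $\fC^\flat$, with the ordering hypothesis guaranteeing that the relevant $\fB^\flat_{(j_\xi)}\cup\fH^\flat_{(j_\xi)}$ lies in $\fE_{i-1}$ so the sortedness hypothesis applies on the unsettled cone. The only difference is organizational: the paper peels off the $\sO$-largest ray and inducts on $|\fE|$, whereas you verify each step $i$ directly, which amounts to the same mechanism.
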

\begin{proof}
    We prove by induction on the number of elements in $\fE$. If the $\fE=\emptyset$, the statement is trivial. If $\fE$ is not empty, we set $\xi_m\in \fE$ being the first element using $\sO^-$. We separate it into two cases. In the case that $\xi_m\in \fB\cup\fH,$ by Lemma \ref{L: star convexity}, we have that $\fX^*(\xi_m)$ is convex. Then, by Lemma \ref{L: seq conv. only need to check unsettled cones}, we just need to consider the statement restricted on $(\fX_i^\zeta,  !\fB_i^\zeta, *\fC_i^\zeta, !\!*\fH_i^\zeta)$, for each $\{\xi_m\}$-unsettled cone $\zeta\in \fX$, and we can conclude such case by the induction assumption.
    
    In the case that $\xi_m\in \bigcup_i\fC^\flat_{(i)},$ by the assumption, there exists $j_\xi\in \{1,...,k\}$, such that $\xi\in \fC^\flat_{(j_\xi)}$ and $\fB^\sharp_{(j)}\cup \fH^\sharp_{(j)}=\fB\cup \fH$. In particular, $(\fX,  !\fB, *\fC)$ is $(\fB, \{\xi\}, \fH)$-sorted. By Lemma \ref{L: star convexity well-sorted}, we have that $\fX^*(\xi_m)$ is convex. We can conclude this case by the same argument as in the previous case.
\end{proof}

\begin{coro}\label{C: seq res of sorted fan trip}
    Given a fan quadruple $(\fX,  !\fB, *\fC, !\!*\fH)$, if it is well-sorted, then for any order $\sO$ on it, the sequential star subdivisions $\fX^*_{\sO^-}(\fD)\to \fX$ is sequentially-convex. If $(\fX,  !\fB, *\fC, !\!*\fH)$ is partially-sorted, then for any order $\sO$ on it, satisfying $\fC\prec_{\sO} \fB\cup\fH$, the sequential star subdivisions $\fX^*_{\sO^-}(\fD)\to \fX$ is sequentially-convex.
\end{coro}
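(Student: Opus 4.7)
The plan is to deduce this corollary directly from Lemma \ref{L: seq star of sorted fan triples} by appropriately specializing the sortedness parameters $(\fB^\sharp,\fC^\flat,\fH^\sharp)$.

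For the well-sorted case, I would note that by Definition \ref{D: sortedness} being well-sorted means $(\fX,!\fB,*\fC,!\!*\fH)$ is $(\fB,\fC,\fH)$-sorted. Hence I take $\fB^\sharp=\fB$, $\fC^\flat=\fC$, $\fH^\sharp=\fH$ in Lemma \ref{L: seq star of sorted fan triples}, so that $\fB^\flat=\fH^\flat=\emptyset$ and $\fE=\fB\cup\fC\cup\fH=\fD$. The hypothesis $\fC^\flat\prec_\sO\fB^\flat\cup\fH^\flat$ reduces to $\fC\prec_\sO\emptyset$, which is vacuously satisfied by every order $\sO$ on $\fD$. The lemma therefore gives sequential-convexity of $\fX^*_{\sO^-}(\fD)\to\fX$ for any order, as claimed.

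For the partially-sorted case, by definition $(\fX,!\fB,*\fC,!\!*\fH)$ is $(\emptyset,\fC,\emptyset)$-sorted, so I set $\fB^\sharp=\emptyset$, $\fC^\flat=\fC$, $\fH^\sharp=\emptyset$ in Lemma \ref{L: seq star of sorted fan triples}. Then $\fB^\flat=\fB$, $\fH^\flat=\fH$, and again $\fE=\fD$. The hypothesis on the order becomes $\fC^\flat\prec_\sO\fB^\flat\cup\fH^\flat$, i.e.\ $\fC\prec_\sO\fB\cup\fH$, which is exactly the assumption placed on $\sO$ in the statement of the corollary. Applying the lemma then yields sequential-convexity of $\fX^*_{\sO^-}(\fD)\to\fX$ for every such order.

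Since both cases are direct instantiations of Lemma \ref{L: seq star of sorted fan triples}, there is no real obstacle here; the only thing to verify carefully is the matching of notation between Definition \ref{D: sortedness} (well-sorted vs.\ partially-sorted) and the $(\fB^\sharp,\fC^\flat,\fH^\sharp)$-sortedness used in the hypotheses of the lemma, and to check that the compatibility condition on the order $\sO$ specializes correctly in each case. Both checks are immediate, so the proof is essentially one sentence referring back to the lemma.
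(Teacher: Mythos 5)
Your proposal is correct and is exactly the intended argument: the paper states this as an immediate corollary of Lemma \ref{L: seq star of sorted fan triples} with no separate proof, and your two instantiations $(\fB^\sharp,\fC^\flat,\fH^\sharp)=(\fB,\fC,\fH)$ and $(\emptyset,\fC,\emptyset)$ match Definition \ref{D: sortedness} precisely, with $\fE=\fB\cup\fC^\flat\cup\fH=\fD$ and the order condition specializing as you describe.
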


From the corollary, we see that, for a partially-sorted toric quadruple, we can achieve an efficient subdivision to make the induced toric quadruple is convex and log-simplicial. We will deal with the general case in the following subsection.

\subsection{Log-simplicial model for a general toric triple}
In this subsection, we will try to resolve a general fan triple, getting a log-simplicial fan triple. For resolving a general fan quadruple, it is similar. Since, we will not use such a result in this paper, we omit the details, Remark \ref{R: resolving general fan quad. getting log-simplicial}
\begin{definition}
     Given a fan triple $(\fX,  !\fB, *\fC)$, we say it is \emph{efficiently and locally-convexly resolved} by $\fX'$, if there exists a sequence of efficient subdivisions $\ff_i: \fX_{i+1}\to \fX_i$, $i=0,..., m$, satisfying that, $(\fX_{i+1}, *\fC_{i+1}, !\fB_{i+1})$ is a locally-convex subdivision of $\fX_{i}$, where $(\fX_{i+1}, *\fC_{i+1}, !\fB_{i+1})$ is the induced fan triple of $(\fX_{i}, *\fC_{i}, !\fB_{i})$, with $(\fX_{0}, *\fC_0, !\fB_{0})=(\fX,  !\fB, *\fC)$, and $\fX_m=\fX'$. 

     We say that $(\fX', !\fB', *\fC')$ is \emph{an efficient and locally-convex model} over $(\fX,  !\fB, *\fC)$, if $(\fX,  !\fB, *\fC)$ can be efficiently and locally-convexly resolved by $\fX'$.
\end{definition}

The following Proposition shows that any toric triple has an efficient and locally-convex model that is log-simplicial. This plays an essential intermediate step for proving the $E_1$-degeneration of Hodge-de Rham spectral sequence for a general toroidal triple, i.e. Danilov's conjecture. 

\begin{prop}\label{P: conv res to get log-simplicial}
    Any fan triple $(\fX, !\fB, *\fC)$ can be efficiently and locally-convexly resolved by $\ff:\fX' \to \fX$, with the induced fan triple $(\fX', !\fB', *\fC')$ being log-simplicial. Furthermore, if we fix any order $\sO$ on $\fB\cup \fC$ satisfying $\fC\prec_\sO \fB$, we can make such subdivision canonical in the following sense. For any $\xi\in\fX$, the affine fan triple $(\fX^\xi, *\fC^\xi, !\fB^\xi)$ has the naturally induced order by restriction, and if we follow the canonical construction and get a log-simplicial fan triple $((\fX^\xi)', *(\fC^\xi)', !(\fB^\xi)')$, we have it matches $(\fX', !\fB', *\fC')$ restricted on $\xi$.
\end{prop}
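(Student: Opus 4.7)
The plan is to prove the proposition by iteratively resolving the fan triple one ray at a time, processing the rays in $\fB \cup \fC$ according to the reverse order $\sO^-$. Since $\fC \prec_\sO \fB$, this means the $\fB$-rays are dealt with first and the $\fC$-rays afterward. At each stage we apply a canonical efficient, locally-convex subdivision that makes the fan simplicial at the current ray while preserving simpliciality at all previously processed rays. By Lemma \ref{L: simplicity of ext}, the use of Ext-type subdivisions preserves the simpliciality already achieved, so after finitely many steps the fan is $(\fB\cup\fC)$-simplicial, i.e.\ log-simplicial. Since every step is efficient and locally convex, the composite $\ff: \fX' \to \fX$ is the desired efficient, locally-convex resolution.

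The heart of the argument is the local step. Given a ray $\nu \in \fB \cup \fC$ and a cone $\rho \in \fX$ with $\nu \prec \rho$ at which $\rho$ is not simplicial, consider the sub-complex $\fL$ consisting of the faces of $\rho$ not containing $\nu$; its maximal cones $\tau_1, \ldots, \tau_m$ are exactly the facets of $\rho$ not containing $\nu$. A standard convexity argument shows $\rho = \bigcup_i \Cone(\tau_i, \nu)$. We first split $\rho$ along these pieces; this is an efficient refinement (no new rays are introduced) and, with the aid of a suitable piecewise-linear sorting function, is locally convex. After the split, each piece $\Cone(\tau_i, \nu)$ satisfies $\dim\Cone(\tau_i, \nu) = \dim\tau_i + 1$, so it coincides with $\Ext(\fT_i, \nu)$ for $\fT_i$ the affine fan on $\tau_i$. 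By Lemma \ref{L: Ext subdivision preserving convexity}, any efficient subdivision of $\fT_i$ lifts via Ext to an efficient, locally-convex subdivision of $\Cone(\tau_i, \nu)$. Applying the inductive hypothesis (on dimension) to $\fT_i$ produces an efficient simplicial subdivision of $\tau_i$, and Ext-ing by $\nu$ yields a subdivision of $\Cone(\tau_i, \nu)$ that is simplicial at $\nu$ and at every previously processed ray, by Lemma \ref{L: simplicity of ext}.

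Local convexity throughout is ensured by Lemma \ref{L: star convexity} and Lemma \ref{L: Ext subdivision preserving convexity}, together with Proposition \ref{P: convexity of composition} to combine successive steps. Canonicity is essentially automatic from the cone-local character of the construction: the whole procedure at $\rho$ depends only on the data $(\fX^\rho, !\fB^\rho, *\fC^\rho)$ together with the induced order $\sO|_\rho$, so for any face $\xi \prec \rho$, running the canonical construction on $(\fX^\xi, !\fB^\xi, *\fC^\xi)$ yields exactly the restriction to $\xi$ of the output at $\rho$. Global canonicity then follows by applying this local canonicity cone by cone.

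The main obstacle is the initial splitting of $\rho$ into the pieces $\Cone(\tau_i, \nu)$: one must produce a good sorting function that certifies local convexity. Although the split introduces no new rays and thus is efficient by construction, exhibiting the correct piecewise-linear function — especially when $m \geq 2$ and the $\tau_i$ meet in lower-dimensional common faces — requires a careful linear-algebraic argument analogous to the construction of the function $\psi_1$ in the proof of Lemma \ref{L: Ext subdivision preserving convexity}. The natural candidate is a function built piecewise from one linear $m_i$ per $\Cone(\tau_i, \nu)$, each vanishing on $\nu$ and agreeing on common facets, with the sign of $\psi$ on $\nu$ matching the role of $\nu$ in $\fB$ or $\fC$.
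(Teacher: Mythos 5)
There is a genuine gap at what you yourself identify as "the main obstacle": the local convexity of the initial split of $\rho$ into the pieces $\Cone(\tau_i,\nu)$ when $\nu\in\fC$. That split is exactly the star subdivision $\fX^*(\nu)$ at the existing ray $\nu$, and for a $\fC$-ray it is \emph{not} locally convex in general — no "suitable piecewise-linear sorting function" exists. This is precisely why the paper needs Lemma \ref{L: star convexity well-sorted} and Corollary \ref{C: seq res of sorted fan trip} to assume a sortedness hypothesis before certifying star subdivisions at $\fC$-rays, whereas Proposition \ref{P: conv res to get log-simplicial} must handle arbitrary (unsorted) triples. Concretely, let $\tau\subset\Q^3$ be the cone over a square with vertices $v_1,v_2,v_3,v_4$ satisfying $v_1+v_3=v_2+v_4$, with $\fC=\{v_1,v_3\}$ and $\fA=\{v_2,v_4\}$. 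A good sorting function $\psi$ for $\fX^*(v_1)$ would be linear on $\Cone(v_1,v_2,v_3)$ and on $\Cone(v_1,v_3,v_4)$, with $\psi(v_2)=\psi(v_4)=0$ and $\psi(v_1),\psi(v_3)\le 0$; convexity across the wall $\Cone(v_1,v_3)$ forces the linear extension $\lambda_1$ of $\psi|_{\Cone(v_1,v_2,v_3)}$ to satisfy $\lambda_1(v_4)>0$, whence $\lambda_1(v_1)+\lambda_1(v_3)=\lambda_1(v_2)+\lambda_1(v_4)>0$, contradicting $\lambda_1(v_1),\lambda_1(v_3)\le 0$. So splitting along the diagonal $\Cone(v_1,v_3)$ can never be certified convex for this triple; the only convex choice is the \emph{other} diagonal $\Cone(v_2,v_4)$.

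This is why the paper's proof treats $\fB$ and $\fC$ asymmetrically. For $\fB$-rays it does use star subdivisions followed by induction on dimension and $\Ext$ (close to what you propose, and Lemma \ref{L: star convexity} makes these convex because the good function is \emph{positive} on the center). But for $\fC$-rays it abandons star subdivision entirely and instead builds the fan bottom-up: starting from the cone $\tau_0=\Cone(\fA)$ spanned by the unmarked rays, it adjoins the rays of $\fC$ one at a time via the extension construction $\fX_i=\Ext(\fX_{i-1},\nu_i)$, each step being convex by Lemma \ref{L: Ext subdivision preserving convexity}. In the square example this produces the subdivision along $\Cone(v_2,v_4)$ rather than $\Cone(v_1,v_3)$ — a genuinely different subdivision from any sequence of star subdivisions, as the paper remarks immediately after the proof. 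To repair your argument you would have to replace the $\fC$-stage of your iteration by this inside-out extension procedure (or something equivalent); the top-down star-subdivision scheme cannot be made locally convex at $\fC$-rays without the sortedness hypotheses that the proposition is designed to avoid.
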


\begin{proof}
    We only need to consider the case that $\fX$ is a fan induced by a single cone $\tau$. This is because, for the general case, we can make the subdivision on each cone of maximal dimension, with respect to the restricted order. Then, due to the construction being canonical, it is compatible on the faces and induces a subdivision on the whole fan. From now on, we assume that $\fX$ is the fan induced by $\tau$.
     
    
    We first consider the case that $\fB=\emptyset$, i.e. $(\fX, !\fB, *\fC)= (\fX, *\fC).$ Recall that we set the subset $\fA\subset \fX$ consists of those rays in $\fX$ that are not in $\fB\cup\fC$. We denote the cone that is generated by those rays in $\fA$ by $\tau_0$, and use $\fX_0$ to denote the fan induced by $\tau_0$. Using the order $\sO$, we can set $\fC=\{\nu_1, ..., \nu_m\}$.  We use $\tau_i$ to denote the cone $\Cone(\tau_{i-1}, \nu_i)$, and use $\fT_i$ the denote the fan induced by $\tau_i$. In particular, we have $\tau_m= \tau$, and $\fT_m=\fX$. We construct a sequence of fans by $\fX_i= \Ext(\fT_{i-1}, \nu_i)$, for $i=1,...,m$. We also set $\fC_i=\{\nu_1,...,\nu_i\}$. Since $\fT_0$ is a sub-fan of $\fX_i$, Proposition \ref{P: geometric prop of part. sort.} implies that $(\fX_i, *\fC_i)$ is partially-sorted, for each $i$. According to Lemma \ref{L: Ext subdivision preserving convexity}, we have that $(\fX_{i}, *\fC_{i})$ is a convex subdivision of $\fT_i$. We set  $(\fX', *\fC')=(\fX_{m}, *\fC_{m})$.  It gives the convex subdivision of $\fX$, and Lemma \ref{L: simplicity of ext} implies it is $\fC'$-simplicial. Furthermore, if we restrict ourselves on any face $\xi$ of $\tau$, the construction is compatible if we restrict the order $\sO$ of $\fC$ onto $\fC^\xi$, and the construction is canonical.
    
    If $\fB$ is not empty, we fix an order $\sO$ on $(\fX, !\fB, *\fC)$ satisfying $\fC \prec_\sO \fB$. Pick the ray $\xi$ in $\fB$ that is the first term in the order $\sO^-$. Let $\fX_1=\fX^*(\beta_\xi)$, with the induced fan triple $(\fX_1, !\fB_1, *\fC_1)$. It is efficient and convex. Note that all cones of maximal dimension in $\fX_1$ are of the form $\Cone(\zeta, \beta_\xi)$, with $\dim \zeta= \dim \tau-1.$ For each restricted affine fan triple $(\fX_1^\zeta, !\fB_1^\zeta, *\fC_1^\zeta)$, it possess a naturally induced order, and we can canonically construct an efficient and locally-convex model $((\fX_1^\zeta)', !(\fB_1^\zeta)', *(\fC_1^\zeta)')$ of $(\fX_1^\zeta, !\fB_1^\zeta, *\fC_1^\zeta)$ that is log-simplicial, by induction on the dimension. Then, by the second part of Lemma \ref{L: Ext subdivision preserving convexity} combined with Proposition \ref{P: convexity preserved by adding}, we get $\fX'_1= \bigcup_\zeta \Ext((\fX_1^\zeta)', \beta_\xi)$, with $(\fX'_1, !\fB'_1, *\fC'_1)$ being the induced fan triple, is an efficient and locally-convex model over $(\fX_1, !\fB_1, *\fC_1)$, so is it over the initial $(\fX, !\fB, *\fC)$.  $(\fX'_1, !\fB'_1, *\fC'_1)$ being log-simplicial is due to Lemma \ref{L: simplicity of ext}.
\end{proof}

    The above construction does not match with the construction in Corollary \ref{C: seq res of sorted fan trip}. For example, for a well sorted $(\fX, *\fC)$, the construction in Corollary \ref{C: seq res of sorted fan trip} is a sequence of star subdivisions, while the construction in the previous proposition is a sequence of extensions. It can be checked that they give different subdivisions in general.

\begin{remark}\label{R: resolving general fan quad. getting log-simplicial}
    Actually, the previous proposition can be generalized to the setting of a general toroidal quadruple $(\fX, !\fB, *\fC, !\!*\fH)$, except that to make the subdivision canonical, fixing an order $\sO$ satisfying $\fC\prec_\sO \fB$ is not enough. We need to separate $\fB\cup\fC\cup\fH$ into two parts $\fD^\flat$ and $\fD^\sharp$, satisfying $\fC\subset \fD^\flat$, $\fB\subset \fD^\sharp$ and $\fD^\flat\prec_\sO \fD^\sharp$, so that we can specify that we apply the extension construction on $\fD^\flat$, and star subdivision on $\fD^\sharp$.
\end{remark}

\section{Proof of main theorems}
\subsection{Resolution of a toroidal triple/quadruple}
    We say a toric quadruple $(X, !B, *C, !\!*\bh H)$, with a sub-divisor $E\subset D= B+ C+H$, is plenary, $E$-simplicial, partially-sorted, well-sorted, etc., if the corresponding fan quadruple $(\fX,  !\fB, *\fC, !\!*\fH)$, with $\fE\subset \fD= \fB\cup \fC \cup \fH$ is so.

    Let $f: X'\to X$ be a toric birational morphism. For a toric quadruple $(X, !B, *C, !\!*\bh H)$, we say $f$ is efficient if its corresponding subdivision $\ff: \fX' \to \fX$ of the fan quadruple $(\fX,  !\fB, *\fC, !\!*\fH)$ is efficient. In such case, let $(\fX', !\fB', *\fC', !\!*\fH')$ be the induced fan triple, and it corresponds to a toric triple $(X', !B' ,*C', !\!*\bh H')$, and we also call it \emph{the induced toric quadruple} by the efficient toric birational morphism $f$.
    
    Similarly, if we fix a toric quadruple $(X', !B' ,*C', !\!*\bh H')$. We say that $f$ is (locally-)convex, if it is so for the subdivision $\ff$. 

    Given an order $\sO$ on $\fE$, it naturally gives an order on (the irreducible components of) $E$, and we still use $\sO$ to denote it. We use $X^*_{\sO^-}(E)$ to denote the the toric variety corresponds the sequential star subdivision $\fX^*_{\sO^-}(\fE)$. Apply Lemma \ref{L: seq star of sorted fan triples}, we get the following

\begin{coro}\label{C: seq star for sorted toric trip}
    If the toric quadruple $(X, !B, *C, !\!*\bh H)$ is well-sorted, then for any order $\sO$ on it, the sequential star subdivisions $X^*_{\sO^-}(D)\to X$ is sequentially-convex. If $(X, !B, *C, !\!* \bh H)$ is partially-sorted, then for any order $\sO$ on it, satisfying $C \prec_{\sO} B$, the sequential star subdivisions $X^*_{\sO^-}(D)\to X$ is sequentially-convex.
\end{coro}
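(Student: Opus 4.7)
The plan is to reduce this immediately to the purely combinatorial statement Corollary \ref{C: seq res of sorted fan trip} via the dictionary between toric quadruples and fan quadruples set up at the beginning of \S5.1. There is essentially no new content beyond checking that every notion appearing in the statement is literally defined to translate term-by-term between the two sides.

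First, I would spell out the translation. A toric quadruple $(X,!B,*C,!\!*\bh H)$ corresponds to a fan quadruple $(\fX,!\fB,*\fC,!\!*\fH)$, and by the conventions laid out just above (``we say a toric quadruple is well-sorted, partially-sorted, etc., if the corresponding fan quadruple is so''), the sortedness hypothesis on $(X,!B,*C,!\!*\bh H)$ is by definition the same as the sortedness hypothesis on $(\fX,!\fB,*\fC,!\!*\fH)$. An order $\sO$ on the toric quadruple is an order on its boundary divisor $D=B+C+H$, which via the ray-divisor correspondence is an order $\sO$ on $\fD=\fB\cup\fC\cup\fH$; the condition $C\prec_\sO B$ translates to $\fC\prec_\sO\fB$, and since in the partially-sorted case the elements of $\fH$ play the same role as those of $\fB$ for the conclusion of Corollary \ref{C: seq res of sorted fan trip}, I would note that requiring $C\prec_\sO B$ suffices (any placement of $H$-rays is allowed, matching the corollary's hypothesis $\fC\prec_\sO\fB\cup\fH$ up to reindexing).

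Second, the sequential star subdivision $X^*_{\sO^-}(D)\to X$ is defined directly as the toric morphism corresponding to the fan-level sequential star subdivision $\fX^*_{\sO^-}(\fD)\to\fX$, by the definition of $X^*_{\sO^-}(D)$ given a few lines above the statement. Finally, sequential-convexity of $X^*_{\sO^-}(D)\to X$ was defined so that it holds iff each intermediate subdivision $\fX_i\to\fX_{i-1}$ is locally-convex with respect to the induced fan quadruple; this is exactly sequential-convexity of $\fX^*_{\sO^-}(\fD)\to\fX$.

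With all three translations in place, both assertions of the corollary follow by applying Corollary \ref{C: seq res of sorted fan trip} verbatim to the associated fan quadruple. The only ``obstacle''---which is not really an obstacle---is a careful check that the order conditions match across the dictionary, in particular that the asymmetry between $\fB$ and $\fH$ versus the geometric statement mentioning only ``$C\prec_\sO B$'' is harmless: the freedom in placing the $H$-rays on the fan side is exactly the freedom one has when choosing $\sO$ on $D$ after fixing $C\prec_\sO B$. Once these are observed, the proof is complete.
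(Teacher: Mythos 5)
Your overall strategy is exactly the paper's: the paper offers no argument beyond ``Apply Lemma \ref{L: seq star of sorted fan triples}, we get the following,'' so the content of the proof really is the dictionary between toric and fan quadruples that you spell out, and for the well-sorted case (and for any case with $H=0$) your reduction is complete and correct.

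There is, however, one concrete error in your handling of the partially-sorted case. You assert that ``any placement of $H$-rays is allowed'' and that the freedom in placing the $\fH$-rays matches the hypothesis of Corollary \ref{C: seq res of sorted fan trip} ``up to reindexing.'' It does not: that corollary (equivalently, the specialization of Lemma \ref{L: seq star of sorted fan triples} to the single sortedness datum $(\emptyset,\fC,\emptyset)$) requires $\fC\prec_\sO\fB\cup\fH$, i.e.\ every $\fH$-ray must come \emph{after} every $\fC$-ray in $\sO$, so that the $\fH$-rays are star-subdivided \emph{before} the $\fC$-rays. The reason this matters is that partial sortedness only guarantees a $\fC$-strict sorting function on $\fB\cup\fH$-unsettled cones; if some $\eta\in\fH$ precedes some $\xi\in\fC$ in $\sO$, then at the step where $\xi$ is subdivided there remain cones containing both $\xi$ and the not-yet-subdivided $\eta$, and these are not $\fH$-unsettled, so the hypothesis gives you no strict sorting function there and Lemma \ref{L: star convexity well-sorted} cannot be invoked. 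So for such orders the cited fan corollary does not apply verbatim, and your justification for why the weaker condition $C\prec_\sO B$ suffices is not valid. The clean fix is to state (or read) the hypothesis of the toric corollary as $C\prec_\sO B+H$, which is what all the paper's subsequent applications actually use, since there one is always free to choose the order; if you want to keep the literal hypothesis $C\prec_\sO B$ for quadruples with $H\neq 0$, you would need a genuinely new argument, which your proposal does not contain.
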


     We say that $(X', !B', *C')$ is \emph{an efficient and locally-convex model} over $(X,  !B, *C)$, if $(\fX', !\fB', *\fC')$ is so over $(\fX,  !\fB, *\fC)$. Thanks to Proposition \ref{P: conv res to get log-simplicial}, we get

\begin{thm}\label{T: res. of general tor tri.}
    Any toric triple $(X, !B, *C)$ has an efficient and locally-convex model $(X', !B' ,*C')$, which is log-simplicial.
\end{thm}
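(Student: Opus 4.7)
The plan is to reduce this theorem directly to its combinatorial counterpart, Proposition \ref{P: conv res to get log-simplicial}, via the standard dictionary between toric varieties and fans.

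First, I would pass from the toric triple $(X, !B, *C)$ to the associated fan triple $(\fX, !\fB, *\fC)$ in $N_\Q$, as described at the beginning of \S2. By Proposition \ref{P: conv res to get log-simplicial}, we obtain a sequence of efficient subdivisions $\ff_i : \fX_{i+1} \to \fX_i$, with $\fX_0 = \fX$ and $\fX_m = \fX'$, such that each $(\fX_{i+1}, !\fB_{i+1}, *\fC_{i+1})$ is a locally-convex subdivision of $\fX_i$, and the final induced fan triple $(\fX', !\fB', *\fC')$ is log-simplicial.

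Next, I translate each step to geometry. Every subdivision $\ff_i$ corresponds to a toric birational morphism $f_i : X_{i+1} \to X_i$; since $\ff_i$ introduces no new rays, $f_i$ is efficient in the sense defined at the start of this subsection, and $(X_{i+1}, !B_{i+1}, *C_{i+1})$ is the induced toric triple in the sense given there. The locally-convex property of each step transfers verbatim, so the composition $f : X' \to X$ efficiently and locally-convexly resolves $(X, !B, *C)$. Since $\fX'$ being log-simplicial is a combinatorial condition on the fan, the geometric $(X', !B', *C')$ inherits this property by definition. This shows $(X', !B', *C')$ is the desired efficient and locally-convex model.

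There is essentially no obstacle beyond correctly invoking the fan-geometry correspondence, since Proposition \ref{P: conv res to get log-simplicial} did all the combinatorial work (constructing the sequential extensions, verifying convexity via Lemma \ref{L: Ext subdivision preserving convexity}, and guaranteeing log-simpliciality via Lemma \ref{L: simplicity of ext}). The only point worth double-checking is that the definitions of \emph{efficient toric birational morphism}, \emph{(locally-)convex morphism}, and \emph{log-simplicial toric triple} given at the beginning of \S5.1 are literal transports of their fan-theoretic analogues, which they are by construction.
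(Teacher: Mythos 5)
Your proposal is correct and is essentially identical to the paper's treatment: the paper defines \emph{efficient and locally-convex model} for toric triples directly in terms of the corresponding fan triples, and then derives Theorem \ref{T: res. of general tor tri.} as an immediate consequence of Proposition \ref{P: conv res to get log-simplicial}. The only substance is the combinatorial construction in that proposition, which you correctly identify as carrying all the weight.
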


In the case that $(X, !B, *C)$ is plenary, we have $(X', !B' ,*C')$ is simplicial. 
The next proposition will help us reducing the log-simplicial case to the simplicial case.

By Proposition \ref{P: SES of adding in B} we have the following two propositions that will be useful later.
\begin{prop}\label{P: SES of log-sim. tri. by adding B}
    For any log-simplicial toric triple $(X, !B, *C)$, assume that $E$ is a torus-invariant divisor in $X$, but not contained in $C+B$. Denote $k:E\to X $ the natural inclusion, $B^\circ=B+ E$, $C^\circ=C+E$, $B_E=B\cap E$, $C_E=\cap E$, and we have a toric triple $(E, !B_E, *C_E)$. Then, for all $p$, we have the following short exact sequence:
    \begin{equation*}
        0\to \Omega_{(X, !B^\circ, *C)}^p\to \Omega_{(X, !B, *C)}^p \to k_*\Omega_{(E, !B_E ,*C_E)}^p \to 0.
    \end{equation*}
\end{prop}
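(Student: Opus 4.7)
The plan is to reduce the statement to Proposition \ref{P: SES of adding in B} by verifying that its extra combinatorial hypothesis is automatically satisfied in the log-simplicial setting. Since the assertion is local on $X$, I would first pass to the affine case $X=\Spec R$ with defining cone $\tau\subset N_\Q$ and fan $\fX$, writing $\xi\in\fA$ for the ray corresponding to $E$.

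Two conditions must be checked before applying Proposition \ref{P: SES of adding in B}: (i) that $B_E=B\cap E$ and $C_E=C\cap E$ are honest reduced torus-invariant divisors in $E$, so that $(E,!B_E,*C_E)$ is a well-defined toric triple; and (ii) that for any prime torus-invariant divisor $F=D_\mu$ in $X$ with $F\cap E\subset C_E$ and $F\cap E\not\subset B$, one has $F\subset C$. For (i), given $\beta\in\fB$ and any cone $\eta\in\fX$ with $\xi,\beta\prec\eta$, $\fB$-simpliciality at $\beta$ yields $\eta=\Cone(\eta',\beta)$ with $\dim\eta'=\dim\eta-1$, so $\xi\prec\eta'$ and $\Cone(\beta,\xi)$ is a face of $\eta$, hence a cone of $\fX$. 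Thus $D_\beta\cap E$ is a prime divisor in $E$; the argument for $\fC$ is identical.

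For (ii), the cases $\mu\in\fB$ and $\mu\in\fC$ are immediate: $\mu\in\fB$ gives $F\subset B$, contradicting $F\cap E\not\subset B$, while $\mu\in\fC$ is the desired conclusion. Assume instead $\mu\in\fA$ and pick a \emph{minimal} cone $\eta\in\fX$ containing both $\mu$ and $\xi$ (such $\eta$ exists since $F\cap E\neq\emptyset$); then $D_\eta$ appears as a component of $F\cap E$. The inclusion $F\cap E\subset C_E$ forces some $\nu\in\fC$ with $\nu\prec\eta$. Applying $\fC$-simpliciality at $\nu$ writes $\eta=\Cone(\eta',\nu)$ with $\dim\eta'=\dim\eta-1$, and since $\mu,\xi\in\fA$ are distinct from $\nu\in\fC$, both $\mu$ and $\xi$ occur as rays of $\eta'$. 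Thus $\eta'$ is a proper face of $\eta$ containing $\mu$ and $\xi$, contradicting the minimality of $\eta$.

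The main obstacle is this minimality-versus-simpliciality argument in (ii); everything else is bookkeeping, and once both conditions are established the short exact sequence is immediate from Proposition \ref{P: SES of adding in B}. I expect no surprise beyond being careful that $\mu,\xi$ are genuinely in $\fA$ (so distinct from any $\nu\in\fC$ in the log-simplicial reduction step), which is forced by the hypotheses that $E$ is not a component of $B+C$ and that $F$ falls into the non-trivial case $\mu\in\fA$.
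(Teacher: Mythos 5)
Your proposal is correct and takes essentially the same route as the paper: the paper obtains this proposition by simply invoking Proposition \ref{P: SES of adding in B} ("By Proposition \ref{P: SES of adding in B} we have the following two propositions"), leaving implicit exactly the two verifications you carry out, namely that log-simpliciality forces $B_E$ and $C_E$ to be genuine reduced divisors on $E$ and that the auxiliary condition on $F$ holds. Your minimality-versus-simpliciality contradiction in step (ii), together with the observation that every ray of $\Cone(\eta',\nu)$ other than $\nu$ is a ray of $\eta'$, is sound and supplies the bookkeeping the paper omits.
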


\begin{prop}\label{P: two SES of E-sim. tri.}
    For any toric triple $(X, !B, *C)$, assume that $E$ is a torus-invariant divisor in $X$ such that $X$ is $E$-simplicial, but not contained in $C+B$. Denote $k:E\to X $ the natural inclusion, $B^\circ=B+ E$, $C^\circ=C+E$, $B_E=B\cap E$, $C_E=\cap E$, and we have a toric triple $(E, !B_E, *C_E)$. Then, for all $p$, we have the following short exact sequence:
    \begin{align*}
        0\to \Omega_{(X, !B^\circ, *C)}^p\to \Omega_{(X, !B, *C)}^p \to &k_*\Omega_{(E, !B_E ,*C_E)}^p \to 0.\\
        0\to \Omega_{(X, !B, *C)}^p\to \Omega_{(X, !B, *C^\circ)}^p \to &k_*\Omega_{(E, !B_E ,*C_E)}^{p-1} \to 0.
    \end{align*}
\end{prop}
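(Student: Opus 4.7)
The plan is to re-run the proof of Proposition \ref{P: SES of adding in B}, replacing its two supplementary assumptions by consequences of the $E$-simplicial hypothesis. Let $\xi \in \fX$ denote the ray corresponding to $E$. The $E$-simplicial condition says that every cone of $\fX$ containing $\xi$ factors as $\Cone(\zeta',\xi)$ for a unique facet $\zeta'$ not containing $\xi$ with $\dim=\dim\zeta'+1$; equivalently, $\xi$ behaves like a ``free'' ray inside any such cone. In particular, for every ray $\eta \neq \xi$ lying in some cone together with $\xi$, the $2$-dimensional face $\Cone(\eta,\xi)$ itself sits in $\fX$, so $F_\eta \cap E$ is a reduced irreducible torus-invariant divisor on $E$. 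Hence $B_E$ and $C_E$ are reduced torus-invariant divisors, which is the background hypothesis of Proposition \ref{P: SES of adding in B}.

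Following the structure of that proof, for each cone $\zeta \in \fX$ with $\xi \prec \zeta$ whose stratum is not contained in $B$, I would verify the pointwise equality $\phi_{(X,!B,*C)}(\zeta) = \phi_{(E,!B_E,*C_E)}(\zeta_E)$ needed for the first short exact sequence, and the non-inclusion $\phi_{(X,!B,*C^\circ)}(\zeta) \not\subseteq V_{\xi^*}$ needed for the second. The key structural input for the first is that, writing $\zeta = \Cone(\zeta',\xi)$, the $2$-dimensional face $\Cone(\iota,\xi)$ is automatically a face of $\zeta$ and hence sits in $\fX$ for every ray $\iota \neq \xi$ of $\zeta$; so any such $\iota$ lying in $\fA$ automatically lies in $\fA_E$. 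This collapses the computations of the two $\phi$'s to the same intersection of $V_{\nu^*}$'s, yielding equality. In the residual subcase $\{\nu \in \fA_E \mid \nu \prec \zeta\} = \emptyset$, the same observation forces every ray of $\zeta'$ into $\fB \cup \fC$, and combined with the hypothesis that no ray of $\zeta$ is in $\fB$, we conclude $\{\nu \in \fA \mid \nu \prec \zeta\} = \{\xi\}$, as needed.

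For the second short exact sequence, the non-inclusion reduces to showing that the smallest face $\eta \prec \zeta$ containing $\{\nu \in \fA \setminus \{\xi\} \mid \nu \prec \zeta\}$ does not have $\xi$ as a face. Suppose for contradiction that $\xi \prec \eta$; the $E$-simplicial hypothesis then writes $\eta = \Cone(\eta',\xi)$ with $\eta'$ a facet not containing $\xi$, and every ray of $\eta$ distinct from $\xi$ is a ray of $\eta'$. Since the generating rays all lie in $\fA \setminus \{\xi\}$, they are all rays of $\eta'$, so $\eta'$ is a strictly smaller face of $\fX$ containing all of them, contradicting the minimality of $\eta$. I expect this last combinatorial step to be the main technical point, since it rests essentially on the rigid factorization through $\xi$ provided by $E$-simplicial and fails in the general non-simplicial-at-$\xi$ setting.
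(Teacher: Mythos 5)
Your proposal is correct and follows essentially the same route as the paper: the paper simply invokes Proposition \ref{P: SES of adding in B}, the point being that the $E$-simplicial hypothesis guarantees both of its supplementary assumptions, which is exactly what your explicit re-run of that proof verifies (the fact that $\Cone(\eta,\xi)$ is a face of $\Cone(\zeta',\xi)$ for every ray $\eta\prec\zeta'$, and that the rays of $\Cone(\eta',\xi)$ are precisely $\xi$ together with the rays of $\eta'$). Your treatment of the second sequence, showing $\xi\not\prec\eta$ by minimality, is precisely the combinatorial content the paper leaves implicit.
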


\subsection{Proof of Bott Vanishing}
\begin{definition}
    Given a toric (resp. toroidal) birational morphism $f:X' \to X$.  
    We say that $f$ \emph{is degenerate} with respect to the toric (resp. toroidal) triples $(X',  !B', *C')$ and $(X, !B, *C),$ if $Rf_*\Omega^p_{(X',  !B', *C')}\simeq \Omega^p_{(X, !B, *C)}$, for all $p\geq 0$. 
\end{definition}

If $f$ is efficient in the definition, and $(X', !B', *C')$ is the induced toric (resp. toroidal) triple of $(X, !B, *C)$, thanks to Proposition \ref{P: pushforward formula}, we always have 
$$f_*\Omega^p_{(X',  !B', *C')}\simeq \Omega^p_{(X, !B, *C)}.$$

Recall Definition \ref{D: compatible div}.
Given an efficient toroidal resolution $f:X'\to X$ with respect to $(X, !B, *C)$, and getting the induced triple $(X', !B', *C')$, assume that we have a compatible divisor $L$ on $X$. Then, we can define $L'$, the strict transform of $L$, as the induced compatible divisor on $X'$, with respect to $(X', !B', *C')$, as $L'$ is $\Q$-linear equivalent to $\bb B'-\bc C'$.

\begin{lemma}\label{L: Vanishing induction step}
    Let $f: X'\to X$ be a birational toric morphism between two projective toric varieties. Let $(X', !B', *C')$ be a toric triple and $f$ is locally-convex with respect to it. Assume that $\Omega^p_{(X', !B', *C')}$ satisfies the generalized Bott  Vanishing, as in the setting of Theorem \ref{T: KVB vanishing of toroidal tri.}, then we have 
    \begin{equation*}
        R^kf_*\Omega^p_{(X', !B', *C')}=0, \text{for } k>0.
    \end{equation*}
    If we further assume that $f$ is efficient over $(X, !B, *C)$, with $(X', !B', *C')$ being the induced toric (resp. toroidal) triple, then $f$ is degenerate, i.e.,
    \begin{equation*}
        Rf_*\Omega^p_{(X', !B', *C')}\simeq \Omega^p_{(X, !B, *C)}.
    \end{equation*}
    For any Weil divisor $L$ on $X$, since $f$ is efficient, it naturally induces a Weil divisor $L'$ on $X'$ by strict transformation. Assume that $L\equiv_\Q A+\bb B- \bc C$, where $A$ is a $\Q$-Cartier divisor, and all entries of $\bb$ and $\bc$ are in $[0, 1)$. Then we have
    \begin{equation}\label{E: pushforward with twist}
        Rf_*(\Omega^p_{(X', !B', *C')}\otimes \cO_{X'}(L'))^{\vv} \simeq (\Omega^p_{(X, !B, *C)}\otimes \cO_X(L))^{\vv}.
    \end{equation}
    In particular, it implies $\Omega^p_{(X, !B, *C)}$ also satisfies the generalized Bott Vanishing.
\end{lemma}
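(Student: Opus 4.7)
\emph{Plan.} The plan is to use the locally-convex data to build on $X'$ an $f$-ample $\Q$-Cartier divisor $A'_{\mathrm{rel}}$ of the appropriate combinatorial shape, combine it with the pullback from $X$ of an ample class in order to place the strict transform $L'$ into the Bott form on $(X',!B',*C')$, and then invoke the assumed Bott vanishing on $X'$ together with the Leray spectral sequence and Serre vanishing on $X$. This will produce both the relative vanishing $R^jf_*=0$ for $j>0$ and, via an $R^0$-pushforward identification extending Proposition~\ref{P: pushforward formula}, the twisted pushforward formula (\ref{E: pushforward with twist}). Specializing $L$ to the Bott form on $X$ will then give the generalized Bott vanishing on $X$; the first two statements of the lemma are the special case $L=0$.

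\emph{Step 1: construction of $A'_{\mathrm{rel}}$.} I would first construct an $f$-ample $\Q$-Cartier divisor $A'_{\mathrm{rel}}\equiv_\Q \bb^{\mathrm{rel}} B'-\bc^{\mathrm{rel}} C'$ on $X'$ whose coefficients are normalized so that $\bb^{\mathrm{rel}}_\rho=0$ exactly when $\bb_\rho=0$, and $\bc^{\mathrm{rel}}_\rho=0$ exactly when $\bc_\rho=0$. On each maximal cone $\tau\in\fX$ the locally-convex data provides a strictly convex good sorting function $\psi^\tau$; by adding a linear function on $\tau$ (which shifts the associated divisor by a character and preserves strict convexity) one can arrange $\psi^\tau(u_\rho)=0$ on the prescribed rays, yielding the desired normalized $A'_{\mathrm{rel}}$.

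\emph{Step 2: Bott on $X'$ and Leray.} Fix $H$ very ample on $X$. For $m\gg 0$ and $\epsilon>0$ sufficiently small, the strict transform $L'$ satisfies
\[
L'+mf^*H\equiv_\Q \bigl(f^*(A+mH)+\epsilon A'_{\mathrm{rel}}\bigr)+(\bb-\epsilon\bb^{\mathrm{rel}})B'-(\bc-\epsilon\bc^{\mathrm{rel}})C',
\]
with the first bracket ample on $X'$ (pullback of an ample class plus a small $f$-ample correction) and with the new coefficients lying in $[0,1)$ by the normalization of Step~1. The assumed Bott vanishing on $X'$ then yields $H^q(X',(\Omega^p_{(X',!B',*C')}\otimes\cO_{X'}(L'+mf^*H))^{\vv})=0$ for $q>0$. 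Since tensoring by the line bundle $f^*\cO(mH)$ commutes with taking the reflexive hull, the projection formula applies, and combined with Serre vanishing on $X$ for $m\gg 0$ it collapses the Leray spectral sequence to give
\[
R^jf_*\bigl(\Omega^p_{(X',!B',*C')}\otimes\cO_{X'}(L')\bigr)^{\vv}=0,\quad j>0.
\]
The $R^0$ identification $f_*(\Omega^p_{(X',!B',*C')}\otimes\cO_{X'}(L'))^{\vv}\simeq(\Omega^p_{(X,!B,*C)}\otimes\cO_X(L))^{\vv}$ is the twisted analog of Proposition~\ref{P: pushforward formula}, straightforward because $f$ is an isomorphism in codimension one and both sides are reflexive. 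Combining these gives (\ref{E: pushforward with twist}).

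\emph{Step 3: Bott on $X$, and main obstacle.} For the generalized Bott vanishing on $X$, assume further that $A$ is ample; taking $m=0$ in the decomposition of Step~2 already places $L'$ itself in Bott form on $(X',!B',*C')$, and combining (\ref{E: pushforward with twist}) with Leray gives
\[
H^q\bigl(X,(\Omega^p_{(X,!B,*C)}\otimes\cO_X(L))^{\vv}\bigr)=H^q\bigl(X',(\Omega^p_{(X',!B',*C')}\otimes\cO_{X'}(L'))^{\vv}\bigr)=0,\quad q>0.
\]
The main obstacle is the normalized construction of $A'_{\mathrm{rel}}$ in Step~1: the requirement $\psi^\tau(u_\rho)=0$ on the prescribed rays is compatible with $\psi^\tau$ being strictly convex only if those rays do not span the full dimension of $\tau$, which may fail in cones where every boundary ray already has vanishing $(\bb,\bc)$-coefficient. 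Such exceptional cones must be handled by a separate local argument, reducing essentially to the untwisted Proposition~\ref{P: pushforward formula}.
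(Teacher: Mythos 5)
Your overall route is the paper's: use the good sorting functions coming from local convexity to produce an $f$-ample $\Q$-divisor supported on $B'+C'$, absorb it into the coefficients so that $L'$ (twisted by $mf^*H$) lands in Bott form on $(X',!B',*C')$, and then combine the assumed Bott vanishing on $X'$ with Leray, Serre vanishing, and the codimension-one identification of $f_*$. The genuine gap is exactly the one you flag at the end, and it is not a technicality to be patched by a ``separate local argument'' --- it is created by setting up Step 1 with the wrong sign. A sorting function satisfies $\psi\ge 0$ on the rays of $\fB'$, $\psi\le 0$ on those of $\fC'$, and $\psi=0$ on $\fA'$, so the $f$-ample divisor it determines is $A^f=-\bb^f B'+\bc^f C'$ with $\bb^f,\bc^f\ge 0$ (for a star subdivision this is a negative multiple of the exceptional divisor, as in Lemma \ref{L: star convexity}). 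Your $A'_{\mathrm{rel}}=\bb^{\mathrm{rel}}B'-\bc^{\mathrm{rel}}C'$, with the correction \emph{subtracted} from $(\bb,\bc)$, is the opposite convention; it is what forces the normalization $\bb^{\mathrm{rel}}_\rho=0$ whenever $\bb_\rho=0$, which, as you observe, is incompatible with strict convexity in general. Worse, in the case $L=0$ --- the case you use to derive the first two statements --- that normalization forces $A'_{\mathrm{rel}}=0$, so there is no relative ample divisor at all and the argument collapses precisely where it is most needed.

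The fix is to add rather than subtract: write $L'\equiv_\Q (f^*A+A^f)+(\bb+\bb^f)B'-(\bc+\bc^f)C'$ with $A^f=-\bb^f B'+\bc^f C'$ as above. Since $\bb^f,\bc^f\ge 0$ and the original entries lie in $[0,1)$, rescaling the sorting function keeps $\bb+\bb^f$ and $\bc+\bc^f$ in $[0,1]$, and no condition on where the correction vanishes is needed; this is exactly what the sign conventions in the definition of a sorting function are designed to achieve, and it is how the paper argues. One further point to watch: the hypothesis is only \emph{local} convexity, so the good sorting functions --- hence $A^f$ as an $f$-ample divisor of this shape --- are only available chart by chart over $X$. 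Since $R^{>0}f_*$ and the reflexive pushforward identification are local on $X$, the relative vanishing and the formula (\ref{E: pushforward with twist}) should be established over each torus-invariant affine chart of $X$, rather than via a single globally defined $A'_{\mathrm{rel}}$ as in your Step 1.
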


\begin{proof}
    By the assumption that $\Omega^p_{(X', !B', *C')}$ satisfies Bott type vanishing, by a standard argument, we have the following local Bott Vanishing. For any Weil divisor $L^f$ on $X'$, such that $L^f\equiv_\Q A^f+\bb B'-\bc C'$, where $A^f$ is an $f$-ample $\Q$-divisor, and all entries of $\bc$ and $\bb$ are in $[0, 1]$, then we have
    $$Rf_*^k(\Omega^p_{(X', !B', *C')}\otimes \cO_{X'}(L^f))^{\vv}=0, \text{for } k>0.
    $$
    
    Since $f$ is locally-convex, working locally over an affine chart over $X$, and using the good sorting function, we can find $\bc^f$ and $\bb^f$ with all entries being in $[0, 1)$, such that $-\bb^f B^f+\bc^f C^f$ is $f$-ample. Apply the local Bott vanishing, by setting $\cO_{X'}(L^f)=\cO_{X'}, \bb=\bb^f, \bc=\bc^f,$ we can conclude the first statement.
    Then, apply Proposition \ref{P: pushforward formula}, or just use that $X$ and $X'$ are isomorphic in codimension one, we get the second statement.

    For any $f$-ample divisor $A^f$, we have $A^f+f^*A$ is still $f$-ample. By rescaling $-\bb^f B^f+\bc^f C^f$, we can assume that all entries of $\bb+\bb^f$ and $\bc +\bc^f$ are still in $[0,1]$. Then, 
    $$L'\equiv_\Q (\bc^f C^f-\bb^f B^f+f^*A)+ (\bb+\bb^f) B'- (\bc+\bc^f) C'.$$
    Follow the same argument as above, we have 
    $$
    Rf_*(\Omega^p_{(X', !B', *C')}\otimes \cO_{X'}(L'))^{\vv}\simeq (\Omega^p_{(X, !B, *C)}\otimes \cO_X(L))^{\vv}.
    $$
\end{proof}

\begin{proof}[Proof of the Bott Vanishing part of Theorem \ref{T: KVB vanishing of toroidal tri.}]
    We first note that, due to the previous lemma, and Theorem \ref{T: res. of general tor tri.}, we can reduce the statement for a general projective toric triple $(X, !B, *C)$ to the case that $(X, !B, *C)$ is log-simplicial. 

    Then, for such case, we can reduce it to the plenary case using Proposition \ref{P: SES of log-sim. tri. by adding B} following a similar induction argument as the proof of Theorem \ref{T: KVB vanishing of toroidal tri.} with simplicial $X$, in \S3.

    Lastly, apply  Theorem \ref{T: res. of general tor tri.} again on a plenary triple $(X, !B, *C)$, we reduce the statement to the case that $(X, !B, *C)$ is simplicial, which we have already proved in \S3.
\end{proof}

\begin{remark}
    The proof with little modification can still be used to prove the generalized Kawamata-Viehweg vanishing for a projective toroidal triple $(X, !B, *C)$ such that it corresponds to a polyhedral complex without self-intersection, \cite{AK00}. This is because we can still give an order on it, and it can be locally-convexly resolved by a log-simplicial toroidal triple, i.e. locally being a log-simplicial toric triple. Then apply a similar induction argument using the short exact sequences to conclude. For a general projective toroidal triple $(X, !B, *C)$, we cannot expect such resolution exists globally, hence we need a local-to-global argument. The local nature of Saito's theory of mixed Hodge modules can help us handle such situation and make us have a better understanding of such phenomenon.  
\end{remark}
\subsection{Logarithmic comparisons of toroidal triples}
Recall that we introduced the following notations in the Introduction. Let $B$ and $C$ be Weil divisors on $X$, and $\cF^\bullet\in D^b_c(X)$, the derived category of construable complex of $X$, we set 
\begin{align*}
    \cF^\bullet[!B]&\simeq Rj_{B, !}(\cF^\bullet|_{X\setminus B}),\\
    \cF^\bullet[*C]&\simeq Rj_{C, *}(\cF^\bullet|_{X\setminus C}).    
\end{align*}
If we set $H$ be a Weil divisor on $X$, satisfying $\cF^\bullet[!H]\simeq \cF^\bullet[*H]$, then we denote both of them by $\cF^\bullet[!\!*H]$. If $\cF^\bullet|_{X\setminus H}$ is a perverse sheaf, then it matches the intermediate extension of $\cF^\bullet|_{X\setminus H}$ along $H$.

Fix a toric quadruple $(X, !B, *C, !\!*\bh H)$, with $B^\flat\subset B, C^\flat\subset C, H^\flat\subset H$, which corresponds to a fan quadruple $(\fX, !\fB, *\fC, !\!*\fH)$, with $\fB^\flat\subset \fB, \fC^\flat\subset \fC, \fH^\flat\subset \fH$. Giving an order $\sO$ on $\fB^\flat\cup \fC^\flat\cup\fH^\flat$ is equivalent to giving an order on the irreducible components of $D^\flat=B^\flat+C^\flat+H^\flat$, which is also denoted by $\sO$. We will also say that $\sO$ gives an order on $(B^\flat, C^\flat, H^\flat)$. Assume that we have $\cF^\bullet\in \bD^b_c(X)$ satisfying 
$$\cF^\bullet[!H_\circ]\simeq \cF^\bullet[*H_\circ],$$
for each irreducible component $H_\circ\subset H.$ Using the order $\sO$, we arrange $D^\flat=D_1+D_2+...+D_m,$ such that $D_i\prec_\sO D_j$, if $i<j$. Then we set
$$\cF^\bullet[\sO(!B^\flat, *C^\flat, !\!* H^\flat)]=\cF^\bullet[\diamond_1 D_1][\diamond_2 D_2]...[\diamond_m D_m],
$$
with $\diamond_i= !, *, !\!*$, respectively, if $D_i\subset B^\flat, C^\flat, H^\flat$ respectively.

\begin{lemma}\label{L: seq conv implies top log comparison}
    Fix a toric triple $(X, !B, *C)$, with and $B^\flat\subset B, C^\flat\subset C$ with an compatible order $\sO$ on  $B^\flat+C^\flat$.
    Assume that $X'=X^*_{\sO^-}(B^\flat+ C^\flat)$ 
    is sequentially-convex over $X$, with $(X', !B', *C')$ the induced triple,
    $B'^\flat, C'^\flat$ induced by $B^\flat, C^\flat$ respectively, and $\sO$ inducing an order on $(B'^\flat, C'^\flat)$, which is still denoted by $\sO$. Assume that 
    $$\DR^\bullet_{(X', !B', *C')}\simeq \DR^\bullet_{(X', !B', *C')}[\sO(!B'^\flat, *C'^\flat)],$$
    then we have 
    $$\DR^\bullet_{(X, !B, *C)}\simeq \DR^\bullet_{(X, !B, *C)}[\sO(!B^\flat, *C^\flat)].
    $$
\end{lemma}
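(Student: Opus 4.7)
The strategy is to derive the isomorphism on $X$ by applying the derived pushforward along the sequential star subdivision $f : X' \to X$ to the assumed isomorphism on $X'$. Write $f = g_m \circ \cdots \circ g_1$ with each $g_i : X_i \to X_{i-1}$ a star subdivision at a ray already present in the fan, so that no new rays are introduced; each $g_i$ is therefore efficient and, by the sequential-convexity hypothesis, locally-convex with respect to the induced triple on $X_i$. Consequently the irreducible components of $B^\flat$ and $C^\flat$ correspond identically between $X$ and $X'$, and the order $\sO$ transports coherently.

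The plan rests on two ingredients. First, $Rf_* \DR^\bullet_{(X', !B', *C')} \simeq \DR^\bullet_{(X, !B, *C)}$. To obtain this, I would apply Lemma \ref{L: Vanishing induction step} at each step $g_i$ to get the degenerate pushforward $Rg_{i,*} \DR^\bullet_{(X_i, !B_i, *C_i)} \simeq \DR^\bullet_{(X_{i-1}, !B_{i-1}, *C_{i-1})}$, then compose. Since that lemma is formulated for projective toric varieties, I would first reduce to a local situation on $X$ (the logarithmic comparison is a statement in the derived category of constructible sheaves and can be checked locally), compactify to a projective toric variety $\bar X$, extend the sequential subdivision to $\bar X' \to \bar X$ preserving sequential-convexity at each intermediate step, and apply Lemma \ref{L: Vanishing induction step} together with the Bott vanishing of Theorem \ref{T: KVB vanishing of toroidal tri.} on the intermediate projective toric varieties.

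Second, $Rf_*$ commutes with the iterated toroidal extension. Since no new rays are introduced, $f^{-1}(D_i) = D'_i$ set-theoretically for each irreducible component $D_i$ of $B^\flat + C^\flat$. The Cartesian square formed by $f$ and the open immersion $X \setminus D_i \hookrightarrow X$ then yields, via flat base change for the open side and proper base change for the closed side, the commutation $Rf_* \circ [\diamond D'_i] \simeq [\diamond D_i] \circ Rf_{U_i, *}$ for $\diamond \in \{!, *\}$, where $U_i = X \setminus D_i$ and $f_{U_i}$ is the restriction of $f$. Iterating outward in the order $\sO$ (peeling off the outermost extension first, then applying the one-step commutation) yields $Rf_*(\cF[\sO(!B'^\flat, *C'^\flat)]) \simeq (Rf_*\cF)[\sO(!B^\flat, *C^\flat)]$ for any $\cF \in D^b_c(X')$.

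Combining the two ingredients and applying $Rf_*$ to the assumed isomorphism on $X'$ produces the desired isomorphism on $X$. The delicate point is the first ingredient: since Lemma \ref{L: Vanishing induction step} is stated projectively, carefully executing the compactification (in particular verifying that a sequential-convex subdivision of $X$ extends to one of $\bar X$ at each intermediate step) is where most of the bookkeeping lives, although the existence of such completions is standard in toric geometry.
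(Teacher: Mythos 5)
Your overall strategy coincides with the paper's: the two ingredients are exactly (i) $Rf_*\DR^\bullet_{(X',!B',*C')}\simeq \DR^\bullet_{(X,!B,*C)}$ via Lemma \ref{L: Vanishing induction step} (whose key vanishing $R^{>0}f_*=0$ is local over $X$, so your worry about projectivity, while reasonable, is handled by working over affine charts rather than by compactifying), and (ii) commuting $Rf_*$ past the extensions by peeling them off from the outside in, which the paper organizes as an induction on the number of components of $B^\flat+C^\flat$.

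The genuine problem is your justification of (ii). The claim that, since no new rays are introduced, $f^{-1}(D_i)=D'_i$ set-theoretically for \emph{every} irreducible component $D_i$ of $B^\flat+C^\flat$ is false: efficiency does not imply this, and neither does the sequential star subdivision for all $i$ at once. Concretely, let $\sigma=\Cone(v_1,v_2,v_3,v_4)\subset\Q^3$ be a cone over a square whose diagonal is $\Cone(v_1,v_4)$, and star-subdivide at $v_1$. The new fan contains the wall $\Cone(v_1,v_4)$, whose smallest containing cone in $\fX$ is $\sigma$ itself; since $v_2\prec\sigma$, the corresponding orbit lies in $f^{-1}(D_2)$ but not in the strict transform $D_2'$, so $f^{-1}(D_2)\supsetneq D_2'$. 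What is true --- and all that is needed --- is $f^{-1}(D_m)=D'_m$ for the \emph{last} component in the order $\sO$, i.e.\ the one star-subdivided \emph{first} (this uses the structure of star subdivision at an existing ray, together with the fact that the subsequent subdivisions preserve $\{\nu_m\}$-simpliciality); the remaining one-step commutations only hold after restricting to $X\setminus D_m$, then to $X\setminus(D_m\cup D_{m-1})$, and so on. This is precisely how the paper runs the induction: peel off the outermost extension using $f^{-1}(C_{m+1})=C'_{m+1}$, pass to the open complement where the restricted morphism is again a sequential star subdivision of the remaining components, and invoke the inductive hypothesis there. Your ``iterating outward'' scheme is salvageable in exactly this way, but as written the base-change input is asserted too strongly and for the wrong reason.
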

\begin{proof}
    Let $f: X'\to X$ be the birational toric morphism. We note that, since $f$ is sequentially-convex, by Lemma \ref{L: Vanishing induction step}, we have 
    $$Rf_* \DR^\bullet_{(X', !B', *C')}\simeq \DR^\bullet_{(X, !B, *C)}.$$
    We will prove the lemma using induction on the number of total irreducible components in $B^\flat$ and $ C^\flat$. 
    

    In the case that $B^\flat+C^\flat=0$, the statement is trivial.
    Assume that the statement holds for the case that $B^\flat+C^\flat$ has $m$ irreducible components, now we consider the case that $B^\flat+C^\flat$ has $m+1$ irreducible components. We assume that $C_{m+1}\subset C^\flat$ is the last irreducible component with respect to the order $\sO$ on $B^\flat+C^\flat$. (The other case is similar.) Set $X\setminus  C_{m+1}=X_\circ$, $X'\setminus  C'_{m+1}=X'_\circ$,  and $(X_\circ, !B_\circ, *C_\circ)$, $(X'_\circ, !B'_\circ, *C'_\circ)$ the restricted triples. Set $B^\flat_\circ=B^\flat\cap X_\circ$, similarly for $B'^\flat_\circ, C^\flat_\circ, C'^\flat_\circ$.  We also use $\sO$ to denote the naturally induced order on $(B^\flat, C^\flat_\circ)$ and $(B'^\flat, C'^\flat_\circ)$.

    Due to the construction of sequential star subdivision, we have $f^{-1} C_{m+1}=C'_{m+1}$. Hence, we have the following commutative diagram:
    $$
    \begin{tikzcd}
        X'_\circ \arrow[r, "i'"] \arrow[d, "f_\circ"] & X'\arrow[d, "f"]\\
        X_\circ \arrow[r, "i"] & X.
    \end{tikzcd}
    $$
    By the inductive assumption, we have 
    $$Rf_{\circ,*} \DR^\bullet_{(X'_\circ, !B'_\circ, *C'_\circ)}\simeq \DR^\bullet_{(X_\circ, !B_\circ, *C_\circ)}\simeq \DR^\bullet_{(X_\circ, !B_\circ, *C_\circ)}[\sO(!B^\flat_\circ, *C^\flat_\circ)]. $$   

    Then, we have 
    \begin{align*}
        \DR^\bullet_{(X, !B, *C)}[\sO(!B^\flat, *C^\flat)]
                                &\simeq Ri_*\DR^\bullet_{(X_\circ, !B_\circ, *C_\circ)}[\sO(!B^\flat_\circ, *C^\flat_\circ)]\\
                                &\simeq Ri_*\circ Rf_{\circ, *}\DR^\bullet_{(X'_\circ, !B'_\circ, *C'_\circ)}\\
                                &\simeq Rf_*\circ Ri'_*\DR^\bullet_{(X'_\circ, !B'_\circ, *C'_\circ)}\\
                                &\simeq Rf_*\DR^\bullet_{(X', !B', *C')}[\sO(!B'^\flat, *C'^\flat)]\\
                                &\simeq Rf_*\DR^\bullet_{(X', !B', *C')}\\
                                &\simeq \DR^\bullet_{(X, !B, *C)}.
    \end{align*}
\end{proof}

\begin{proof}[Proof of Theorem \ref{P: log comparison of sorted tor tri.}]
    Thanks to Corollary \ref{C: seq star for sorted toric trip} and the previous Lemma, now we just need to show the case that $(X, !B, *C)$ is log-simplicial. In the case that it is also plenary, then it is simplicial and we have proved this case in \S3. 
    If there is a torus-invariant divisor $E\subset X$, that is not an irreducible component of $C$ or $B$. Then, according to Proposition \ref{P: SES of log-sim. tri. by adding B}, also following the notations there, we have the following short exact sequence:
    $$
    0\to \Omega_{(X, !B^\circ, *C)}^p\to \Omega_{(X, !B, *C)}^p \to k_*\Omega_{(E, !B_E ,*C_E)}^p \to 0.
    $$
    Since we assume $(X, !B, *C)$ is log-simplicial, by using the same $\fC$-strict sorting function, we have $(X, !B^\circ, *C)$ is also well-sorted; and $(E, !B_E ,*C_E)$ is log-simplicial (hence also well-sorted), 

    By induction on the dimension of $X$ and the number of torus-invariant divisors in $X$ but not in $B+C$, we have 
    $$\DR^\bullet_{(X, !B^\circ, *C)}\simeq \DR^\bullet_{(X, !B^\circ, *C)}[\sO^\circ(!B^\circ, *C)],
    $$
    for any  order $\sO^\circ$ on $(B^\circ, C)$. In particular, we have 
    $$\DR^\bullet_{(X, !B^\circ, *C)}\simeq \DR^\bullet_{(X, !B^\circ, *C)}[!E][\sO(!B, *C)],
    $$
    for any order $\sO$ on $(B, C)$. Such order also induces an order $\sO^E$ on $(B_E, C_E)$. We also have 
    $$k_*\DR^\bullet_{(E, !B_E ,*C_E)}\simeq k_*(\DR^\bullet_{(E, !B_E ,*C_E)}[\sO^E(!B_E ,*C_E)])\simeq (k_*\DR^\bullet_{(E, !B_E ,*C_E)})[\sO(!B, *C)].
    $$
    The last quasi-isomorphism is due to that, since $k$ is proper, it is commutative with two different canonical extensions.
    Hence we get 
    $$\DR^\bullet_{(X, !B, *C)}\simeq \DR^\bullet_{(X, !B, *C)}[\sO(!B, *C)].
    $$
    We can conclude our proof, by noticing that $\DR^\bullet_{(X, !B, *C)}|_{U}\simeq \C_U[n]$, where $U=X\setminus B+C$, by \cite[4.5. Lemma.]{Dan78}.

\end{proof}

\begin{lemma}\label{L: partial simplicial comparison}
    Fix an affine toric triple $(X, !B, *C)$, with a torus invariant divisor $S\subset B\cup C$. Assume that $(X, !B, *C)$ is $S$-simplicial, set $B^\flat=  S\cap B, C^\flat= S\cap C$. Then we have 
    $$\DR^\bullet_{(X, !B, *C)}\simeq \DR^\bullet_{(X, !B, *C)}[\sO(!B^\flat, *C^\flat)],
    $$
    for any order $\sO$ on $(B^\flat, C^\flat)$.
\end{lemma}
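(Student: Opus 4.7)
The plan is to first establish the lemma in the log-simplicial case by induction on the pair $(\dim X, k)$, where $k$ is the number of torus-invariant divisors of $X$ not contained in $B+C$, and then to reduce the general $S$-simplicial case to this via an efficient locally-convex resolution. For the base case $k=0$ the triple is simultaneously log-simplicial and plenary, hence $X$ is simplicial and Proposition~\ref{P: simplicial comparison} gives $\DR^\bullet_{(X,!B,*C)}\simeq \C_X[!B+*C][n]$. One checks directly that applying $[!E]$ for $E\subset B$ or $[*E']$ for $E'\subset C$ to $\C_X[!B+*C][n]$ is the identity (using idempotency of $Rj_!$ when enlarging the vanishing locus, together with the disjointness of $B$ and $C$), and that these extensions commute with one another in any order, settling the base case.

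For the inductive step ($k>0$), I would pick a torus-invariant divisor $F\not\subset B+C$ and apply Proposition~\ref{P: SES of log-sim. tri. by adding B}, producing a distinguished triangle
$$\DR^\bullet_{(X,!(B+F),*C)}\to \DR^\bullet_{(X,!B,*C)}\to k_*\DR^\bullet_{(F,!B_F,*C_F)}[1]\xrightarrow{+1}.$$
The triple $(X,!(B+F),*C)$ is still log-simplicial and $S$-simplicial with $k-1$ outside divisors, to which the induction on $k$ applies. For $(F,!B_F,*C_F)$, the projected fan inherits the $S_F$-simplicial property with $S_F=S\cap F$: given $\tau=\Cone(\zeta,\xi)$ with $\xi\in S$ and $\xi_F\prec\tau$, the ray $\xi_F$ is a ray of $\zeta$, and projecting modulo $\Z\xi_F$ preserves the required factorization. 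Since $\dim F<\dim X$, the induction on dimension also applies. As $[!E]$ and $[*E']$ along $S$-ray divisors are triangulated functors (and disjoint from $F$), applying $[\sO(!B^\flat,*C^\flat)]$ to the triangle and invoking the inductive hypothesis on the outer terms yields the required quasi-isomorphism for the middle term.

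For the reduction of the general case to the log-simplicial one, apply Theorem~\ref{T: res. of general tor tri.} to obtain an efficient locally-convex toric birational morphism $f:X'\to X$ with $(X',!B',*C')$ log-simplicial. Since $f$ is efficient, no new rays are introduced, and one checks (by a variant of Lemma~\ref{L: simplicity of ext}, using that sub-cones of an efficient subdivision only involve existing rays) that $\fX'$ inherits the $S$-simplicial property, and that $f^{-1}(E)=E'$ set-theoretically for each $S$-ray divisor $E$ with strict transform $E'$. A local form of Lemma~\ref{L: Vanishing induction step}, grounded in the log-simplicial Bott Vanishing established earlier in \S5.2, then gives $Rf_*\DR^\bullet_{(X',!B',*C')}\simeq \DR^\bullet_{(X,!B,*C)}$, and proper base change combined with $f^{-1}(E)=E'$ shows that $Rf_*$ commutes with $[!E']$ and $[*E']$ for every $S$-ray divisor. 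Pushing forward the log-simplicial version of the lemma therefore yields the lemma for $(X,!B,*C)$.

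The principal technical obstacle I anticipate is verifying the interaction of $Rf_*$ with the $S$-ray extensions in the reduction step. Proper base change demands both the set-theoretic equality $f^{-1}(E)=E'$ (granted by efficiency of the subdivision) and the inheritance of the $S$-simplicial property by $\fX'$; these are combinatorial checks but must be traced carefully through the construction of the log-simplicial model in Theorem~\ref{T: res. of general tor tri.}, which interleaves extensions and star subdivisions. A secondary subtlety is that Lemma~\ref{L: Vanishing induction step} is formulated for projective toric varieties whereas the statement here is affine; the local Bott vanishing step built into the proof of that lemma, however, extends directly to the affine setting, so the pushforward identification holds locally as needed.
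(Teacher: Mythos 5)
Your overall architecture (settle the log-simplicial case, then push forward along an efficient locally-convex log-simplicial model) is the right one, and your reduction step is a genuinely different, and arguably cleaner, route than the paper's: the paper inducts on the number of components of $S$, and for the last component $B_m$ (ray $\nu$) it builds a \emph{tailored} model $\fX'=\Ext((\fX^\zeta)',\nu)$ precisely so that $f^{-1}(B_m)=B'_m$, deduces $\DR^\bullet_{(X,!B,*C)}\simeq\DR^\bullet_{(X,!B,*C)}[!B_m]$, and then restricts to $X\setminus B_m$ to continue; you instead handle all of $S$ at once for an \emph{arbitrary} efficient model. Your key combinatorial claim is correct, but be precise about what grants it: $f^{-1}(E)=E'$ holds because of efficiency \emph{together with} the $S$-simpliciality of the source fan $\fX$ (every cone of $\fX$ containing the ray $\nu$ of $E$ is $\Cone(\zeta,\nu)$ with $\zeta$ a facet whose rays exhaust the rays of the cone other than $\nu$, so any cone of the efficient subdivision meeting its relative interior must have $\nu$ among its rays, cf.\ Lemma~\ref{L: subdiv of simplicial must be Ext}); it is not granted by efficiency alone, nor by the $S$-simpliciality of $\fX'$. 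Also, when you ``apply $[\sO(!B^\flat,*C^\flat)]$ to the triangle and invoke the five lemma,'' do it one extension at a time, since only a single $[!E]$ or $[*E]$ carries a natural comparison map with the identity.

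The genuine gap is in your log-simplicial induction: the assertion that $(X,!(B+F),*C)$ is still log-simplicial is false in general. Log-simpliciality of $(X,!(B+F),*C)$ requires $\fX$ to be simplicial with respect to the rays of $B+C$ \emph{and} the ray of $F$, a strictly stronger condition. For instance, let $\tau=\Cone(\zeta,\nu_1)$ with $\zeta$ a cone over a square with rays $\nu_2,\dots,\nu_5$ and $B+C$ the divisor of $\nu_1$: the triple is log-simplicial, but adding the divisor of $\nu_2$ to $B$ destroys log-simpliciality because $\zeta$ is not $\{\nu_2\}$-simplicial. So your induction on $k$ does not close as stated. Two fixes are available: (a) do not re-prove this case at all --- the proof of Proposition~\ref{P: log comparison of sorted tor tri.} in \S5.3 already yields $\DR^\bullet_{(X,!B,*C)}\simeq\DR^\bullet_{(X,!B,*C)}[\sO'(!B,*C)]$ for \emph{every} order $\sO'$ on the full $(B,C)$ when the triple is log-simplicial, and the partial statement for $(B^\flat,C^\flat)$ follows formally by idempotency (if $\DR^\bullet\simeq\cG[!E]$ for some $\cG$, then $\DR^\bullet[!E]\simeq\cG[!E][!E]\simeq\DR^\bullet$, applied with $E$ placed last in a suitable order, and then composed component by component); or (b) run the induction over well-sorted triples, noting that $(X,!(B+F),*C)$ remains well-sorted via the same sorting functions, and re-resolve to a log-simplicial model at each step using Corollary~\ref{C: seq star for sorted toric trip} and Lemma~\ref{L: seq conv implies top log comparison}, which is what the paper's proof of Proposition~\ref{P: log comparison of sorted tor tri.} does.
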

\begin{proof}
    Let's prove the statement by induction on the number of irreducible components in $S$, equivalently, the number of elements in $\fS$.  If $\fS=\emptyset$, the statement is trivial. 
    
    If $\fS\neq \emptyset$, let $\nu$ being the last term in $\fS$ with respect to $\sO$, and let $\zeta\in \fX$ be the $\{\nu\}$-unsettled cone, such that $\fX=\Ext(\fX^\zeta, \nu)$. We assume that $\nu$ corresponds to an irreducible divisor $B_m$ in $\fB^\flat.$ The case that it is in $\fC^\flat$ follows by essentially the same argument.

    By Corollary \ref{C: seq star for sorted toric trip}, we can find $((X^\zeta)', (!B^\zeta)', (*C^\zeta)')$, an efficient locally-convex model of $(X^\zeta, !B^\zeta, *C^\zeta)$, which is log-simplicial. Then, set $\fX'=\Ext((\fX^\zeta)', \nu)$, and the induced triple $(\fX', !\fB', *\fC')$ is also log-simplicial and is a locally-convex model of $(X, !B, *C)$, by Proposition \ref{L: Ext subdivision preserving convexity}. Denote $f: X'\to X$ being the induced toric birational morphism. According to our construction, we have that $f^{-1}(B_m)=B'_m$, the divisor in $X'$ that corresponds to $\nu\in \fX'$. Then, by Lemma \ref{L: Vanishing induction step}, and Theorem \ref{P: log comparison of sorted tor tri.}, we have 
    $$\DR^\bullet_{(X, !B, *C)}\simeq Rf_* \DR^\bullet_{(X', !B', *C')}\simeq Rf_* \DR^\bullet_{(X', !B', *C')}[!B'_m]\simeq \DR^\bullet_{(X, !B, *C)}[!B_m].$$
    By inductive assumption, 
    $$\DR^\bullet_{(X, !B, *C)}|_{X\setminus B_m}\simeq \DR^\bullet_{(X, !B, *C)}[\sO(!(B^\flat-B_m), *C^\flat)]|_{X\setminus B_m},
    $$
    so we can conclude the proof.
\end{proof}

The following theorem lifts the above results to the derived category of mixed Hodge modules. We will follow the notations introduced in \S \ref{S: mixed Hodge module}.

\begin{thm}\label{T: log comparison using complex of MHM}
    For a toroidal triple $(X, !B, *C)$, we have 
    $$\widetilde\DR^\bullet_{(X, !B, *C)}\simeq \widetilde\DR(\cM^\bullet),$$
    where $\cM^\bullet$ is an object in the derived category of mixed Hodge modules on $X$.
    
    Moreover, if $(X, !B, *C)$ is partially-sorted (resp. well-sorted), then we have $$\DR(\cM^\bullet|_{z=1})\simeq \C_X[*C][!B][n]$$ 
    (resp. 
    $$\DR(\cM^\bullet|_{z=1})\simeq\C_X[\sO(!B, *C)][n],$$
    for any order on $B+C$). 
    
    More generally, if $(X, !B, *C)$ is simultaneously $(C^\flat_{(1)}, B^\sharp_{(1)})$-sorted, ..., $(C^\flat_{(k)}, B^\sharp_{(k)})$-sorted, we set $B^\flat_{(i)}=B- B^\sharp_{(i)}$. Assume we have an order $\sO$ on $(B, \sum_i C^\flat_{(i)})$ satisfying that, for any irreducible component $C_\circ \in \sum_i C^\flat_{(i)}$, there exists $j_{C_\circ} \in \{1,...,k\}$, such that $C_\circ\subset C^\flat_{(j_{C_\circ})}$ and $C_\circ \prec_\sO B^\flat_{(j_{C_\circ})}$. Then 
    $$\DR(\cM^\bullet|_{z=1})\simeq \DR(\cM^\bullet|_{z=1})[\sO(!B, *\sum_i C^\flat_{(i)})].$$
\end{thm}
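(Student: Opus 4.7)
The plan is to construct the mixed Hodge module complex $\cM^\bullet$ via a two-stage resolution (efficient locally-convex model, then plenary/simplicial reduction), and then to read off the various sortedness identities from Proposition~\ref{P: log comparison of sorted tor tri.} together with the descent Lemma~\ref{L: seq conv implies top log comparison}.

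\textbf{Construction of $\cM^\bullet$.} First I would invoke Theorem~\ref{T: res. of general tor tri.} to obtain an efficient and locally-convex model $f\colon X'\to X$ with $(X',!B',*C')$ log-simplicial. On $X'$ I would then use Proposition~\ref{P: SES of log-sim. tri. by adding B} iteratively, adjoining torus-invariant divisors one at a time to $B'$, to express $\widetilde\DR^\bullet_{(X',!B',*C')}$ as an iterated mapping cone of twistor de Rham complexes of plenary log-simplicial (hence simplicial) triples of the form $(X'_j,!B'_j,*C'_j)$ and their closed strata. In the simplicial case, Proposition~\ref{P: simplicial comparison} identifies each of these twistor de Rham complexes with $\widetilde\DR$ of a genuine mixed Hodge module. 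Forming the corresponding iterated cones in $D^b(\text{MHM}(X'))$ would yield a complex $\cM^\bullet_{X'}$ with $\widetilde\DR(\cM^\bullet_{X'})\simeq \widetilde\DR^\bullet_{(X',!B',*C')}$. Setting $\cM^\bullet := Rf_* \cM^\bullet_{X'}$, I would then combine the compatibility of $\widetilde\DR$ with proper pushforward with the degeneracy $Rf_*\widetilde\DR^\bullet_{(X',!B',*C')}\simeq \widetilde\DR^\bullet_{(X,!B,*C)}$ from Lemma~\ref{L: Vanishing induction step} to conclude $\widetilde\DR(\cM^\bullet)\simeq \widetilde\DR^\bullet_{(X,!B,*C)}$.

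\textbf{Sortedness identifications.} Restriction to $z=1$ gives $\DR(\cM^\bullet|_{z=1})\simeq \DR^\bullet_{(X,!B,*C)}$. The partially-sorted and well-sorted claims are then immediate from Proposition~\ref{P: log comparison of sorted tor tri.} together with the fact that $\DR^\bullet_{(X,!B,*C)}|_U\simeq \C_U[n]$ by \cite[4.5]{Dan78}. For the more general simultaneous multi-sortedness statement, I would apply the second half of Lemma~\ref{L: seq star of sorted fan triples} to produce a sequentially-convex star subdivision $g\colon X''\to X$ such that $(X'',!B'',*C'')$ is simplicial and the order $\sO$ lifts; on $X''$ Proposition~\ref{P: simplicial comparison} allows arbitrary commutation of $[!\bullet]$ and $[*\bullet]$ extensions, and Lemma~\ref{L: seq conv implies top log comparison} descends the resulting identity along $g$ to give $\DR^\bullet_{(X,!B,*C)}\simeq \DR^\bullet_{(X,!B,*C)}[\sO(!B,*\sum_i C^\flat_{(i)})]$, as required.

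\textbf{Main obstacle.} The technically delicate point is the mapping-cone construction on $X'$: Proposition~\ref{P: SES of log-sim. tri. by adding B} only yields a short exact sequence of reflexive coherent sheaves, and one must upgrade it to a distinguished triangle
$$\cM^\bullet_{(X',!B'^{\circ},*C')}\to \cM^\bullet_{(X',!B',*C')}\to k_* \cM^\bullet_{(E,!B'_E,*C'_E)}\xrightarrow{+1}$$
in $D^b(\text{MHM}(X'))$, compatibly with $\widetilde\DR$ and in a way that can be iterated. This requires tracking the Rees-construction of the filtered $\sD$-module along the closed embedding $k\colon E\to X'$ and checking that the connecting morphism is genuinely realized by an MHM morphism; once this is granted, the remaining steps are formal consequences of the simplicial case, the degeneracy of locally-convex toroidal maps from Lemma~\ref{L: Vanishing induction step}, and the sortedness/sequential-convexity dictionary of \S4.
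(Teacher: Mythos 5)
Your construction of $\cM^\bullet$ for the first statement and your treatment of the partially/well-sorted cases follow essentially the same route as the paper: reduce to a log-simplicial efficient locally-convex model via Theorem~\ref{T: res. of general tor tri.}, push forward with Lemma~\ref{L: Vanishing induction step}, and run an induction on dimension and on the number of torus-invariant divisors outside $B+C$ using Proposition~\ref{P: SES of log-sim. tri. by adding B}, with Proposition~\ref{P: simplicial comparison} as the base case. The ``main obstacle'' you flag (lifting the short exact sequences of reflexive sheaves to distinguished triangles in $D^b(\mathrm{MHM})$) is real, and the paper is itself terse about it; the intended mechanism is the same induction used in the proof of Proposition~\ref{P: log comparison of sorted tor tri.}.

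There is, however, a concrete error in your argument for the third (multi-sortedness) statement. The sequential star subdivision $\fX''=\fX^*_{\sO^-}(\fB\cup\bigcup_i\fC^\flat_{(i)})$ is only $(\fB\cup\bigcup_i\fC^\flat_{(i)})$-simplicial: star subdivision at a ray $\nu$ leaves every cone not containing $\nu$ untouched, so cones spanned by rays of $\fA$ and of $\fC^\sharp=\fC\setminus\bigcup_i\fC^\flat_{(i)}$ can remain non-simplicial. Hence $(X'',!B'',*C'')$ is in general \emph{not} simplicial, Proposition~\ref{P: simplicial comparison} does not apply on $X''$, and you cannot get ``arbitrary commutation'' of the extensions there. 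You also cannot repair this by subdividing further, since star subdivisions at rays of $\fC^\sharp$ need not be sequentially convex (that is precisely what the sortedness hypotheses control), and introducing new rays would destroy efficiency. The correct substitute is Lemma~\ref{L: partial simplicial comparison}, which is tailored to the $S$-simplicial situation with $S=B''+\sum_i C''^\flat_{(i)}$ and yields commutation only for the extensions along the subdivided components --- which is exactly why the conclusion of the third statement is the weaker identity $\DR(\cM^\bullet|_{z=1})\simeq\DR(\cM^\bullet|_{z=1})[\sO(!B,*\sum_i C^\flat_{(i)})]$ rather than one involving all of $B+C$. With Lemma~\ref{L: partial simplicial comparison} in place of Proposition~\ref{P: simplicial comparison}, the descent via Lemma~\ref{L: seq conv implies top log comparison} goes through as you describe and matches the paper's proof.
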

\begin{proof}
    Since all the statements are local, we just need to consider the case that $(X, !B, *C)$ is an affine toric triple. We first assume the first statement, and show the rest two.
    
    The second statement follows by Theorem \ref{P: log comparison of sorted tor tri.}. 
    
    For the third statement, we first apply Lemma \ref{L: seq star of sorted fan triples}, getting $(X', !B', *C')$ being the induced toric triple of the sequential star subdivision $\fX'=\fX^*_{\sO^-}(\fB\cup \bigcup_i \fC^\flat_{(i)})$. Now $(X', !B', *C')$ is $B' \cup \sum_i C'^\flat_{(i)}$-simplicial. $\sO$ also naturally induces an order on $(B', \sum_i C'^\flat_{(i)}).$ Due to the first statement, and Lemma \ref{L: partial simplicial comparison}, we have 
    $$\widetilde\DR^\bullet_{(X', !B', *C')}\simeq \widetilde\DR(\cM'^\bullet),
    $$
    where $\cM'^\bullet$ is an object in the derived category of mixed Hodge modules on $X'$, satisfying 
    $$\DR(\cM'^\bullet|_{z=1})\simeq\DR(\cM'^\bullet|_{z=1})[\sO(!B', *\sum_i C'^\flat_{(i)})].$$
    Then we can conclude the proof of the third statement by applying Lemma \ref{L: seq conv implies top log comparison}.
    
    Now we focus on the first statement.
    Due to Proposition \ref{P: conv res to get log-simplicial}, we can find an efficient and locally-convex model $(X', !B', *C')$ that is log-simplicial, and we use $f: X' \to X$ to denote the toric birational morphism, and $\tilde f: \cX'\to \cX'$ the induced morphism. By Lemma \ref{L: Vanishing induction step}, we have 
    $$R\tilde f_* \widetilde\DR^\bullet_{(X', !B', *C')}\simeq \widetilde\DR^\bullet_{(X, !B, *C)}.$$
    Let $\cM'^\bullet$ be an object in the derived category of mixed Hodge modules on $X'$. Due to the compatibility of taking direct image and de Rham functor in the derived category of mixed Hodge modules, we have
    $$\widetilde\DR(Rf_\dagger \cM'^\bullet)\simeq Rf_*(\widetilde\DR \cM'^\bullet).$$
    Hence, we just need to consider $(X, !B, *C)$ being log-simplicial case, and it follows via a similar induction argument on the dimension of $X$ and the number of irreducible torus-invariant divisors on $X$ while not contained $B\cup C$, as in the proof of Theorem \ref{P: log comparison of sorted tor tri.} above. The second part of Proposition \ref{P: simplicial comparison} is the induction base.
\end{proof}

\subsection{Twisted Logarithmic Comparisons of toroidal triples}
\begin{lemma}\label{L: restriction of Weil div for log-simplicial}
    Fix a toric variety $X$, with a reduced torus invariant divisor $S$ such that $X$ is $S$-simplicial, and there is a Weil divisor $L$ that is $\Q$-linear equivalent to $\bs S$, for some $\bs\in \Q^m$. Then, for any irreducible torus invariant divisor $E\subset X$ that is not in $S$, there exists a Weil divisor(, unique up to linear equivalence,) $L_E$ on $E$ such that it fits in the follow short exact sequence:    
    $$0\to \cO_{X}(-E+L) \to \cO_{X}(L) \to k_* \cO_E(L_E) \to 0.
    $$
\end{lemma}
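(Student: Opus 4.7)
The plan is to work locally on $X$: fix a maximal cone $\tau\in\fX$ and prove the statement on $U_\tau$. If $\rho_E\not\prec\tau$ then $E\cap U_\tau=\emptyset$ and nothing is to be shown, so label the rays of $\tau$ as $\rho_1=\rho_E,\rho_2,\dots,\rho_n$ and let $I\subset\{2,\dots,n\}$ index those in $\fS$. Writing $L|_{U_\tau}=\sum_i a_i D_{\rho_i}$ with $a_i\in\Z$, the primitivity of $v_{\rho_1}$ gives $m'\in M$ with $\langle m',v_{\rho_1}\rangle=-a_1$; after replacing $L$ by $L+\operatorname{div}(\chi^{m'})$ we may assume $a_1=0$. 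The hypothesis $L\equiv_\Q\bs S$ then furnishes $\mu\in M_\Q\cap\rho_1^\perp$ satisfying $a_i=\langle\mu,v_{\rho_i}\rangle$ for $i\notin I$ and $a_j=s_j+\langle\mu,v_{\rho_j}\rangle$ for $j\in I$.

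Injectivity of $\cO_X(L-E)\hookrightarrow\cO_X(L)$ is automatic, since both sheaves are subsheaves of the constant sheaf of rational functions. The standard $M$-graded description identifies the cokernel on $U_\tau$ as $\bigoplus\C\chi^m$ ranging over $m\in M_E:=M\cap\rho_1^\perp$ with $\langle m,v_{\rho_i}\rangle\geq -a_i$ for every $i\geq 2$; it is automatically annihilated by $\cI_E$, hence an $\cO_E$-module. The remaining task is to recognize it as $\cO_E(L_E^\tau)|_{U_\tau\cap E}$ for a torus-invariant Weil divisor $L_E^\tau$.

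Here the $S$-simplicial hypothesis is essential. Applied iteratively to the rays in $I$, it gives a decomposition $\tau=\Cone(\zeta,\{\rho_i:i\in I\})$ with $\dim\tau=\dim\zeta+|I|$, where $\zeta$ is the face whose rays are $\{\rho_j:j\notin I\}$. Consequently, in $\tau/\rho_1$ the apex rays $\bar\rho_i$ ($i\in I$) are extremal, while $\bar v_{\rho_j}$ for $j\notin I$, $j\neq 1$ lies in the face $\zeta/\rho_1$. Let $I^E$ index the extremal rays of $\tau/\rho_1$, write $\bar v_{\rho_j}=c_j v_{\bar\rho_j}$ with $v_{\bar\rho_j}$ primitive in $N_E$, and set $b_j:=\lfloor a_j/c_j\rfloor$ and $L_E^\tau:=\sum_{j\in I^E}b_j D_{\bar\rho_j}$. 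A direct floor computation shows the extremal constraint $\langle m,v_{\rho_j}\rangle\geq -a_j$ is equivalent to $\langle m,v_{\bar\rho_j}\rangle\geq -b_j$ for $m\in M_E$. For non-extremal $i$, $\bar v_{\rho_i}$ admits a representation $\bar v_{\rho_i}=\sum_{j\in I^E\setminus I}c_{ij}v_{\bar\rho_j}$ with $c_{ij}\geq 0$ and no apex contribution (by the decomposition above); combining this with the identities $a_i=\langle\mu,\bar v_{\rho_i}\rangle=\sum c_{ij}\langle\mu,v_{\bar\rho_j}\rangle$ and the bounds $b_j\leq\langle\mu,v_{\bar\rho_j}\rangle$ (valid for $j\notin I$, using $a_j=c_j\langle\mu,v_{\bar\rho_j}\rangle$) yields $\sum c_{ij}b_j\leq a_i$, so the non-extremal constraint is implied by the extremal ones.

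Once the local identification cokernel $=\cO_E(L_E^\tau)$ is in place on every affine chart, the global statement follows: the cokernel is a globally defined coherent $\cO_E$-module, and rank-one reflexive sheaves on the normal variety $E$ are in bijection with linear equivalence classes of Weil divisors, so the local $L_E^\tau$ patch to a single class $L_E$ which is automatically unique up to linear equivalence. The principal obstacle is the key inequality $\sum c_{ij}b_j\leq a_i$: without $S$-simplicial, non-extremal projected rays can involve apex directions from $\fS$ and the inequality fails, while without $L\equiv_\Q\bs S$ the vector $\mu$ underpinning the identity $a_i=\langle\mu,\bar v_{\rho_i}\rangle$ is unavailable; in either case explicit examples show the cokernel need not be reflexive.
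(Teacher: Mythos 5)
Your argument is correct, but it proceeds by a genuinely different route from the paper's. The paper uses an abelian-cover trick: $S$-simpliciality lets one refine the lattice only in the directions of the rays of $\fS$, producing a finite toric cover $f:\hat X\to X$ ramified only along $S$ on which $f^*(\bs S)$ becomes integral Cartier; there the sequence is just the ideal-sheaf sequence of $\hat E$ twisted by a line bundle, and one descends by pushing forward and extracting the trace/eigensheaf summand, which also yields the definition of $\cO_E(L_E)$. You instead compute the cokernel of $\cO_X(L-E)\hookrightarrow\cO_X(L)$ directly in its $M$-graded description and identify it chart by chart with $\cO_E(L_E^\tau)$ for an explicit divisor $\sum_{j}\lfloor a_j/c_j\rfloor D_{\bar\rho_j}$ on the star of $\rho_E$; the substance of your proof is the verification that the inequalities attached to non-extremal projected rays are implied by the extremal ones, which is exactly where both hypotheses enter ($S$-simpliciality forces the apex rays $\bar\rho_i$, $i\in I$, to be extremal in $\tau/\rho_E$ and to be absent from the positive expression of any non-extremal $\bar v_{\rho_i}$, while $L\equiv_\Q\bs S$ supplies the functional $\mu$ giving $\sum_j c_{ij}b_j\le a_i$). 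Your version buys an explicit formula for $L_E$ and a transparent account of why each hypothesis is needed, at the cost of more bookkeeping; the paper's version is shorter, reuses the cover-and-trace machinery already running throughout \S3 and \S5, and extends verbatim to the twisted sequences for $\Omega^p$ in Proposition \ref{P: SES of twisted log-sim. tri. by adding B}, where a direct graded computation would be heavier. Two cosmetic points: your index $n$ for the number of rays of $\tau$ clashes with $n=\dim X$ (the cone need not be simplicial), and the final patching is cleanest phrased as you do at the end — the globally defined cokernel is locally rank-one reflexive, hence corresponds to a unique Weil divisor class on the normal variety $E$ — rather than as literal gluing of the chartwise $L_E^\tau$, which only agree up to the characters used to normalize $a_1=0$ on each chart.
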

\begin{proof}
    By refining the lattice $M_\Z$, we can find a torus invariant finite cover $f: \hat X\to X,$ such that it is only ramified along $S$, $f^*\bs S=\hat\bs \hat S$, where $\hat S:= f^{-1}S$, for some $\hat\bs\in \Z^m$, and $\hat\bs \hat S$ is an integral Cartier-divisor. Set $\hat E=f^{-1}E$. We always have the following short exact sequence:
    $$0\to \cO_{\hat X}(-\hat E) \to \cO_{\hat X} \to k_*\cO_{\hat E} \to 0.
    $$
    Twisting it with $\cO_{\hat X}(\hat\bs \hat S)$, we get 
    $$0\to \cO_{\hat X}(-\hat E+\hat\bs \hat S) \to \cO_{\hat X}(\hat\bs \hat S) \to \hat k_* \cO_{\hat E}(\hat\bs \hat S_E) \to 0,
    $$
    where $\hat\bs \hat S_E$ is the Cartier divisor on $\hat E$ by restricting $\hat\bs \hat S$, and $\hat k:\hat E\to \hat X$ the natural closed embedding. Pushing it forward by $f$, by projection formula, we have the following short exact sequence as a direct summand:
    $$0\to \cO_{X}(-E+L) \to \cO_{X}(L) \to k_* \cE \to 0,
    $$
    where $\cE$ is a reflexive sheaf of rank one on $E$. Setting $\cO_E(L_E)\simeq \cE$, and it is clear that such $L_E$ satisfies our assumption.
\end{proof}

For any log-simplicial toric triple $(X, !B, *C)$, with a Weil divisor $L$ on $X$ and $\Q$-linear equivalent to $\bb B- \bc C$, with all entries of $\bb$ and $\bc$ are rational numbers. Assume that $E$ is a reduced irreducible torus-invariant divisor in $X$, but not contained in $B+C$. Denote $k:E\to X $ the natural inclusion, $B^\circ=B+ E$, $C^\circ=C+ E$.  $B_E=B\cap E$ and $C_E=C\cap E$ are reduced torus invariant divisors on $E$. Let $L_E$ being the Weil divisor defined in the previous lemma. As a generalization of the previous lemma, we have the following
\begin{prop}\label{P: SES of twisted log-sim. tri. by adding B}
    For all $p$, we have the following short exact sequence:
        \begin{equation*}
        0\to (\Omega_{(X, !B^\circ, *C)}^p\otimes \cO(L))^{\vv}\to (\Omega_{(X, !B, *C)}^p\otimes \cO(L))^{\vv} \to k_*(\Omega_{(E, !B_E ,*C_E)}^p\otimes \cO_E(L_E))^{\vv} \to 0.
    \end{equation*}
\end{prop}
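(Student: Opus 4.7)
The plan is to reduce the twisted short exact sequence to the untwisted one of Proposition \ref{P: SES of log-sim. tri. by adding B} by passing to a finite torus-equivariant cover on which $L$ pulls back to a genuine Cartier divisor, mirroring the strategy in the proof of Lemma \ref{L: restriction of Weil div for log-simplicial}.

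Since the claim is local on $X$, I first reduce to the affine case. Following the proof of Lemma \ref{L: restriction of Weil div for log-simplicial}, I refine the lattice $M$ to produce a finite torus-equivariant cover $f:\hat X \to X$ that is ramified only along $B+C$ and on which $f^*L$ is (up to $\Q$-linear equivalence) represented by an integral Cartier divisor $\tilde L$. Because the refinement leaves the fan unchanged, $\hat X$ remains log-simplicial; moreover, since $f$ is unramified along $E$, the preimage $\hat E := f^{-1}(E)$ is reduced and is not contained in $\hat B + \hat C$. Applying Proposition \ref{P: SES of log-sim. tri. by adding B} to $(\hat X, !\hat B, *\hat C)$ with the divisor $\hat E$ produces the untwisted short exact sequence
$$0 \to \Omega^p_{(\hat X, !\hat B^\circ, *\hat C)} \to \Omega^p_{(\hat X, !\hat B, *\hat C)} \to \hat k_* \Omega^p_{(\hat E, !\hat B_E, *\hat C_E)} \to 0.$$
Tensoring with the line bundle $\cO_{\hat X}(\tilde L)$ preserves exactness, and pushing forward by the finite map $f$ is also exact, yielding a short exact sequence of $G$-equivariant coherent sheaves on $X$, where $G$ denotes the Galois group of $f$.

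The last step is to project this pushed-forward sequence onto the $\chi_L$-isotypic direct summand for the $G$-action, exactly as in the proof of Lemma \ref{L: restriction of Weil div for log-simplicial}. The identification
$$\bigl(f_*(\Omega^p_{(\hat X, !\hat B, *\hat C)} \otimes \cO_{\hat X}(\tilde L))\bigr)^{\chi_L} \cong (\Omega^p_{(X, !B, *C)} \otimes \cO_X(L))^{\vv},$$
and its analogues for the kernel and cokernel terms, follow because both sides are reflexive sheaves of the same generic rank on the normal variety $X$ (finite pushforward followed by isotypic projection preserves reflexivity), and they agree on the smooth locus $X^{\mathrm{sm}}$, where the calculation reduces to the classical tensor product of $\Omega^p_{X^{\mathrm{sm}}}(\log(B+C))(-B)$ with the line bundle $\cO_X(L)|_{X^{\mathrm{sm}}}$; the identification of $L_E$ on $E$ is precisely Lemma \ref{L: restriction of Weil div for log-simplicial} applied to the normal toric variety $E$. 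The main obstacle is this final identification: tracking the $G$-action and the relevant characters on each of the three pushed-forward sheaves and verifying that $f_*$ commutes with both $\chi$-isotypic projection and reflexive hull. Once these are in place, exactness is automatic, since tensoring by a line bundle, finite pushforward, and projection onto an isotypic summand all preserve short exact sequences.
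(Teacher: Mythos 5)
Your proposal is correct and follows essentially the same route as the paper: pass to a finite torus-equivariant cover ramified only along $B+C$ on which $L$ becomes Cartier, apply the untwisted sequence of Proposition \ref{P: SES of log-sim. tri. by adding B} there, twist by the resulting line bundle, push forward by the finite map, and extract the appropriate direct summand (your $\chi_L$-isotypic projection is exactly the paper's ``image of the trace map''). The identification of the summand with the twisted reflexive sheaves via agreement on the smooth locus plus reflexivity is also the argument the paper relies on through Lemma \ref{L: restriction of Weil div for log-simplicial}.
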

\begin{proof}
    Since $(X, !B, *C)$ is log-simplicial, the defining functions of each irreducible component of $B+C$ can be completed to form a base in $M_\Q$. By refining the lattice $M_\Z$ along those directions corresponding to the defining functions of $B+C$, we can find a torus invariant finite cover $f: \hat X\to X,$ such that it is only ramified along $B+C$, and $f^* (\bb B- \bc C)$ is a Cartier-divisor. By Proposition \ref{P: SES of log-sim. tri. by adding B}, we have the following short exact sequence 
    $$0\to \Omega_{(X, !B', *C)}^p\to \Omega_{(X, !B, *C)}^p \to k_*\Omega_{(E, !B_E ,*C_E)}^p\to 0,
    $$
    and it is naturally a direct summand of 
    $$f_*[0\to \Omega_{(\hat X, !\hat B^\circ, *\hat C)}^p\to \Omega_{(\hat X, !\hat B, *\hat C)}^p \to \hat k_*\Omega_{(\hat E, !\hat B_E ,*\hat C_E)}^p\to 0],
    $$
    as the image of the trace map. The rest of the proof follows by a same argument as in the proof of the previous lemma.
\end{proof}

The next Proposition will be used in the proof of Kawamata-Viehweg Vanishing in the next section.
\begin{prop}\label{P: SES of twisted forms by adding a simplicial E}
    Assume that we have a toric triple $(X, !B, *C)$, with $E$ being a reduced irreducible torus-invariant divisor that is not in $B+C$, such that $X$ is $E$-simplicial. Denote $k:E\to X $ the natural inclusion, $B^\circ=B+ E$, $C^\circ=C+ E$.  $B_E=B\cap E$ and $C_E=C\cap E$ are reduced torus invariant divisors on $E$. Let $L$ be a Weil divisor on $X$ and $\Q$-linear equivalent to $eE+\bb B- \bc C$, where all entries of $\bb$ and $\bc$ are in $[0,1]$. 
    Then, there exists a Weil divisor $L_E$ on $E$ (, unique up to linear equivalence,) such that it fits in the follow short exact sequence:
    $$0\to \cO_{X}(-B^\circ+L) \to \cO_{X}(-B+L) \to k_* \cO_E(-B_E+L_E) \to 0.
    $$
    Moreover, for all $p$, we have the following two short exact sequences:
    \begin{align*}
        0\to (\Omega_{(X, !B^\circ, *C)}^p\otimes \cO(L))^{\vv}\to (\Omega_{(X, !B, *C)}^p\otimes \cO(L))^{\vv} \to k_*(\Omega_{(E, !B_E ,*C_E)}^p\otimes \cO_E(L_E))^{\vv} \to 0. \\
        0\to (\Omega_{(X, !B, *C)}^p\otimes \cO(L))^{\vv}\to (\Omega_{(X, !B, *C^\circ)}^p\otimes \cO(L))^{\vv} \to k_*(\Omega_{(E, !B_E ,*C_E)}^{p-1}\otimes \cO_E(L_E))^{\vv} \to 0.
    \end{align*}
\end{prop}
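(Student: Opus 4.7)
The plan is to pass to a torus-equivariant finite Galois cover $f : \hat X \to X$ obtained by lattice refinement, establish the analogous short exact sequences on $\hat X$, and descend to $X$ via the trace-map direct summand technique used repeatedly in the paper, notably in the proofs of Proposition~\ref{P: simplicial comparison} and Proposition~\ref{P: SES of twisted log-sim. tri. by adding B}. Since the statement is local on $X$, I would work on an affine chart corresponding to a cone $\tau \in \fX$ containing the ray $\nu_E$. The $E$-simplicial hypothesis gives $\tau = \Cone(\zeta, \nu_E)$ with $\dim \tau = \dim \zeta + 1$ and $\nu_E \notin \Span(\zeta)$. By refining the lattice so that $\nu_E$ becomes part of a basis of the intersection with $\Span(\tau)$, and further refining to clear the denominator of $e$, one obtains a cover on which, locally, $\hat X_\tau \cong \hat X_\zeta \times \A^1$ with $\hat E = \hat X_\zeta \times \{0\}$ Cartier and $\hat B, \hat C$ pulled back from $\hat X_\zeta$. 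In this product picture, the reflexive sheaf $\cO_{\hat X}(f^* L)$ decomposes as a line bundle along the $\A^1$-factor (accounting for $e \hat E$) tensored with the pullback of a rank-one reflexive sheaf from $\hat X_\zeta$ (accounting for $\bb \hat B_\zeta - \bc \hat C_\zeta$).

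On $\hat X$, Proposition~\ref{P: SES of adding in B} applied to $\hat E$ produces
\begin{align*}
0 \to \Omega^p_{(\hat X, !\hat B^\circ, *\hat C)} &\to \Omega^p_{(\hat X, !\hat B, *\hat C)} \to \hat k_* \Omega^p_{(\hat E, !\hat B_E, *\hat C_E)} \to 0, \\
0 \to \Omega^p_{(\hat X, !\hat B, *\hat C)} &\to \Omega^p_{(\hat X, !\hat B, *\hat C^\circ)} \to \hat k_* \Omega^{p-1}_{(\hat E, !\hat B_E, *\hat C_E)} \to 0,
\end{align*}
which I would then twist by $\cO_{\hat X}(-\hat B + f^* L)$ and $\cO_{\hat X}(f^* L)$ respectively. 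Exactness is preserved because, in the local product, both sequences are pulled back from the $\A^1$-sequence $0 \to \cO_{\A^1}(-\{0\}) \to \cO_{\A^1} \to \cO_{\{0\}} \to 0$, and the local equation $x_n$ of $\hat E$ acts as a non-zero-divisor on the twist (which is free over $\C[x_n]$, being a line bundle tensored with a pullback from $\hat X_\zeta$). Applying the exact functor $f_*$ and extracting the trace-map direct summand, as in the proof of Proposition~\ref{P: SES of twisted log-sim. tri. by adding B}, yields both of the desired short exact sequences on $X$. The $p = 0$ case of the first sequence is the structure-sheaf sequence stated in the proposition, and simultaneously defines $\cO_E(L_E)$ as the trace summand of $f_{E,*} \cO_{\hat E}(f^* L|_{\hat E})$; uniqueness of $L_E$ up to linear equivalence is then immediate from the $S_2$ extension property of rank-one reflexive sheaves on the normal variety $E$.

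The main technical obstacle will be two-fold: first, verifying the additional hypotheses of Proposition~\ref{P: SES of adding in B} on the cover $\hat X$ (that no torus-invariant divisor $F \not\subset \hat C$ has $F \cap \hat E \subset \hat C_E$ while $F \cap \hat E \not\subset \hat B$ for the first SES, and that intersections of torus-invariant divisors outside $\hat B + \hat C + \hat E$ never lie inside $\hat E$ for the second); and second, controlling reflexive hulls under the twist by $\cO_{\hat X}(f^* L)$, which fails to be a line bundle when $\bb \hat B - \bc \hat C$ is not Cartier. Both obstacles dissolve in the product structure $\hat X_\tau \cong \hat X_\zeta \times \A^1$ afforded by the $\hat E$-simplicial condition: containment in $\hat E$ forces $\nu_E$ to be an extremal ray of the relevant cone, so intersections of divisors whose defining rays avoid $\nu_E$ cannot drop inside $\hat E$; and the twist splits as a line bundle in the $\A^1$-direction times a pullback from $\hat X_\zeta$, reducing all questions of tensor exactness and reflexive-hull commutativity to the standard one-variable situation over $\A^1$.
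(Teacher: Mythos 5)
Your approach is correct in outline but genuinely different from the paper's. The paper first takes a finite cover ramified only along $E$ to make $eE$ Cartier and untwist it, then invokes the resolution machinery: it chooses an efficient, locally-convex, log-simplicial model $(X',!B',*C')$, observes via Lemma~\ref{L: subdiv of simplicial must be Ext} that this model is again of the form $\Ext(\fE',\nu)$ so that it simultaneously resolves $(E,!B_E,*C_E)$, applies the already-established log-simplicial case (Proposition~\ref{P: SES of twisted log-sim. tri. by adding B}) upstairs, and pushes the sequence down with $Rf_*$ using Lemma~\ref{L: Vanishing induction step} (hence, ultimately, Bott Vanishing). You instead exploit the $E$-simplicial hypothesis directly: $\tau=\Cone(\zeta,\nu_E)$ with $\nu_E\notin\Span(\zeta)$ really does make $\tau$ a product cone, so after a lattice refinement the chart becomes $\hat X_\zeta\times\A^1$ with $\hat E=\hat X_\zeta\times\{0\}$, every class in $\mathrm{Cl}(\hat X_\zeta\times\A^1)$ is pulled back from $\hat X_\zeta$, and both untwisted sequences of Proposition~\ref{P: two SES of E-sim. tri.} become direct sums of external products of a fixed sheaf on $\hat X_\zeta$ with the elementary sequences $0\to x\C[x]\to\C[x]\to\C_0\to 0$ on the $\A^1$-factor; twisting by $p_1^*\cM$ and reflexifying then manifestly preserves exactness. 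Your route is more elementary in that it bypasses the resolution theorem and the vanishing results entirely, at the price of two facts you assert but would need to verify from Danilov's graded-module description: the K\"unneth decomposition $\Omega^p_{(X_\zeta\times\A^1,\,\cdot\,)}\cong\bigoplus_{a+b=p}\Omega^a_{(X_\zeta,\cdot)}\boxtimes\Omega^b_{(\A^1,\cdot)}$ for product divisors (not stated anywhere in the paper, though it follows from the definition of $\phi$ on a product cone), and the commutation of $(\cdot)^{\vv}$ with flat pullback along $p_1$ and with $\boxtimes$ by a line bundle. Two small cautions: the lattice refinement splitting off $\Z v_{\nu}$ generally forces you to enlarge $N\cap\Span(\zeta)$ as well, so the cover is typically ramified along components of $B+C$ too, not only along $E$ --- this is harmless for your argument (only the isomorphism class of $\cO(f^*L)$ and the product structure matter), but you should not claim the cover is ramified only along $E$; and your worry about the extra hypotheses of Proposition~\ref{P: SES of adding in B} is moot, since the paper already records in Proposition~\ref{P: two SES of E-sim. tri.} that both untwisted sequences hold unconditionally when $X$ is $E$-simplicial.
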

\begin{proof}
    Since it is a local problem, we will assume that $X$ is affine, and $\fX$ is a fan induced by a cone $\tau$. 
        
    Since $X$ is $E$-simplicial, as in the proof of the previous proposition, we have a torus-invariant finite cover $g: \hat X\to X$, such that it is only ramified along $E$, and $g^*eE$ is an integral Cartier-divisor. Since the direct image functor $g_*$ being acyclic and $\cO_{\hat X}(g^*eE)$ is a rank-one locally free sheaf, we can untwist it, and reduce our statements to the case that $e=0$.

    By Corollary \ref{C: seq star for sorted toric trip}, we can find an efficient and locally-convex model, $(X', !B', *C')$, with respect to $(X, !B, *C)$. Moreover, since $X$ is $E$-simplicial, we have $\fX=\Ext(\fE, \nu)$, where $\nu\in \fX$ is the ray corresponds to $E$, and $\fE$ is an affine fan induced by a facet of $\tau,$ and it is the same as the fan of $E$ as an affine toric variety. By Lemma \ref{L: subdiv of simplicial must be Ext}, $\fX'$ is of the form $\Ext(\fE', \nu)$, where $\fE'$ is an efficient and inductively locally-convex subdivision with respect to the toric triple $(E, !B_E, *C_E)$. We denote $(E', !B'_{E'} ,*C'_{E'})$ the induced triple.
    
    Since $L\equiv_\Q \bb B- \bc C,$ as a compatible divisor with respect to $(X, !B, *C)$, we set $L'$ be its induced divisor on $X'$, and it is compatible with respect to $(X', !B', *C')$.    
    Due to the previous proposition, we can find a Weil divisor $L'_{E'}$ on $E'$, such that we have the following short exact sequence:
    $$
    0\to (\Omega_{(X', !B'^\circ, *C')}^p\otimes \cO(L'))^{\vv}\to (\Omega_{(X', !B', *C')}^p\otimes \cO(L'))^{\vv} \to k'_*(\Omega_{(E', !B'_{E'} ,*C'_{E'})}^p\otimes \cO_{E'}(L'_{E'}))^{\vv} \to 0,
    $$
    where $k'$ is the natural closed embedding. We have $L'\equiv_\Q \bb B'- \bc C',$ and $L'_{E'}\equiv_\Q \bb B'_{E'}- \bc C'_{E'}$, so they are compatible divisors on $(X', !B', *C')$ and $(E', !B'_{E'} ,*C'_{E'})$ respectively.
    
    Thanks to Lemma \ref{L: Vanishing induction step}, we have
    \begin{align*}
        Rf_*(\Omega_{(X', !B'^\circ, *C')}^p\otimes \cO(L'))^{\vv}&\simeq (\Omega_{(X, !B^\circ, *C)}^p\otimes \cO(L))^{\vv},\\
        Rf_*(\Omega_{(X', !B', *C'^\circ)}^p\otimes \cO(L'))^{\vv}&\simeq (\Omega_{(X, !B, *C^\circ)}^p\otimes \cO(L))^{\vv},\\
        Rf_*(\Omega_{(X', !B', *C')}^p\otimes \cO(L'))^{\vv}&\simeq (\Omega_{(X, !B^, *C)}^p\otimes \cO(L))^{\vv}.
    \end{align*}
    To get         
    $$
    Rf_*\circ k'_*(\Omega_{(E', !B'_{E'} ,*C'_{E'})}^p\otimes \cO_{E'}(L'_{E'}))^{\vv} \simeq k_*(\Omega_{(E, !B_E ,*C_E)}^p\otimes \cO_E(L_E))^{\vv},
    $$
    we first note that, in the case of $p=0$, the vanishing of higher direct images are also due to Lemma \ref{L: Vanishing induction step}, and the $R^0f_*$ piece gives the definition of $\cO_E(L_E)$, such that it satisfies the first statement. We have $L_E\equiv_\Q \bb B_E- \bc C_E$, which implies it is a compatible divisor of $(E, !B_E ,*C_E)$. We can apply Lemma \ref{L: Vanishing induction step} again to get the quasi-isomorphism for $p$ in general, and we can conclude the proof.
\end{proof}

Fix an affine divisorial toric triple $(X, !B, *C)$. Assume that there is a compatible Weil divisor $L$ on $X$ that induces an affine toric quadruple $(X, !F, *G, !\!* \bh H)$. Assume that we have an efficient resolution $f:X'\to X$, with $(X', !B', *C')$ and $(X', !F', *G', !\!* \bh H')$ the induced triple and quadruple. We also have a naturally induced compatible Weil divisor $L'$ on $X'$, as the strict transform of $L$, and it is clear that $(X', !F', *G', !\!* \bh H')$ is also the induced quadruple by $L'$. 
\begin{lemma}\label{L: seq conv implies twisted top log comparison}
    Notations as above, fix $F^\flat\subset F, G^\flat\subset G, H^\flat\subset H$.
    Assume that we have an order $\sO$ on $(F^\flat, G^\flat, H^\flat)$, such that $X'=X^*_{\sO^-}(F^\flat, G^\flat, H^\flat)$ is sequentially-convex over $X$, with $f$ being the map $X':\to X$. $\sO$ also gives an order on $(F'^\flat, G'^\flat, H'^\flat)$. In such setting, if we have 
    $$\DR^\bullet_{(X', !F', *G', !\!* \bh H')}\simeq \DR^\bullet_{(X', !F', *G', !\!* \bh H')}[\sO(!F'^\flat, *G'^\flat, !\!* H'^\flat)],$$
    then we have 
    $$\DR^\bullet_{(X, !F, *G, !\!* \bh H)}\simeq\DR^\bullet_{(X, !F, *G, !\!* \bh H)}[\sO(!F^\flat, *G^\flat, !\!* H^\flat)].
    $$
\end{lemma}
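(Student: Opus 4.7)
The plan is to mirror the proof of Lemma \ref{L: seq conv implies top log comparison} almost verbatim, replacing the untwisted de Rham complex there by its twisted counterpart. I will induct on the total number of irreducible components in $F^\flat + G^\flat + H^\flat$; the base case (no components) is immediate.

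The key preliminary is the twisted pushforward identity
$$Rf_* \DR^\bullet_{(X', !F', *G', !\!*\bh H')} \simeq \DR^\bullet_{(X, !F, *G, !\!*\bh H)}.$$
Since $f$ is sequentially-convex with respect to $(X', !B', *C')$, hence in particular locally-convex, I will deduce this from equation (\ref{E: pushforward with twist}) of Lemma \ref{L: Vanishing induction step} applied degree-by-degree, using the identity $\Omega^p_{(X', !F', *G', !\!*\bh H')} = (\Omega^p_{(X', !B', *C')} \otimes \cO_{X'}(L'))^{\vv}$ and its analogue on $X$. The compatible divisor $L$ has coefficients in $[0,1]$ rather than strictly in $[0,1)$, but entries equal to $0$ or $1$ correspond exactly to the components absorbed into the $F$/$G$ structure of the induced quadruple, and so upon passing to reflexive hulls they do not affect the argument; the remaining fractional entries lie strictly in $(0,1)$ and satisfy the hypothesis of (\ref{E: pushforward with twist}). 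Equivalently, the vanishing of $R^{k>0}f_*$ follows directly from the local twisted Bott vanishing used inside the proof of Lemma \ref{L: Vanishing induction step} via the good sorting functions supplied by sequential convexity, while the $R^0$ piece reproduces the reflexive hull on $X$ automatically since $f$ is an isomorphism outside a codimension $\geq 2$ subset.

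For the inductive step, suppose $F^\flat + G^\flat + H^\flat$ has $m+1$ components and let $D_{m+1}$ be its last component with respect to $\sO$; for concreteness take $D_{m+1} \subset G^\flat$, the remaining cases being analogous. Set $X_\circ = X \setminus D_{m+1}$ and $X'_\circ = X' \setminus D'_{m+1}$, and denote the restricted data by adding a subscript $\circ$. Because $X^*_{\sO^-}(F^\flat, G^\flat, H^\flat)$ was built by star-subdividing $D_{m+1}$ first (using the opposite order $\sO^-$), one has $f^{-1}(D_{m+1}) = D'_{m+1}$ set-theoretically, so $f$ restricts to $f_\circ : X'_\circ \to X_\circ$ and the square
$$
\begin{tikzcd}
X'_\circ \arrow[r, "i'"] \arrow[d, "f_\circ"] & X' \arrow[d, "f"] \\
X_\circ \arrow[r, "i"] & X
\end{tikzcd}
$$
is Cartesian. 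The restricted morphism $f_\circ$ inherits sequential convexity from $f$, and the compatible divisor $L$ restricts to a compatible divisor $L_\circ$ on $X_\circ$ whose induced quadruple is $(X_\circ, !F_\circ, *G_\circ, !\!*\bh H_\circ)$. Applying the inductive hypothesis on $X_\circ$ to the remaining $m$ components, together with base change $Rf_* \circ Ri'_* \simeq Ri_* \circ Rf_{\circ,*}$, the pushforward identity above, and the hypothesis on $X'$, produces the desired chain of quasi-isomorphisms on $X$ exactly as at the end of the proof of Lemma \ref{L: seq conv implies top log comparison}.

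The main obstacle will be carrying out the twisted pushforward identity cleanly when some coefficients of $\bb$ or $\bc$ equal $1$, so that (\ref{E: pushforward with twist}) does not apply verbatim. The absorption argument described above should suffice, but if needed I can instead reduce to the strictly-fractional case by a secondary induction using the short exact sequences of Proposition \ref{P: SES of twisted log-sim. tri. by adding B}.
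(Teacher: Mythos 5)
Your proposal is correct and follows essentially the same route as the paper: the paper's proof likewise invokes Lemma \ref{L: Vanishing induction step} for the twisted pushforward identity $Rf_*\DR^\bullet_{(X', !F', *G', !\!*\bh H')}\simeq \DR^\bullet_{(X, !F, *G, !\!*\bh H)}$ and then runs the same induction on the number of components, mirroring the proof of Lemma \ref{L: seq conv implies top log comparison}. Your extra attention to the boundary coefficients $0$ and $1$ (where (\ref{E: pushforward with twist}) as stated requires entries in $[0,1)$) is a legitimate detail the paper glosses over, and either of your proposed fixes is adequate.
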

\begin{proof}
    By Lemma \ref{L: Vanishing induction step}, we have 
    $$Rf_* \DR^\bullet_{(X', !F', *G', !\!* \bh H')}\simeq \DR^\bullet_{(X, !F, *G, !\!* \bh H)}.$$ 
    We then use induction on the number of total irreducible components in $F+G+H$, and the rest of the argument is essentially the same as in the proof of Lemma \ref{L: seq conv implies top log comparison}.
\end{proof}

\begin{proof}[Proof of Proposition \ref{P: twisted log comparison of sorted tor tri.}]
    Thanks to Corollary \ref{C: seq star for sorted toric trip} and the previous Lemma, now we just need to show the case that the induced quadruple $(X, !F, *G, !\!* \bh H)$ is log-simplicial. In the case that it is also plenary, then it is simplicial and we have proved this case in \S4. 
    If there is a reduced torus-invariant divisor $E\subset X$, that is not an irreducible component of $B+C$. Then, according to Proposition \ref{P: SES of twisted log-sim. tri. by adding B}, also following the notations there, we have the following short exact sequence:
    $$
        0\to (\Omega_{(X, !B^\circ, *C)}^p\otimes \cO(L))^{\vv}\to (\Omega_{(X, !B, *C)}^p\otimes \cO(L))^{\vv} \to k_*(\Omega_{(E, !B_E ,*C_E)}^p\otimes \cO_E(L_E))^{\vv} \to 0.
    $$
    Let $(X, !F^\circ, *G, !\!* \bh H)$ and $(E, !F_E ,*G_E, !\!* \bh H_E)$ be the induced quadruples of $((X, !B^\circ, *C)\otimes \cO(L))^{\vv}$ and $((E, !B_E ,*C_E)\otimes \cO_E(L_E))^{\vv}$ respectively. Since we assume that $(X, !F, *G, !\!* \bh H)$ is log-simplicial, hence well-sorted, we have  $(X, !F^\circ, *G, !\!* \bh H)$ is well-sorted and $(E, !F_E ,*G_E, !\!* \bh H_E)$ log-simplicial.
    
    The rest of the proof follows the same argument as in the proof of Proposition \ref{P: log comparison of sorted tor tri.} in the previous subsection, so we omit it.
\end{proof}

Also follows the same argument for the proof of Lemma \ref{L: partial simplicial comparison}, we have a twisted version:
\begin{lemma}
    Fix an affine toric triple $(X, !B, *C)$, with a reduced torus invariant divisor $S\subset B\cup C$. Assume that $(X, !B, *C)$ is $S$-simplicial, set $B^\flat=  S\cap B, C^\flat= S\cap C$.
    If we further assume that, there is a compatible Weil divisor $L$ on $X$ and $(X, !F, *G, !\!* \bh H)$ is the induced quadruple, we have 
    $$\DR^\bullet_{(X, !F, *G, !\!* \bh H)}\simeq \DR^\bullet_{(X, !F, *G, !\!* \bh H)}[\sO(!F, *G, !\!*H)],
    $$
    for any order $\sO$ on  $(F, G, H)$.
\end{lemma}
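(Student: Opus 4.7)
The plan is to mirror the inductive proof of Lemma \ref{L: partial simplicial comparison}, substituting each ingredient with its twisted analogue. I would induct on the number of rays in $\fS \subset \fX$ corresponding to $S$; the base case $\fS = \emptyset$ is vacuous, so the substance lies in the inductive step.

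For the inductive step, let $\nu$ be the last ray of $\fS$ under $\sO$, let $D_m$ be the torus-invariant divisor corresponding to $\nu$, and let $\zeta \in \fX$ be the $\{\nu\}$-unsettled cone with $\fX = \Ext(\fX^\zeta, \nu)$; such $\zeta$ exists by the $S$-simpliciality hypothesis. Use Proposition \ref{P: conv res to get log-simplicial} to choose a log-simplicial efficient locally-convex model over $(X^\zeta, !B^\zeta, *C^\zeta)$ (with the induced order), then extend by $\nu$ via Lemma \ref{L: Ext subdivision preserving convexity} to obtain $\fX' = \Ext((\fX^\zeta)', \nu)$. The induced triple $(X', !B', *C')$ is log-simplicial and is an efficient locally-convex model of $(X, !B, *C)$, with birational map $f : X' \to X$ satisfying $f^{-1}(D_m) = D'_m$. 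Let $L'$ denote the strict transform of $L$ (compatible with $(X', !B', *C')$) and let $(X', !F', *G', !\!*\bh H')$ be the induced quadruple on $X'$; it is log-simplicial, hence well-sorted.

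By the twisted pushforward formula of Lemma \ref{L: Vanishing induction step}, namely equation (\ref{E: pushforward with twist}) (applied after absorbing any coefficient-$1$ components of $L$ into an auxiliary $\Q$-Cartier summand, since the reflexive sheaf depends only on the induced quadruple per Definition \ref{D: compatible div}), I obtain
$$Rf_*\, \DR^\bullet_{(X', !F', *G', !\!*\bh H')} \simeq \DR^\bullet_{(X, !F, *G, !\!*\bh H)}.$$
Well-sortedness of $(X', !F', *G', !\!*\bh H')$ together with Proposition \ref{P: twisted log comparison of sorted tor tri.} yields
$$\DR^\bullet_{(X', !F', *G', !\!*\bh H')} \simeq \DR^\bullet_{(X', !F', *G', !\!*\bh H')}[\diamond_m D'_m],$$
where $\diamond_m \in \{!,\, *,\, !\!*\}$ is determined by whether $D_m$ is a component of $F$, $G$, or $H$; in the last case the two displayed formulas of that proposition jointly force the $!$- and $*$-extensions along any component of $H'$ to agree, so $[!\!* D'_m]$ is defined. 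Since $f$ is proper and $f^{-1}(D_m) = D'_m$, the functor $[\diamond_m D'_m]$ commutes with $Rf_*$, giving the single-component identity $\DR^\bullet_{(X, !F, *G, !\!*\bh H)} \simeq \DR^\bullet_{(X, !F, *G, !\!*\bh H)}[\diamond_m D_m]$. Finally, the restricted structure on $X \setminus D_m$ is $(S - D_m)$-simplicial with one fewer element in $\fS$; the inductive hypothesis supplies the iterated extension along the remaining components in the order induced by $\sO$, and applying the outermost $[\diamond_m D_m]$ to both sides of that open-set identity reassembles the global conclusion.

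The chief technical obstacle I anticipate is the coefficient-$1$ case, which lies outside the strict $[0,1)$ hypothesis of equation (\ref{E: pushforward with twist}); this is managed by the fact that $\Omega^p_{(X, !F, *G, !\!*\bh H)}$ is an invariant of the induced quadruple rather than of the particular toric-triple/compatible-divisor presentation, so one is free to re-present $L$ and the underlying triple to bring coefficients into $[0,1)$. A secondary subtlety, in the case $D_m \subset H$, is the well-definedness of $[!\!* D_m]$ on $X$; but this follows because Proposition \ref{P: twisted log comparison of sorted tor tri.} yields both $\DR^\bullet_{(X', !F', *G', !\!*\bh H')} \simeq \DR^\bullet_{(X', !F', *G', !\!*\bh H')}[!D'_m]$ and the analogous statement for $[*D'_m]$ on $X'$, and both properties transfer via $Rf_*$ because of $f^{-1}(D_m) = D'_m$ and the properness of $f$.
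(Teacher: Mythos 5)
Your proposal is correct and is essentially the paper's intended argument: the paper gives no separate proof of this lemma, stating only that it ``follows the same argument as the proof of Lemma \ref{L: partial simplicial comparison},'' and your induction on the rays of $\fS$, the $\Ext((\fX^\zeta)',\nu)$ construction with $f^{-1}(D_m)=D'_m$, the twisted pushforward from Lemma \ref{L: Vanishing induction step}, and the appeal to Proposition \ref{P: twisted log comparison of sorted tor tri.} on the log-simplicial model are exactly the twisted substitutions the paper has in mind. You have in fact been more careful than the source in flagging the coefficient-$1$ re-presentation and the well-definedness of $[!\!*D_m]$, both of which the paper leaves implicit.
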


\begin{thm}\label{T: twisted log comparison using complex of MHM}
    For a toroidal triple $(X, !B, *C)$ with a compatible Weil divisor $L$ on $X$, let $(X, !F, *G,\\ !\!* \bh H)$ be the induced toroidal quadruple. Then, we have 
    $$\widetilde\DR^\bullet_{(X, !F, *G, !\!* \bh H)}\simeq \widetilde\DR(\cM^\bullet),$$
    where $\cM^\bullet$ is an object in the derived category of mixed Hodge modules on $X$.
    If $(X, !F, *G, !\!* \bh H)$ is partially-sorted, then we have 
    $$\DR(\cM^\bullet|_{z=1})\simeq \C_X[*(G+H)][!B][n]\simeq \C_X[*G][!(B+H)][n],$$
    If it is well-sorted, then we have
    $$\DR(\cM^\bullet|_{z=1})\simeq\C_X[\sO(!F, *G, !\!* H)][n],$$
    for any order on $(F, G, H).$
    
    More generally, if $(X, !F, *G, !\!* \bh H)$ is simultaneously $(B^\sharp_{(1)}, C^\flat_{(1)}, H^\sharp_{(1)})$-sorted, ..., $(B^\sharp_{(k)}, C^\flat_{(k)}, H^\sharp_{(k)})$-sorted. Then, for any order $\sO$ on $B+ \sum_i C^\flat_{(i)}+ H$ satisfying that, for any irreducible component $C_\circ \in \sum_i C^\flat_{(i)}$, there exists $j_{C_\circ} \in \{1,...,k\}$, such that $C_\circ\subset \fC^\flat_{(j_{C_\circ})}$ and $\{C_\circ\}\prec_\sO B^\flat_{(j_{C_\circ})}+H^\flat_{(j_{C_\circ})}$. Then 
    $$\DR(\cM^\bullet|_{z=1})\simeq \DR(\cM^\bullet)[\sO(!B, *\sum_i C^\flat_{(i)}, !\!* H)].$$

    Lastly, assume that $(X, !F, *G, !\!* \bh H)$ is plenary and $F=G=0$, or equivalently, assume that $(X, !B, *C)$ is plenary and all entries of $\bb$ and $\bc$ being in $(0,1)$, then we have $\cM^\bullet$ actually underlies a pure Hodge module on $X$.
\end{thm}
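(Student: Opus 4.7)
The plan is to exploit the identification of the underlying perverse sheaf of $\cM^\bullet$ already supplied by the well-sorted portion of the theorem, and then deduce purity from simplicity of this perverse sheaf via the faithfulness of the underlying-perverse-sheaf functor on mixed Hodge modules.

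First I would check that when $F = G = 0$ the quadruple $(X, !0, *0, !\!*\bh H)$ is automatically well-sorted. In the corresponding fan quadruple, every ray belongs to $\fH$, so the sets $\fA$, $\fB$, $\fC$ are all empty; every cone is vacuously $\fB^\flat \cup \fH^\flat$-unsettled (both subsets are empty), and the zero linear function is trivially a $\fC^{\flat\xi}$-strict sorting function on every face, since every sign condition is imposed on an empty set. The well-sorted portion of the theorem then yields
$$\DR(\cM^\bullet|_{z=1}) \simeq \C_X[!\!* H][n],$$
where $H = B + C$ is the full toric boundary. Since $U := X \setminus H$ is smooth and connected (analytically locally it is the open torus of the toroidal structure), $\C_U[n]$ is a simple perverse sheaf on $U$, and by the notation conventions $\C_X[!\!* H][n]$ is its intermediate extension along $H$, which is therefore a simple perverse sheaf on $X$ concentrated in perverse degree zero.

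Next I would pass back to mixed Hodge modules. The underlying-perverse-sheaf functor from $D^b(\mathrm{MHM}(X))$ to $D^b_c(X, \Q)$ is $t$-exact (with respect to the natural and perverse $t$-structures, by Saito) and faithful; faithfulness follows from Riemann--Hilbert together with the strictness of $\cM^\bullet$ in $z$, which guarantees that a morphism of strict $\sR$-modules is determined by its restriction to $z = 1$. Since the underlying perverse sheaf of $\cM^\bullet$ is concentrated in perverse degree zero, $t$-exactness forces $H^i(\cM^\bullet) = 0$ for $i \ne 0$, so $\cM^\bullet \simeq \cM$ for a single mixed Hodge module $\cM$ whose underlying perverse sheaf is $\C_X[!\!* H][n]$.

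Finally I would extract purity from simplicity. Each term $W_w \cM$ of the weight filtration is a sub-MHM, so its underlying perverse sheaf is a sub-perverse-sheaf of the simple object $\C_X[!\!* H][n]$; this forces it to be either zero or the whole object, and faithfulness then forces $W_w \cM$ itself to be either zero or all of $\cM$. The weight filtration therefore has a single jump at some integer $w_0$, and $\cM = \gr^W_{w_0}\cM$ is pure of weight $w_0$. The only potentially delicate point I foresee is precisely invoking the correct $t$-exactness and faithfulness statements within Sabbah's twistor $\sR$-module formalism used throughout the paper, but both are standard consequences of strictness combined with Riemann--Hilbert at $z = 1$.
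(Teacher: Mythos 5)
Your proposal only addresses the final clause (purity when $F=G=0$ and the triple is plenary); the first four clauses --- including the existence of $\cM^\bullet$ with $\widetilde\DR(\cM^\bullet)\simeq\widetilde\DR^\bullet_{(X,!F,*G,!\!*\bh H)}$, which is needed even to state the purity claim --- are simply assumed. The paper disposes of them by rerunning the argument of Theorem \ref{T: log comparison using complex of MHM} (locally-convex resolution to the log-simplicial case via Lemma \ref{L: Vanishing induction step}, with Lemma \ref{L: twited comparison with Hodge module in simplicial case} as the induction base), and some indication of this is needed for a complete proof.

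Within the purity argument there is a genuine gap: you read the well-sorted clause as literally identifying $\DR(\cM^\bullet|_{z=1})$ with $\C_X[!\!*H][n]$, the intermediate extension of the \emph{constant} sheaf. This cannot hold when $\bh\neq 0$: by the construction via \cite[3.2.~Theorem]{EV92}, the restriction of $\DR^\bullet_{(X,!F,*G,!\!*\bh H)}$ to $U=X\setminus H$ is a shifted rank-one local system with monodromy $e^{\pm 2\pi i h_j}\neq 1$ around each component $H_j$, not $\C_U[n]$. The clause must be read in the self-extension form used everywhere else in the paper (cf.\ Proposition \ref{P: twisted log comparison of sorted tor tri.} and the ``more generally'' clause, and the paper's own proof, which uses $\DR(\cM^\bullet)\simeq\DR(\cM^\bullet)[*D]\simeq\DR(\cM^\bullet)[!D]$). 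That only tells you the perverse sheaf is the intermediate extension of its restriction to $U$; to get simplicity --- the linchpin of your ``simple $\Rightarrow$ single MHM $\Rightarrow$ one jump in the weight filtration $\Rightarrow$ pure'' chain --- you still must identify that restriction as a rank-one (unitary) local system, which is precisely the reduction to Lemma \ref{L: twited comparison with Hodge module in simplicial case} that the paper carries out and your proposal omits. Once that is supplied, your abstract purity argument is a legitimate alternative to the paper's (which instead notes that a unitary rank-one connection underlies a pure object and that intermediate extension preserves purity), though the weight-filtration step needs the graded-polarizable weight data that the twistor $\sR$-module formalism of \S\ref{S: mixed Hodge module} deliberately suppresses, so the paper's route is the more robust one here.
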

\begin{proof}
    We will only prove the last statement, since we can prove the previous statements via the same argument as in the proof of Theorem \ref{T: log comparison using complex of MHM}.

    For the last statement, we denote $i: U\to X$ being the toroidal embedding and $X\setminus D=U$.
    According to the assumption, we have 
    $$\DR(\cM^\bullet)\simeq \DR(\cM^\bullet)[*D]\simeq \DR(\cM^\bullet)[!D],
    $$
    Now we just need to show that $\cM^\bullet|_U$ is actually a pure Hodge module $\cN_U$, which implies that $\cM^\bullet$ is the pure Hodge module induced by the intermediate extension of $\cN_U$. Since all toroidal birational morphisms are identity above $U$, we are reduced to the case that $X$ is simplicial, by Corollary \ref{C: seq star for sorted toric trip}. According to the construction in Lemma \ref{L: twited comparison with Hodge module in simplicial case}, we see that $\cN_U$ actually corresponds to a rank-one unitary connection, which is pure.
\end{proof}

\subsection{Proof of Kawamata-Viehweg Vanishing of toroidal triple}
\begin{proof}[Proof of Theorem \ref{T: KVB vanishing of toroidal tri.}]
    Since being ample is an open condition, we can always assume that $L$ is $\Q$-linear equivalent to 
    $$aA+\bb B-\bc C, (\text{resp. } -aA+\bb B-\bc C,)
    $$
    and all entries of $\bb$, $\bc$ are in $(0,1)$. Take a sufficient divisible integer $m$, such that $maA+B+C$ is still ample, and $E$ be a generic section of $maA$, such that $(X, !B^\circ, *C)$ (resp. $(X, !B, *C^\circ)$) is a $E$-simplicial toroidal triple, where $B^\circ=B+E(, C^\circ=C+E)$.
    Now, we have, for some $e>0$,
    $$L\equiv_\Q eE+\bb B-\bc C, (\text{resp. } -eE+\bb B-\bc C,)
    $$
    and the induced toroidal quadruple is $(X, !\!* \bh H)$, where $H=E+B+C$, and 
    $$\bh H=eE+\bb B-\bc C+C(, \text{resp. } \bh H=(1-e)E+\bb B-\bc C+C).$$
    According to our construction, $X\setminus H$ is an affine variety.
    
    Due to Lemma \ref{P: SES of twisted forms by adding a simplicial E}, we have the following short exact sequence:
    \begin{align*}
        0\to \Omega_{(X, !\!* \bh H)}^p\to (\Omega_{(X, !B, *C)}^p\otimes \cO(L))^{\vv} &\to k_*(\Omega_{(E, !B_E ,*C_E)}^p\otimes \cO_E(L_E))^{\vv} \to 0.\\
        (\text{Resp. } 0\to (\Omega_{(X, !B, *C)}^p\otimes \cO(L))^{\vv}\to \Omega_{(X, !\!* \bh H)}^p &\to k_*(\Omega_{(E, !B_E ,*C_E)}^{p-1}\otimes \cO_E(L_E))^{\vv} \to 0.)
    \end{align*}
    Since 
    \begin{align*}
        &L_E\equiv_\Q aA|_E+\bb B_E-\bc C_E,  \\
        (\text{resp. }&L_E\equiv_\Q -aA|_E+\bb B_E-\bc C_E,)
    \end{align*}
    by induction on the dimension of $X$, we just need to show:
    \begin{align*}
        &H^q(X, \Omega_{(X, !\!* \bh H)}^p)=0, \\
        (\text{resp. }&H^{n-q}(X, \Omega_{(X, !\!* \bh H)}^{n-p})=0, )
    \end{align*}
    for all $p+q>n$.
    Apply Theorem \ref{T: twisted log comparison using complex of MHM}, we have 
        $$\widetilde\DR^\bullet_{(X, !\!* \bh H)}\simeq \widetilde\DR(\cM^\bullet),$$
    with $\DR(\cM^\bullet|_{z=1})=\DR(\cM^\bullet|_{z=1})[*H]=\DR(\cM^\bullet|_{z=1})[!H]$.
    We also note that 
    $\DR^\bullet_{(X, !\!* \bh H)}|_{X\setminus H}$ is a rank-one local system on the affine variety $X\setminus H$, which locally can be checked by \cite[4.5. Lemma.]{Dan78}.
    
    Using Artin-Grothendieck Vanishing, e.g. \cite[Theorem 3.1.13.]{Laz04I}, we have 
    \begin{align*}
            (R^i\pi_*\widetilde\DR(\cM^\bullet))|_{z=1}&= \H^i (X, \DR(\cM^\bullet|_{z=1}))\\
            &=\H^i (X\setminus H, \DR^\bullet_{(X, !\!* \bh H)}|_{X\setminus H})\\
            &=\H^i_c (X\setminus H, \DR^\bullet_{(X, !\!* \bh H)}|_{X\setminus H})\\
            &= 0,
    \end{align*}
    for all $i$, except $i=0$. Here $\H^\bullet_c$ means taking hypercohomologies with compact support, or just the derived functor $R^\bullet a_!$, with $a$ being the constant map $X\setminus H \to \text{Spec}(\C)$.
    
    Due to the strictness of the direct image functor of mixed Hodge modules, $R^i\pi_* \widetilde\DR^\bullet_{(X, !\!* \bh H)}$ is torsion free, for all $i$. 
    This implies 
    $$(R^i\pi_*\widetilde\DR(\cM^\bullet))|_{z=1}=(R^i\pi_*\widetilde \DR(\cM^\bullet))|_{z=0}= \oplus_{p+q=n+i} H^q(X, \Omega_{(X, !\!* \bh H)}^p)=0,$$
    for  $i\neq 0$.
\end{proof}

\section{An application on hypersurface toric singularities}
Given a toric variety $X$ with its corresponding fan $\fX$, we say that it has at worst hypersurface singularities, if, for each cone $\xi\in \fX$, it can has at most one more ray as its face than its dimension. In this section, we will always work under the following setting.

Let $(X, !B, *C)$ be an affine toric triple with its corresponding fan triple $(\fX, !\fB, *\fC)$, and $X$ has dimension $n$ and at worst hypersurface singularities. Let $\fX$ being the fan induced by a cone $\tau$, and fix a ray $\nu$ in the interior of $\tau$. Let $\fX'=\fX^*(\nu)$. Let's use $f: X'\to X$ to denote the toric birational morphism, and $\nu\in \fX'$ corresponds to an exceptional divisor $E\subset X'$. We denote $B'$ and $C'$ being the strict transform of $B$ and $C$ respectively, and set $B'^\circ=B'+E$ and $C'^\circ=C'+E$. We get two simplicial affine toric triples $(X', !B'^\circ, *C')$ and $(X', !B', *C'^\circ)$. Moreover, set $B'\cap E=B'_E$, $C'\cap E= C'_E$, we have a simplicial triple $(E, !B'_E, *C'_E)$. By Proposition \ref{P: two SES of E-sim. tri.}, we have the following two short exact sequences:
    \begin{align}
        0\to \Omega_{(X', !B'^\circ, *C')}^p\to \Omega_{(X', !B', *C')}^p \to &k_*\Omega_{(E, !B'_E ,*C'_E)}^p \to 0, \label{E: add B}\\
        0\to \Omega_{(X', !B', *C')}^p\to \Omega_{(X', !B', *C'^\circ)}^p \to &k_*\Omega_{(E, !B'_E ,*C'_E)}^{p-1} \to 0. \label{E: add C}
    \end{align}

\begin{lemma}\label{L: equiv cond. for commutative ext}
    In the above setting, if $B\neq 0$, we have 
    $$\C_X[!B][*C][n]\simeq \C_X[*C][!B][n],$$ 
    if and only if 
    $$H^i(E, \C_E[!B'_E+*C'_E])=0, \text{for all $i$}.$$
\end{lemma}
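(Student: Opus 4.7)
The strategy is to lift the two short exact sequences (\ref{E: add B}) and (\ref{E: add C}) to distinguished triangles in $D^b_c(X')$, combine them via the octahedral axiom, and push forward along $f$ to produce a triangle on $X$ whose outer terms are the two sides of the desired isomorphism, while the cone has cohomology controlled by $R\Gamma(E,\C_E[!B'_E+*C'_E])$.

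First I would apply the simplicial comparison (Proposition~\ref{P: simplicial comparison}) to the three simplicial triples $(X',!B'^\circ,*C')$, $(X',!B',*C'^\circ)$, and $(E,!B'_E,*C'_E)$, identifying their de Rham complexes with $\C_{X'}[!B'^\circ+*C'][n]$, $\C_{X'}[!B'+*C'^\circ][n]$, and $\C_E[!B'_E+*C'_E][n-1]$ respectively. Setting $\cL:=\C_{X'}[!B'+*C'][n]$, $\cF:=\cL[!E]$, $\cH:=\cL[*E]$, the two SES-triangles take the forms
$$\cF\to \cL\to k_*\C_E[!B'_E+*C'_E][n]\xrightarrow{+1}\quad\text{and}\quad \cL\to \cH\to k_*\C_E[!B'_E+*C'_E][n-1]\xrightarrow{+1},$$
where the different shifts come from aligning the third columns of (\ref{E: add B}) and (\ref{E: add C}) with the rest of the de Rham complex. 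Comparison with the standard localization triangles $\cL[!E]\to \cL\to i_*i_E^*\cL$ and $i_*i_E^!\cL\to \cL\to \cL[*E]$ identifies $i_E^*\cL\simeq \C_E[!B'_E+*C'_E][n]$ and $i_E^!\cL\simeq \C_E[!B'_E+*C'_E][n-2]$.

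Next, the octahedral axiom applied to $\cF\to \cL\to \cH$ yields a distinguished triangle $\cF\to \cH\to Q\xrightarrow{+1}$ in $D^b_c(X')$, with $Q$ supported on $E$ fitting into the subsidiary triangle $k_*i_E^*\cL\to Q\to k_*i_E^!\cL[1]\xrightarrow{+1}$. Push forward along the proper map $f$ and use proper base change on the two Cartesian squares $X'\setminus B'^\circ\simeq X\setminus B$ (valid since $\mathbf{0}\in B$ yields $f^{-1}(B)=B'^\circ$) and $X'\setminus C'^\circ\simeq X\setminus C$ (valid when $\mathbf{0}\in C$) to obtain $Rf_*\cF\simeq \C_X[*C][!B][n]$ and $Rf_*\cH\simeq \C_X[!B][*C][n]$. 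Since $Q$ is supported on $E=f^{-1}(\mathbf{0})$, $Rf_*Q$ is supported at the origin and, via $Rf_*k_*=k_{0,*}R\Gamma(E,-)$, fits in
$$V^\bullet[n]\to Rf_*Q\to V^\bullet[n-1]\xrightarrow{+1},$$
where $V^\bullet:=R\Gamma(E,\C_E[!B'_E+*C'_E])$.

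The key point is that $V^\bullet[n]$ and $V^\bullet[n-1]$ live in cohomological degrees shifted by one, so $Rf_*Q=0$ forces $V^\bullet\simeq V^\bullet[2]$ in the bounded derived category, which in turn forces $V^\bullet=0$; the converse is trivial. The ``if'' direction then follows: $V^\bullet=0$ gives $Rf_*Q=0$, so the natural map $\C_X[*C][!B][n]\to \C_X[!B][*C][n]$ is an isomorphism. For the ``only if'' direction, any isomorphism forces matching stalks at $\mathbf{0}$; since $\mathbf{0}\in B$ gives $(\C_X[*C][!B][n])_{\mathbf{0}}=0$, so $(\C_X[!B][*C][n])_{\mathbf{0}}=0$, which combined with $i_E^*\cF=0$ (the restriction of a $!$-extension to its closed complement) and proper base change yields $(Rf_*\cH)_{\mathbf{0}}\simeq Rf_*Q=0$, forcing $V^\bullet=0$. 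The main obstacle will be carefully tracking the degree shifts produced by the two short exact sequences and verifying the base-change identifications of $Rf_*\cF$ and $Rf_*\cH$.
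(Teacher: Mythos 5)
Your proof is correct, and it takes a genuinely different route from the paper's. The paper argues through mixed Hodge modules: using the uniqueness of the $\sR$-module underlying a perverse sheaf, it translates the isomorphism $\C_X[!B][*C][n]\simeq\C_X[*C][!B][n]$ into the coherent condition $R^{i}f_*\Omega^p_{(X',!B',*C'^\circ)}=0$ for $i\geq 1$, and then runs an induction on $p$ through the exact sequences (\ref{E: add B}) and (\ref{E: add C}), invoking Proposition \ref{P: simplicial comparison} on $E$ to pass between $H^i(E,\C_E[!B'_E+*C'_E])$ and $H^i(E,\Omega^p_{(E,!B'_E,*C'_E)})$. You instead stay entirely on the topological side: the octahedron built from the two localization triangles, pushed forward by $f$, exhibits $Rf_*Q$ as the obstruction, and the shift trick $V^\bullet\simeq V^\bullet[2]\Rightarrow V^\bullet=0$ together with the stalk computation at $\mathbf{0}$ (which is exactly where $B\neq 0$ enters) closes both directions; your ``only if'' direction is appropriately careful in using only an abstract isomorphism rather than the canonical map. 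What you gain is elementarity --- no uniqueness theorem of Mochizuki, no strictness or degeneration input on $E$ --- while the paper's route stays inside the coherent/Hodge-theoretic framework it reuses throughout \S5--6. Two small points to nail down: (i) Propositions \ref{P: SES of adding in B} and \ref{P: two SES of E-sim. tri.} are proved purely at the level of graded modules, so you should check that the restriction and residue maps in (\ref{E: add B}) and (\ref{E: add C}) commute with the de Rham differential (they do --- they are the standard restriction and Poincar\'e residue --- but this is what promotes the coherent exact sequences to triangles of constructible complexes via Proposition \ref{P: simplicial comparison}), and likewise that the inclusions of de Rham complexes correspond to the adjunction maps $\cL[!E]\to\cL\to\cL[*E]$ (immediate from adjunction, since everything restricts to the identity over $X'\setminus E$); (ii) your identification $Rf_*\cH\simeq\C_X[!B][*C][n]$ needs $f^{-1}(C)=C'^\circ$, i.e.\ $C\neq 0$, but the paper's proof makes the same implicit assumption, and in the lemma's only application $C=0$ would force well-sortedness anyway.
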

\begin{proof}
Since $f^{-1}B=B'^\circ$, and $f^{-1}C=C'^\circ$, we have 
    \begin{align*}
        Rf_*\C_{X'}[!B'^\circ+ *C'][n]&\simeq \C_X[*C][!B][n],\\
        Rf_*\C_{X'}[!B'+ *C'^\circ][n]&\simeq \C_X[!B][*C][n].
    \end{align*}
Lifting it to the derived category of mixed Hodge modules, we have 
    \begin{align*}
        Rf_\dagger \cM' &\simeq \cM^\bullet,\\
        Rf_\dagger \cN'&\simeq \cN^\bullet,
    \end{align*}
    where 
    \begin{align*}
        \widetilde\DR(\cM'|_{z=1})&=\C_{X'}[!B'^\circ+ *C'][n],\\
        \widetilde\DR(\cN'|_{z=1})&=\C_{X'}[!B'+ *C'^\circ][n],\\
        \widetilde\DR(\cM^\bullet|_{z=1})&=\C_X[*C][!B][n],\\
        \widetilde\DR(\cM^\bullet|_{z=1})&=\C_X[!B][*C][n].\\
    \end{align*}
By Proposition \ref{P: simplicial comparison}, we have 
\begin{align}
    \widetilde\DR^\bullet_{(X', !B'^\circ, *C')}&\simeq \widetilde\DR(\cM'),\nonumber\\
    \widetilde\DR^\bullet_{(X', !B', *C'^\circ)}&\simeq \widetilde\DR(\cN'). \label{E: log comp for N'}
\end{align}

Due to Lemma \ref{L: star convexity}, we have $(X', !B'^\circ, *C')$ is convex over $X$. In particular, we have 
\begin{equation}\label{E: derived push with B' circ}
    Rf_* \Omega^p_{(X', !B'^\circ, *C')}\simeq \Omega^p_{(X, !B, *C)}, \text{for all } p.
\end{equation}
This implies 
\begin{equation}\label{E: log comp for M bullet}
    \widetilde\DR^\bullet_{(X, !B, *C)}\simeq \widetilde\DR(\cM^\bullet).
\end{equation}
Due to the uniqueness of the mixed Hodge module (as an $\sR$-module) that is associated to a perverse sheaf, e.g. \cite[25.5.]{Moc07b}, we have
$$\C_X[!B][*C][n]\simeq \C_X[*C][!B][n],
$$
if and only if 
$$\cM^\bullet\simeq \cN^\bullet,
$$
which is also equivalent to 
$$ Rf_\dagger \cN'\simeq\cM^\bullet .
$$
Thanks to (\ref{E: log comp for N'}) and (\ref{E: log comp for M bullet}), we have the above condition is equivalent to 
\begin{equation*}
    Rf_* \Omega^p_{(X', !B', *C'^\circ)}\simeq \Omega^p_{(X, !B, *C)}.
\end{equation*}
By Proposition \ref{P: pushforward formula}, 
$$f_* \Omega^p_{(X', !B', *C'^\circ)}\simeq \Omega^p_{(X, !B, *C)}
$$
always holds, so the previous condition is further equivalent to
\begin{equation}\label{E: vanishing for C' circ}
    R^i f_* \Omega^p_{(X', !B', *C'^\circ)}=0, \text{for all } i\geq 1.
\end{equation}

On the other hand, by Proposition \ref{P: simplicial comparison}, we have that 
$$H^i(E, \C_E[!B'_E+*C'_E])=0, \text{for all } i,$$
is equivalent to that 
$$H^i(E, \Omega^p_{(E, !B'_E, *C'_E)})=0,  \text{for all  $p$ and $i$}.$$

Assume it is the case, then combined with (\ref{E: add B}),  (\ref{E: add C}) and (\ref{E: derived push with B' circ}), we get (\ref{E: vanishing for C' circ}).

Conversely, if (\ref{E: vanishing for C' circ}) holds, applying (\ref{E: add B}),  (\ref{E: add C}) and (\ref{E: derived push with B' circ}) with $p=0$, we have 
$$H^i(E, \Omega^0_{(E, !B'_E ,*C'_E)})= R^if_* \Omega^0_{(X', !B', *C')} =0, \text{for all $i$}.$$ 
By induction on $p$, using (\ref{E: add C}), we have 
$$R^if_* \Omega^{p+1}_{(X', !B', *C')} =0, \text{for all $i$}.$$ Then, using (\ref{E: add B}), we get 
$$H^i(E, \Omega^{p+1}_{(E, !B'_E, *C'_E)})=0,  \text{for all $i$}.$$
\end{proof}

\begin{lemma}\label{L: equiv. cond. with B=0}
    In the same setting as in the previous lemma, with $B=0$, we have
    $$\DR^\bullet_{(X, *C)}\simeq \C_X[*C],$$
    if and only if 
    $$H^i(E, \C_E[*C'_E])=0, \text{for all $i$}.$$
\end{lemma}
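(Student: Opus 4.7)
The plan is to mimic the proof of Lemma \ref{L: equiv cond. for commutative ext} verbatim with $B$ taken to be $0$, noting that in this degenerate case the two perverse sheaves $\C_X[!B][*C]$ and $\C_X[*C][!B]$ collapse into the single object $\C_X[*C]$, so the content becomes a direct comparison between $\DR^\bullet_{(X,*C)}$ and $\C_X[*C][n]$.

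First, I would pass to mixed Hodge modules via the twistor de Rham construction. Since $(X', *C'^\circ)$ is simplicial, Proposition \ref{P: simplicial comparison} provides a mixed Hodge module $\cN'$ on $X'$ with $\widetilde\DR(\cN') \simeq \widetilde\DR^\bullet_{(X', *C'^\circ)}$ and underlying perverse sheaf $\C_{X'}[*C'^\circ][n]$. Setting $\cM^\bullet := Rf_\dagger \cN'$, properness of $f$ together with the equality $f^{-1}(C) = C'^\circ$ gives that the underlying perverse sheaf of $\cM^\bullet$ is $\C_X[*C][n]$. On the other hand, Theorem \ref{T: log comparison using complex of MHM} produces $\cN^\bullet \in D^b(\mathrm{MHM}(X))$ with $\widetilde\DR(\cN^\bullet) \simeq \widetilde\DR^\bullet_{(X,*C)}$. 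The uniqueness of mixed Hodge modules with a given underlying perverse sheaf then shows that $\DR^\bullet_{(X,*C)} \simeq \C_X[*C][n]$ is equivalent to $\cN^\bullet \simeq \cM^\bullet$, and hence to $\widetilde\DR^\bullet_{(X,*C)} \simeq Rf_*\widetilde\DR^\bullet_{(X', *C'^\circ)}$. Proposition \ref{P: pushforward formula} applies to $(X', *C'^\circ) \to (X, *C)$, so $f_*\Omega^p_{(X', *C'^\circ)} \simeq \Omega^p_{(X,*C)}$ for all $p$, and the comparison reduces to the higher-direct-image vanishing $R^if_*\Omega^p_{(X', *C'^\circ)} = 0$ for $i \geq 1$ and $p \geq 0$.

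To translate this into cohomology on $E$, I would use the short exact sequences (\ref{E: add B}) and (\ref{E: add C}) on $X'$. By Lemma \ref{L: star convexity} together with Proposition \ref{P: convexity preserved by adding}, the pair $(X', !E, *C')$ is convex over $X$, so Lemma \ref{L: Vanishing induction step} yields $R^if_*\Omega^p_{(X', !E, *C')} = 0$ for $i \geq 1$. Feeding this into the long exact sequence obtained from (\ref{E: add B}) identifies $R^if_*\Omega^p_{(X', *C')}$ with $H^i(E, \Omega^p_{(E, *C'_E)}) \otimes k'_*\C$ for $i \geq 1$; then the long exact sequence from (\ref{E: add C}) expresses each $R^if_*\Omega^p_{(X', *C'^\circ)}$ as an extension built from the groups $a^{p,q} := H^q(E, \Omega^p_{(E, *C'_E)})$. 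The forward direction of the equivalence is then immediate; the converse proceeds by induction on $p$, beginning with $p=0$ (where $\Omega^{-1}_{(E, *C'_E)} = 0$ pins down $a^{0,i}$) and propagating through the sequence. Finally, since $E$ is a complete simplicial toric variety, Proposition \ref{P: simplicial comparison} together with Theorem \ref{T: degen of SS} give the Hodge decomposition $H^i(E, \C_E[*C'_E]) \simeq \bigoplus_{p+q=i} a^{p,q}$, so that the vanishing of all $a^{p,q}$ is equivalent to the vanishing of all $H^i(E, \C_E[*C'_E])$. The main subtlety relative to the $B \neq 0$ case is that Proposition \ref{P: pushforward formula} no longer applies to $(X', !E, *C') \to (X, *C)$ (since $E \not\subset f^{-1}(\emptyset)$), so one loses the clean identification $Rf_*\Omega^p_{(X', !E, *C')} \simeq \Omega^p_{(X,*C)}$ that was available in Lemma \ref{L: equiv cond. for commutative ext}; the converse inductive step must extract the required vanishings purely from the interplay of the two long exact sequences above.
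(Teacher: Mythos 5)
Your proposal follows the paper's own proof step for step: the lift to mixed Hodge modules via Proposition \ref{P: simplicial comparison} and Theorem \ref{T: log comparison using complex of MHM}, the reduction to the vanishing of $R^if_*\Omega^p_{(X',*C'^\circ)}$ for $i\geq 1$ using Proposition \ref{P: pushforward formula}, the two short exact sequences (\ref{E: add B}) and (\ref{E: add C}) together with the convexity of $(X',!E,*C')$, and the Hodge decomposition on $E$. So there is no divergence of method to report.

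There is, however, a genuine gap in the converse direction, and it sits exactly at the point you flag but do not resolve. Write $a^{p,q}:=H^q(E,\Omega^p_{(E,*C'_E)})$. The base case $p=0$ of your induction only pins down $a^{0,i}=R^if_*\cO_{X'}=0$ for $i\geq 1$; the group $a^{0,0}=H^0(E,\cO_E)=\C$ is never zero. Chasing the long exact sequences further, the hypothesis $R^if_*\Omega^p_{(X',*C'^\circ)}=0$ for $i\geq 1$ yields isomorphisms $a^{p,i}\cong a^{p+1,i+1}$ for $i\geq 1$ and hence $a^{p,q}=0$ for all $q\geq 1$, but it gives no control whatsoever over the groups $a^{p,0}=H^0(E,\Omega^p_{(E,*C'_E)})$. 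Since $H^0(E,\C_E[*C'_E])$ contains $a^{0,0}=\C$ as a summand, the right-hand condition of the lemma is \emph{never} satisfied, whereas the left-hand side does hold, e.g., for $X$ simplicial with $C\neq 0$ by Proposition \ref{P: log comparison of sorted tor tri.}; so the claimed equivalence cannot be extracted from these sequences because it is false in the stated form. In the $B\neq 0$ case of Lemma \ref{L: equiv cond. for commutative ext} the offending degree-zero term is killed by the extra identification $f_*\Omega^0_{(X',!B'^\circ,*C')}\simeq f_*\Omega^0_{(X',!B',*C')}\simeq\cO_X(-B)$ supplied by Proposition \ref{P: pushforward formula}, which requires $E\subset f^{-1}(B)$. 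You correctly observe that this input disappears when $B=0$, but the ``interplay of the two long exact sequences'' cannot substitute for it: what survives is only the vanishing of $a^{p,q}$ for $q\geq 1$, which is strictly weaker than the vanishing of all $H^i(E,\C_E[*C'_E])$. (The paper's own proof, which defers to ``the same argument as the previous lemma,'' inherits the identical problem; the statement would need to be amended, e.g., to the vanishing of $H^q(E,\Omega^p_{(E,*C'_E)})$ for $q\geq 1$, before either argument can close.)
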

\begin{proof}
    Since $f^{-1}C=C'^\circ$, we have 
    $$Rf_*\C_{X'}[*C'^\circ]\simeq \C_X[*C].
    $$
    Lifting it to the derived category of mixed Hodge modules, we have 
    $$
        Rf_\dagger \cM' \simeq \cM^\bullet,
    $$
    where 
    \begin{align*}
        \widetilde\DR(\cM'|_{z=1})&=\C_{X'}[*C'^\circ][n]\\
        \widetilde\DR(\cM^\bullet|_{z=1})&=\C_{X}[*C][n]\\
    \end{align*}
By Proposition \ref{P: simplicial comparison}, we have 
    $$\widetilde\DR^\bullet_{(X', *C'^\circ)}\simeq \widetilde\DR(\cM').
    $$ 
    Due to Lemma \ref{L: star convexity}, we have $(X', !E, *C')$ is convex over $X$. In particular, we have 
    \begin{equation}\label{E: higher vanishing with E}
        Rf_* \Omega^p_{(X', !E, *C')}\simeq 0, \text{for $i\geq 1$.}
    \end{equation}
    By Theorem \ref{T: log comparison using complex of MHM}, we have 
    $$\widetilde\DR^\bullet_{(X, *C)}\simeq \widetilde\DR(\cM^\bullet),$$
    where $\cN^\bullet$ is an object in the derived category of mixed Hodge modules on $X$.
    The condition
    $$\DR^\bullet_{(X, *C)}\simeq \C_X[*C],$$
    is equivalent to 
    $$\cM^\bullet\simeq \cN^\bullet,
    $$
    which is also equivalent to 
    $$Rf_*\Omega^p_{(X', *C'^\circ)} \simeq \Omega^p_{(X, *C)}, \text{for all $p$.}
    $$
    Thanks to Proposition \ref{P: pushforward formula}, $f_*\Omega^p_{(X', *C'^\circ)} \simeq \Omega^p_{(X, *C)}$ always holds, so the above condition is equivalent to
    \begin{equation}\label{E: higher vanishing cond}
            R^i f_*\Omega^p_{(X', *C'^\circ)}=0, \text{for all $p$ and $i\geq 1$. }
    \end{equation}

    On the other hand, by Proposition \ref{P: simplicial comparison}, we have that 
    $$H^i(E, \C_E[*C'_E])=0, \text{for all } i,$$
    is equivalent to that 
    $$H^i(E, \Omega^p_{(E, *C'_E)})=0,  \text{for all  $p$ and $i$}.$$

    Now we just need to argue that it is equivalent to (\ref{E: higher vanishing cond}), but it is follows by exactly the same argument at the end of the proof of the previous lemma, using (\ref{E: add B}), (\ref{E: add C}) and (\ref{E: higher vanishing with E}).
\end{proof}

\begin{proof}[Proof of Theorem \ref{T: log comparison for hypersurface sing.}]
    Since the sufficiency part has already been proved in Proposition \ref{P: log comparison of sorted tor tri.}, we just need to prove the necessity part. Also, due to the local nature of the statements, we will only consider $X$ being an affine toric variety with at worst hypersurface singularities. We can further assume that $\fX$ is induced by a cone $\tau\in \N_\Q$, with $\dim \tau=\dim \N_\Q$, and each proper face $\xi \prec \tau$ is simplicial. Otherwise, we can work locally around the torus invariant strata that corresponds to $\xi$, which is a product space, so it can be done by induction on dimension.

    Assume $(X, !B, *C)$ is not well-sorted. According to Lemma \ref{L: finding THE ray for surface sing.}, we are able to find two subsets of rays $\fB^+$ and $\fC^+$ of $\fX$, such that $\fB\subset \fB^+$, $\fC\subset \fC^+$, $\fB^+\cap \fC^+=\emptyset$,  $\fB^+\cup \fC^+$ consists of all rays in $\fX$, and $\Cone(\fB^+)\cap \Cone(\fC^+)=\{\nu\}$, which is a ray in the interior of $\tau$. 
    
    In the case that $B\neq 0$, following Lemma \ref{L: equiv cond. for commutative ext}, we just need to show that
    $$H^i(E, \C_E[!B'_E+*C'_E])\neq 0, \text{for some } i.$$
    According to our setting, we have $\fE$, the complete fan corresponding to $E$, is a product of two complete fans $\fF$ and $\fG$, which gives $E=F\times G$. They naturally induce simplicial toric triples $(F, !B'_F)$ and $(G, *C'_G)$. According to Lemma \ref{L: product ext}, we have
    $$\C_E[!B'_E+*C'_E]\simeq \C_F[!B'_F]\boxtimes \C_G[*C'_G].
    $$
    By Verdier duality, $H^{2m}(F, \C_F[!B'_F])=H^0(F, \C_F[*B'_F])\neq 0$, where $\dim F=m$, and $H^0(G, \C_G[*C'_G])\neq 0$. By K\"unneth formula, we have $H^{2m}(E, \C_E[!B'_E+*C'_E])\neq 0$.

    In the case that that $B=0$, $(X, !B, *C)$ being not well-sorted is equivalent to being not partially-sorted. Actually, for the case that $(X, !B, *C)$ is not partially-sorted, with the assumptions at the beginning of the proof, we are also in the case that $B=0$. Hence, we are left to consider the case that $(X, *C)$ is not well-sorted. By Lemma \ref{L: equiv. cond. with B=0}, we just need to show 
        $$H^i(E, \C_E[*C'_E])\neq 0, \text{for some $i$}.$$
    If follows by the same argument as in the previous case.
\end{proof}

\begin{lemma}\label{L: finding THE ray for surface sing.}
    Given an affine fan triple $(\fX, !\fB, *\fC)$, let $\fX$ be the affine fan induced by a cone $\tau\in \N_\Q$, with $\dim \tau=\dim \N_\Q=n$. Assume that $\tau$ has $n+1$ rays as its faces, and each proper face $\theta \prec \tau$ is simplicial. If $(\fX, !\fB, *\fC)$ is not well-sorted, then we are able to find two subsets of rays $\fB^+$ and $\fC^+$ of $\fX$, such that $\fB\subset \fB^+$, $\fC\subset \fC^+$, $\fB^+\cap \fC^+=\emptyset$,  $\fB^+\cup \fC^+$ consists of all rays in $\fX$, and $\Cone(\fB^+)\cap \Cone(\fC^+)=\{\nu\}$, which is a ray in the interior of $\tau$.
\end{lemma}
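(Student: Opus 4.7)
The plan is to reduce the failure of well-sortedness to the nonexistence of a single linear functional on $\tau$, and then combine Farkas-type duality with the essentially unique linear relation among the $n+1$ ray generators to read off the bipartition. Since every proper face $\theta \prec \tau$ is simplicial, any such face trivially admits a $\fC^\theta$-strict sorting function (pick an element of the dual basis to its generators), so failure of well-sortedness must occur at $\tau$ itself: there is no $\Q$-linear functional $\rho$ on $N_\Q$ satisfying $\rho(v_i) \geq 0$ for primitive generators $v_i$ of rays in $\fB$, $\rho(v_j) < 0$ for those in $\fC$, and $\rho(v_k) = 0$ for those in $\fA$.

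Enumerate the primitive generators of the $n+1$ rays of $\tau$ as $v_0, \dots, v_n$. Since they span $N_\Q$, the space of linear relations among them is one-dimensional; fix a generator $(c_0, \dots, c_n)$, so $\sum_i c_i v_i = 0$. I would first establish two elementary facts: (i) every $c_i$ is nonzero --- otherwise $\{v_\ell : \ell \neq i\}$ is a linearly dependent set of $n$ vectors spanning a hyperplane $H$ with $v_i \notin H$; then $H$ supports $\tau$ and $\tau \cap H = \Cone(v_\ell : \ell \neq i)$ is a proper face carrying $n$ rays in dimension $n-1$, contradicting simpliciality; and (ii) strong convexity of $\tau$ forces both $I^+ := \{i : c_i > 0\}$ and $I^- := \{j : c_j < 0\}$ to be nonempty.

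Motzkin's transposition theorem applied to the infeasible sorting system then yields coefficients $\alpha_i \geq 0$ for $i \in \fB$, $\beta_j \geq 0$ for $j \in \fC$ (not all zero), and $\gamma_k \in \R$ for $k \in \fA$ with
$$\sum_{i \in \fB} \alpha_i v_i - \sum_{j \in \fC} \beta_j v_j + \sum_{k \in \fA} \gamma_k v_k = 0.$$
By one-dimensionality of the relation space, the coefficient of each $v_i$ on the left equals $t c_i$ for some fixed $t \neq 0$, and comparing signs forces (after swapping $I^+ \leftrightarrow I^-$ if $t < 0$) that $\fB \subset I^+$ and $\fC \subset I^-$. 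I then take $\fB^+$ to be the rays indexed by $I^+$ and $\fC^+$ those indexed by $I^-$; they partition the rays of $\fX$ and extend $\fB$ and $\fC$. Setting
$$\tilde\nu := \sum_{i \in I^+} c_i v_i = -\sum_{j \in I^-} c_j v_j = \tfrac{1}{2} \sum_i |c_i| v_i$$
simultaneously shows that the ray $\nu$ generated by $\tilde\nu$ lies in $\Cone(\fB^+) \cap \Cone(\fC^+)$ (first two expressions) and in the interior of $\tau$ (last expression, a strictly positive combination of every generator, valid even when $|I^+|=1$). Conversely, any $w \in \Cone(\fB^+) \cap \Cone(\fC^+)$ can be written as $\sum_{i \in I^+} a_i v_i = \sum_{j \in I^-} b_j v_j$ with $a_i, b_j \geq 0$, producing a relation which by uniqueness is a nonnegative scalar multiple of $(c_i)$, so $w$ lies on $\nu$.

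The main obstacle I expect is the verification in step (i) that every $c_i$ is nonzero: this is precisely where the hypothesis on the proper faces of $\tau$ enters, and it requires a short dimensional argument to pin down that the remaining $n$ generators lie in an honest $(n-1)$-dimensional hyperplane supporting $\tau$. Once that is in hand, the rest reduces to a standard Farkas-type certificate combined with the one-dimensionality of the relation space.
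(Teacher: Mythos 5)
Your proof is correct, but it takes a genuinely different route from the paper's. The paper argues synthetically with convex geometry: it sets $\bar\fB=\fB\cup\fA$, $\bar\fC=\fC\cup\fA$, shows that $\Cone(\bar\fB)\cap\Cone(\bar\fC)$ strictly contains $\Cone(\fA)$ (else a separating hyperplane would give a sorting function), picks a ray $\nu$ of that intersection not lying in $\Cone(\fA)$, and recovers $\fB^+,\fC^+$ as the rays of the smallest faces of $\Cone(\bar\fB)$ and $\Cone(\bar\fC)$ containing $\nu$; the hypersurface hypothesis enters through the fact that deleting any single ray leaves a simplicial cone. You instead exploit the one-dimensionality of the relation space among the $n+1$ generators together with a Motzkin/Farkas certificate for the infeasible sorting system, and read the bipartition off the sign pattern of the unique relation $\sum_i c_iv_i=0$; the hypersurface hypothesis enters through your step (i), whose dimension count (a proper face of dimension $n-1$ carrying $n$ rays cannot be simplicial) is sound. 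Your version has the advantage of producing $\nu$ explicitly as the ray through $\tfrac12\sum_i|c_i|v_i$ and of making the bipartition canonical, whereas the paper's argument is coordinate-free and closer in spirit to the sorting-function formalism used elsewhere in the section. Two points worth making explicit in a final write-up: non-well-sortedness forces $\fC\neq\emptyset$ (otherwise $\rho=0$ is a strict sorting function), which is what guarantees that the Motzkin certificate is a nontrivial relation and hence a nonzero multiple of $(c_i)$; and $\tilde\nu\neq 0$, which follows from strong convexity of $\tau$ since $\tilde\nu$ is a strictly positive combination of the generators indexed by $I^+$.
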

\begin{proof}
    
    Set $\bar\fB=\fB\cup \fA$, $\bar\fC=\fC\cup \fA$. Consider $\xi=\Cone(\bar\fB)\cap \Cone(\bar\fC)$, which is a convex cone that contains $\Cone(\fA)$. We first note that $\xi\neq \Cone(\fA) $, since otherwise, we are able to find a hyperplane containing $\xi$ and separates $\fB$ and $\fC$, which implies that $(\fX, !\fB, *\fC)$ is well sorted. 
    
    Pick any ray $\nu$ as a face of $\xi$ that is not a face of $\Cone(\fA)$. We note that $\nu$ has to be in the interior of $\tau$, since all proper face of $\tau$ are simplicial. 
    Let's set $\zeta_\fB \prec \Cone(\bar\fB)$ being the smallest face of $\Cone(\bar\fB)$ that contains $\nu$. In particular, $\nu$ is in the interior of $\zeta_\fB$.
    Now, we argue that any ray $\beta\in \fB$ will be a face of $\zeta_\fB$. Otherwise, $\nu \subset \Cone(\bar\fB\setminus \{\beta\})\cap \Cone(\bar\fC)$. Note that $\Cone(\bar\fB\cup \bar\fC\setminus \{\beta\})$ is simplicial, which implies $\nu\subset \Cone (\fA)$. It contradicts that $\nu$ is not a face of $\Cone(\fA)$.
    Similarly, if we set $\zeta_\fC \prec \Cone(\bar\fC)$ being the smallest face of $\Cone(\bar\fC)$ that contains $\nu$. We have $\nu$ is in the interior of $\zeta_\fC$, and any ray in $\fC$ will be a face of $\zeta_\fC$. Since $\nu$ is in the interior of $\nu'=\zeta_\fB\cap \zeta_\fC$, while $\nu$ is a face of $\xi$ which contains $\nu'$, we get $\nu=\nu'$.
    
    Set $\fB^+$ consists of all rays of $\zeta_\fB$, and $\fC^+$ consists of all rays of $\zeta_\fC$. The above argument shows that  $\fB\subset \fB^+$, $\fC\subset \fC^+$, $\fB^+\cap \fC^+=\emptyset$, and $\Cone(\fB^+)\cap \Cone(\fC^+)=\{\nu\}$. If $\alpha\in\fA$ is not in $\fB^+\cup \fC^+$, then both $\zeta_\fB$ and $\zeta_\fC$ are faces of the simplicial cone $\Cone(\bar\fB\cup \bar\fC\setminus \{\beta\})$, which contradicts that $\nu$ is in the interior of both of them. Hence $\fB^+\cup \fC^+$ consists of all rays in $\fX$.
\end{proof}

\begin{lemma}\label{L: product ext}
    Given two analytic spaces $X$ and $Y$, with $U\subset X$ be an open embedding. Set $i:U \to X$, $j: U\times Y \to X\times Y$ the natural open embeddings. Let $E^\bullet$ and $F^\bullet$ be complexes of $\C$-modules on $U$ and $Y$ respectively. Then we have 
    \begin{align*}
        Ri_* E^\bullet \boxtimes F^\bullet &\simeq Rj_*(E^\bullet \boxtimes F^\bullet),\\
        Ri_! E^\bullet \boxtimes F^\bullet &\simeq Rj_!(E^\bullet \boxtimes F^\bullet),
    \end{align*}
    where $\bullet \boxtimes \bullet$ is the exterior tensor product functor. 
\end{lemma}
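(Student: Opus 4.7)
The plan is to view this as a base change / Künneth-type statement attached to the Cartesian square
\[
\begin{tikzcd}
U\times Y \arrow[r, "j"] \arrow[d, "p'"] & X\times Y \arrow[d, "p"] \\
U \arrow[r, "i"] & X,
\end{tikzcd}
\]
where $p,p'$ are the first-factor projections; I let $q:X\times Y\to Y$ and $q'=q\circ j$ denote the second-factor projections, so that by definition $E\boxtimes F = (p')^{-1}E \otimes_{\C} (q')^{-1}F$ and $Ri_*E\boxtimes F = p^{-1}Ri_*E \otimes q^{-1}F$.

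For the $!$-statement the argument should be essentially trivial: open embeddings have exact extension-by-zero functors, so $Ri_!=i_!$ and $Rj_!=j_!$ can be computed stalkwise. I would check stalks at $(x,y)\in X\times Y$: both $(i_!E\boxtimes F)_{(x,y)}$ and $(j_!(E\boxtimes F))_{(x,y)}$ reduce to $E_x\otimes F_y$ when $x\in U$, and to $0$ otherwise. This pointwise identification assembles term-by-term into the required quasi-isomorphism of complexes.

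For the $*$-statement I would use two standard tools. First, since $(q')^{-1}F=j^{-1}q^{-1}F$ is pulled back from $X\times Y$, the projection formula yields
\[
Rj_*(E\boxtimes F) \;=\; Rj_*\bigl((p')^{-1}E\otimes j^{-1}q^{-1}F\bigr) \;\simeq\; Rj_*((p')^{-1}E)\otimes q^{-1}F.
\]
Second, I would invoke the base-change isomorphism $p^{-1}Ri_*E \simeq Rj_*(p')^{-1}E$ for the Cartesian square above. Because $i$ (and hence $j$) is an open embedding, this base change holds with no additional restriction; it can be verified directly on stalks, the point being that at $(x,y)$ with $x\notin U$ both sides compute $\varinjlim R\Gamma(V\cap U,E)$ over a cofinal system of product neighborhoods $V\times W$ of $(x,y)$, while for $x\in U$ both sides restrict to $E_x$.

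The only step that requires any real care---and the closest thing to an obstacle---is the application of the projection formula to complexes of sheaves of $\C$-modules that are not assumed constructible or bounded. Fortunately the formula still holds whenever one tensor factor is an inverse image from the base, so no extra hypotheses on $E^\bullet$ or $F^\bullet$ are needed, and both quasi-isomorphisms follow.
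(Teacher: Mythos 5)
Your treatment of the $!$-statement is correct and complete: extension by zero along an open embedding is exact and satisfies both base change ($p^{-1}i_{!}=j_{!}(p')^{-1}$) and the projection formula ($j_{!}\cA\otimes\cB\simeq j_{!}(\cA\otimes j^{-1}\cB)$) on the nose, so the stalkwise check suffices. The gap is in the $*$-statement, where both of your ``standard tools'' are invoked in a generality in which they are false. First, the unrestricted projection formula for $Rj_*$ fails even for open embeddings: take $j:\C^*\hookrightarrow\C$, $\cA=\C_{\C^*}$, and $\cB$ the skyscraper sheaf at the origin; then $Rj_*(\cA\otimes j^{-1}\cB)=0$, while $Rj_*\cA\otimes\cB$ has stalk $H^*(S^1)\neq 0$ at the origin. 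Note that \emph{every} $\cB$ appearing in the projection formula is ``an inverse image from the base'' (that is how the formula is stated), so this cannot be the reason your step holds. What you actually need is that $q^{-1}F$ is pulled back along the \emph{second} projection, i.e.\ constant in the direction in which $j$ is nontrivial; converting that observation into the isomorphism $Rj_*((p')^{-1}E\otimes j^{-1}q^{-1}F)\simeq Rj_*((p')^{-1}E)\otimes q^{-1}F$ is a local K\"unneth statement and carries essentially the entire content of the lemma --- it is not a citable formality. Second, the base change $p^{-1}Ri_*E\simeq Rj_*(p')^{-1}E$ does \emph{not} hold ``with no additional restriction'': if $Y$ fails to be locally connected (say $Y=\{0\}\cup\{1/n:n\geq 1\}$), then already in degree zero $(j_*\C)_{(x,0)}$ computes germs of locally constant functions on $(V\cap U)\times W$ with $W$ totally disconnected, which is far larger than $(i_*\C)_x$. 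What rescues the step is that analytic spaces are locally contractible (indeed locally conical), so that $R\,\mathrm{pr}_{1,*}\,\mathrm{pr}_1^{-1}E\simeq E$ over small product neighborhoods; this is a Vietoris--Begle/homotopy-invariance input, not a purely formal one, and your stalk sketch (which asserts both sides are colimits over the same product neighborhoods) conflates the two sides precisely at this point.

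For comparison, the paper proves the lemma by taking Godement resolutions of $E^\bullet$ and $F^\bullet$, arguing that the exterior product of the resulting sheaves is acyclic for $j_*$ and $j_!$, and then concluding by the same stalk computation you perform. Your factorization into (base change) $\circ$ (projection formula) is a legitimate alternative skeleton, and you correctly identified the projection formula as the pressure point; but each of the two steps requires an argument of the same analytic nature as the paper's (soft/flabby resolutions compatible with $\boxtimes$, or local contractibility together with finite cohomological dimension to commute $Rj_*$ with the relevant colimits), so as written the proposal has displaced the difficulty rather than resolved it.
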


\begin{proof}
    Using Godement resolution, we can reduce our statement to the case that $E^\bullet$ and $F^\bullet$ are flabby sheaves $E$ and $F$ respectively. Now, we notice that $E\boxtimes F$ is also flabby as a sheaf on $U\times Y$, hence it is acyclic with respect to the functor $j_*$ and $j_!$. The rest is clear by local computation.
\end{proof}

\begin{remark}\label{R: a counter ex}
The following example shows that the commutativity of two types of toroidal extensions, does not imply the well-sortedness in general.

Consider a convex polytope in $\Q^3$ given by the six vertexes: $B_1=(0,0,1), B_2=(1,0,0), B_3=(-1, 0, 0), C_1=(0,0,-1), C_2=(0,1,0), C_3=(0,-1,1)$. We set $\tau$ being a cone in $N_\Q\simeq \Q^4$ induced by the previous polytope. It induces an affine fan triple $(\fX, !\fB, *\fC)$, with $\fB$ consists of those rays corresponding to $B_i$, and $\fC$ consists of those rays corresponding to $C_i$, with $i=1,2,3$. Since 
$$\Cone(\fB)\cap \Cone(\fC)\neq \{\0\},$$
$(\fX, !\fB, *\fC)$ is not well-sorted.

On the other hand, if we denote $\beta$ being the ray corresponding to $B_1$ and $\gamma$ being the ray corresponding to $C_1$, we have $\fX^*(\beta)=\fX^*(\gamma)$, and it is simplicial. We denote it by $\fX'$, and we get an induced simplicial fan triple $(\fX', !\fB', *\fC')$, which is convex over $\fX$ by Lemma \ref{L: star convexity}. Let $(X, !B, *C)$ and $(X', !B', *C')$ be the corresponding toric triples, and $f:X'\to X$ being the induced birational morphism. Since we have $f^{-1}B=B'$ and $f^{(-1)}C=C'$, (or just apply Lemma \ref{L: seq conv implies top log comparison},) we have
$$Rf_*\C_{X'}[!B'+*C']\simeq \C_{X}[!B][*C]\simeq \C_{X}[*C][!B].
$$
\end{remark}

\bibliographystyle{alpha}
\bibliography{bib}

\end{document}